\documentclass[12pt,a4paper]{amsart}

%===============================================================================
% General settings
%===============================================================================

% Allow foreign characters in the text
\usepackage[latin1]{inputenc}

\usepackage{hyperref}

% German style
\usepackage{enumerate}

\usepackage{float}
\restylefloat{table}

\usepackage{graphicx}

\usepackage{bm}

%===============================================================================
% Mathematical settings
%===============================================================================

% Math package of the AMS
\usepackage{amsmath}

% Includes mathfrak
\usepackage{amssymb}

% Theorem environments
\usepackage{amsthm}

\usepackage{tikz-cd}

\usepackage{a4wide}

\usepackage{bbm}

\usepackage{comment}

%\usepackage{txfonts}

%----------------------------
% Theorems
%----------------------------

\newtheoremstyle{plain}
  {6pt}   % ABOVESPACE
  {6pt}   % BELOWSPACE
  {\itshape}  % BODYFONT
  {0pt}       % INDENT (empty value is the same as 0pt)
  {\bfseries} % HEADFONT
  {.}         % HEADPUNCT
  {5pt plus 1pt minus 1pt} % HEADSPACE
  {}          % CUSTOM-HEAD-SPEC

\newtheoremstyle{definition}
  {6pt}   % ABOVESPACE
  {6pt}   % BELOWSPACE
  {\normalfont}  % BODYFONT
  {0pt}       % INDENT (empty value is the same as 0pt)
  {\bfseries} % HEADFONT
  {.}         % HEADPUNCT
  {5pt plus 1pt minus 1pt} % HEADSPACE
  {}          % CUSTOM-HEAD-SPEC

\theoremstyle{plain}
\newtheorem*{thm*}{Theorem}
\newtheorem{thm}{Theorem}[section]
\newtheorem{prop}[thm]{Proposition}
\newtheorem{cor}[thm]{Corollary}
\newtheorem{lem}[thm]{Lemma}
\newtheorem{theorem}{Theorem}

\theoremstyle{definition}
\newtheorem{defn}[thm]{Definition}

\newtheorem{mex}[thm]{Meta-Example}
\newtheorem{ex}[thm]{Example}
\newtheorem{rmk}[thm]{Remark}
\newtheorem{q}[thm]{Question}
\newtheorem{ass}[thm]{Assumption}

% thm-counter followed by equation number
\numberwithin{equation}{thm}

\newcommand{\emphbf}[1]{\emph{\textbf{#1}}}

\DeclareMathAlphabet{\mathpzc}{OT1}{pzc}{m}{it}

\DeclareMathOperator{\Kopf}{top}
\DeclareMathOperator{\soc}{soc}
\DeclareMathOperator{\id}{id}
\DeclareMathOperator{\Hom}{Hom}
\DeclareMathOperator{\Ext}{Ext}

\DeclareMathOperator{\Mod}{Mod}
\DeclareMathOperator{\modu}{mod}
\DeclareMathOperator{\rep}{rep}
\DeclareMathOperator{\repfd}{rep_{fd}}

\DeclareMathOperator{\adj}{adj}
\DeclareMathOperator{\im}{Im}
\DeclareMathOperator{\coker}{Coker}

\DeclareMathOperator{\image}{Im}

\DeclareMathOperator{\res}{res}
\DeclareMathOperator{\Coker}{Coker}

\DeclareMathOperator{\Id}{Id}

\DeclareMathOperator{\proj}{proj}
\DeclareMathOperator{\inj}{inj}
\DeclareMathOperator{\pd}{projdim}
\DeclareMathOperator{\indim}{injdim}

\DeclareMathOperator{\op}{op}

\DeclareMathOperator{\CAT}{CAT}

\DeclareMathOperator{\arrowin}{in}
\DeclareMathOperator{\arrowout}{out}

\DeclareMathOperator{\Lex}{Lex}
\DeclareMathOperator{\Rex}{Rex}
\DeclareMathOperator{\Fun}{Fun}
\DeclareMathOperator{\Ab}{Ab}
\DeclareMathOperator{\supr}{sup}

\DeclareMathOperator{\incl}{incl}
\DeclareMathOperator{\Coh}{Coh}

\DeclareMathOperator{\Mono}{Mono}
\DeclareMathOperator{\Monofd}{Mono_{fd}}
\DeclareMathOperator{\Epi}{Epi}
\DeclareMathOperator{\Epifd}{Epi_{fd}}
\DeclareMathOperator{\fp}{f.p.}

\DeclareMathOperator{\Ambi}{Ambi}

\newcommand{\unit}[2]{\eta^{#1\dashv #2}}
\newcommand{\counit}[2]{\varepsilon^{#1\dashv #2}}
\newcommand{\relGinj}[2]{\mathcal{G}_{f_*}\operatorname{inj}}
\newcommand{\relGproj}[2]{\mathcal{G}_{f_!}\operatorname{proj}}

%===============================================================================
% Beginning of the actual document
%===============================================================================

\begin{document}

\title{A functorial approach to monomorphism categories for species I}

\author{Nan Gao}
\author{Julian K\"ulshammer\and Sondre Kvamme}
%\author{Sondre Kvamme}
\author{Chrysostomos Psaroudakis}
\date{\today}

\thanks{The first author was supported by Natural Science foundation of China (11771272). The third author was supported by a public grant as part of FMJH. The third author wants to thank Xiao Jie for helpful discussions. The authors want to thank Steffen Koenig for suggestions to make the introduction to the paper more accessible. }

\address{Nan Gao\\
Departement of Mathematics, Shanghai University\\
Shanghai 200444, PR China
} \email{nangao@shu.edu.cn}

\address{Julian K\"ulshammer\and Sondre Kvamme\\
Department of Mathematics, 
Uppsala University \\ Box 480 \\ 75106 Uppsala,
Sweden} \email{julian.kuelshammer@math.uu.se\and sondre.kvamme@math.uu.se}

\address{Chrysostomos Psaroudakis\\
Department of Mathematics, 
Aristotle University of Thessaloniki \\ Thessaloniki, 54124, Greece}  \email{chpsaroud@math.auth.gr}

\begin{abstract}
We introduce a very general extension of the monomorphism category as studied by Ringel and Schmidmeier which in particular covers generalised species over locally bounded quivers. We prove that analogues of the kernel and cokernel functor send almost split sequences over the path algebra and the preprojective algebra to split or almost split sequences in the monomorphism category. We derive this from a general result on preservation of almost split morphisms under adjoint functors whose counit is a monomorphism. Despite of its generality, our monomorphism categories still allow for explicit computations as in the case of Ringel and Schmidmeier. 
\end{abstract}

\maketitle

\tableofcontents

\section{Introduction}

Let $A$ be an Artin algebra. In \cite{RS06, RS08}, Ringel and Schmidmeier studied two subcategories of the category $\mathcal{H}(A)$ of homomorphisms of $A$-modules, i.e. the category with objects $(M,N,h)$ where $M$ and $N$ are $A$-modules and $h\colon M\to N$ is a homomorphism (morphisms in $\mathcal{H}(A)$ are given by commutative squares). The two categories are the full subcategories of $\mathcal{H}(A)$ where $h$ is a monomorphism, denoted by $\mathcal{S}(A)$, and where $h$ is an epimorphism, denoted by $\mathcal{F}(A)$. They proved that these subcategories are functorially finite and extension-closed, whence they have almost split sequences. Further, they studied the relationship between  almost split sequences in $\mathcal{S}(A)$ and $\mathcal{F}(A)$ and almost split sequences in the category of all homomorphisms. One of their key results in this situation is that the kernel and the cokernel functor $\mathcal{H}(A)\to \mathcal{S}(A)$ and $\mathcal{H}(A)\to \mathcal{F}(A)$ send almost split sequences in the category of homomorphisms to a direct sum of almost split sequences and split sequences in the category of monomorphisms and epimorphisms, respectively.

The subcategory of monomorphisms is closely related to the category of Gorenstein projective modules over the triangular matrix ring $\left(\begin{smallmatrix}A&A\\0&A\end{smallmatrix}\right)$. Namely, a module over the triangular matrix ring is Gorenstein projective if and only if it is a monomorphism between Gorenstein projective modules whose cokernel is also Gorenstein projective, see \cite[Corollary 3.6]{JK11}. Recently, these results by Ringel and Schmidmeier as well as the connections to Gorenstein projective modules have been further explored, see e.g. \cite{Zha11, XZ12, LZ13, XZZ14, RZ17, Kul17, Kva17, Kva16}. A special case has also been studied in \cite{BBOS20} in the context of topological data analysis. Noting that $\mathcal{H}(A)\cong \modu(A\otimes \Bbbk A_2)\cong \modu \left(\begin{smallmatrix}A&A\\0&A\end{smallmatrix}\right)$, where $A_2$ denotes the Dynkin quiver $A_2$, the generalisations are in two directions: Firstly, they concern monomorphism categories for algebras of the form $A\otimes \Bbbk Q$ where $Q$ is a finite acyclic quiver, or more generally categories of functors $Q\to \modu A$ where $Q$ is a locally bounded quiver, and secondly monomorphism categories in module categories of algebras of the form $\left(\begin{smallmatrix}A&X\\0&B\end{smallmatrix}\right)$, where $A$ and $B$ are Artin algebras and $X$ is a $B$-$A$-bimodule. The purpose of this paper is to introduce a very general extension of the monomorphism category, which in particular covers both of these generalisations, but much more. 

Our original motivation for writing this paper was to study monomorphism categories associated to (generalised) species as defined and studied in \cite{Li12, GLS16, LY15, Geu17, Kul17}. Throughout the introduction, let $Q$ be a finite acyclic quiver. (In the paper we deal with the slightly more general setup of locally bounded quivers for which slight modifications need to be made.) A generalised species $\Lambda$ associated to  $Q$ is an association of a finite dimensional algebra $\Lambda_\mathtt{i}$ to each vertex $\mathtt{i}$ of $Q$ and a $\Lambda_\mathtt{j}$-$\Lambda_\mathtt{i}$-bimodule $\Lambda_\alpha$ to each arrow $\alpha\colon \mathtt{i}\to \mathtt{j}$ such that $\Lambda_\alpha$ is projective as a left $\Lambda_\mathtt{j}$-module as well as a right $\Lambda_\mathtt{i}$-module, and that 
\[\Lambda_{\alpha^*}:=\Hom_{\Lambda_\mathtt{j}}(\Lambda_\alpha,\Lambda_\mathtt{j})\cong \Hom_{\Lambda_\mathtt{i}^{\op}}(\Lambda_\alpha,\Lambda_\mathtt{i})\]
as $\Lambda_\mathtt{i}$-$\Lambda_\mathtt{j}$-bimodules for each $\alpha\colon \mathtt{i}\to \mathtt{j}$ in $Q$. In \cite{Kul17} this is called a dualisable pro-species of algebras. A representation of $\Lambda$ is a tuple $(M_\mathtt{i},M_\alpha)_{\mathtt{i}\in Q_0, \alpha\in Q_1}$ where $M_\mathtt{i}\in \modu \Lambda_\mathtt{i}$ and $M_\alpha\colon \Lambda_\alpha\otimes_{\Lambda_{s(\alpha)}} M_{s(\alpha)}\to M_{t(\alpha)}$ is a $\Lambda_{t(\alpha)}$-linear map. Furthermore, denote by $M_{\alpha^*}$ the adjoint map $M_{s(\alpha)}\to \Lambda_{\alpha^*}\otimes_{\Lambda_{t(\alpha)}} M_{t(\alpha)}$ of the action $M_\alpha$. For each vertex $\mathtt{i}\in Q_0$ define the maps
\[
M_{\mathtt{i},\arrowin}\colon \bigoplus_{\substack{\alpha\in Q_1\\t(\alpha)=\mathtt{i}}} \Lambda_\alpha\otimes_{\Lambda_{s(\alpha)}} M_{s(\alpha)}\xrightarrow{(M_\alpha)_\alpha} M_\mathtt{i}\text{ and }M_{\mathtt{i},\arrowout}\colon M_\mathtt{i}\xrightarrow{(M_{\alpha^*})_\alpha}\bigoplus_{\substack{\alpha\in Q_1\\s(\alpha)=\mathtt{i}}} \Lambda_{\alpha^*}\otimes M_{t(\alpha)}.\]

Then for a dualisable pro-species $\Lambda$ define the monomorphism and the epimorphism category to be the following subcategories of $\rep \Lambda$:
\begin{align*}
\Mono(\Lambda)&=\{M\in \rep\Lambda\,|\,M_{\mathtt{i},\arrowin} \text{ is a monomorphism }\}\\
\Epi(\Lambda)&=\{M\in \rep\Lambda\,|\,M_{\mathtt{i},\arrowout} \text{ is an epimorphism }\}
\end{align*}

%We also consider the subcategory $\repfd \Lambda$ of $\rep \Lambda$ consisting of all generalised species $\Lambda$ for which $\Lambda_i=0$ for all but finitely many vertices $i$ in $Q$, and the subcategories $\Monofd (\Lambda):=\Mono(\Lambda) \cap \repfd \Lambda$ and $\Epifd (\Lambda):=\Epi(\Lambda) \cap \repfd \Lambda$

Note that we follow the terminology of \cite{LZ13} in calling this category the monomorphism category while (in the setup of quivers with relations) \cite{XZ17, XZ19} call this category the separated monomorphism category. The main reason for doing so is that for us this is a special case of our more general setup (see below) in which there is only one morphism which is demanded to be a monomorphism (or an epimorphism). In this more general setup, speaking of separation doesn't make any sense.

We define functors $\nu\colon \rep(\Lambda)\to \Epi(\Lambda)$ and $\nu^-\colon \rep(\Lambda)\to \Mono(\Lambda)$ analogous to the cokernel and kernel functors of Ringel and Schmidmeier which behave like a `relative' (inverse) Nakayama functor. As for the cokernel and kernel functor, they are defined very explicitly, which we illustrate by a few running examples. The functor $\nu^ -$ is defined on objects via the following short exact sequence:

\begin{equation}
\label{def:taunuspecies}
0\to (\nu^-(M_\mathtt{j},M_\alpha))_\mathtt{i}\hookrightarrow \bigoplus_{\substack{q\in Q_{\geq 0}\\t(q)=\mathtt{i}}} \Lambda_q\otimes_{\Lambda_{s(q)}} M_{s(q)}\xrightarrow{\xi} \bigoplus_{\substack{p\in Q_{\geq 0}\\t(p)=\mathtt{i}}}\bigoplus_{\substack{\alpha\in Q_1\\s(\alpha)=s(p)}} \Lambda_p\otimes_{\Lambda_{s(\alpha)}}\Lambda_{\alpha^*}\otimes_{\Lambda_{t(\alpha)}} M_{t(\alpha)}
\end{equation}
where $Q_{\geq 0}$ denotes the set of paths in $Q$, $\Lambda_q=\Lambda_{\alpha_{t}}\otimes_{\Lambda_{s(\alpha_t)}}\dots \otimes_{\Lambda_{s(\alpha_2)}} \Lambda_{\alpha_1}$ for $q=\alpha_t\dots\alpha_1$ a path written as a concatenation of arrows $\alpha_i$, and $\xi$ is the difference between the coevaluation $\Lambda_q\otimes_{ \Lambda_{s(q)}} M_{s(q)}\to \Lambda_{q}\otimes_{\Lambda_{t(\alpha)}} \Lambda_\alpha\otimes_{\Lambda_{s(\alpha)}} \Lambda_{\alpha^*}\otimes_{\Lambda_{t(\alpha)}} M_{t(\alpha)}$ and $M_{\alpha^*}$.
The functor $\nu$ is defined as the left adjoint of $\nu^-$. 

We prove the following generalisation of a theorem of Ringel and Schmidmeier:

\begin{theorem}[Theorem \ref{thmCmaintext}, Corollary \ref{imagesofseveralalmostsplitsequences} and Corollary \ref{Existence of almost split sequences}]\label{thmC}
Let $\Lambda$ be a dualisable pro-species on $Q$. 
\begin{enumerate}[(i)]
\item The categories $\Mono(\Lambda)$ and $\Epi(\Lambda)$ have almost split sequences. 
\item Let $g$ be a right almost split morphism in $\rep(\Lambda)$. Then $\nu^-(g)$ is a split epimorphism or a right almost split morphism in $\Mono(\Lambda)$. A dual statement holds for the epimorphism category. 
\item Let $0\to A''\xrightarrow{g'} A\xrightarrow{g} A'\to 0$ be an almost split sequence in $\rep(\Lambda)$ with $A'\in \Epi(\Lambda)$ such that $A'$ is not projective in $\Epi(\Lambda)$. Then, 
\[0\to \nu^-(A'')\xrightarrow{\nu^-(g')} \nu^-(A)\xrightarrow{\nu^-(g)} \nu^-(A')\to 0\]
is a direct sum of an almost split sequence and a split exact sequence in $\Mono(\Lambda)$. A dual statement holds for the epimorphism category 
\end{enumerate}
\end{theorem}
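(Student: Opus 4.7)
My plan is to deduce all three parts from an abstract result on preservation of almost split morphisms under an adjoint pair whose counit (dually, unit) is a monomorphism (epimorphism) --- exactly the theorem foreshadowed in the abstract. The first step is to identify $\nu$ and $\nu^-$ (together with the inclusions of $\Mono(\Lambda)$ and $\Epi(\Lambda)$ into $\rep(\Lambda)$) with such an adjoint pair. The defining short exact sequence \eqref{def:taunuspecies} exhibits $\nu^-(M)$ explicitly as a subobject of an object equipped with a natural comparison map to $M$, which furnishes the candidate monomorphic counit. A direct verification should confirm both the adjunction and the monomorphicity of the counit; this is essentially a bookkeeping exercise tracing through the bimodule tensor products indexed over paths of $Q$.

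For part (i), once the adjunctions are in place, functorial finiteness of $\Mono(\Lambda)$ and $\Epi(\Lambda)$ in $\rep(\Lambda)$ follows immediately from the existence of the (co)reflectors $\nu^-$ and $\nu$. Extension-closedness is a standard diagram chase: the snake lemma shows that monomorphisms (respectively, epimorphisms) at each vertex are preserved under extensions in an abelian category. The existence of almost split sequences in the two subcategories then follows from the Auslander--Smal\o\ criterion, applied to these functorially finite, extension-closed subcategories of $\rep(\Lambda)$, which itself has almost split sequences because each $\Lambda_\mathtt{i}$ is finite-dimensional and $Q$ is locally bounded. Parts (ii) and (iii) are then applications of the abstract theorem: (ii) is its direct instantiation to $\nu^-$, while for (iii) one additionally verifies that $\nu^-$ sends the short exact almost split sequence to a short exact sequence (using that the counit is a monomorphism to control the kernel on the left, and that the non-projectivity hypothesis prevents degeneration on the right), and then invokes the standard principle that a right almost split morphism sitting inside a short exact sequence with indecomposable terms is the right-hand map of an almost split sequence, up to a split summand that accounts for the split-epimorphism alternative of (ii).

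The main obstacle I anticipate is Step 1: confirming that \eqref{def:taunuspecies} genuinely defines a right adjoint with the claimed monomorphic counit. The middle term is a sum indexed over all paths of $Q$, and the map $\xi$ is assembled from coevaluation morphisms at every intermediate vertex; consequently the universal property of $\nu^-(M)$ requires a careful comparison of morphism sets at each vertex of $Q$, keeping track of how the monomorphism condition defining $\Mono(\Lambda)$ interacts with the projectivity and dualisability of the bimodules $\Lambda_\alpha$. Once this identification together with the abstract preservation theorem are in place, the remaining steps reduce to routine adjoint-functor manipulations and a standard decomposition argument for almost split sequences.
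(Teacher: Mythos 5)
Your overall strategy for parts (ii) and (iii) — apply the abstract Theorem~\ref{theoremAmaintext} to the adjoint pair $\nu\dashv\nu^-$ and quote that the counit is a monomorphism — is the same as the paper's. The route you propose to \emph{establish} those hypotheses, however, is different: you would verify the adjunction and the monomorphicity of the counit by a direct computation on the explicit sequence~\eqref{def:taunuspecies}, whereas the paper bypasses this entirely by working in the monad/Nakayama-functor framework. Concretely, the paper shows that $f_!\dashv f^*$ admits a Nakayama functor, exhibits a two-term relative projective resolution of every object (Lemma~\ref{globaldimension1}), deduces that the adjunction is Iwanaga--Gorenstein of dimension $\le 1$, and then \emph{derives} from Propositions~\ref{kernel and cokernel of unit/counit} and~\ref{1gorensteinunitepi} that $\unit{\nu}{\nu^-}$ is an epimorphism and $\counit{\nu}{\nu^-}$ is a monomorphism. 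This trades your ``bookkeeping exercise over paths of $Q$'' for a short homological argument that comes for free from Gorensteinness. You also need (and do not explicitly flag) the identification $\image\nu^-=\Mono(\Lambda)$ and $\image\nu=\Epi(\Lambda)$, which in the paper is a theorem (Corollary~\ref{Gorensteinprojectiveimagenu} and Theorem~\ref{characterisationsofmono}) rather than a definition.

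There is a genuine gap in your treatment of part (i). You assert that functorial finiteness of $\Mono(\Lambda)$ and $\Epi(\Lambda)$ ``follows immediately from the existence of the (co)reflectors $\nu^-$ and $\nu$,'' but $\nu^-$ is \emph{not} a right adjoint to the inclusion $\Mono(\Lambda)\hookrightarrow\rep(\Lambda)$; it is the right adjoint of the endofunctor $\nu$. What the adjunction $\nu\dashv\nu^-$ together with $\image\nu^-=\Mono(\Lambda)$ gives you for free is that $\unit{\nu}{\nu^-}_M\colon M\to\nu^-\nu(M)$ is a left $\Mono(\Lambda)$-approximation, i.e.\ covariant finiteness of $\Mono(\Lambda)$, and dually that $\counit{\nu}{\nu^-}_M$ is a right $\Epi(\Lambda)$-approximation. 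Contravariant finiteness of $\Mono(\Lambda)$ (and covariant finiteness of $\Epi(\Lambda)$) is not automatic from this data and would require a separate argument, which is precisely the part of the Ringel--Schmidmeier proof that takes real work. The paper avoids the Auslander--Smal\o\ criterion altogether: it observes that $\nu^-\colon\Epi(\Lambda)\to\Mono(\Lambda)$ is an exact equivalence, so that every indecomposable non-(relative-)projective object of $\Mono(\Lambda)$ is $\nu^-(A')$ for some indecomposable non-projective $A'\in\Epi(\Lambda)$, and then produces the required almost split sequence by applying part (iii) to the almost split sequence ending in $A'$ inside $\rep(\Lambda)$ (Corollaries~\ref{imagesofseveralalmostsplitsequences} and~\ref{Existence of almost split sequences}). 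You should either supply the missing contravariant/covariant finiteness argument or switch to the paper's route. Two further smaller issues: in (iii) ``a right almost split morphism sitting inside a short exact sequence with indecomposable terms'' is not quite the correct criterion; one needs the right-hand map to be \emph{minimal} right almost split (Lemma~\ref{right minimal gives almost split seq}), and one passes to a minimal version using the Krull--Schmidt property. Finally, for existence of almost split sequences one should work in the finite-length subcategory $\repfd(\Lambda)$, which is a dualizing variety and hence Krull--Schmidt with almost split sequences; your proposal does not distinguish $\rep(\Lambda)$ from $\repfd(\Lambda)$, which matters when $Q$ is infinite.
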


Furthermore, given a dualisable pro-species one can associate to it its preprojective algebra $\Pi(\Lambda)$ as in \cite{Kul17}. Denote by $g^*$ the restriction functor $\modu \Pi(\Lambda)\to \rep(\Lambda)$.  Furthermore, define 
 $\tau^-$, the `relative' inverse Auslander--Reiten translation, via the cokernel of the map $\xi$ in the sequence \eqref{def:taunuspecies} and $M\in \rep(\Lambda)$. 
One then obtains the following theorem, the first part of which generalises a theorem of Ringel \cite[Theorem A]{Ri98} while the second part generalises unpublished work of the first and last named author in the case of the preprojective algebra associated to Ringel--Schmidmeier's situation, which is isomorphic to the Morita ring, see \cite{Kul17}. 

\begin{theorem}[Theorem \ref{ringelsdescriptionofpreprojective} and Theorem \ref{thmDmaintext}]\label{thmD}
Let $\Lambda$ be a dualisable pro-species of algebras. 
\begin{enumerate}[(i)]
\item The category $\modu \Pi(\Lambda)$ is equivalent to the category of morphisms $\tau^-(M)\to M$.
\item Let $0\to L\to M\to N\to 0$ be an almost split sequence in $\modu \Pi(\Lambda)$. Assume that $\nu^-g^*(N)$ is not projective. Then the sequence $0\to (\nu^-g^*)(L)\to (\nu^-g^*)(M)\to (\nu^-g^*)(N)\to 0$ is exact. Furthermore, it is either split exact or a direct sum of a split exact and an almost split sequence in $\Mono(\Lambda)$. 
\end{enumerate}
\end{theorem}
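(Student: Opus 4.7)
The plan is to handle part (i) by generalising Ringel's original theorem from \cite{Ri98} to the species setting, and then to combine it with Theorem~C and the general adjoint-functor principle announced in the abstract to derive part (ii).

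For part (i), I would start from the description of $\Pi(\Lambda)$ as a tensor algebra over the path algebra of $Q$ (with respect to the species) obtained by adjoining reverse arrows $\alpha^*$ subject to the preprojective relations at each vertex. A $\Pi(\Lambda)$-module $X$ then decomposes into its restriction $g^*(X)=M\in \rep(\Lambda)$ together with the additional reverse actions $M_{\alpha^*}$. Packaging these reverse actions vertex by vertex and propagating them along paths as in \eqref{def:taunuspecies}, the preprojective relation at each vertex translates into saying that the resulting map factors through $\tau^-(M)$, which is by definition the cokernel of $\xi$. This gives a bijection between $\Pi(\Lambda)$-modules and pairs $(M, f\colon \tau^-(M)\to M)$; checking functoriality on morphisms and compatibility with composition completes the equivalence of categories.

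For the exactness statement in (ii), note that $g^*$ is exact as a restriction functor, so $0\to g^*(L)\to g^*(M)\to g^*(N)\to 0$ is short exact in $\rep(\Lambda)$. Since $\nu^-$ is defined term-wise as a kernel in \eqref{def:taunuspecies}, applying it preserves monomorphisms, and a snake-lemma argument on the defining sequence---using right exactness of $\tau^-$ and exactness of the middle term on this particular sequence---then yields the asserted short exactness of $\nu^-g^*$ applied to $0\to L\to M\to N\to 0$.

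For the almost-split statement, the strategy is to invoke Theorem~C(iii). Via the equivalence of part (i) and the adjunction between $g^*$ and its left adjoint, the right almost split morphism $M\to N$ in $\modu\Pi(\Lambda)$ should lift to a right almost split morphism in $\rep(\Lambda)$ on the relevant subcategory. The general adjoint-functor principle from the abstract then propagates the almost-split property through $\nu^-$ into $\Mono(\Lambda)$. The hypothesis that $\nu^-g^*(N)$ is non-projective rules out the split-epimorphism case of Theorem~C(ii), producing an actual right almost split map in $\Mono(\Lambda)$; uniqueness of almost split sequences then identifies the image sequence as a direct sum of an almost split and a split exact sequence. The main obstacle will be pinning down the correct adjoint pair whose counit is a monomorphism, so that the general principle applies cleanly to the composite $\nu^-g^*$ rather than to $\nu^-$ alone; once this is in place, the rest should fall out by combining the description from part (i) with Theorem~C.
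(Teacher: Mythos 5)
Your overall plan is sensible, and for part (i) the route you sketch — decomposing a $\Pi(\Lambda)$-module into its $\Lambda$-restriction $g^*(X)$ and the reverse actions, then showing the preprojective relations make the packaged reverse actions factor through $\coker\xi=\tau^-(M)$ — is a reasonable concretisation. The paper instead works abstractly: it first constructs an isomorphism $\mathcal{C}^{\Pi(X)}\cong(\Id\Downarrow\tau)$ (Theorem~\ref{ringelsdescriptionofpreprojective}), using the explicit presentation of $\nu$ and the fact that $\chi\circ\varphi=0$ encodes the preprojective relation, and only afterwards passes to $(\tau^-\Downarrow\Id)$ via the adjunction $\tau^-\dashv\tau$ (Lemma~\ref{tautau-adjoint}). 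Your direct route to $\tau^-(M)\to M$ is workable but you should be aware that the paper goes the other way, and that the passage between the two formulations actually requires establishing the $\tau^-\dashv\tau$ adjunction, which is a nontrivial lemma.

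For part (ii), however, there are two genuine gaps. First, your exactness argument is flawed: applying the snake lemma to the defining sequence~\eqref{def:taunuspecies} only reproduces the long exact $\nu^-$--$\tau^-$ sequence
\[
0\to\nu^-g^*(L)\to\nu^-g^*(M)\to\nu^-g^*(N)\to\tau^-g^*(L)\to\tau^-g^*(M)\to\tau^-g^*(N)\to 0,
\]
and ``right exactness of $\tau^-$'' gives no information about injectivity of $\tau^-g^*(L)\to\tau^-g^*(M)$, which is what is actually needed. Exactness at $\nu^-g^*(N)$ is \emph{not} automatic and is precisely where the almost-split hypothesis and the non-projectivity of $\nu^-g^*(N)$ enter: the paper produces a non-split epimorphism from a relative projective onto $\nu^-g^*(N)$ and factors it through $\nu^-g^*(M)$ using the almost-split property, and this is how surjectivity is obtained. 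Second, the ``correct adjoint pair'' you flag as the main obstacle is the content of Lemma~\ref{isomorphismgamma}: the adjunction $j\circ\nu\dashv\nu^-\circ g^*$, with compatible adjunction isomorphisms. This is the crucial lemma that lets one transport maps between $\mathcal{C}^{\Pi(X)}$ and $\relGproj{P}{\mathcal{C}^{T(X)}}$, and its proof relies on the vanishing $\tau^-\nu(M,h_1)=0$ for relative Gorenstein injectives. Without it, neither the almost-split transfer nor the exactness argument goes through, so your proposal as written stops short of a proof.
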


In fact, Theorems \ref{thmC} and \ref{thmD} (and their dual statements) are consequences of the theory of adjunctions with Nakayama functors as developed by the third named author in \cite{Kva16} as follows: We consider the abelian category $\mathcal{C}=\prod_{\mathtt{i}\in Q_0} \modu \Lambda_\mathtt{i}$ and on it the `push-along' functor 
\[X\colon \mathcal{C}\to \mathcal{C}, (M_\mathtt{i})_{\mathtt{i}\in Q_0}\mapsto \left(\bigoplus_{\substack{\alpha\in Q_1\\t(\alpha)=\mathtt{i}}} \Lambda_\alpha\otimes M_{s(\alpha)}\right)_{\mathtt{i}\in Q_0}.\] 
Then $\rep \Lambda\cong \mathcal{C}^{T(X)}$, the Eilenberg--Moore category of the free monad on the endofunctor $X$. 
%If we let $\mathcal{C}'$ denote the subcategory of $\mathcal{C}$ consisting of all $(M_\mathtt{i})_{\mathtt{i}\in Q_0}$ where $M_\mathtt{i}=0$ for all but finitely many $\mathtt{i}$, then $\repfd \Lambda\cong  \mathcal{C}'^{T(X)}$
 
The functor $X$ also satisfies three technical conditions, from which we in fact derive even more general analogues of Theorem \ref{thmC} and \ref{thmD} in this abstract setup. Namely,
\begin{itemize}
\item The functor $X$ is a Frobenius functor, i.e. its left adjoint $Y$ is also a right adjoint;
\item the coproduct of the powers of $X$ coincides with the product of the powers of $X$ (and the same condition for $Y$), see Assumption \ref{standard1};
\item there are no epimorphisms $X(M)\to M$ and no monomorphisms $M\to Y(M)$ for every non-zero $M\in \mathcal{C}$. This is an analogue of Nakayama's lemma.
\end{itemize}

Under these assumptions, the monomorphism category coincides with the category of (relative) Gorenstein projective objects with respect to the Eilenberg--Moore adjunction $\begin{tikzcd}\mathcal{C}^{T(X)}\arrow[yshift=-0.5ex]{r} &\mathcal{C}\arrow[yshift=0.5ex]{l}\end{tikzcd}$ which admits a (relative) Nakayama functor $\nu$ in the sense of \cite{Kva16}. We furthermore prove that if $\mathcal{C}$ has enough projectives and injectives then the third condition is redundant. 

Our reasoning for considering this more abstract setup (beside covering some obvious generalisations of species where the underlying abelian category doesn't need to have enough projectives) is two-fold. Firstly, the abstract setup removes some of the technical difficulties in defining the monomorphism category for a generalised species and makes the categorical reasons, why certain notions are defined in a specific way, more transparent. Secondly, there is a striking similarity of our abstract  conditions with conditions arising in the literature in connection with the theory of categorification, see e.g. \cite{Lau06} for the first condition and \cite{EH17} for an analogue of the second condition. 

In a second paper in preparation \cite{GKKP19b} we will provide an analogue of the explicit description of the Auslander--Reiten translation obtained in \cite{RS08} for the monomorphism category of a dualisable pro-species. 

The above theorems are in fact consequences of the following master theorem on preservation of almost split morphisms under adjoint functors:

\begin{theorem}[Theorem \ref{theoremAmaintext}]\label{thmA}
Let $\mathcal{A}$ and $\tilde{\mathcal{A}}$ be abelian categories. Let $(L,R)$ be an adjoint pair with $L\colon \tilde{\mathcal{A}}\to \mathcal{A}$. Assume that the counit of the adjunction $\counit{L}{R}\colon LR\to \Id_\mathcal{A}$ is a monomorphism. If $g\colon A\to A'$ is a right almost split morphism in $\tilde{\mathcal{A}}$, then $R(g)\colon R(A)\to R(A')$ is either split or right almost split in $\image(R)$. 
\end{theorem}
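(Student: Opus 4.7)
The overall plan is to handle the trivial case (where $R(g)$ is itself a split epimorphism in $\image(R)$) separately and, under the assumption that it is not, to verify the right almost split property directly. Fix any $f\colon X \to R(A')$ in $\image(R)$ which is not a split epimorphism. Since $\image(R)$ is the essential image of $R$, we may assume $X = R(B)$ for some $B \in \mathcal{A}$, and pass $f$ across the adjunction to its mate $\tilde{f} := \counit{L}{R}_{A'}\circ L(f)\colon LR(B)\to A'$, related to $f$ by the standard formula $f = R(\tilde{f})\circ \unit{L}{R}_{R(B)}$. Once we know that $\tilde{f}$ is not a split epimorphism in $\mathcal{A}$, the right almost split property of $g$ gives $h\colon LR(B)\to A$ with $\tilde{f} = g\circ h$, and applying $R$ and composing with the unit produces
\[ f = R(\tilde{f})\circ \unit{L}{R}_{R(B)} = R(g)\circ\bigl(R(h)\circ \unit{L}{R}_{R(B)}\bigr), \]
which is the desired factorization of $f$ through $R(g)$ in the full subcategory $\image(R)$.

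The core of the proof is therefore to show that $\tilde{f}$ is not a split epimorphism in $\mathcal{A}$. The plan is contrapositive: given a hypothetical section $t\colon A'\to LR(B)$ of $\tilde{f}$, exhibit $\tau := R(\counit{L}{R}_B\circ t)\colon R(A')\to R(B)$ as a section of $f$ in $\image(R)$, contradicting the choice of $f$. The verification $f\circ \tau = \Id_{R(A')}$, which I expect to be the main technical step, reduces to establishing the intrinsic identity
\[ R(\tilde{f}) \;=\; f\circ R(\counit{L}{R}_B) \quad\text{in}\quad \Hom_{\tilde{\mathcal{A}}}(RLR(B), R(A')); \]
granting this, $f\circ \tau = f\circ R(\counit{L}{R}_B)\circ R(t) = R(\tilde{f})\circ R(t) = R(\tilde{f}\circ t) = R(\Id_{A'}) = \Id_{R(A')}$.

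Finally, this displayed identity is verified by comparing both sides under the adjunction bijection $\Hom_{\tilde{\mathcal{A}}}(RLR(B), R(A'))\cong \Hom_{\mathcal{A}}(LRLR(B), A')$. Their adjoint mates under this bijection are $\counit{L}{R}_{A'}\circ LR(\tilde{f})$ and $\tilde{f}\circ LR(\counit{L}{R}_B)$ respectively; by naturality of $\counit{L}{R}$ at $\tilde{f}$ the former equals $\tilde{f}\circ \counit{L}{R}_{LR(B)}$, so the entire proof reduces to the single identity $LR(\counit{L}{R}_B) = \counit{L}{R}_{LR(B)}$. This is the unique place where the hypothesis on the counit is needed: naturality of $\counit{L}{R}$ at the morphism $\counit{L}{R}_B\colon LR(B)\to B$ gives $\counit{L}{R}_B\circ LR(\counit{L}{R}_B) = \counit{L}{R}_B\circ \counit{L}{R}_{LR(B)}$, and cancelling the monomorphism $\counit{L}{R}_B$ yields the desired equality.
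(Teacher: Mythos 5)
Your argument is correct, and while it shares the paper's overall framework --- transport $f$ to its adjoint mate $\tilde{f}=\counit{L}{R}_{A'}\circ L(f)$, show $\tilde{f}$ is not a split epimorphism, factor it through $g$, and carry the factorisation back through the adjunction --- it handles the key step (non-split $f$ implies non-split $\tilde{f}$) by a genuinely different and leaner route. The paper argues structurally: it first deduces that $\counit{L}{R}_{A'}$ is an isomorphism (mono plus split epi), so that $L(f)$ and hence $(RL)(f)$ are split epimorphisms; it then separately proves that $R\counit{L}{R}$ is a natural isomorphism with inverse $\unit{L}{R}_R$ (again mono plus split epi, via a triangle identity and left exactness of $R$); and finally it transports the split-epi property back to $f$ through the naturality square for the unit. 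You instead write down an explicit section $\tau = R(\counit{L}{R}_B\circ t)$ of $f$ from any section $t$ of $\tilde{f}$, and your verification collapses to the single identity $LR(\counit{L}{R}_B)=\counit{L}{R}_{LR(B)}$, which you obtain by left-cancelling the monomorphism $\counit{L}{R}_B$ in a naturality square. Your route invokes the monomorphism hypothesis exactly once and in its most elementary form, avoids establishing any natural isomorphism, and makes visible that the hypothesis on the counit is exactly the condition making the comonad $LR$ idempotent --- a structural fact the paper's proof uses only implicitly.
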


In the context of Ringel and Schmidmeier, the adjoint pair $(L,R)$ is in fact given by the cokernel-kernel adjunction. The corresponding images are the epimorphism and the monomorphism category, respectively. Another application of Theorem \ref{thmA} arises by interpreting the cokernel-kernel adjunction as an adjunction between  $\Fun(\proj \Bbbk A_2, \modu \Lambda)$ and $\Fun(\inj \Bbbk A_2, \modu \Lambda)$, where $A_2$ denotes the Dynkin quiver $A_2$. Let $\mathcal{A}$ and $\mathcal{B}$ be abelian categories, and assume $\mathcal{B}$ is essentially small and has enough projectives and injectives. Let $\mathsf{L}\colon \Fun(\proj\mathcal{B},\mathcal{A})\to \Fun(\inj\mathcal{B},\mathcal{A})$ be the functor given by the composite
\[
\mathsf{L}\colon \Fun(\proj\mathcal{B},\mathcal{A}) \xrightarrow{\widehat{\bullet}} \Fun(\mathcal{B},\mathcal{A})\xrightarrow{\res} \Fun(\inj\mathcal{B},\mathcal{A})
\]
where $\widehat{\bullet}$ extends a functor $F\colon \proj\mathcal{B}\to  \mathcal{A}$ to a right exact functor $\widehat{F}\colon \mathcal{B}\to \mathcal{A}$ as in diagram (3.2.1), and where $\res$ is the restriction functor. Dually, let $\mathsf{R}\colon \Fun(\inj\mathcal{B},\mathcal{A})\to \Fun(\proj\mathcal{B},\mathcal{A})$ be the functor given by the composite  
\[
\mathsf{R}\colon \Fun(\inj\mathcal{B},\mathcal{A}) \xrightarrow{\widetilde{\bullet}} \Fun(\mathcal{B},\mathcal{A})\xrightarrow{\res} \Fun(\proj\mathcal{B},\mathcal{A})
\]  
where $\widetilde{\bullet}$ extends a functor $F\colon \inj\mathcal{B}\to  \mathcal{A}$ to a left exact functor $\widetilde{F}\colon \mathcal{B}\to \mathcal{A}$, and where $\res$ is the restriction functor.

\begin{theorem}[Theorem \ref{thmBmaintext}]\label{thmB}
The following holds:
\begin{enumerate}[(i)]
\item The functor $\mathsf{L}$ is left adjoint to $\mathsf{R}$;
\item If $\mathcal{B}$ is $1$-Gorenstein, then the counit of the adjunction $\mathsf{L}\dashv \mathsf{R}$ is a monomorphism. In particular, if $g$ is a right almost split morphism in $\Fun(\proj\mathcal{B},\mathcal{A})$, then $\mathsf{L}(g)$ is a split or right almost split morphism in the full subcategory of $\Fun(\inj\mathcal{B},\mathcal{A})$ whose objects are restrictions of left exact functors in $\Fun(\mathcal{B},\mathcal{A})$.  
\end{enumerate}
\end{theorem}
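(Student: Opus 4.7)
For (i), the plan is to identify $\widehat{\bullet}$ and $\widetilde{\bullet}$ with Kan extensions along the fully faithful inclusions $i\colon\proj\mathcal{B}\hookrightarrow\mathcal{B}$ and $j\colon\inj\mathcal{B}\hookrightarrow\mathcal{B}$. Concretely, the unique right exact extension $\widehat{F}\colon\mathcal{B}\to\mathcal{A}$ is the left Kan extension $\operatorname{Lan}_{i}F$, computed pointwise by $B\mapsto\coker(F(P_{1})\to F(P_{0}))$ for a projective presentation $P_{1}\to P_{0}\to B\to 0$; this yields the standard adjunction $\operatorname{Lan}_{i}\dashv i^{*}$, where $i^{*}$ denotes restriction along $i$. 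Dually, $\widetilde{\bullet}\simeq\operatorname{Ran}_{j}$ and $j^{*}\dashv\operatorname{Ran}_{j}$. Writing $\mathsf{L}=j^{*}\circ\widehat{\bullet}$ and $\mathsf{R}=i^{*}\circ\widetilde{\bullet}$, the composition of the two adjunctions delivers $\mathsf{L}\dashv\mathsf{R}$.

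For (ii), I would first compute the counit $\counit{\mathsf{L}}{\mathsf{R}}\colon\mathsf{L}\mathsf{R}\to\Id$ pointwise. Chasing through the adjunction isomorphisms, the component at $G\in\Fun(\inj\mathcal{B},\mathcal{A})$ evaluated on $J\in\inj\mathcal{B}$ is the canonical map $\widehat{\widetilde{G}|_{\proj\mathcal{B}}}(J)\to\widetilde{G}(J)=G(J)$ induced by the universal property of $\operatorname{Lan}_{i}$ applied to the identity on $\widetilde{G}|_{\proj\mathcal{B}}$. Next I would invoke the $1$-Gorenstein hypothesis to fix a length-$1$ projective resolution $0\to P_{1}\to P_{0}\to J\to 0$ of $J$. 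By right exactness of $\widehat{\bullet}$, the domain of the counit map becomes $\coker(\widetilde{G}(P_{1})\to\widetilde{G}(P_{0}))$, while applying the left exact functor $\widetilde{G}$ to the resolution yields an exact sequence $0\to\widetilde{G}(P_{1})\to\widetilde{G}(P_{0})\to\widetilde{G}(J)$. This identifies the counit at $J$ with the canonical inclusion of the image of $\widetilde{G}(P_{0})\to G(J)$, which is a monomorphism. Since monomorphisms in $\Fun(\inj\mathcal{B},\mathcal{A})$ are tested pointwise, $\counit{\mathsf{L}}{\mathsf{R}}$ is a monomorphism globally.

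The ``in particular'' statement then follows from a direct application of Theorem~\ref{thmA} to the adjunction $\mathsf{L}\dashv\mathsf{R}$: the counit being a monomorphism is precisely the hypothesis of that theorem, which delivers the asserted preservation of right almost split morphisms up to splitness into the relevant image subcategory (those functors arising as restrictions of left exact functors on $\mathcal{B}$). I expect the main technical step to be the explicit identification of the counit as the image inclusion; the $1$-Gorenstein assumption is essential precisely because it reduces the projective resolution of $J$ to length one, so that left exactness of $\widetilde{G}$ alone pins down the kernel and forces injectivity of the comparison map.
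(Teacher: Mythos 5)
Your proof is correct and follows essentially the same route as the paper: part (i) is the composition of the two adjunctions the paper records as Proposition~\ref{adjoints} (your Kan-extension phrasing is the same universal property, provided one takes the additive Kan extension so that $\operatorname{Lan}_i F$ really is the right exact extension and not the unenriched conical colimit), and part (ii) is exactly the computation in Lemma~\ref{lemcounituntimonoepi}: resolve $J$ by a length-one projective resolution using $\mathsf{spli}\,\mathcal{B}\leq 1$, compare the right-exact cokernel defining $\widehat{\bullet}$ with the left-exact sequence given by $\widetilde{G}$, and conclude the counit component is the image inclusion (the paper packages this as a snake-lemma application), after which Theorem~\ref{theoremAmaintext} yields the statement about almost split morphisms.
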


This approach generalises the theory of monomorphism categories to functor categories, similarly to \cite{Kva17} but without the requirement that the projectives and the injectives of $\mathcal{B}$ coincide. Theorem \ref{thmA} also applies to the context of trivial extensions of categories as studied in \cite{FGR75}, see Corollary \ref{trivialextensions}.

The article is structured as follows. In Section \ref{sec:notation} we set up our notation for this paper. In Section \ref{sec:extensionoffunctors}, we give a proof of Theorems \ref{thmA} and \ref{thmB}. In Section \ref{sec:phyla} we give a slight generalisation of pro-species of algebras called phyla and its category of modules. This will be the main source of examples for our set-up. Section \ref{sec:monads} recalls the notion of a monad with particular emphasis on the free monad. Furthermore, analogues of the top and socle functor are introduced in the abstract setting. Section \ref{sec:nakayamafunctors} is devoted to the proof of Theorem \ref{thmC} using the theory of adjunctions with Nakayama functors introduced in \cite{Kva17}. Furthermore, we show that in our setup the category of (relative) Gorenstein projective objects can be described as a generalisation of the monomorphism category. In the final section, Section \ref{sec:preprojective}, we introduce an analogue for the preprojective algebra in our setup and give a proof of Theorem \ref{thmD}.

\section{Notation} \label{sec:notation}

Throughout, for a category $\mathcal{C}$ we write $\mathcal{C}(M,N)$ for the set of morphisms from $M$ to $N$. In the case of the category $\modu A$ of finite dimensional left modules over a finite dimensional algebra $A$, as is standard, we use $\Hom_A(M,N)$ instead. Denote the category of abelian groups by $\Ab$. 

Let all functors between additive categories be additive. For abelian categories $\mathcal{A}$ and $\mathcal{B}$ we denote by $\Fun(\mathcal{B},\mathcal{A})$ the category of all (covariant) additive functors $\mathcal{B}\to \mathcal{A}$ and by $\proj \mathcal{B}$, respectively $\inj \mathcal{B}$ the full subcategory of projective, respectively injective objects in $\mathcal{B}$.  

For an endofunctor $X\colon \mathcal{C}\to \mathcal{C}$ we define $X^0=\Id_\mathcal{C}$ and  $X^i:=\underbrace{X\circ X\circ \dots \circ X}_{i \text{ times }}$ for $i>0$. 
For a category $\mathcal{C}$, we use $\coprod_{i} M_i$ for the coproduct of the objects $M_i$ and $\prod_{i} M_i$ for their product. In case the two are isomorphic via the natural map $\coprod_i M_i\to \prod_i M_i$ we refer to it as the biproduct and use the notation $\bigoplus_{i} M_i$. 

For future reference we fix the following setup:

\begin{ass}\label{standard1}
Let $\mathcal{C}$ be an abelian category. Let $X\colon \mathcal{C}\to \mathcal{C}$ be an endofunctor of $\mathcal{C}$. Assume that the left adjoint $Y$ of $X$ coincides with the right adjoint of $X$. Assume furthermore that the biproducts $\bigoplus_{i\geq 0} X^i(M)$ and $\bigoplus_{i\geq 0} Y^i(M)$ exist for all $M\in \mathcal{C}$. 
\end{ass}

Furthermore, for functors $L\colon \mathcal{A}\to \tilde{\mathcal{A}}$ and $R\colon \tilde{\mathcal{A}}\to \mathcal{A}$ we write $L\dashv R$ if $L$ is left adjoint to $R$ and denote the unit and counit of the adjunction by $\unit{L}{R}$ and $\counit{L}{R}$, respectively. The adjunction isomorphism will be denoted by $\adj^{L\dashv R}_{M,N}\colon \tilde{\mathcal{A}}(L(M),N)\to \mathcal{A}(M,R(N))$ where we omit the subscript when $M$ and $N$ are clear from the context. 

A \emphbf{quiver} $Q=(Q_0,Q_1,s,t)$ is a directed graph with vertex set $Q_0$, arrow set $Q_1$ and functions $s,t\colon Q_1\to Q_0$ specifying the source and the target of an arrow, respectively. We will denote vertices of quivers in typewriter font $\mathtt{i},\mathtt{j},\mathtt{k}$.  %A quiver $Q$ is called \emphbf{locally finite} if for all $\mathtt{i},\mathtt{j}\in Q_0$, the set $Q(\mathtt{i},\mathtt{j})=\{\alpha\in Q_1|s(\alpha)=\mathtt{i},t(\alpha)=\mathtt{j}\}$ is finite. 
A quiver is called \emphbf{locally bounded} if for every vertex $x$ the set of paths starting in $x$ as well as the set of paths ending in $x$ are finite.

\section{Extension of functors}\label{sec:extensionoffunctors}

This section provides the proof of Theorem \ref{thmA} right at the start. It is a generalisation of the result of Ringel and Schmidmeier concerning the kernel functor and the monomorphism category, see \cite[Lemma 3.1]{RS08}. Recall that a  morphism $g\colon A\to A'$ in a category $\mathcal{A}$ is called \emphbf{right almost split} if $g$ is not a split epimorphism and for every morphism $h\colon B\to A'$ which is not a split epimorphism there exists a morphism $h'\colon B\to A$ such that $gh'=h$. The notion of a \emphbf{left almost split} morphism is defined dually. Furthermore for a functor $R\colon \mathcal{A}\to \tilde{\mathcal{A}}$ we denote by $\image R$ 
%\[\image R:=\{\tilde{A}\in \tilde{\mathcal{A}}\,|\,\tilde{A}\cong R(A) \text{ for some } A\in \mathcal{A}\}\] 
the \emphbf{essential image} of $R$, considered as a full subcategory of $\tilde{\mathcal{A}}$. 

\begin{thm}\label{theoremAmaintext}
Let $\mathcal{A}$ and $\tilde{\mathcal{A}}$ be abelian categories. Let $L\colon \tilde{\mathcal{A}}\to \mathcal{A}$ be left adjoint to $R\colon \mathcal{A}\to \tilde{\mathcal{A}}$.
%\begin{enumerate}[(i)]
%\item\label{adjointalmostsplit:i}  
Assume that the counit of the adjunction $\counit{L}{R}\colon LR\to \Id_{\mathcal{A}}$ is a pointwise monomorphism. If $g\colon A\to A'$ is a right almost split morphism in $\mathcal{A}$ then $R(g)\colon R(A)\to R(A')$ is either split or right almost split in ${\image}(R)$.
%\item\label{adjointalmostsplit:ii} Assume that the unit of the adjunction $\unit{L}{R}\colon \Id_{\tilde{\mathcal{A}}}\to RL$ is a pointwise epimorphism. If $g\colon \tilde{A}'\to \tilde{A}$ is a left almost split morphism in $\tilde{\mathcal{A}}$ then $L(g)\colon L(\tilde{A}')\to L(\tilde{A})$ is either split or left almost split in ${\image}(L)$.
%\end{enumerate}
A dual statement regarding left almost split morphisms being preserved holds for $L$. 
\end{thm}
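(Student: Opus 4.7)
The plan is to establish a slightly stronger statement that avoids case-splitting on $R(g)$: every morphism $h\colon R(C)\to R(A')$ in $\image(R)$ (with $C\in\mathcal{A}$) that is not a split epimorphism in $\image(R)$ factors through $R(g)$. Once this is in hand, the dichotomy in the conclusion is immediate, since the only way for $R(g)$ to fail to be right almost split is for it to be itself a split epimorphism.

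To prove this, fix such an $h$ and argue by contradiction, assuming that $h$ does not factor through $R(g)$. First I would transport the problem to $\mathcal{A}$: let $\bar h := \counit{L}{R}_{A'}\circ L(h)\colon LR(C)\to A'$ be the adjoint mate of $h$. The standard identity $h = R(\bar h)\circ \unit{L}{R}_{R(C)}$ shows that any factorization $\bar h = g\circ k$ would yield a factorization $h = R(g)\circ (R(k)\circ\unit{L}{R}_{R(C)})$ of $h$ through $R(g)$; hence $\bar h$ does not factor through $g$. The right almost split hypothesis on $g$ then forces $\bar h$ to be a split epimorphism, and I pick a section $t\colon A'\to LR(C)$ with $\bar h\circ t=\Id_{A'}$. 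Since $\bar h = \counit{L}{R}_{A'}\circ L(h)$ is an epimorphism, its left factor $\counit{L}{R}_{A'}$ is also epi, and combined with the standing assumption that it is mono, the abelian category forces $\counit{L}{R}_{A'}$ to be an isomorphism.

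The heart of the argument is then to exhibit an explicit section of $h$. I would set $\psi := R(\counit{L}{R}_C\circ t)\colon R(A')\to R(C)$. Naturality of the counit at the morphism $\counit{L}{R}_C\circ t\colon A'\to C$ gives
\[
\counit{L}{R}_C\circ L(\psi) \;=\; (\counit{L}{R}_C\circ t)\circ\counit{L}{R}_{A'},
\]
and since $\counit{L}{R}_C$ is a monomorphism by hypothesis, this identity can be cancelled to $L(\psi)=t\circ\counit{L}{R}_{A'}$. Consequently $\bar h\circ L(\psi) = \bar h\circ t\circ\counit{L}{R}_{A'} = \counit{L}{R}_{A'}$, which is precisely the adjoint mate of $\Id_{R(A')}$. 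Injectivity of the adjunction bijection $\adj^{L\dashv R}$ then yields $h\circ\psi=\Id_{R(A')}$, contradicting the assumption that $h$ is not a split epimorphism in $\image(R)$. The dual statement about $L$ and left almost split morphisms follows by dualising the entire argument, replacing counit by unit, monomorphism by epimorphism, and right almost split by left almost split.

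The step I expect to be the main obstacle is isolating the correct candidate $\psi$ for a section of $h$; once the formula $\psi=R(\counit{L}{R}_C\circ t)$ is in hand, the proof hinges on using the counit-mono hypothesis twice, once in a soft way to upgrade $\counit{L}{R}_{A'}$ from epi to iso, and once more subtly to cancel $\counit{L}{R}_C$ and identify $L(\psi)$ with $t\circ\counit{L}{R}_{A'}$.
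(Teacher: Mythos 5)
Your proof is correct. It is organised the same way as the paper's: you reduce everything to showing that the adjoint mate $\bar h=\counit{L}{R}_{A'}\circ L(h)$ cannot be a split epimorphism unless $h$ itself is, after which right almost splitness of $g$ and naturality of the adjunction give the factorisation. The interesting divergence is in how that reduction is achieved. The paper argues at the level of functors: it shows $R\counit{L}{R}\colon RLR\to R$ is a natural isomorphism (it is always split epi by the triangle identity, and mono because $R$ is left exact and $\counit{L}{R}$ is pointwise mono), then transports a section of $RL(h)$ across this isomorphism. You instead write down an explicit candidate section $\psi=R(\counit{L}{R}_C\circ t)$ of $h$ and verify $h\circ\psi=\Id$ by one naturality square and one cancellation of the monomorphism $\counit{L}{R}_C$; these are really the same computation unwound, but your version is more self-contained. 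In particular it never uses left exactness of $R$ nor the abelianness of $\mathcal{A}$ (mono and epi implies iso), so it applies verbatim in any additive setting where the statement makes sense. One small remark: the sentence observing that $\counit{L}{R}_{A'}$ is an isomorphism is dead code in your argument — you never invoke it afterwards, since $\adj^{L\dashv R}(\counit{L}{R}_{A'})=\Id_{R(A')}$ holds by the triangle identity regardless — and can be deleted.
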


\begin{proof}
%We only prove \eqref{adjointalmostsplit:i}. The proof of \eqref{adjointalmostsplit:ii} is dual. 
Let $h\colon R(B)\to R(A')$ be a morphism which is not a split epimorphism. We show that it factors through $R(g)$. Applying $L$ and composing with the counit of the adjunction we obtain a map $\counit{L}{R}_{A'}\circ L(h)\colon (LR)(B)\to A'$. Assume that this was a split epimorphism. Since $\counit{L}{R}$ is a pointwise monomorphism by assumption, this would imply that $\counit{L}{R}_{A'}$ is an isomorphism and hence that $L(h)$ is a split epimorphism. Thus, $(RL)(h)\colon (RLR)(B)\to (RLR)(A')$ is a split epimorphism. However,  the composition of the unit and the counit $R\to RLR\to R$ is the identity natural transformation on $R$. In particular, the map $R\counit{L}{R}\colon RLR\to R$ is a pointwise epimorphism. But it is also a pointwise monomorphism as $\counit{L}{R}$ is a pointwise monomorphism and $R$ is left exact. Thus, the functors $RLR$ and $R$ are naturally isomorphic via $R\counit{L}{R}$ with inverse $\unit{L}{R}_R$. In particular, the commutative diagram
\[
\begin{tikzcd}
(RLR)(B)\arrow{r}{(RL)(h)} &(RLR)(A')\\
R(B)\arrow{u}{\unit{L}{R}_{R(B)}}\arrow{r}{h}& R(A')\arrow{u}{\unit{L}{R}_{R(A')}}
\end{tikzcd}
\]
implies that the map $h$ is a split epimorphism, which contradicts our assumption.
Thus, $\counit{L}{R}_{A'}\circ L(h)\colon (LR)(B)\to A'$ is not a split epimorphism, hence it factors through $g$, i.e. there exists $h'\colon (LR)(B)\to A$ such that $gh'=\counit{L}{R}_{A'}\circ L(h)$. Applying the adjunction yields a factorisation $h=R(g)R(h')\unit{L}{R}_B$ of $h$ through $R(g)$ by naturality of the adjunction.
\end{proof}

The condition of the counit being a pointwise monomorphism seems strange at first sight as being a pointwise epimorphism is much more common. In fact the counit being a pointwise epimorphism is equivalent to the right adjoint being faithful. But there are other instances where the condition of the counit being a pointwise monomorphism has been considered, see e.g. \cite[Proposition 2.4]{GT92}, \cite{CW11}.

As a first application we consider the notion of a trivial extension in the sense of Fossum, Griffith, and Reiten \cite{FGR75}. Let $\mathcal{A}$ be an abelian category and let $F\colon \mathcal{A}\to \mathcal{A}$ be a functor. Define the \emphbf{right trivial extension} $\mathcal{A}\ltimes F$ of $\mathcal{A}$ by $F$ as the category with objects being morphisms $h\colon F(M)\to M$ for $M\in \mathcal{A}$ such that $h\circ F(h)=0$ and morphisms $(M,h)\to (N,g)$ being given by $\varphi\colon M\to N$ such that $\varphi\circ h=g\circ F(\varphi)$. Dually, one can define the \emphbf{left trivial extension} $F\rtimes \mathcal{A}$ as the category with objects $h\colon M\to F(M)$ such that $F(h)\circ h=0$. According to \cite[Proposition 1.1]{FGR75}, right exactness of $F$ implies that $\mathcal{A}\ltimes F$ is abelian while left exactness of $F$ implies that $F\rtimes \mathcal{A}$ is abelian.  Furthermore, there is an adjunction $C\dashv Z$, where $C\colon \mathcal{A}\ltimes F\to \mathcal{A}$ is the cokernel functor $(M,h)$ to $\coker h$ and $Z\colon \mathcal{A}\to \mathcal{A}\ltimes F$ is the functor sending $M$ to $(M,0)$. Dually, the functor $Z\colon \mathcal{A}\to F\rtimes \mathcal{A}$, $M\mapsto (M,0)$,  is left adjoint to the kernel functor $K$. By \cite[Proposition 1.5]{FGR75}, the unit of the former adjunction is an epimorphism while the counit of the latter adjunction is a monomorphism. Theorem \ref{theoremAmaintext} then has the following corollary:

\begin{cor}\label{trivialextensions}
If $g$ is a left almost split morphism in $\mathcal{A}\ltimes F$, then $C(g)$ is either split or left almost split in $\mathcal{A}$. Dually, if $g$ is a right almost split morphism in $F\rtimes \mathcal{A}$, then $K(g)$ is either split or right almost split in $\mathcal{A}$. 
\end{cor}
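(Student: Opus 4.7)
The plan is to read off both halves of the corollary directly from Theorem \ref{theoremAmaintext} and its dual, applied to the two adjunctions $C\dashv Z$ and $Z\dashv K$ that were just recalled. The only non-trivial points are matching each adjunction with the correct form of the master theorem and then upgrading the qualification ``in $\image(K)$'' (respectively ``in $\image(C)$'') to ``in $\mathcal{A}$''.

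For the statement about $F\rtimes\mathcal{A}$, I take the adjoint pair with $L=Z\colon\mathcal{A}\to F\rtimes\mathcal{A}$ and $R=K\colon F\rtimes\mathcal{A}\to\mathcal{A}$, so that the category called $\mathcal{A}$ in Theorem \ref{theoremAmaintext} is played here by $F\rtimes\mathcal{A}$. By \cite[Proposition 1.5]{FGR75}, the counit $\counit{Z}{K}\colon ZK\to\Id_{F\rtimes\mathcal{A}}$ is a pointwise monomorphism, which is precisely the hypothesis of Theorem \ref{theoremAmaintext}. The theorem therefore outputs that $K(g)$ is either split or right almost split in $\image(K)$ whenever $g$ is right almost split in $F\rtimes\mathcal{A}$. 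For the statement about $\mathcal{A}\ltimes F$, I instead take $L=C\colon\mathcal{A}\ltimes F\to\mathcal{A}$ and $R=Z$; the unit $\unit{C}{Z}\colon\Id_{\mathcal{A}\ltimes F}\to ZC$ is pointwise epimorphic by the same reference, and this is exactly the hypothesis of the dual form of Theorem \ref{theoremAmaintext} advertised in its final sentence, giving that $C(g)$ is split or left almost split in $\image(C)$.

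To replace $\image(K)$ and $\image(C)$ by $\mathcal{A}$ in the conclusions, I observe that both functors are essentially surjective: for every $M\in\mathcal{A}$ one has $K(M,0)=\ker(0\colon M\to F(M))\cong M$ and $C(M,0)=\coker(0\colon F(M)\to M)\cong M$, so $Z$ is a section of each of $K$ and $C$. Hence $\image(K)$ and $\image(C)$ already exhaust $\mathcal{A}$ up to isomorphism, and the corollary follows in the form stated.

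The main obstacle, such as it is, is purely bookkeeping: one must be careful that $Z$ plays the role of a right adjoint in one application (where the monomorphic counit of $Z\dashv K$ feeds Theorem \ref{theoremAmaintext}) and of a left adjoint in the other (where the epimorphic unit of $C\dashv Z$ feeds its dual), so that each of the two trivial-extension adjunctions is paired with the correct side of the master theorem. Once this matching is set up correctly, no further argument is required.
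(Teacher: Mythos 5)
Your proof is correct and is essentially the argument the paper intends (the paper states the corollary as an immediate consequence of Theorem~\ref{theoremAmaintext} without spelling out the details). You match each trivial-extension adjunction with the appropriate form of the master theorem exactly as required, and you correctly supply the small bridging observation — that $KZ\cong\Id$ and $CZ\cong\Id$ make $K$ and $C$ essentially surjective — which is needed to pass from ``in $\image(K)$'' (resp.\ ``in $\image(C)$'') to ``in $\mathcal{A}$'' and which the paper leaves implicit.
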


For our needs in the sequel, we recall the notion of finitely presented functors studied by Auslander in \cite{Aus66} under the name coherent functors. Let $\mathcal{C}$ be an additive category with weak kernels, that is, for each morphism $f\colon M\to N$ in $\mathcal{C}$ there exists a morphism
$g\colon L\to M$ in $\mathcal{C}$  such that the sequence 
$\Hom_{\mathcal{C}}(-,L) \to \Hom_{\mathcal{C}}(-,M) \to \Hom_{\mathcal{C}}(-,N)$
is exact. A functor $F\colon \mathcal{C}^{\op}\to \Ab$ is called \emphbf{finitely presented} if there exists an exact sequence of functors
\[
\Hom_{\mathcal{C}}(-,M)\to \Hom_{\mathcal{C}}(-,N)\to F\to 0
\]
with $M$ and $N$ in $\mathcal{C}$. The category of finitely presented functors over $\mathcal{C}$ is denoted by $\fp(\mathcal{C}^{\op}, \Ab)$. Let $\mathbb{Y}\colon \mathcal{C}\to \fp(\mathcal{C}^{\op},\Ab)$, $C\mapsto \Hom_{\mathcal{C}}(-,C)$ 
denote the Yoneda embedding. Since $\mathcal{C}$ has weak kernels, $\fp(\mathcal{C}^{\op}, \Ab)$ is an abelian category with enough projective objects. We refer to \cite{Aus66, Kra98} for more details on finitely presented functors.

Let $\tilde{\mathcal{C}}$ be an abelian category with enough projectives. It is then known that the category $\fp((\proj \tilde{\mathcal{C}})^{\op},\Ab)$ is equivalent to $\tilde{\mathcal{C}}$, see \cite[Proposition 2.3]{Kra98}. Furthermore, it follows from \cite[Universal Property 2.1]{Kra98} that for every abelian category $\mathcal{B}'$ every additive functor $F\colon \proj\tilde{\mathcal{C}}\to \mathcal{B}'$ can be extended uniquely, up to a natural isomorphism, to a right exact functor $\widehat{F}\colon \fp((\proj\tilde{\mathcal{C}})^{\op},\Ab)\to \mathcal{B}'$ such that $\widehat{F}\circ \mathbb{Y}=F$, i.e. the following diagram commutes$\colon$
\begin{equation}
\label{universalproperty1}
\begin{tikzcd}
\fp((\proj\tilde{\mathcal{C}})^{\op},\Ab) \arrow{r}{\widehat{F}} & \mathcal{B}' \\
\proj{\tilde{\mathcal{C}}} \arrow{u}{\mathbb{Y}} \arrow{ur}[swap]{F}  &
\end{tikzcd}
\end{equation}
More precisely, given a finitely presented functor $\Phi$ in $\fp((\proj\tilde{\mathcal{C}})^{\op},\Ab)$ and a choice of projective presentation 
\[
\begin{tikzcd}
\proj\tilde{\mathcal{C}}(-,P_1) \arrow{r}{(-,f)} & \proj\tilde{\mathcal{C}}(-,P_0) \arrow{r} & \Phi \arrow{r} & 0
\end{tikzcd}
\]
then $\widehat{F}(\Phi):=\Coker{F(f)}$. Using the equivalence $\tilde{\mathcal{C}}\cong \fp((\proj\tilde{\mathcal{C}})^{\op},\Ab)$ this corresponds to choosing a projective presentation $P_1\xrightarrow{f} P_0\to A$ in $\tilde{\mathcal{C}}$ for each object $\tilde{C}\in \tilde{\mathcal{C}}$ (choosing the trivial one for each projective object), and defining $\widehat{F}(\tilde{C}):=\Coker F(f)$. It can be checked that this defines a functor 
\[\widehat{\bullet}\colon \Fun(\proj\tilde{\mathcal{C}},\mathcal{B}')\to \Fun(\tilde{\mathcal{C}},\mathcal{B}'), \quad F\mapsto \widehat{F}.\] 
It is defined on morphisms as follows: Let $\alpha\colon F\to F'$ be a natural transformation where $F$ and $F'$ lie in $\Fun(\proj \tilde{\mathcal{C}},\mathcal{B}')$. Then there is an  exact commutative diagram$\colon$
\begin{equation}
\label{exactcomdiagramff}
\begin{tikzcd}
F(P_1) \arrow{d}{\alpha_{P_1}} \arrow{r}{F(f)} & F(P_0) \arrow{d}{\alpha_{P_0}} \arrow{r} & \widehat{F}(\Phi) \arrow{r} \arrow{d}{\widehat{\alpha}_{\Phi}} & 0 \\
F'(P_1) \arrow{r}{F'(f)} & F'(P_0) \arrow{r} & \widehat{F'}(\Phi) \arrow{r} & 0
\end{tikzcd}
\end{equation}

Dually, if $\tilde{\mathcal{C}}$ is an abelian category with enough injective objects, then the category of finitely presented  functors $\fp((\inj{\tilde{\mathcal{C}}})^{\op},\Ab)$ is equivalent to $\tilde{\mathcal{C}}^{\op}$. %, i.e. 
%\[\tilde{\mathcal{C}}\cong (\fp((\inj{\tilde{\mathcal{C}}})^{\op},\Ab))^{\op}.\]
This equivalence implies that any additive functor $\inj \tilde{\mathcal{C}}\to \mathcal{B}'$ can be extended uniquely up to natural isomorphism to a left exact functor $\tilde{\mathcal{C}}\to \mathcal{B}$, i.e. we can define a functor  
\[\widetilde{\bullet}\colon \Fun(\inj\tilde{\mathcal{C}},\mathcal{B}')\to \Fun(\tilde{\mathcal{C}},\mathcal{B}'), \quad F\mapsto \widetilde{F}\]
by choosing an injective copresentation $0\to \tilde{C}\to I_0\xrightarrow{f} I_1$ for each $\tilde{C}\in \tilde{\mathcal{C}}$ (choosing the trivial one for each injective object) and defining $\widetilde{F}(\tilde{C})=\ker F(f)$.
We now recall the following result due to Auslander.

\begin{prop}[{\cite[Proposition 2.1]{Aus66}}]
\label{adjoints}
Let $\tilde{\mathcal{C}}$ and $\mathcal{B}'$ be abelian categories.
\begin{enumerate}[(i)]
\item\label{adjoint:i} Assume that $\tilde{\mathcal{C}}$ has enough projectives.  Then, the functor $\widehat{\bullet}$ is left adjoint to the restriction functor $\Fun(\tilde{\mathcal{C}}, \mathcal{B}')\to \Fun(\proj{\tilde{\mathcal{C}}}, \mathcal{B}')$.
\item\label{adjoint:ii} Assume that $\tilde{\mathcal{C}}$ has enough injectives. Then, the functor $\widetilde{\bullet}$ is right adjoint to the restriction functor $\Fun(\tilde{\mathcal{C}}, \mathcal{B}')\to \Fun(\inj{\tilde{\mathcal{C}}}, \mathcal{B}')$.
\end{enumerate}
\end{prop}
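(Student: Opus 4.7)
The plan is to prove part (i) by constructing the adjunction bijection
\[
\Fun(\tilde{\mathcal{C}}, \mathcal{B}')(\widehat{F}, G) \;\cong\; \Fun(\proj \tilde{\mathcal{C}}, \mathcal{B}')(F, \res G)
\]
explicitly, natural in $F \in \Fun(\proj \tilde{\mathcal{C}}, \mathcal{B}')$ and $G \in \Fun(\tilde{\mathcal{C}}, \mathcal{B}')$. One direction is restriction: given $\beta\colon \widehat{F}\to G$, note that for a projective $P\in\tilde{\mathcal{C}}$ the chosen trivial presentation $0\to P\xrightarrow{\id} P\to 0$ gives $\widehat{F}(P)=F(P)$, so restricting $\beta$ to $\proj\tilde{\mathcal{C}}$ yields a natural transformation $\alpha\colon F\to \res G$.

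For the inverse direction, given $\alpha\colon F\to \res G$, I would construct $\widehat{\alpha}\colon \widehat{F}\to G$ pointwise. For $\tilde{C}\in\tilde{\mathcal{C}}$ with chosen presentation $P_1\xrightarrow{f} P_0\xrightarrow{\pi} \tilde{C}\to 0$, consider the composite $G(\pi)\circ\alpha_{P_0}\colon F(P_0)\to G(\tilde{C})$. The key point is that this composite kills $\image F(f)$: by naturality of $\alpha$ applied to the morphism $f$ in $\proj\tilde{\mathcal{C}}$ we have $\alpha_{P_0}\circ F(f)=G(f)\circ \alpha_{P_1}$, and then $G(\pi)\circ G(f)=G(\pi\circ f)=G(0)=0$. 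Hence the composite factors uniquely through $\widehat{F}(\tilde{C})=\Coker F(f)$, defining $\widehat{\alpha}_{\tilde{C}}$.

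To check that $\widehat{\alpha}$ is natural in $\tilde{C}$, take a morphism $g\colon \tilde{C}\to \tilde{C}'$ and use projectivity of $P_0, P_1$ to lift it to a chain map between the chosen presentations, with components $g_0\colon P_0\to P_0'$ and $g_1\colon P_1\to P_1'$; then $\widehat{F}(g)$ is the map induced by $F(g_0)$ as in diagram \eqref{exactcomdiagramff}. The required square commutes after precomposing with the epimorphism $F(P_0)\twoheadrightarrow \widehat{F}(\tilde{C})$, where commutativity reduces to the naturality square of $\alpha$ on $g_0$ together with functoriality of $G$ applied to $\pi$, $\pi'$, and $g_0$. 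The two constructions are mutually inverse: restricting $\widehat{\alpha}$ to a projective (using its trivial presentation) recovers $\alpha_P$; conversely, for $\beta\colon\widehat{F}\to G$, the map produced from $\beta|_{\proj}$ agrees with $\beta_{\tilde{C}}$ because both make the square $F(P_0)\to \widehat{F}(\tilde{C})\to G(\tilde{C})$ commute against $\alpha_{P_0}$ and $G(\pi)$, and $F(P_0)\twoheadrightarrow \widehat{F}(\tilde{C})$ is epi. Naturality of the bijection in $F$ and $G$ is then immediate from the defining formulas.

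The main obstacle is purely bookkeeping: because $\widehat{F}$ depends on a chosen projective presentation for each object, functoriality and naturality statements all require lifting morphisms through the chosen presentations, and one must verify that the resulting constructions are independent of choices (which follows from the uniqueness part of the universal property \eqref{universalproperty1}). For part (ii), rather than rerunning the argument with kernels and injective copresentations, I would deduce it formally from part (i) by passing to opposites: $\Fun(\tilde{\mathcal{C}},\mathcal{B}')^{\op}\cong \Fun(\tilde{\mathcal{C}}^{\op},(\mathcal{B}')^{\op})$, $\inj\tilde{\mathcal{C}}=(\proj \tilde{\mathcal{C}}^{\op})^{\op}$, and the kernel construction $\widetilde{\bullet}$ on $\tilde{\mathcal{C}}$ corresponds to the cokernel construction $\widehat{\bullet}$ on $\tilde{\mathcal{C}}^{\op}$; thus left adjointness of $\widehat{\bullet}$ to restriction for $\tilde{\mathcal{C}}^{\op}$ translates to right adjointness of $\widetilde{\bullet}$ to restriction for $\tilde{\mathcal{C}}$.
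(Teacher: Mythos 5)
The paper does not prove this proposition; it states it as a known result and cites Auslander's \emph{Coherent functors}, so there is no in-paper argument to compare against. Your direct verification is correct: the construction of $\widehat{\alpha}_{\tilde{C}}$ from $\alpha$ via the universal property of the cokernel, the check that $G(\pi)\circ\alpha_{P_0}$ annihilates $\operatorname{Im}F(f)$ using naturality of $\alpha$ on $f\in\Mor(\proj\tilde{\mathcal{C}})$ together with $G(\pi\circ f)=G(0)=0$, the naturality check by precomposing with the epimorphism $F(P_0)\twoheadrightarrow\widehat{F}(\tilde{C})$, and the verification that the two assignments are mutually inverse all go through (for the latter, one also uses that with the trivial presentation chosen for projectives, $\widehat{F}(\pi)$ is exactly the cokernel projection, so that $\beta_{\tilde{C}}\circ\widehat{F}(\pi)=G(\pi)\circ\beta_{P_0}$ is just naturality of $\beta$). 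Deducing part (ii) from part (i) by passing to $\tilde{\mathcal{C}}^{\op}$ and $(\mathcal{B}')^{\op}$ is the standard and correct way to avoid repeating the argument.
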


Denote by $\Lex(\tilde{\mathcal{C}},\mathcal{B}')$ the full subcategory of $\Fun(\tilde{\mathcal{C}},\mathcal{B}')$ of all left exact functors and by $\Rex(\tilde{\mathcal{C}},\mathcal{B}')$ the full subcategory of all right exact functors. 

\begin{lem}
Let $\tilde{\mathcal{C}}$ and $\mathcal{B}'$ be abelian categories.
\begin{enumerate}[(i)]
\item\label{extension:ii} Assume that $\tilde{\mathcal{C}}$ has enough projectives. Then there is an equivalence of categories
\[
\widehat{\bullet}\colon \Fun(\proj \tilde{\mathcal{C}},\mathcal{B}')\cong \Rex(\tilde{\mathcal{C}},\mathcal{B}').
\]
\item\label{extension:i}
Assume that $\tilde{\mathcal{C}}$ has enough injectives. Then there is an equivalence of categories
\[
\widetilde{\bullet}\colon \Fun(\inj\tilde{\mathcal{C}},\mathcal{B}')\cong \Lex(\tilde{\mathcal{C}},\mathcal{B}').
\]

\end{enumerate}
\end{lem}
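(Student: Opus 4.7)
The plan is to leverage Proposition \ref{adjoints} directly: since $\widehat{\bullet}$ is left adjoint to the restriction functor $\res\colon \Fun(\tilde{\mathcal{C}}, \mathcal{B}')\to \Fun(\proj\tilde{\mathcal{C}}, \mathcal{B}')$, an adjunction always restricts to an equivalence between the full subcategories on which the unit, respectively the counit, is an isomorphism. So the task is to identify these two subcategories as $\Fun(\proj\tilde{\mathcal{C}}, \mathcal{B}')$ (on the ``small'' side) and $\Rex(\tilde{\mathcal{C}}, \mathcal{B}')$ (on the ``large'' side). Part (\ref{extension:i}) will then follow by dualising, using part (\ref{adjoint:ii}) of Proposition \ref{adjoints} and the dual construction with injective copresentations.

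For part (\ref{extension:ii}), I would first verify that $\widehat{F}$ is right exact for any $F\in \Fun(\proj\tilde{\mathcal{C}},\mathcal{B}')$, so that $\widehat{\bullet}$ factors through $\Rex(\tilde{\mathcal{C}},\mathcal{B}')$. Given an exact sequence $A\xrightarrow{a} B\xrightarrow{b} C\to 0$ in $\tilde{\mathcal{C}}$, one lifts $a$ and $b$ to a map between chosen projective presentations of $A,B,C$ and runs a diagram chase in the analogue of the commutative diagram \eqref{exactcomdiagramff}; alternatively, one identifies $\widehat{F}$ with the left Kan extension of $F$ along $\proj\tilde{\mathcal{C}}\hookrightarrow \tilde{\mathcal{C}}$, which is automatically right exact.

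Next, I would check that the unit $\eta_F\colon F\to (\res\circ\widehat{\bullet})(F)$ is a natural isomorphism for every $F\in \Fun(\proj\tilde{\mathcal{C}},\mathcal{B}')$. For $P$ projective, the trivial projective presentation $0\to P\xrightarrow{\id} P\to 0$ was chosen, so $\widehat{F}(P)=\coker(F(0)\to F(P))=F(P)$, and the unit is componentwise the identity (one should also confirm naturality from the construction of $\widehat{\bullet}$ on morphisms in \eqref{exactcomdiagramff}).

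Finally, I would prove that the counit $\varepsilon_G\colon (\widehat{\bullet}\circ \res)(G)\to G$ is an isomorphism if and only if $G$ is right exact. For $G$ right exact and $\tilde{C}\in \tilde{\mathcal{C}}$ with projective presentation $P_1\xrightarrow{f} P_0\to \tilde{C}\to 0$, applying $G$ yields the exact sequence $G(P_1)\to G(P_0)\to G(\tilde{C})\to 0$, whence $G(\tilde{C})\cong \coker G(f)=\widehat{\res(G)}(\tilde{C})$, and one verifies that $\varepsilon_G$ realises this isomorphism. Conversely, if $\varepsilon_G$ is an isomorphism then $G$ is isomorphic to the right exact functor $\widehat{\res(G)}$, hence itself right exact. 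Combining the three steps yields the desired equivalence, with quasi-inverse given by restriction. The main obstacle I expect is checking right exactness of $\widehat{F}$ together with the naturality in $\tilde{C}$ of the isomorphism witnessing $\varepsilon_G$, both of which require some care because the definition of $\widehat{\bullet}$ depends on a choice of projective presentation for every object of $\tilde{\mathcal{C}}$.
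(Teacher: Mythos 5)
Your proof is correct and takes a genuinely different route from the paper's. The paper proves the equivalence directly by verifying essential surjectivity, fullness, and faithfulness of $\widehat{\bullet}$, working inside the identification $\tilde{\mathcal{C}}\cong\fp((\proj\tilde{\mathcal{C}})^{\op},\Ab)$ and invoking the universal property \eqref{universalproperty1}; in particular, essential surjectivity is established by setting $K=H\circ\mathbb{Y}$ for a given right exact $H$ and arguing that two right exact functors agreeing on projectives are naturally isomorphic. You instead recognise the equivalence as the restriction of the adjunction $\widehat{\bullet}\dashv\res$ from Proposition~\ref{adjoints} to the respective ``fixed point'' subcategories where the unit, respectively the counit, is invertible, and then identify those subcategories as all of $\Fun(\proj\tilde{\mathcal{C}},\mathcal{B}')$ (since the trivial presentations chosen for projectives force $\widehat{F}(P)=F(P)$) and as $\Rex(\tilde{\mathcal{C}},\mathcal{B}')$ (using right exactness of $\widehat{F}$ for one implication and the isomorphism $G\cong\widehat{\res G}$ for the other). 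Both approaches need the same two technical inputs — that $\widehat{F}$ is right exact and that right exact functors agree up to isomorphism when they agree on $\proj\tilde{\mathcal{C}}$ — but your organisation via the standard adjunction-to-equivalence lemma is more systematic and makes the quasi-inverse (restriction) explicit, at the cost of relying on that general categorical lemma rather than arguing from scratch. Two small caveats: the remark that a left Kan extension is ``automatically right exact'' should be treated with care (it holds here because $\proj\tilde{\mathcal{C}}$ is a generating subcategory closed under finite coproducts, but it is not a blanket fact), and the naturality of the comparison map $\widehat{\res G}(\tilde{C})\to G(\tilde{C})$ is automatic since it is the counit of an adjunction — the only thing to check is that it coincides pointwise with the canonical map induced by the universal property of the cokernel, which is immediate from the construction.
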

\begin{proof}
This is well-known. For the reader's convenience we sketch the proof of \eqref{extension:ii}, the proof of \eqref{extension:i} is dual. It suffices to show that the assignment $F\mapsto \widehat{F}$ induces an equivalence between $\Fun((\proj \tilde{\mathcal{C}},\mathcal{B}')$ and $\Rex(\fp((\proj\tilde{\mathcal{C}})^{\op},\Ab),\mathcal{B}')$.

We first show that $\widehat{\bullet}$ is essentially surjective. Let $H\colon \fp((\proj\tilde{\mathcal{C}})^{\op}, \Ab)\to \mathcal{B}'$ be a right exact functor. Set $K=H\circ \mathbb{Y}\colon \proj{\tilde{\mathcal{C}}}\to \mathcal{B}'$. By the universal property (\ref{universalproperty1})
\[\widehat{K}\circ \mathbb{Y}=K=H\circ \mathbb{Y}.\]
This shows that the functors $\widehat{K}$ and $H$ coincide on projective objects. Since they are both right exact, it follows that  $\widehat{K}$ is naturally isomorphic to $H$.

We now show that the functor $\widehat{\bullet}$ is full. Suppose that there is a morphism $h\colon \widehat{F}\to \widehat{F'}$. Commutativity of the diagram (\ref{universalproperty1}) ipmlies that $F(P)=\widehat{F}(P)$ and $F'(P)=\widehat{F'}(P)$ for every $P$ in $\proj{\tilde{\mathcal{C}}}$. Denote by $h'\colon F\to F'$ the restriction of $h$ to $\proj \tilde{C}$. This implies that there is an exact commutative diagram similar to $(\ref{exactcomdiagramff})$, where the vertical maps are now $h_{P_1}'$ and $h_{P_0}'$. The latter maps induce a morphism $\widehat{h'}_{\Phi}\colon \widehat{F}(\Phi)\to \widehat{F'}(\Phi)$. By the universal property of cokernels we obtain that the natural transformations $h$ and $\widehat{h'}$ are equal.

Finally, we show that the functor $\widehat{\bullet}$ is faithful. Suppose that there is a natural transformation $\alpha\colon F\to F'$ such that $\widehat{\alpha}=0$. Composing with the Yoneda functor $\mathbb{Y}$ yields that $0=\widehat{\alpha}_\mathbb{Y}\colon \widehat{F}\circ \mathbb{Y}\to \widehat{F'}\circ \mathbb{Y}$. Since $F=\widehat{F}\circ \mathbb{Y}$ and $F'=\widehat{F'}\circ \mathbb{Y}$ we infer that $\alpha=0$. This completes the proof that the functor $\widehat{\bullet}$ is an equivalence of categories.
\end{proof}

Let $\mathcal{A}$ and $\mathcal{B}$ be abelian categories. Assume that $\mathcal{B}$ has enough projectives and enough injectives. Let $\mathsf{R}$ be the functor $\Fun(\inj \mathcal{B},\mathcal{A})\to \Fun(\proj \mathcal{B},\mathcal{A})$ given by composing $\widetilde{\bullet}$ with the restriction to $\proj \mathcal{B}$. Dually, let $\mathsf{L}\colon \Fun(\proj \mathcal{B},\mathcal{A})\to \Fun(\inj \mathcal{B},\mathcal{A})$  be the functor given by composing $\widehat{\bullet}$ with restriction to $\inj\mathcal{B}$. This gives the following diagram:
\[
\begin{tikzcd}
\Lex(\mathcal{B},\mathcal{A}) \arrow{d} & \Fun(\inj\mathcal{B},\mathcal{A}) \arrow[l,swap,"\widetilde{\bullet}"] \arrow[xshift=-0.5ex]{d}[swap]{\mathsf{R}} & \Fun(\mathcal{B},\mathcal{A}) \arrow[l,swap,"\mathsf{res}"] \\
\Fun(\mathcal{B}, \mathcal{A}) \arrow["\mathsf{res}"]{r} & \Fun(\proj{\mathcal{B}},\mathcal{A}) \arrow{r}{\widehat{\bullet}} \arrow[xshift=0.5ex]{u}[swap]{\mathsf{L}} &  \Rex(\mathcal{B},\mathcal{A}) \arrow{u}
\end{tikzcd}
\]
It follows from Proposition \ref{adjoints} that $\mathsf{L}$ is left adjoint to $\mathsf{R}$.

We now recall from \cite[Chapter VII]{BR07} the notion of Gorenstein abelian categories. Let $\mathcal{A}$ be an abelian category with enough projective and injective objects. Associated to $\mathcal{A}$ we consider the following homological invariants$\colon$
\[
\mathsf{spli}{\mathcal{A}} = \supr\{\pd_{\mathcal{A}}I \ | \  I\in \inj{\mathcal{A}} \}
\ \ \text{and} \ \
\mathsf{silp}{\mathcal{A}} = \supr\{\indim_{\mathcal{A}}P \ | \  P\in \proj{\mathcal{A}} \}
\]
The category $\mathcal{A}$ is called \emphbf{Gorenstein} if
$\mathsf{spli}{\mathcal{A}}<\infty$ and $\mathsf{silp}{\mathcal{A}}<\infty$. Moreover, $\mathcal{A}$ is called \emphbf{n-Gorenstein} if the maximum of $\mathsf{spli}{\mathcal{A}}$ and $\mathsf{silp}{\mathcal{A}}$, is less than or equal to $n$.

\begin{lem}
\label{lemcounituntimonoepi}
Let $\mathcal{A}$ and $\mathcal{B}$ be abelian categories. Assume that $\mathcal{B}$ has enough projectives and enough injectives.
\begin{enumerate}[(i)]
\item\label{gorenstein:i} If $\mathsf{spli}{\mathcal{B}}\leq 1$, then the counit of the adjunction $(\mathsf{L},\mathsf{R})$ is a pointwise monomorphism.

\item\label{gorenstein:ii} If $\mathsf{silp}{\mathcal{B}}\leq 1$, then the unit of the adjunction $(\mathsf{L},\mathsf{R})$ is a pointwise epimorphism.
\end{enumerate}
\end{lem}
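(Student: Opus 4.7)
The plan is to unfold the (co)unit of the composite adjunction $\mathsf{L}\dashv\mathsf{R}$ in terms of the two constituent adjunctions $\widehat{\bullet}\dashv\res_{\proj\mathcal{B}}$ and $\res_{\inj\mathcal{B}}\dashv\widetilde{\bullet}$ from Proposition \ref{adjoints}, and then to compute it pointwise using a short exact projective (resp.\ injective) resolution furnished by the Gorenstein hypothesis.

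For (i) I would denote by $\varepsilon^1$ and $\varepsilon^2$ the counits of the two constituent adjunctions; the counit of the composite adjunction $\mathsf{L}\dashv\mathsf{R}$ at $G\in\Fun(\inj\mathcal{B},\mathcal{A})$ then factors as
\[
\mathsf{L}\mathsf{R}(G)\xrightarrow{\res_{\inj\mathcal{B}}(\varepsilon^1_{\widetilde{G}})} \res_{\inj\mathcal{B}}(\widetilde{G})\xrightarrow{\varepsilon^2_G} G,
\]
and the second arrow is an isomorphism because, by the convention of using the trivial injective copresentation on injective objects, $\widetilde{G}|_{\inj\mathcal{B}}$ coincides with $G$. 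It will therefore suffice to verify that $\varepsilon^1_{\widetilde{G}}$ evaluated at any $I\in\inj\mathcal{B}$ is a monomorphism. Under $\mathsf{spli}(\mathcal{B})\leq 1$ one can choose the projective presentation of $I$ to be a short exact sequence $0\to P_1\to P_0\to I\to 0$. By construction of $\widehat{\bullet}$ the domain of the counit at $I$ is $\coker(\widetilde{G}(P_1)\to \widetilde{G}(P_0))$, and the counit is the map induced by $\widetilde{G}(P_0)\to \widetilde{G}(I)$. Left exactness of $\widetilde{G}\in\Lex(\mathcal{B},\mathcal{A})$ applied to this short exact sequence yields the exact sequence $0\to \widetilde{G}(P_1)\to \widetilde{G}(P_0)\to \widetilde{G}(I)$, from which the cokernel in question identifies with the image of $\widetilde{G}(P_0)$ in $\widetilde{G}(I)$, and the counit becomes the inclusion of this image, hence a monomorphism.

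Part (ii) will be strictly dual. A short exact injective copresentation $0\to P\to I_0\to I_1\to 0$ of any $P\in\proj\mathcal{B}$, available by $\mathsf{silp}(\mathcal{B})\leq 1$, together with right exactness of $\widehat{F}\in\Rex(\mathcal{B},\mathcal{A})$, will identify the codomain of the unit at $P$, namely $\ker(\widehat{F}(I_0)\to \widehat{F}(I_1))$, with $\im(\widehat{F}(P)\to \widehat{F}(I_0))$, making the unit the canonical surjection onto this image. The main obstacle I anticipate is the bookkeeping required to pin down the composite (co)unit as the concrete $\coker$/$\ker$ map arising from the chosen resolution; once that identification is in hand, both claims reduce to a single application of left/right exactness of $\widetilde{G}$ or $\widehat{F}$ to the Gorenstein-provided short exact sequence.
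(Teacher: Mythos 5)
Your proposal is correct and follows essentially the same route as the paper's proof. You unfold the counit of $\mathsf{L}\dashv\mathsf{R}$ as $\varepsilon^2_G\circ\res_{\inj\mathcal{B}}(\varepsilon^1_{\widetilde{G}})$, observe $\varepsilon^2_G$ is an isomorphism, and then identify $\res_{\inj\mathcal{B}}(\varepsilon^1_{\widetilde{G}})$ at $I$ as the map $\coker(\widetilde{G}(P_1)\to\widetilde{G}(P_0))\to\widetilde{G}(I)$, which is a monomorphism by left exactness of $\widetilde{G}$; the paper makes the same computation, just packaging it directly as a commutative diagram with right-exact top row and left-exact bottom row and applying the snake lemma rather than first factoring the counit through the two constituent adjunctions.
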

\begin{proof}
We only prove (i), the proof of (ii) is dual. Let $J\in \inj \mathcal{B}$. Take a projective presentation $0\to P_1\to P_0\to J\to 0$ which exists as the projective dimension of all injective objects of $\mathcal{B}$ is smaller or equal to $1$. Let $Z\in \Fun(\inj\mathcal{B}, \mathcal{A})$. As $\mathsf{R}(Z)$ is left exact, this yields the following diagram by the definition of $\mathsf{L}$:
\[
\begin{tikzcd}
&\mathsf{R} (Z)(P_1)\arrow{r}\arrow{d}{\id} &\mathsf{R} (Z)(P_0)\arrow{r}\arrow{d}{\id} &(\mathsf{L}\mathsf{R}) (Z)(J)\arrow{d}{(\counit{\mathsf{L}}{\mathsf{R}}_{Z})_J}\arrow{r} &0\\
0\arrow{r}&\mathsf{R} (Z)(P_1)\arrow{r}&\mathsf{R} (Z)(P_0)\arrow{r}&\mathsf{R} (Z)(J)=Z(J)
\end{tikzcd}
\]
Applying the snake lemma yields that $(\counit{\mathsf{L}}{\mathsf{R}})_{Z}$ is a monomorphism.
\end{proof}

As a consequence of Lemma~\ref{lemcounituntimonoepi} and Theorem~\ref{theoremAmaintext} there is the following result.

\begin{cor}\label{thmBmaintext}
Let $\mathcal{A}$ and $\mathcal{B}$ be abelian categories. Assume that $\mathcal{B}$ is $1$-Gorenstein. Then the counit, respectively the unit, of the adjunction $(\mathsf{L}, \mathsf{R})$ is a monomorphism, respectively  an epimorphism. In particular, the image of a right almost split morphism under $\mathsf{R}$ is either split or right almost split in $\image \mathsf{R}$. Dually, the image of a left almost split morphism under $\mathsf{L}$ is either split or right almost split in $\image \mathsf{L}$. 
\end{cor}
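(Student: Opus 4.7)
The plan is to combine the two results immediately preceding the corollary. Since $\mathcal{B}$ is $1$-Gorenstein by hypothesis, the very definition gives both $\mathsf{spli}\mathcal{B}\leq 1$ and $\mathsf{silp}\mathcal{B}\leq 1$. Feeding the first inequality into Lemma \ref{lemcounituntimonoepi}(i) yields that the counit $\counit{\mathsf{L}}{\mathsf{R}}$ of the adjunction $\mathsf{L}\dashv \mathsf{R}$ is a pointwise monomorphism, and feeding the second into Lemma \ref{lemcounituntimonoepi}(ii) yields dually that the unit $\unit{\mathsf{L}}{\mathsf{R}}$ is a pointwise epimorphism. This establishes the first half of the statement.

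For the ``in particular'' part, I would apply Theorem \ref{theoremAmaintext} to the adjoint pair at hand, taking $\tilde{\mathcal{A}}=\Fun(\proj\mathcal{B},\mathcal{A})$ and letting the role of ``$\mathcal{A}$'' in Theorem \ref{theoremAmaintext} be played by $\Fun(\inj\mathcal{B},\mathcal{A})$, so that $L=\mathsf{L}$ and $R=\mathsf{R}$. Since we have just verified that the counit is a pointwise monomorphism, Theorem \ref{theoremAmaintext} applies verbatim and gives that if $g$ is right almost split in $\Fun(\inj\mathcal{B},\mathcal{A})$, then $\mathsf{R}(g)$ is either split or right almost split in $\image\mathsf{R}$. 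The dual assertion about left almost split morphisms and $\mathsf{L}$ follows from the dual half of Theorem \ref{theoremAmaintext} together with the fact that the unit is a pointwise epimorphism.

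There is really no obstacle here: the corollary is a straightforward repackaging of Lemma \ref{lemcounituntimonoepi} and Theorem \ref{theoremAmaintext}. The only item requiring a moment's care is matching the notation of the general adjunction in Theorem \ref{theoremAmaintext} with the functor-category adjunction $(\mathsf{L},\mathsf{R})$ of the present setup, which is purely bookkeeping.
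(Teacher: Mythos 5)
Your proposal is correct and is essentially the same argument the paper intends: the paper introduces the corollary with the single sentence ``As a consequence of Lemma~\ref{lemcounituntimonoepi} and Theorem~\ref{theoremAmaintext} there is the following result,'' which is exactly the combination you spell out — $1$-Gorenstein gives $\mathsf{spli}\mathcal{B}\leq 1$ and $\mathsf{silp}\mathcal{B}\leq 1$, Lemma~\ref{lemcounituntimonoepi} then supplies the pointwise monic counit and pointwise epic unit, and Theorem~\ref{theoremAmaintext} with $\tilde{\mathcal{A}}=\Fun(\proj\mathcal{B},\mathcal{A})$, $\mathcal{A}=\Fun(\inj\mathcal{B},\mathcal{A})$, $L=\mathsf{L}$, $R=\mathsf{R}$ (and its dual) gives the almost-split preservation statements. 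The only cosmetic remark is that in the final dual clause the target notion should be ``left almost split,'' but this is a typo already present in the paper's own wording of the corollary, not a defect of your argument.
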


\begin{ex}
Ringel--Schmidmeier's original result is a special case of our setup: Let $\Lambda$ be a finite dimensional algebra and $Q$ be the $\mathbb{A}_2$-quiver $1\to 2$. Let $\mathcal{A}=\modu \Lambda$ and $\mathcal{B}=\modu \Bbbk Q$. Then $\Fun(\proj \mathcal{B},\mathcal{A})\cong \Fun(\inj \mathcal{B}, \mathcal{A})\cong \modu \Bbbk Q\otimes \Lambda$. Furthermore, via this equivalence, $\mathsf{L}$ and $\mathsf{R}$ can be identified with the cokernel and kernel functor, respectively. We point that using the methods in this section, we can replace $\Bbbk Q$ with an arbitrary $1$-Iwanaga--Gorenstein algebra $B$ and analogous results hold in this setup. In this case, one still has the equivalence $\proj\mathcal{B}\cong \inj\mathcal{B}$. Let us point out, that there are other categories $\mathcal{B}$, which do not fall in such a class but satisfy the assumptions for our main results in this section. One such example is obtained by letting $\mathcal{B}$ be  the category of at most countable abelian groups\footnote{The authors would like to thank Steffen Oppermann and Jeremy Rickard for independently providing this example.}.   
\end{ex}

\section{Phyla and species} \label{sec:phyla}

In this section, we introduce what will be our running `example' in later sections. It is a generalisation of Gabriel's concept of a species, \cite{Gab73}, which was studied intensively by Dlab and Ringel in a series of papers \cite{DR74a, DR74b, DR75, DR76}. The setup was recently generalised under different names in \cite{Li12, GLS16, LY15, Geu17, Kul17}. We propose a further generalisation in a similar vein as \cite{Fir16, Moz20}.

We call an adjunction of the form $G\dashv F\dashv G$ \emphbf{ambidextrous} following Baez \cite{Bae01}.  Ambidextrous adjunctions were first investigated by Morita in \cite{Mor65} in the situation where $F$ is the restriction functor along a ring homomorphism $R\to S$. In case of an embedding, he found that this is equivalent to the extension being Frobenius in the sense of \cite{Kas61}. Therefore, other authors call functors with coinciding left and right adjoint Frobenius functors, see e.g. \cite{MW13}.

\begin{defn}
Let $Q$ be a quiver. A \emphbf{phylum} $\mathfrak{A}$ on $Q$ is an assignment of an abelian category $\mathcal{A}_\mathtt{i}$ to each vertex $\mathtt{i}\in Q_0$ and to each arrow $\alpha\colon \mathtt{i}\to \mathtt{j}$ a pair of functors $(F_\alpha\colon \mathcal{A}_\mathtt{i}\to \mathcal{A}_\mathtt{j}, G_\alpha\colon \mathcal{A}_j\to \mathcal{A}_i)$ together with adjunction isomorphisms exhibiting $G_\alpha\dashv F_\alpha\dashv G_\alpha$. 
\end{defn}

The categorically inclined reader will realise that this assignment can equivalently be defined as a strict $2$-functor from $Q$, to the (large) strict 2-category $\Ambi$ whose objects are abelian categories, whose $1$-morphisms are pairs of functors $(F,G)$ with adjunction isomorphisms exhibiting $G\dashv F\dashv G$, and whose $2$-morphisms are natural transformations.

\begin{mex}\label{prospecies}
Let $\mathfrak{A}$ be a phylum on $Q$. Assume that each of the $\mathcal{A}_\mathtt{i}$ is equivalent to the category of finite dimensional modules over a finite dimensional algebra $\modu \Lambda_\mathtt{i}$. Since $F_\alpha\colon \modu \Lambda_\mathtt{i}\to \modu \Lambda_\mathtt{j}$ is right exact (as it is a left adjoint), by the Eilenberg--Watts theorem it is given by 
 \[F_\alpha \cong \Lambda_\alpha\otimes_{\Lambda_\mathtt{i}} -\colon \modu \Lambda_\mathtt{i}\to \modu \Lambda_\mathtt{j}\]
for some $\Lambda_\mathtt{j}$-$\Lambda_\mathtt{i}$-bimodule $\Lambda_\alpha$. 
Since $G_\alpha$ is right adjoint to $F_\alpha$ it is given by
\[G_\alpha\cong \Hom_{\Lambda_\mathtt{j}}(\Lambda_\alpha,-)\colon \modu \Lambda_\mathtt{j}\to \modu \Lambda_\mathtt{i}.\]
Now since $F_\alpha$ is left exact and $G_\alpha$ is right exact, it follows that $\Lambda_\alpha$ is projective as a right $\Lambda_\mathtt{i}$-module as well as a left $\Lambda_\mathtt{j}$-module. Again invoking the Eilenberg--Watts theorem we obtain that 
\[G_\alpha\cong \Hom_{\Lambda_\mathtt{j}}(\Lambda_\alpha,\Lambda_\mathtt{j})\otimes_{\Lambda_\mathtt{j}} -.\]
Therefore $F_\alpha$, being also right adjoint to $G_\alpha$, can be written as 
\[F_\alpha\cong \Lambda_\alpha\otimes_{\Lambda_\mathtt{i}}-\cong \Hom_{\Lambda_\mathtt{i}}(\Hom_{\Lambda_{\mathtt{j}}}(\Lambda_\alpha,\Lambda_{\mathtt{j}}),-)\]
It follows from the uniqueness assertion of Eilenberg--Watts that 
\[\Lambda_\alpha\cong \Hom_{\Lambda_{\mathtt{i}}}(\Hom_{\Lambda_{\mathtt{j}}}(\Lambda_\alpha,\Lambda_\mathtt{j}),\Lambda_{\mathtt{i}})\]
or equivalently that 
\[\Hom_{\Lambda_\mathtt{i}^{\op}}(\Lambda_\alpha,\Lambda_\mathtt{i})\cong \Hom_{\Lambda_\mathtt{j}}(\Lambda_\alpha,\Lambda_\mathtt{j})\]
as $\Lambda_{\mathtt{i}}$-$\Lambda_{\mathtt{j}}$-bimodules.
Thus, the assignment of $\Lambda_\mathtt{i}$ to $\mathtt{i}\in Q_0$ and $\Lambda_\alpha$ to $\alpha\in Q_1$ defines a dualisable pro-species in the sense of \cite{Kul17}. We write $\Lambda_{\alpha^*}$ for the bimodule $\Hom_{\Lambda_\mathtt{i}}(\Lambda_\alpha,\Lambda_\mathtt{i})$.  Dually, it is easy to see that a dualisable pro-species of algebras in the sense of \cite{Kul17} yields a phylum. 
\end{mex}

Other examples of ambidextrous adjunctions arise in Lie theory.

\begin{ex}
\begin{enumerate}[(i)]
\item Let $\mathfrak{g}$ be a finite dimensional semisimple complex Lie algebra. Denote by $\mathcal{O}\subseteq \Mod U(\mathfrak{g})$ the BGG category. Let $V$ be a finite dimensional $U(\mathfrak{g})$-module. Then tensoring with $V$ induces a functor $T\colon V\otimes_\mathbb{C} -\colon \mathcal{O}\to \mathcal{O}$, see e.g. \cite[Theorem 1.1 (d)]{Hum08}. Using that $V$ is finite dimensional, it is easy to see that $T$ has a left and a right adjoint which coincide and are equal to tensoring with $DV$. Since category $\mathcal{O}$ decomposes into a sum of blocks $\mathcal{O}=\bigoplus \mathcal{O}_\chi$, with each $\mathcal{O}_\chi$ being equivalent to the category of finite dimensional modules over a finite dimensional algebra $\Lambda_\chi$, this example is very close to the ambidextreous adjunction used for prospecies. The functors $T$ are the basis for the translation functors which proved to be important in Lie theory. We refer the reader to \cite{Hum08} and the references therein for more details on the BGG category $\mathcal{O}$. 
\item In the same spirit as (i) but without the corresponding category being equivalent to a sum of blocks equivalent to module categories consider the category of $G_rT$-modules where $G$ is a reductive algebraic group over a field of characteristic $p\neq 0$ with maximal torus $T$ and $G_r$ is its $r$th Frobenius kernel. Again tensoring with a finite dimensional module over the ground field provides a functor with both adjoints isomorphic to tensoring with its dual. We refer to \cite[Section 9.22]{Jan03} for details. 
\item\footnote{The second author wants to thank Greg Stevenson for bringing this example to his attention.} Let $Z$ and $Z'$ be smooth and proper Calabi--Yau varieties of the same dimension. Let $f\colon Z\to Z'$ be a proper map. Then the left adjoint of the inverse image functor $f^*\colon D^b(\Coh(Z'))\to D^b(\Coh (Z))$, i.e. the direct image functor, is equal to its right adjoint, the proper direct image $f_!$ as $f_!=S_{Z'} f^* S^{-1}_Z$ as $Z$ and $Z'$ are Calabi--Yau of the same dimension and therefore the Serre functors commute with $f^*$ and cancel. If $f$ is in addition flat, then the example restricts to the abelian categories $\Coh(Z)$ and $\Coh(Z')$. A concrete example is given by any isogeny of abelian varieties. 
\end{enumerate}
\end{ex}

Similarly to the classical case of quivers and species, one can define a category of representations for each phylum $\mathfrak{A}$.

\begin{defn}
Let $Q$ be a quiver. Let $\mathfrak{A}$ be a phylum on $Q$. An \emphbf{$\mathfrak{A}$-representation} is a collection $(M_\mathtt{i},M_\alpha)_{\mathtt{i}\in Q_0,\alpha\in Q_1}$ where $M_\mathtt{i}$ is an object of $\mathcal{A}_\mathtt{i}$ and $M_\alpha\colon F_\alpha(M_\mathtt{i})\to M_\mathtt{j}$ is a morphism in $\mathcal{A}_\mathtt{j}$. A \emphbf{morphism} of $\mathfrak{A}$-representations $(M_\mathtt{i},M_\alpha)\to (N_\mathtt{i},N_\alpha)$ is given by a collection $(\varphi_\mathtt{i}\colon M_\mathtt{i}\to N_\mathtt{i})_{\mathtt{i}\in Q_0}$ such that the following diagram commutes for every $\alpha\in Q_1$:
\[
\begin{tikzcd}
F_\alpha(M_\mathtt{i})\arrow{r}{M_\alpha}\arrow{d}{F_\alpha(\varphi_\mathtt{i})} &M_\mathtt{j}\arrow{d}{\varphi_\mathtt{j}}\\
F_\alpha(N_\mathtt{i})\arrow{r}{N_\alpha} &N_\mathtt{j}
\end{tikzcd}
\]
\end{defn}

It is straightforward to check that this forms an abelian category with componentwise composition which we denote by $\rep \mathfrak{A}$. We will provide a proof later on, realising it as the Eilenberg--Moore category of a right exact monad, see Example \ref{modulecategoryabelian} and Proposition \ref{eilenbergmooreexact}. 

To be more concrete and connect it to representation theory, we give three examples, how modules over triangular matrix rings can be thought of as representations over phyla, for details on how to obtain these equivalences, see \cite[Example 2.10 (c)]{Kul17}.  

\begin{ex} \phantomsection\label{runningexample}
\begin{enumerate}[(i)]
\item Consider the quiver $\mathbb{A}_2$ given by $\mathtt{1}\xrightarrow{\alpha} \mathtt{2}$. Then, as noted before, a phylum can be specified by a triple $(\Lambda_\mathtt{1},\Lambda_{\mathtt{2}},\Lambda_\alpha)$, where $\Lambda_\mathtt{1},\Lambda_\mathtt{2}$ are finite dimensional algebras and $\Lambda_\alpha$ is a $\Lambda_\mathtt{2}$-$\Lambda_\mathtt{1}$-bimodule. Its category of representations is equivalent to the category of modules over the triangular matrix ring $\begin{pmatrix}\Lambda_{\mathtt{1}}&0\\\Lambda_\alpha&\Lambda_{\mathtt{2}}\end{pmatrix}$. 
\item Consider the quiver $\mathbb{A}_3$ given by $\mathtt{1}\xrightarrow{\alpha} \mathtt{2}\xrightarrow{\beta} \mathtt{3}$. In a similar fashion, it is specified by a $5$-tuple $\Lambda_\mathtt{1}\xrightarrow{\Lambda_\alpha}\Lambda_\mathtt{2}\xrightarrow{\Lambda_\beta}\Lambda_\mathtt{3}$. Its category of representations is equivalent to the category of modules over the triangular matrix ring $\begin{pmatrix}\Lambda_\mathtt{1}&0&0\\\Lambda_\alpha&\Lambda_\mathtt{2}&0\\\Lambda_\beta\otimes_{\Lambda_2} \Lambda_\alpha&\Lambda_\beta&\Lambda_\mathtt{3}\end{pmatrix}$. 
\item Consider another quiver of type $\mathbb{A}_3$, namely $\mathtt{1}\xrightarrow{\alpha} \mathtt{3}\xleftarrow{\beta} \mathtt{2}$. Then, a phylum is specified by the $5$-tuple $\Lambda_{\mathtt{1}}\xrightarrow{\Lambda_\alpha} \Lambda_\mathtt{3}\xleftarrow{\Lambda_\beta} \Lambda_\mathtt{2}$. Its category of representations is equivalent to the category of modules over the triangular matrix ring $\begin{pmatrix}\Lambda_\mathtt{1}&0&0\\0&\Lambda_\mathtt{2}&0\\\Lambda_\alpha&\Lambda_\beta&\Lambda_\mathtt{3}\end{pmatrix}$.
\end{enumerate}
\end{ex}

We will illustrate the main concepts of the paper on these three concrete examples. 

\begin{rmk}
For a phylum $\mathfrak{A}$ on $Q$ we can identify the category of $\mathfrak{A}$-representations with sections of the Grothendieck construction of a certain functor as follows: Consider $Q$ as a category with objects $Q_0$ and morphisms being the paths in $Q$. Let $\CAT$ be the (large) category of all categories where the morphisms are given by functors. By definition, a phylum gives rise to a functor $F\colon Q\to \CAT$. It is given by sending a vertex $\mathtt{i}$ in the quiver $Q$ to the category $\mathcal{A}_\mathtt{i}$ and a path $p=\alpha_s\cdots \alpha_2\alpha_1$ to the functor $F_p:=F_{\alpha_s}\cdots F_{\alpha_2}F_{\alpha_1}$. The Grothendieck construction for $F$ is a category $\int F$ whose objects are pairs $(\mathtt{i}, M_\mathtt{i})$ where $\mathtt{i}\in Q_0$ and $M_{\mathtt{i}}\in \mathcal{A}_\mathtt{i}$. A morphism $(\mathtt{i}, M_\mathtt{i})\to (\mathtt{j},M_\mathtt{j})$ in $\int F$ consists of a pair $(p,f_p)$ where $p\colon \mathtt{i}\to \mathtt{j}$ is a morphism in $Q$ and $f_p\colon F_p(M)\to M'$ is a morphism in $\mathcal{A}_\mathtt{j}$. The composition of morphisms in $\int F$ is given by 
\[(q,f_q)\circ (p,f_p)=(q\circ p, f_q\circ F_q(f_p)).\]
Note that the category $\int F$ comes equipped with a forgetful functor $\int F\to Q$ 
sending an object $(\mathtt{i},M_\mathtt{i})$ to $\mathtt{i}$ and a morphism $(p,f_p)$ to $p$. A straightforward verification shows that the category of $\mathfrak{A}$-representations can be identified with the category of sections of this forgetful functor, i.e. the category of functors $Q\to \int F$ such that the composite $Q\to \int F\to Q$ is the identity functor. We refer to \cite[Section B1.3]{Joh02} and \cite[Section 2.1]{Lur09} for more details on the Grothendieck construction and to \cite{AK13, Asa13} for applications of the Grothendieck construction in representation theory. 
\end{rmk}

\section{Monads and comonads} \label{sec:monads}

In this section we recall basic facts on monads and comonads. Of particular interest for us are the free monad and comonad on endofunctors. The endofunctor  we have in mind with respect to phyla, could be called the `push-along' functor. Application of this functor gives the sum of the images of the functors on the arrows. For a general introduction to monads, see e.g. \cite[Chapter VI]{McL98} and \cite[Appendix A]{ARV11}. Throughout, let $\mathcal{C}$ be an additive category. We remind the reader of our convention that all functors are additive. 

\begin{defn}
\begin{enumerate}[(i)]
\item A \emphbf{monad} on $\mathcal{C}$ is a tuple $(T,\eta,\mu)$ where $T$ is a functor and $\eta\colon \Id\to T$ and $\mu\colon T\circ T\to T$ are natural transformations such that the diagrams
\[\begin{tikzcd}
T^3\arrow{r}[swap]{T(\mu)}\arrow{d}{\mu_T}&T^2\arrow{d}{\mu}\\
T^2\arrow{r}[swap]{\mu}&T
\end{tikzcd}
\quad \text{ and }
\begin{tikzcd}
&T\arrow[equals]{rd}\arrow[equals]{ld}\\
T\arrow{r}[swap]{T(\eta)}&T^2\arrow{u}{\mu}&T\arrow{l}{\eta_T}
\end{tikzcd}\]
commute. 
\item Dually, a \emphbf{comonad} on $\mathcal{C}$ is a tuple $(W,\varepsilon,\Delta)$ where $W$ is a functor and $\varepsilon\colon W\to \Id$ and $\Delta\colon W\to W\circ W$ are natural transformations such that $\Delta_W\circ \Delta=W(\Delta)\circ \Delta$ and $W(\varepsilon)\circ \Delta=\Id=\varepsilon_W\circ \Delta$.  %the diagrams
%\[\begin{tikzcd}
%W\arrow{r}[swap]{\Delta}\arrow{d}{\Delta}&W^2\arrow{d}{\Delta_W}\\
%W^2\arrow{r}[swap]{W(\Delta)}&W^3
%\end{tikzcd}
%\quad \text{ and }
%\begin{tikzcd}
%&W\arrow{d}{\Delta}\arrow[equals]{rd}\arrow[equals]{ld}\\
%W&W^2\arrow{r}[swap]{\varepsilon_W}\arrow{l}{W(\varepsilon)}&W
%\end{tikzcd}\]
%commute. 
\end{enumerate}  

By slight abuse of notation, we often write just $T$ for the monad $(T,\eta,\mu)$ and just $W$ for the comonad $(W,\varepsilon, \Delta)$. 
\end{defn}

There is a close connection between adjunctions and monads, see also Proposition \ref{eilenbergmooreadjunction}.

\begin{rmk}\label{adjunctionmonad}
Let $L\colon \mathcal{C}\to \mathcal{D}$ be a functor. Let $L\dashv R$ be an adjunction. 
Then, it is well-known, see e.g. \cite[Section VI.1, p.138]{McL98} that $(LR,\counit{L}{R},L(\unit{L}{R}_{R}))$ is a comonad on $\mathcal{C}$ while $(RL,\unit{L}{R},R(\counit{L}{R}_{L}))$ is a monad on $\mathcal{D}$.
\end{rmk}

Another standard construction of monads is the association of the free monad and free comonad to  endofunctors. We only use the cases $s=1,2$ in the sequel.

\begin{defn}\label{free monad/comonad}
Let $s\in \mathbb{N}$.
\begin{enumerate}[(i)]
\item  Let $X_1,\dots X_s\colon \mathcal{C}\to \mathcal{C}$ be endofunctors preserving (countable) coproducts. For a word $w$ in $\{1,\dots,s\}$ define $X_w$ inductively as $X_{\emptyset}=\Id$ and whenever $w$ can be written as a concatenation $(i,w')$ with $i\in \{1,\dots,s\}$ and a word $w'$ then $X_w=X_i\circ X_{w'}$. Note that this implies that $X_w=X_{w'}\circ X_{w''}$ whenever $w=(w',w'')$ for $w',w''$ subwords of $w$. Assume that for all $M\in \mathcal{C}$, the coproduct $\coprod_{w\text{ word}} X_w(M)$ exists in $\mathcal{C}$. Then, for a map $f\colon M\to N$, the map 
\[\coprod_{w \text{ word}} X_w(f)\colon \coprod_{w \text{ word}} X_w(M)\to \coprod_{w \text{ word}} X_w(N)\]
can be defined on the component $X_v(M)$ as the composition of $X_v(f)$ followed by the inclusion $\iota_v\colon X_v(N)\to \coprod_{w \text{ word}} X_w(N)$, where $\iota_v$ denotes the $v$-th canonical inclusion. This way 
\[T(X_1,\dots,X_s)\colon \mathcal{C}\to \mathcal{C},\, M\mapsto \coprod_{w \text{ word}} X_w(M),\, f\mapsto \coprod_{w \text{ word}} X_w(f)\] defines a functor. Define the natural transformation $\eta\colon \Id\rightarrow T(X_1,\dots,X_s)$ as $\eta_M=\iota_{\emptyset}$, the inclusion of $M$ as the component corresponding to the empty word.  Note that $T(X_1,\dots,X_s)( T(X_1,\dots,X_s)(M))=\coprod_{w' \text{ word}} X_{w'}(\coprod_{w'' \text{ word}} X_{w''}(M))\cong \coprod_{w',w''} X_{w'}(X_{w''}(M))$. Thus, we can define 
\[\mu\colon T(X_1,\dots,X_s)\circ T(X_1,\dots,X_s)\rightarrow T(X_1,\dots,X_s)\] componentwise as the identification $X_{w'}(X_{w''}(M))\to X_{(w',w'')}(M)$. This way 
\[(T(X_1,\dots,X_s),\eta,\mu)\] defines a monad, called the \emphbf{free monad} on $X_1,\dots,X_s$.
\item Dually,  let $Y_1,\dots,Y_s\colon \mathcal{C}\to \mathcal{C}$ be endofunctors preserving (countable) products. Assume that for all $M\in \mathcal{C}$, the product $\prod_{w \text{ word}} Y_w(M)$ exists. Then, similarly as before, $M\mapsto \prod_{w \text{ word}} Y_w(M)$ defines the \emphbf{free comonad} $W(Y_1,\dots,Y_s)$ on $Y_1,\dots,Y_s$. 
\end{enumerate}
\end{defn}

Our terminology comes from the analogy with the tensor algebra of e.g. a vector space. We will not use this fact, but (provided $\mathcal{C}$ is essentially small), the association $(X_1,\dots,X_s)\mapsto T(X_1,\dots,X_s)$ (resp. $(Y_1,\dots,Y_s)\to W(Y_1,\dots,Y_s))$ defines a functor from the category of $s$-tuples of endofunctors on $\mathcal{C}$ to monads on $\mathcal{C}$. The functor $T(-)$ is left adjoint to the `forgetful diagonal' functor which sends a monad $(T,\eta,\mu)$ to the $s$-tuple $(T,\dots,T)$. Dually, the functor $W(-)$ is right adjoint to the functor sending a comonad $(W,\varepsilon, \Delta)$ to the $s$-tuple $(W,\dots,W)$.

\begin{rmk}\label{justonefunctor}
Let $s\in \mathbb{N}$. Let $X_1,\dots,X_s\colon \mathcal{C}\to \mathcal{C}$ be endofunctors preserving (countable) coproducts. Assume that for all $M\in \mathcal{C}$ the coproduct $\coprod_{w \text{ word}}X_w(M)$ exists in $\mathcal{C}$. Then the functor $X=\coprod_{i=1}^s X_i$ preserves countable coproducts and for all $M\in \mathcal{C}$, the coproduct $\coprod_{j\geq 0} X^j(M)$ exists and there is a natural isomorphism of monads
\[T(X_1,\dots,X_s)\rightarrow T(X).\]
A similar remark applies to free comonads. For this reason, we will mostly formulate our results in this section only in the case of $s=1$. 
\end{rmk}

\begin{mex}\label{runningexample1}
Let $\mathfrak{A}$ be a phylum on a locally bounded quiver $Q$. Let $\mathcal{C}:=\prod_{\mathtt{i}\in Q_0} \mathcal{A}_\mathtt{i}$. Define $X\colon \mathcal{C}\to \mathcal{C}$ and $Y\colon \mathcal{C}\to \mathcal{C}$ via 
\[X((M_\mathtt{i})_{\mathtt{i}\in Q_0})=\left(\coprod_{\substack{\alpha\in Q_1\\t(\alpha)=\mathtt{j}}} F_\alpha(M_{s(\alpha)})\right)_{\mathtt{j}\in Q_0} \text{ and } Y((M_\mathtt{i})_{\mathtt{i}})=\left(\prod_{\substack{\alpha\in Q_1\\s(\alpha)=\mathtt{j}}} G_\alpha(M_{t(\alpha)})\right)_{\mathtt{j}\in Q_0} \]
%and $Y\colon \mathcal{C}\to \mathcal{C}$ via 
%\[Y((M_\mathtt{i})_{\mathtt{i}})=\left(\prod_{\substack{\alpha\in Q_1\\s(\alpha)=\mathtt{j}}} G_\alpha(M_{t(\alpha)})\right)_{\mathtt{j}\in Q_0}\] 
on objects and extend them to functors in the obvious way. As $Q$ is locally bounded, the sum on each component is finite, thus the functor is well-defined and the coproducts and products are actually biproducts. As the $F_\alpha$ are left adjoints, they preserve (countable) coproducts. As in each component, $X$ is defined as a finite sum of $F_\alpha$, $X$ also preserves (countable) coproducts. Locally boundedness of $Q$ also guarantees that $\coprod_{i\geq 0} X^i(M)$ exists. Thus, we obtain a monad $T(X)$. The argument for $W(Y)$ is similar. 
\end{mex}

\begin{ex}\label{runningexample6}
In the terminology of Meta-Example \ref{prospecies} and Example \ref{runningexample}:
\begin{enumerate}[(i)]
\item Let $M=(M_\mathtt{1},M_\mathtt{2})\in \modu(\Lambda_\mathtt{1}\times \Lambda_\mathtt{2})$, then $X(M)=(0,\Lambda_\alpha\otimes_{\Lambda_\mathtt{1}} M_\mathtt{1})$ and $Y(M)=(\Lambda_{\alpha^*}\otimes_{\Lambda_{\mathtt{2}}} M_{\mathtt{2}},0)$. 
\item Let $M=(M_\mathtt{1},M_\mathtt{2},M_\mathtt{3})\in \modu (\Lambda_\mathtt{1}\times \Lambda_\mathtt{2}\times \Lambda_{\mathtt{3}})$. Then $X(M)=(0,\Lambda_\alpha\otimes_{\Lambda_\mathtt{1}} M_{\mathtt{1}}, \Lambda_\beta\otimes_{\Lambda_{\mathtt{2}}} M_\mathtt{2})$ and $Y(M)=(\Lambda_{\alpha^*}\otimes_{\Lambda_\mathtt{2}} M_\mathtt{2}, \Lambda_{\beta^*}\otimes_{\Lambda_{\mathtt{3}}} M_{\mathtt{3}},0)$. 
\item Let $M=(M_\mathtt{1},M_\mathtt{2},M_\mathtt{3})\in \modu (\Lambda_\mathtt{1}\times \Lambda_\mathtt{2}\times \Lambda_\mathtt{3})$. Then $X(M)=(0,0,(\Lambda_\alpha\otimes_{\Lambda_{\mathtt{1}}} M_\mathtt{1})\oplus (\Lambda_\beta \otimes_{\Lambda_{\mathtt{2}}} M_\mathtt{2}))$ and $Y(M)=(\Lambda_{\alpha^*}\otimes_{\Lambda_\mathtt{3}} M_3,\Lambda_{\beta^*}\otimes_{\Lambda_{\mathtt{3}}} M_3,0)$. 
\end{enumerate}
\end{ex}

\begin{defn}
Let $L_1,L_2\colon \mathcal{C}\to \mathcal{D}$ and $R_1,R_2\colon \mathcal{D}\to \mathcal{C}$ be functors. Assume $L_1\dashv R_1$ and $L_2\dashv R_2$ are adjunctions. Following \cite{McL98}, two natural transformations $\phi\colon L_1\to L_2$ and $\phi'\colon R_2\to R_1$ are \emphbf{conjugate} if the square
\[
\begin{tikzcd}
\mathcal{D}(L_2(M),N)\arrow{d}{\mathcal{D}(\phi_M,N)}\arrow{r}{\adj^{L_2\dashv R_2}} & \mathcal{C}(M,R_2(N))\arrow{d}{\mathcal{C}(M,\phi'_N)} \\
\mathcal{D}(L_1(M),N)\arrow{r}{\adj^{L_1\dashv R_1}} & \mathcal{C}(M,R_1(N))
\end{tikzcd}
\]
commutes for all $M\in \mathcal{C}$ and all $N\in \mathcal{D}$. We say that $\phi'$ is the \emphbf{right conjugate} of $\phi$ or $\phi$ is the \emphbf{left conjugate} of $\phi'$ in this case. It follows from \cite[Theorem IV.7.2]{McL98} that the left and right conjugate of a given natural transformation is unique. 
\end{defn}

From the proof of \cite[Theorem 2.15]{Lau06}, the following result can be extracted:

\begin{prop}\label{Conjugate units and counits}
Let $L, R\colon \mathcal{C}\to \mathcal{D}$ and $G\colon \mathcal{D}\to \mathcal{C}$ be functors. Assume that there are adjunctions $L\dashv G$ and $G\dashv R$. 
Then $\unit{L}{G}$ and $\counit{G}{R}$ are conjugate, and $\counit{L}{G}$ and $\unit{G}{R}$ are conjugate.
\end{prop}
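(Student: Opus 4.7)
The plan is to verify both conjugacy claims by a direct diagram chase in the defining square. I focus on the first claim, that $\unit{L}{G}$ and $\counit{G}{R}$ are conjugate; the second is entirely symmetric. First I identify which ambient adjunctions are in play: the trivial identity adjunction $\Id_\mathcal{C}\dashv \Id_\mathcal{C}$ provides the role of $L_1\dashv R_1$, while composing $L\dashv G$ with $G\dashv R$ yields the adjunction $GL\dashv GR$ of endofunctors of $\mathcal{C}$, which provides $L_2\dashv R_2$. Then $\unit{L}{G}\colon \Id_\mathcal{C}\to GL$ and $\counit{G}{R}\colon GR\to \Id_\mathcal{C}$ have exactly the right source and target to test for conjugacy.

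Next I unwind the composite adjunction isomorphism in terms of units and counits. For $f\colon GL(M)\to N$ in $\mathcal{C}$, applying first $G\dashv R$ and then $L\dashv G$ produces
\[
\adj^{GL\dashv GR}(f) \;=\; GR(f)\circ G(\unit{G}{R}_{L(M)})\circ \unit{L}{G}_M.
\]
To check commutativity of the conjugacy square, I compose this with $\counit{G}{R}_N$ on the left, use naturality of $\counit{G}{R}$ to rewrite $\counit{G}{R}_N\circ GR(f)$ as $f\circ \counit{G}{R}_{GL(M)}$, and then invoke the triangle identity for $G\dashv R$ at the object $L(M)\in \mathcal{D}$, namely $\counit{G}{R}_{GL(M)}\circ G(\unit{G}{R}_{L(M)})=\Id_{GL(M)}$. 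The whole expression collapses to $f\circ \unit{L}{G}_M$, which is exactly the other path round the square (the bottom adjunction isomorphism $\adj^{\Id\dashv \Id}$ being literally the identity). This establishes the first conjugacy.

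The second claim (conjugacy of $\counit{L}{G}$ and $\unit{G}{R}$) is verified by the symmetric computation: the ambient adjunctions are now the composite $LG\dashv RG$ on endofunctors of $\mathcal{D}$ and the trivial $\Id_\mathcal{D}\dashv \Id_\mathcal{D}$, and one substitutes the triangle identity for $L\dashv G$ in place of that for $G\dashv R$. I expect no real obstacle; the whole argument is essentially two applications of naturality and one triangle identity, and the only care required is in correctly decomposing the composite adjunction isomorphisms in terms of the units and counits of the two given adjunctions. Alternatively, since the right conjugate of a given natural transformation is unique by \cite[Theorem IV.7.2]{McL98}, one could phrase the above as verifying that $\counit{G}{R}$ (respectively $\unit{G}{R}$) \emph{is} the right conjugate of $\unit{L}{G}$ (respectively the left conjugate of $\counit{L}{G}$).
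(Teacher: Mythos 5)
Your proof is correct. The paper supplies no proof of this proposition at all; it simply states that the result "can be extracted from" the proof of Theorem 2.15 in Lauda's paper \cite{Lau06}, so your direct verification is a self-contained replacement for an argument the paper defers to a reference. Your organizing choice is the right one: take the trivial identity adjunction on $\mathcal{C}$ together with the composite adjunction $GL\dashv GR$ for the first claim (and $\Id_{\mathcal{D}}\dashv\Id_{\mathcal{D}}$ together with $LG\dashv RG$ for the second), unwind the composite adjunction isomorphism in terms of units, and collapse the square via one naturality square plus one triangle identity. One small point worth making explicit about the claimed symmetry: in the first claim the identity adjunction fills the role of $L_1\dashv R_1$ (the bottom row of the conjugacy square) and the collapse uses the triangle identity for $G\dashv R$ together with naturality of $\counit{G}{R}$; in the second claim the identity adjunction fills $L_2\dashv R_2$ (the top row), and the collapse uses the triangle identity for $L\dashv G$ together with naturality of $\unit{G}{R}$. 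You note the change of triangle identity, which is the essential observation; the accompanying switch from $\counit{G}{R}$-naturality to $\unit{G}{R}$-naturality is equally automatic once the ambient adjunctions are set up correctly.
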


%\begin{proof}
%This follows from the proof of \cite[Theorem 2.15]{Lau06}. For convenience of the reader, we provide a proof of the first statement. Let $M, N\in \mathcal{C}$. 
%The first claim is equivalent to the following diagram being commutative:
%\[
%\begin{tikzcd}
%\mathcal{C}((GL)(M),N)\arrow{r}{\adj^{GL\dashv GR}}\arrow{d}{\mathcal{C}(\unit{L}{G}_M,N)} &\mathcal{C}(M,(GR)(N))\arrow{d}{\mathcal{C}(M,\counit{G}{R}_N)}\\
%\mathcal{C}(M,N)\arrow[equals]{r} &\mathcal{C}(M,N)
%\end{tikzcd}
%\]
%This follows from:
%\begin{equation*}
%\begin{split}
%\counit{G}{R}_N\circ \adj^{GL\dashv GR}(f)&=\counit{G}{R}_N\circ \adj^{L\dashv G}\circ \adj^{G\dashv R}(f)=\counit{G}{R}_N\circ \adj^{L\dashv G}(R(f)\circ \unit{G}{R}_{L(M)})\\
%&=\counit{G}{R}_N\circ G(R(f)\circ \unit{G}{R}_{L(M)})\unit{L}{G}_M=f\circ \counit{G}{R}_{GL(M)}\circ G(\unit{G}{R}_{L(M)})\circ \unit{L}{G}_M\\
%&=f\circ \unit{L}{G}_M.\qedhere
%\end{split}
%\end{equation*}
%\end{proof}

For monads and comonads, there is a stronger sense in which they can be adjoint. This forces the corresponding Eilenberg--Moore categories to be equivalent, see Proposition \ref{eilenbergmooreequivalent}. 

\begin{defn}
Let $T$ (resp. $W$) be a monad (resp. comonad) on $\mathcal{C}$. Then $T$ is said to be \emphbf{right adjoint} to $W$ \emphbf{(in the monadic sense)} if there exists an adjunction $\adj^{W\dashv T}_{M,N}\colon \begin{tikzcd}\mathcal{C}(W(M),N)\arrow{r}{\sim}  &\mathcal{C}(M,T(N))\end{tikzcd}$ such that 
$\mu$ is right conjugate to $\Delta$ and $\eta$ is right conjugate to $\varepsilon$, i.e. the following two diagrams commute for all $M,N\in \mathcal{C}$:
\[
\begin{tikzcd}
\mathcal{C}(W(M),N)\arrow{r}{\adj^{W\dashv T}_{M,N}} &\mathcal{C}(M,T(N))&\mathcal{C}(W(M),N)\arrow{rr}{\adj^{W\dashv T}_{M,N}} &&\mathcal{C}(M,T(N))\\
\mathcal{C}(W^2(M),N)\arrow{r}{\adj^{W^2\dashv T^2}_{M,N}}\arrow{u}{\mathcal{C}(\Delta_M,N)} &\mathcal{C}(M,T^2(N))\arrow{u}[swap]{\mathcal{C}(M,\mu_N)}&&\mathcal{C}(M,N)\arrow{lu}{\mathcal{C}(\varepsilon_M,N)}\arrow{ru}[swap]{\mathcal{C}(M,\eta_N)}
\end{tikzcd}
\]
We say that $T$ is \emphbf{left adjoint} to $W$ \emphbf{(in the monadic sense)}, if there exists an adjunction $\adj^{T\dashv W}_{M,N}\colon \begin{tikzcd}\mathcal{C}(T(M),N)\arrow{r}{\sim}  &\mathcal{C}(M,W(N))\end{tikzcd}$ such that 
$\mu$ is left conjugate to $\Delta$ and $\eta$ is left conjugate to $\varepsilon$.%, i.e. the following two diagrams commute for all $M,N\in \mathcal{C}$:
%\[
%\begin{tikzcd}[column sep=15ex]
%\mathcal{C}(T(M),N)\arrow{r}{\adj^{T\dashv W}_{M,N}}\arrow{d}{\mathcal{C}(\mu_M,N)} &\mathcal{C}(M,W(N))\arrow{d}{\mathcal{C}(M,\Delta_N)}\\
%\mathcal{C}(T^2(M),N)\arrow{r}{\adj^{T^2\dashv W^2}_{M,N}} &\mathcal{C}(M,W^2(N))
%\end{tikzcd}
%\]
%and
%\[
%\begin{tikzcd}
%\mathcal{C}(T(M),N)\arrow{rr}{\adj^{T\dashv W}_{M,N}}\arrow{rd}[swap]{\mathcal{C}(\eta_M,N)} &&\mathcal{C}(M,W(N))\arrow{ld}{\mathcal{C}(M,\varepsilon_N)}\\
%&\mathcal{C}(M,N)
%\end{tikzcd}
%\]
\end{defn}

Given an adjunction between endofunctors, the corresponding free monad and cofree comonad are adjoint in the monadic sense:

\begin{lem}\label{naturalmonadicadjunction}
Let $X,Y\colon \mathcal{C}\to \mathcal{C}$ be endofunctors with $X\dashv Y$.  Assume further that $\coprod_{i\geq 0} X^i(M)$ and $\prod_{i\geq 0} Y^i(M)$ exist for all $M\in \mathcal{C}$. Then $(T(X),\eta,\mu)$ is left adjoint to $(W(Y),\varepsilon,\Delta)$ in the monadic sense. 
\end{lem}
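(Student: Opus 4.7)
The plan is to iterate the given adjunction $X\dashv Y$ and paste the pieces together: for each $i\geq 0$ the adjunction $X^i\dashv Y^i$ (with $X^0=Y^0=\Id$ giving the trivial adjunction) yields an isomorphism $\adj^{(i)}\colon \mathcal{C}(X^i(M),N)\xrightarrow{\sim} \mathcal{C}(M,Y^i(N))$, and I would define the candidate adjunction isomorphism for $T(X)\dashv W(Y)$ as the composite
\[
\mathcal{C}\Big(\coprod_{i\geq 0} X^i(M),N\Big)\cong \prod_{i\geq 0}\mathcal{C}(X^i(M),N)\xrightarrow{\prod_i\adj^{(i)}}\prod_{i\geq 0}\mathcal{C}(M,Y^i(N))\cong \mathcal{C}\Big(M,\prod_{i\geq 0} Y^i(N)\Big),
\]
obtained from the universal properties of coproducts and products. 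Naturality in both variables follows componentwise, giving an honest adjunction $T(X)\dashv W(Y)$.

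For the unit compatibility, I need to check that $\eta\colon\Id\to T(X)$ is left conjugate to $\varepsilon\colon W(Y)\to\Id$. Given $f\colon T(X)(M)\to N$ with components $f_k\colon X^k(M)\to N$ indexed over the summands of $T(X)(M)$, the composite $f\circ \eta_M$ is simply the $k=0$ component $f_0$, while $\adj(f)$ is the tuple $(\adj^{(k)}(f_k))_{k\geq 0}$. Post-composing with $\varepsilon_N$, which projects to the $k=0$ factor, yields $\adj^{(0)}(f_0)=f_0$ since $\adj^{(0)}=\id$, so the conjugation square commutes.

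For the multiplication compatibility, I would first use that $X^j$ preserves coproducts and $Y^j$ preserves products (the latter since $Y$ is a right adjoint) to obtain canonical identifications $T(X)^2(M)\cong \coprod_{j,i\geq 0} X^{j+i}(M)$ and $W(Y)^2(N)\cong \prod_{j,i\geq 0} Y^{j+i}(N)$. Under these, $\mu_M$ becomes the map whose $(j,i)$-component is the identity into the $(j+i)$-th summand of $T(X)(M)$, and dually $\Delta_N$ is the map whose $(j,i)$-component is the projection from the $(j+i)$-th factor of $W(Y)(N)$. An unravelling then shows that, for $f\colon T(X)(M)\to N$ with components $f_k$, both $\adj^{T(X)^2\dashv W(Y)^2}(f\circ \mu_M)$ and $\Delta_N\circ \adj(f)$ have $(j,i)$-component $\adj^{(j+i)}(f_{j+i})$. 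The main obstacle here is purely bookkeeping: one has to check that the adjunction $T(X)^2\dashv W(Y)^2$ obtained by composing the one for $T(X)\dashv W(Y)$ with itself agrees with the direct componentwise description via $\adj^{(j+i)}$, but this is formal from the compatibility of iterated adjunctions.
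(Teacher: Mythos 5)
Your proof is correct and follows the same route as the paper's: establish $X^i\dashv Y^i$, assemble the adjunction isomorphism for $T(X)\dashv W(Y)$ from the componentwise identifications using the universal properties of the biproducts, and then verify the conjugacy squares for $(\eta,\varepsilon)$ and $(\mu,\Delta)$. The paper's proof is terser — it produces the chain of isomorphisms and then says the conjugacy checks are "straightforward" — whereas you actually carry out those checks; the unit check is clean, and your account of the multiplication check, while it glosses over the fact that iterating the adjunction naturally transposes the outer/inner indices of $T(X)^2(M)\cong\coprod_{j,i}X^{j+i}(M)$ versus $W(Y)^2(N)\cong\prod_{j,i}Y^{j+i}(N)$, is harmless because $j+i=i+j$.
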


\begin{proof}
Firstly observe that since $X$ is a left adjoint, it commutes with (countable) coproducts and dually $Y$ commutes with (countable) products as it is a right  adjoint.

Secondly, note that as $X\dashv Y$, it follows that $X^i\dashv Y^i$ for all $i\geq 0$. To show that $T(X)$ and $W(Y)$ are adjoint as functors, note that 
\[\mathcal{C}\left(\coprod_{i\geq 0} X^i(M),M'\right)\cong \prod_{i\geq 0} \mathcal{C}(X^i(M),M')\cong \prod_{i\geq 0} \mathcal{C}(M,Y^i(M')) \cong \mathcal{C}\left(M,\prod_{i\geq 0} Y^i(M')\right).\]
It is straightforward to show that $\eta$ and $\varepsilon$, resp. $\mu$ and $\Delta$ are conjugate.%, i.e. that the following diagrams commute:
%\[
%\begin{tikzcd}
%\mathcal{C}(\coprod_{i\geq 0} X^i(M),M')\arrow{r}\arrow{d}{\mathcal{C}(\eta_M,M')} &\mathcal{C}(M,\prod_{i\geq 0} Y^i(M'))\arrow{d}{\mathcal{C}(M,\varepsilon_{M'})}\\
%\mathcal{C}(M,M')\arrow[equals]{r} &\mathcal{C}(M,M')
%\end{tikzcd}
%\]
%and
%\[\begin{tikzcd}
%\mathcal{C}(\coprod_{k\geq 0} X^k(M),M')\arrow{r}\arrow{d}{\mathcal{C}(\mu_M,M')} &\mathcal{C}(M,\prod_{k\geq 0} Y^k(M'))\arrow{d}{\mathcal{C}(M,\Delta_{M'})}\\
%\mathcal{C}(\coprod_{i,j\geq 0} X^{i+j}(M),M')\arrow{r} &\mathcal{C}(M,\prod_{i,j\geq 0} Y^{i+j}(M'))
%\end{tikzcd}\]
%The commutativity of the first diagram is easy to see since on the degree $0$ part, all the morphisms are actually identities, while on the positive degree part the morphisms on the vertical arrows vanish. The commutativity of the second diagram follows from the fact that $\mu$ and $\Delta$ are defined componentwise by identities and the way the adjunction between $X^i$ and $Y^i$ is obtained from the adjunction between $X$ and $Y$.
\end{proof}

\begin{mex}
Let $\mathfrak{A}$ be a phylum on a locally bounded quiver $Q$. Let $\mathcal{C}$, $X$ and $Y$ be as in Meta-Example \ref{runningexample1}. Then it is straightforward to check that $X$ is left adjoint to $Y$.
%\begin{align*}
%\mathcal{C}(X((M_{\mathtt{i}})_{\mathtt{i}}),(N_\mathtt{k})_{\mathtt{k}})&\cong \mathcal{C}((\coprod_{\substack{\alpha\in Q_1\\t(\alpha)=\mathtt{j}}} F_\alpha(M_{s(\alpha)}))_{\mathtt{j}}, (N_{\mathtt{k}})_{\mathtt{k}})\cong \prod_{\mathtt{j}} \prod_{\substack{\alpha\in Q_1\\t(\alpha)=\mathtt{j}}} \mathcal{A}_\mathtt{j}(F_\alpha(M_{s(\alpha)}),N_\mathtt{j})\\
%&\cong \prod_{\mathtt{j}} \prod_{\substack{\alpha\in Q_1\\t(\alpha)=\mathtt{j}}}  \mathcal{A}_\mathtt{j}(M_{s(\alpha)},G_\alpha(N_\mathtt{j}))\cong \prod_{\mathtt{i}}\prod_{\substack{\alpha\in Q_1\\s(\alpha)=\mathtt{i}}} \mathcal{A}_\mathtt{i}(M_{\mathtt{i}},G_\alpha(N_{t(\alpha)}))\\
%&\cong \mathcal{C}((M_\mathtt{i})_{\mathtt{i}},(\prod_{\substack{\alpha\in Q_1\\s(\alpha)=\mathtt{j}}} G_\alpha(N_\mathtt{t(\alpha)}))_\mathtt{j})\cong \mathcal{C}((M_\mathtt{i})_{\mathtt{i}}, Y((N_\mathtt{k})_{\mathtt{k}}))
%\end{align*}
\end{mex}

Recall that for objects $M_\mathtt{i}$ in some additive category, we write $\bigoplus_{i} M_i$ if both the coproduct $\coprod_{i} M_i$ and $\prod_i M_i$ exist and are isomorphic via the natural homomorphism. By imposing the strong assumption that the corresponding biproducts exist, we can also force an adjunction in the other direction. The condition might seem quite strong but has recently been studied in several contexts, see e.g. \cite{Iov06} for the case of an arbitrary collection and \cite{EH17} for the case of tensor powers of an object in a monoidal category. 

\begin{lem}\label{unnaturalmonadicadjunction}
Let $\mathcal{C}$, $X$, and $Y$ be as in Assumption \ref{standard1}. 
Then, $(W(Y),\varepsilon,\Delta)$ is left adjoint to $(T(X),\eta,\mu)$ in the monadic sense.
\end{lem}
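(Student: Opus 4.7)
The plan is to imitate the proof of Lemma~\ref{naturalmonadicadjunction}, but exploiting the biproduct assumption in order to \emph{reverse} the direction of adjunction at the level of free monads and cofree comonads. Concretely, Assumption~\ref{standard1} gives us an ambidextrous situation: since $Y$ is simultaneously a left and a right adjoint of $X$, we get the adjunction $Y\dashv X$, and by iteration $Y^i\dashv X^i$ for every $i\geq 0$. The existence of the biproducts $\bigoplus_{i\geq 0} X^i(M)$ and $\bigoplus_{i\geq 0} Y^i(M)$ then allows us to interchange the product and coproduct that appear in the definition of $T(X)$ and $W(Y)$.

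First I would establish the adjunction isomorphism at the level of underlying functors. Using the biproduct identifications $W(Y)(M)=\prod_{i\geq 0}Y^i(M)\cong\coprod_{i\geq 0}Y^i(M)$ and $T(X)(N)=\coprod_{i\geq 0}X^i(N)\cong\prod_{i\geq 0}X^i(N)$, together with $Y^i\dashv X^i$, one computes
\[
\mathcal{C}(W(Y)(M),N)\cong\prod_{i\geq 0}\mathcal{C}(Y^i(M),N)\cong\prod_{i\geq 0}\mathcal{C}(M,X^i(N))\cong\mathcal{C}(M,T(X)(N)),
\]
and naturality in both variables follows from the naturality of each adjunction isomorphism $Y^i\dashv X^i$ and of the biproduct splittings. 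This defines $\adj^{W(Y)\dashv T(X)}_{M,N}$.

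Next I would verify that the counit $\varepsilon$ of the comonad $W(Y)$ is left conjugate to the unit $\eta$ of the monad $T(X)$, and that the comultiplication $\Delta$ is left conjugate to the multiplication $\mu$. Under the identifications above, $\varepsilon_M\colon W(Y)(M)\to M$ is the projection onto the $i=0$ component, while $\eta_N\colon N\to T(X)(N)$ is the inclusion of the $i=0$ component; the required commutative square for conjugacy reduces, componentwise, to the trivial statement that $\mathcal{C}(Y^0(M),N)=\mathcal{C}(M,N)=\mathcal{C}(M,X^0(N))$ under the adjunction isomorphism for $i=0$. Similarly, $\Delta_M$ is induced by the canonical isomorphism $\prod_i Y^i(M)\cong \prod_{j,k}Y^j(Y^k(M))$ regrouping words, and $\mu_N$ by the analogous regrouping $\coprod_{j,k}X^j(X^k(N))\to \coprod_i X^i(N)$; the required square then decomposes, word by word, into the statement that the adjunction $Y^{j+k}\dashv X^{j+k}$ coincides with the iterated composite of $Y^j\dashv X^j$ and $Y^k\dashv X^k$, which is a standard fact about composition of adjoint pairs.

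I expect the only subtle point to be the bookkeeping needed to justify that the biproduct isomorphism commutes with both the regrouping used for $\Delta$ and $\mu$ and the adjunction isomorphisms $Y^i\dashv X^i$; concretely, one needs to observe that the inclusion $Y^i(M)\hookrightarrow\bigoplus_j Y^j(M)$ corresponds under the biproduct to the projection $\bigoplus_j Y^j(M)\twoheadrightarrow Y^i(M)$, so that the structure maps of the free monad $T(X)$ really do get sent to the structure maps of the cofree comonad $W(Y)$ under the $\mathrm{Hom}$-isomorphism. Once this pointwise verification is carried out on each word-indexed component, the conjugacy conditions for $\varepsilon\leftrightarrow\eta$ and $\Delta\leftrightarrow\mu$ follow, and the monadic adjunction $W(Y)\dashv T(X)$ is established.
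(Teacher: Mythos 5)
Your proposal is correct and takes essentially the same approach as the paper's proof: exploit the biproduct assumption to interchange products and coproducts, use the iterated adjunctions $Y^i\dashv X^i$ to build the Hom-isomorphism $\mathcal{C}(W(Y)(M),N)\cong\mathcal{C}(M,T(X)(N))$, and verify conjugacy of the (co)unit and (co)multiplication componentwise. The paper's proof is terser, packaging the same reasoning as "$W(Y)=T(Y)\dashv W(X)=T(X)$ as functors" with "one can easily verify" for the conjugacy; your version usefully spells out the verification that the paper leaves implicit.
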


\begin{proof}
As the products and coproducts which need to be preserved are actually biproducts, and $X$ and $Y$ are adjoint and therefore preserve coproducts resp.  products, it follows that the free monad $T(X)$ and the free comonad $W(Y)$ are well-defined. In a similar fashion as before, using that $\coprod X^i$ is naturally isomorphic to $\prod X^i$, and similarly for $Y$, it follows that $W(Y)=T(Y)\dashv W(X)=T(X)$ as functors. One can easily verify that $\eta, \varepsilon$ and $\Delta,\mu$ are conjugate. 
%For the naturality, we have to show that the following diagrams commute:
%\[\begin{tikzcd}
%\mathcal{C}(M,M')\arrow[equals]{r}\arrow{d}{\mathcal{C}(\varepsilon_M,M')} &\mathcal{C}(M,M')\arrow{d}{\mathcal{C}(M,\eta_{M'})}\\
%\mathcal{C}(\bigoplus Y^j(M),M')\arrow{r} &\mathcal{C}(M,\bigoplus X^i(M))
%\end{tikzcd}\]
%and
%\[
%\begin{tikzcd}
%\mathcal{C}(\bigoplus_{i,j} Y^{i+j}(M),M')\arrow{r}\arrow{d}{\mathcal{C}(\Delta_M,M')} &\mathcal{C}(M,\bigoplus_{i,j} X^{i+j}(M'))\arrow{d}{\mathcal{C}(M,\mu_{M'})}\\
%\mathcal{C}(\bigoplus Y^k(M),M')\arrow{r} &\mathcal{C}(M,\bigoplus X^k(M'))
%\end{tikzcd}
%\]
%Again, the commutativity of the first diagram follows since on the degree $0$ part all the morphisms are identities while the morphisms on the vertical arrows on positive degree part vanish. The commutativity of the second diagram follows from the fact that $\mu$ and $\Delta$ are defined componentwise by identities. 
\end{proof}

This quite strong assumption is satisfied in our example:

\begin{mex}\label{assumptionssatisfiedforphyla}
Let $\mathfrak{A}$ be a phylum on a locally bounded quiver. Let $X,Y$ be as in Example \ref{runningexample1}. As $\mathcal{C}=\prod_{\mathtt{i}\in Q_0} \mathcal{A}_\mathtt{i}$, sums and products are computed componentwise and since  $Q$ is locally bounded, for $M\in \mathcal{C}$ the coproduct $\coprod_{i\geq 0} X^i(M)$ on each component is finite, thus isomorphic to the product. Therefore, the condition that $\coprod X^i\cong \prod X^i$ is satisfied. The same holds for $Y$.
\end{mex}

\begin{rmk} 
\begin{enumerate}[(i)]
\item Note that the condition that the biproduct exists does not necessarily imply that $X$ is nilpotent. For example, take $Q$ to be the union of linearly oriented $\mathbb{A}_n$-quivers, one for every $n$. Set $\mathcal{A}_\mathtt{i}=\modu \mathbbm{k}$ and set $F_\alpha=\Id$.  Then, $X^n(M)$ does not vanish for any $n$ with $M$ defined by $M_{\mathtt{i}}=\mathbbm{k}$ for all $\mathtt{i}\in Q_0$. 
\item A more extreme example is given by $\mathcal{C}$ being the module category of a quiver of the form $\mathbb{A}_\infty$ obtained from quivers of the form 
$\begin{tikzcd}\bullet\arrow{r}&\dots\arrow{r}&\bullet&\dots\arrow{l}&\bullet\arrow{l}\end{tikzcd}$
with $n$ arrows going left and right, by gluing them together at the left and rightmost vertex while varying $n$ from $1$ to $+ \infty$. The $X$ as before can be lifted to that setting. Here, the module $M$ as before gives an example of an indecomposable such that $X^n(M)\neq 0$ for every $n$. But as $Q$ is locally bounded, still $\coprod X^i\cong \prod X^i$. 
\item Let $Q$ be the $\mathbb{A}_\infty$-quiver with all arrows  pointing towards the unique sink, resp. originating from a unique source,, i.e. 
\[
\begin{tikzcd}[row sep=1ex]
Q=\bullet&\bullet\arrow{l}&\bullet\arrow{l}&\dots\arrow{l}
\end{tikzcd}
\]
Let $\Lambda_{\mathtt{i}}=\Bbbk$ for all $\mathtt{i}\in Q_0$ and $\Lambda_\alpha=\Bbbk$ for all $\alpha\in Q_1$. Let $M$ be the representation of $\Lambda$ defined by $M_\mathtt{i}=\Bbbk$ for all $\mathtt{i}\in Q_0$  and $M_\alpha=\id$ for all $\alpha\in Q_1$. Then it is easy to see that $\coprod Y^i(M)\cong \prod Y^i(M)$ but $\coprod X^i(M)\ncong \prod X^i(M)$  An analogous example for $Q^{\op}$ gives $\coprod X^i(M)\cong \prod X^i(M)$ but $\coprod Y^i(M)\ncong \prod Y^i(M)$. 
\end{enumerate}
\end{rmk}

Attached to a monad $T$ is another category, called the Eilenberg--Moore category or the category of $T$-algebras. We will use the former terminology, since in our context the standard example will be representations, not rings. 

\begin{defn}\label{eilenbergmoore}
%\begin{enumerate}[(i)]
%\item 
Let $(T,\eta,\mu)$ be a monad on $\mathcal{C}$. Then, the \emphbf{Eilenberg--Moore category} $\mathcal{C}^T$ is defined to be the category with:
\begin{description}
\item[objects] pairs $(M,h)$, where $M\in \mathcal{C}$ and $h\in \mathcal{C}(T(M),M)$, such that the following diagrams commute:
\[\begin{tikzcd}
T^2(M)\arrow{r}{\mu_M}\arrow{d}{T(h)} &T(M)\arrow{d}{h}\\
T(M)\arrow{r}{h}&M
\end{tikzcd}\qquad
\begin{tikzcd}
T(M)\arrow{r}{h}&M\arrow[equals]{ld}\\
M\arrow{u}{\eta_M}
\end{tikzcd}\]
\item[morphisms] $\mathcal{C}^T((M,h),(M',h'))$ is the set given by those $\varphi\in \mathcal{C}(M,M')$ such that the following diagram commutes:
\[
\begin{tikzcd}
M\arrow{r}{\varphi}&M'\\
T(M)\arrow{u}{h}\arrow{r}{T(\varphi)}&T(M')\arrow{u}{h'}
\end{tikzcd}\]
\item[composition] Composition is given by the composition in $\mathcal{C}$.
\item[unit] The unit is given by the identity map $1_M\in \mathcal{C}(M,M)$.
\end{description} 
%\item 
Dually, for $(W,\varepsilon,\Delta)$ a comonad on $\mathcal{C}$ one can define the \emphbf{(co-)Eilenberg--Moore category} $\mathcal{C}^W$ as the category of certain morphisms $N\to W(N)$. %is defined to be the category with:
%\begin{description}
%\item[objects] pairs $(N,g)$, where $N\in \mathcal{C}$ and $g\in \mathcal{C}(N,W(N))$, such that the following diagrams commute:
%\[\begin{tikzcd}
%N\arrow{r}{g}\arrow{d}{g}&W(N)\arrow{d}{\Delta_N}\\
%W(N)\arrow{r}{W(g)}&W^2(N)
%\end{tikzcd}\qquad
%\begin{tikzcd}
%N\arrow{d}{g}\arrow[equals]{rd}\\
%W(N)\arrow{r}{\varepsilon_N} &N
%\end{tikzcd}\]
%\item[morphisms] $\mathcal{C}^W((N,g),(N',g'))$ is the set given by those morphisms $\varphi\in \mathcal{C}(N,N')$ such that the following diagram commutes:
%\[\begin{tikzcd}
%N\arrow{d}{g}\arrow{r}{\varphi} &N'\arrow{d}{g'}\\
%W(N)\arrow{r}{W(\varphi)} &W(N')
%\end{tikzcd}\]
%\item[composition] Composition is given by the composition in $\mathcal{C}$.
%\item[unit] The unit is given by the identity map $1_N\in \mathcal{C}(N,N)$.
%\end{description}
%\end{enumerate}
\end{defn}

A similar construction to the Eilenberg--Moore category for two endofunctors is given by the following definition. The notation is inspired by the standard notation $(F\downarrow G)$ for the comma category of morphisms $F(M)\to G(N)$.

\begin{comment}
\begin{defn}
Let $F,G\colon \mathcal{C}\to \mathcal{D}$ be functors. Then, the comma category $(F\Downarrow G)$ is given by objects being pairs $(M,h_1)$ where $h_1\colon F(M)\to G(M)$ is a morphism in $\mathcal{D}$ and morphisms $(M,h_1)\to (N,g_1)$ are given by morphisms $\varphi\colon M\to N$ in $\mathcal{C}$ such that the following diagram commutes:
\[
\begin{tikzcd}
F(M)\arrow{r}{h_1}\arrow{d}{F(\varphi)} &G(M)\arrow{d}{G(\varphi)}\\
F(N)\arrow{r}{g_1} &G(N)
\end{tikzcd}
\]
with the composition and unit from $\mathcal{C}$. 
\end{defn}
\end{comment}

\begin{defn}
Let $\mathcal{C}$, $\mathcal{D}$ be additive categories.
%\begin{enumerate}[(i)]
%\item 
Let $F_1,\dots,F_s,G\colon \mathcal{C}\to \mathcal{D}$ be functors. Then the category $(F_1,\dots,F_s\Downarrow G)$ is given by objects being tuples $(M,h_1,\dots,h_s)$ where $h_i\colon F_i(M)\to G(M)$ are morphisms in $\mathcal{D}$. Morphisms $(M,h_1,\dots,h_s)\to (N,g_1,\dots,g_s)$ are given by morphisms $\varphi\colon M\to N$ in $\mathcal{C}$ such that the following diagram commutes for each $i=1,\dots,s$:
\[
\begin{tikzcd}
F_i(M)\arrow{r}{h_i}\arrow{d}{F_i(\varphi)} &G(M)\arrow{d}{G(\varphi)}\\
F_i(N)\arrow{r}{g_i} &G(N)
\end{tikzcd}
\]
Composition and identities are induced from composition and identities in $\mathcal{C}$. 
%\item 

Dually for functors $F,G_1,\dots,G_t\colon \mathcal{C}\to \mathcal{D}$ one defines the category $(F\Downarrow G_1,\dots,G_t)$. 
%is given by objects being tuples $(M,h_1,\dots,h_t)$ where $h_i\colon F(M)\to G_i(M)$ are morphisms in $\mathcal{D}$. Morphisms $(M,h_1,\dots,h_s)\to (N,g_1,\dots,g_s)$ are given by morphisms $\varphi\colon M\to N$ in $\mathcal{C}$ such that the following diagram commutes for each $i=1,\dots,t$:
%\[
%\begin{tikzcd}
%F(M)\arrow{r}{h_i}\arrow{d}{F(\varphi)} &G_i(M)\arrow{d}{G_i(\varphi)}\\
%F(N)\arrow{r}{g_i} &G_i(N)
%\end{tikzcd}
%\]
%Again, composition and identities are induced from composition and identities in $\mathcal{C}$. 
%\end{enumerate}
\end{defn}

\begin{rmk}
A similar remark to Remark \ref{justonefunctor} applies: If $\mathcal{D}$ is additive, then $(F_1,\dots,F_s\Downarrow G)\cong (\coprod_{i=1}^s F_i\Downarrow G)$ and $(F\Downarrow G_1,\dots,G_t)\cong (F\Downarrow \prod_{i=1}^t G_i)$. Therefore in the sequel we mostly consider the case $s=t=1$. 
\end{rmk}

The following lemma is well-known, but the authors were not able to find a published reference in the stated setting for it. 

\begin{lem}\label{comma=EilenbergMoore}
Let $\mathcal{C}$ be an additive category. 
%\begin{enumerate}[(i)]
%\item 
Let $X\colon \mathcal{C}\to \mathcal{C}$ be an endofunctor preserving (countable) coproducts. Assume that for all $M\in \mathcal{C}$ the coproduct $\coprod X^i(M)$ exists in $\mathcal{C}$. Let $T(X)$ be the associated monad. Then, the functor $\mathcal{C}^{T(X)}\to (X\Downarrow \Id)$ given by $(M,h)\mapsto (M,h_1)$ where $h_1\colon X(M)\to M$ is the composition of the canonical inclusion $X(M)\to T(X)(M)$ followed by $h$, is an isomorphism of categories. 
%\item 
%Let $Y\colon \mathcal{C}\to \mathcal{C}$ be an endofunctor preserving (countable) products. Assume that for all $M\in \mathcal{C}$ the product $\prod Y^i(M)$ exists in $\mathcal{C}$. Let $W(Y)$ be the associated comonad. Then the functor $\mathcal{C}^{W(Y)}\to (\Id\Downarrow Y)$ given by $(M,h)\mapsto (M,h_1)$ where $h_1\colon M\to Y(M)$ is the composition of $h$ followed by the canonical projection $W(Y)(M)\to Y(M)$, is an isomorphism of categories.
%\end{enumerate}
Dually, the categories $\mathcal{C}^{W(Y)}$ and $(\Id\Downarrow Y)$ are isomorphic for an endofunctor $Y$ preserving countable products. 
\end{lem}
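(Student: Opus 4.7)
I plan to construct an explicit inverse to the functor $\Phi\colon \mathcal{C}^{T(X)}\to (X\Downarrow \Id)$ described in the statement. For $(M,h_1)\in (X\Downarrow \Id)$, the candidate $T(X)$-action $h\colon T(X)(M)=\coprod_{i\geq 0} X^i(M)\to M$ must be specified by maps $h^{(i)}\colon X^i(M)\to M$ for $i\geq 0$. The unit axiom forces $h^{(0)}=\id_M$, and the associativity axiom applied inductively forces
\[
h^{(i)} = h_1\circ X(h_1)\circ \dots \circ X^{i-1}(h_1) = h_1\circ X(h^{(i-1)}).
\]
Hence I define $\Psi(M,h_1)=(M,h)$ by taking $h$ to be the unique map out of the coproduct whose $i$-th component is the above composition.

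The plan is then to verify the following in order. First, I would check that $\Psi(M,h_1)$ really is a $T(X)$-algebra. The unit axiom is immediate since the empty-word component of $\eta_M$ is $\id_M$ and $h^{(0)}=\id_M$. For the associativity axiom $h\circ \mu_M = h\circ T(X)(h)$, since both sides are maps out of $T(X)(T(X)(M))=\coprod_{i,j\geq 0} X^i(X^j(M))$, I can check equality on each summand $X^i(X^j(M))=X^{i+j}(M)$. On this summand, $\mu_M$ is the inclusion into the $(i+j)$-th component, so the left side is $h^{(i+j)}$; on the right side, $T(X)(h)$ restricts on $X^i(X^j(M))$ to $X^i(h^{(j)})$ composed with the inclusion into the $i$-th component, giving $h^{(i)}\circ X^i(h^{(j)})$. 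The equality $h^{(i+j)}=h^{(i)}\circ X^i(h^{(j)})$ follows by a direct induction using the recursion defining $h^{(i)}$.

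Next I would check that $\Psi$ is functorial: a morphism $\varphi\colon (M,h_1)\to (N,g_1)$ in $(X\Downarrow \Id)$ satisfies $\varphi\circ h_1 = g_1\circ X(\varphi)$, from which a straightforward induction on $i$, combined with functoriality of $X$, gives $\varphi\circ h^{(i)} = g^{(i)}\circ X^i(\varphi)$. Summing over $i$ via the universal property of coproducts shows $\varphi$ commutes with the $T(X)$-actions.

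Finally, I would check $\Phi\circ \Psi=\id$ and $\Psi\circ \Phi=\id$. The first is immediate from the construction, since $\Phi\Psi(M,h_1)=(M,h\circ \iota_1)=(M,h^{(1)})=(M,h_1)$. For the second, given a $T(X)$-algebra $(M,h)$ with $h^{(i)}:=h\circ \iota_i$, the associativity axiom applied to the summand $X(X^{i-1}(M))\subset T(X)(T(X)(M))$, together with the unit axiom to absorb $T(X)(\eta_M)$, forces $h^{(i)}=h^{(1)}\circ X(h^{(i-1)})$, so $h$ is reconstructed from $h^{(1)}$ exactly as $\Psi\Phi(M,h)$ prescribes. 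This yields the claimed isomorphism; the dual assertion for $\mathcal{C}^{W(Y)}\cong (\Id\Downarrow Y)$ follows by formally dualising the construction above, replacing the coproduct by a product and inductively defining $h_{(i)}\colon M\to Y^i(M)$ via $h_{(i)}=Y(h_{(i-1)})\circ h_1$.

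The main obstacle, though not a deep one, is the careful bookkeeping with indices for words over the one-letter alphabet when verifying the associativity axiom, in particular making sure the identification $T(X)\circ T(X)(M)\cong \coprod_{i,j\geq 0} X^{i+j}(M)$ used implicitly is exactly the one induced by $\mu$. Everything else is formal manipulation via the universal property of the coproduct.
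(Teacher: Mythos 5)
Your proposal is correct and follows essentially the same route as the paper: restrict the associativity square to the one-letter summands of $T(X)^2(M)$ to see that the recursion $h^{(i)} = h_1\circ X(h^{(i-1)})$ determines (and reconstructs) the structure map $h$ from $h_1$. You spell out a few verifications the paper only asserts (that the reconstructed $h$ really satisfies associativity, via $h^{(i+j)}=h^{(i)}\circ X^i(h^{(j)})$, and functoriality of $\Psi$); the only small imprecision is the remark about ``the unit axiom to absorb $T(X)(\eta_M)$'' in the $\Psi\Phi=\Id$ step, which is unnecessary since $X(X^{i-1}(M))$ is already a direct summand of $T(X)^2(M)$ and the restriction of $\mu_M$ and $T(X)(h)$ to it gives the recursion directly.
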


\begin{proof}
In order for $h\colon T(X)(M)\to M$ to be in the Eilenberg--Moore category, we need the following diagram to commute:
\[
\begin{tikzcd}
T(X)^2(M)\arrow{r}{\mu_M}\arrow{d}{T(X)(h)}&T(X)(M)\arrow{d}{h}\\
T(X)(M)\arrow{r}{h}&M
\end{tikzcd}
\]
Restricting to the component starting in $X^i(X(M))$ in the upper left corner, we get that the following diagram needs to commute where $h_i$ denotes the composition of the canonical inclusion $X^i(M)\to T(X)(M)$ followed by $h$:
\[
\begin{tikzcd}[column sep = 2cm]
X^i(X(M))\arrow{r}{\mu_T|_{X^i(X(M))}=\id}\arrow{d}{X^i(h_1)} &X^{i+1}(M)\arrow{d}{h_{i+1}}\\
X^i(M)\arrow{r}{h_i}&M
\end{tikzcd}
\]
It follows by induction that all $h_i$ are determined by $h_1$. In the other direction a map $T(X)(M)\to M$ is given by $h_0=\id_M$, the given $h_1$ and $h_{i+1}=h_i\circ X^i(h_1)$.
The proof for morphisms is similar. 
\end{proof}

From now on, in the situation of the preceding lemma we will identify the Eilenberg--Moore category $\mathcal{C}^{T(X)}$ with the comma category $(X\Downarrow \Id)$. Also, we write $(M,h)$ where $h\colon T(X)(M)\to M$ and $(M,h_1)$ where $h_1\colon X(M)\to M$ for an object in this category interchangeably.

As mentioned before, categories of representations of phyla can be realised as Eilenberg--Moore categories over the free monad associated to the `push-along' functor. 

\begin{mex}\label{modulecategoryabelian}
Let $\mathfrak{A}$ be a phylum on a locally bounded quiver $Q$. Let $\mathcal{C}$, $X$, $Y$ be as in Meta-Example \ref{runningexample1}. Then, $\mathcal{C}^{T(X)}=\rep \mathfrak{A}$. 
\end{mex}

The following proposition gives the well-known correspondence between monads and adjunctions, see e.g. \cite[Theorem VI.2.1]{McL98}.

\begin{prop}\label{eilenbergmooreadjunction}
Let $\mathcal{C}$ be a category and $(T, \eta, \mu)$ be a monad on $\mathcal{C}$. Then, there is an adjunction $F_T\dashv U_T$ where $F_T\colon \mathcal{C}\to \mathcal{C}^T$ and $U_T\colon \mathcal{C}^T\to \mathcal{C}$ are given by $U_T(M,h)=M$ and $F_T(M)=(T(M),\mu_M)$. 
The monad associated to the adjunction $(F_T,U_T)$ as in Remark \ref{adjunctionmonad} is $T$. Dually, for a comonad $(W, \varepsilon, \Delta)$ on $\mathcal{C}$, there is an adjunction $U_W\dashv F_W$ where $U_W\colon \mathcal{C}^W\to \mathcal{C}$ and $F_W\colon \mathcal{C}\to \mathcal{C}^W$given by $U_W(M,h)=M$ and $F_W(M)=(W(M),\Delta_M)$. The comonad associated to the adjunction $(U_W,F_W)$ as in Remark \ref{adjunctionmonad} is $W$. 
\end{prop}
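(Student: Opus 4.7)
The plan is to verify the standard Eilenberg--Moore construction step by step, then check that the associated monad agrees with the original one, and finally deduce the comonadic dual by formal duality.

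First I would verify that $F_T$ is a well-defined functor into $\mathcal{C}^T$. On objects, I need $(T(M),\mu_M)$ to satisfy the two axioms of Definition \ref{eilenbergmoore}. The associativity square $\mu\circ T\mu=\mu\circ \mu_T$ applied at $M$ is precisely the first axiom for $(T(M),\mu_M)$, and the unit law $\mu\circ \eta_T=\id$ is the second. On morphisms, $F_T(f):=T(f)$ intertwines $\mu_M$ and $\mu_N$ by naturality of $\mu$. Functoriality is then clear from functoriality of $T$.

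Next I would construct the adjunction isomorphism. Given $(N,h)\in \mathcal{C}^T$ and $M\in \mathcal{C}$, define
\[
\adj^{F_T\dashv U_T}_{M,(N,h)}\colon \mathcal{C}^T(F_T(M),(N,h)) \longrightarrow \mathcal{C}(M,U_T(N,h)),\qquad \varphi\longmapsto \varphi\circ \eta_M,
\]
with candidate inverse $\psi\mapsto h\circ T(\psi)$. I need three checks: (a) $h\circ T(\psi)$ is indeed a morphism in $\mathcal{C}^T$ from $F_T(M)$ to $(N,h)$, which follows from naturality of $\mu$ combined with the first $T$-algebra axiom for $h$; (b) the two assignments are mutually inverse: for one direction use the unit law $h\circ \eta_N=\id_N$ on $\varphi$; for the other direction use that $\varphi$ is an algebra morphism, namely $\varphi\circ\mu_M=h\circ T(\varphi)$, together with naturality of $\eta$; (c) naturality of $\adj$ in both arguments, which is a direct computation from the definitions and the fact that morphisms in $\mathcal{C}^T$ are morphisms in $\mathcal{C}$.

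Then I would recompute the monad of Remark \ref{adjunctionmonad} arising from $F_T\dashv U_T$. The underlying endofunctor is $U_TF_T$, which by construction equals $T$. The unit $\unit{F_T}{U_T}$ is obtained by applying $\adj$ to $\id_{F_T(M)}$, giving $\id_{T(M)}\circ \eta_M=\eta_M$. The counit at $(N,h)$ is obtained from $\adj^{-1}$ applied to $\id_{U_T(N,h)}$, yielding the morphism $h\circ T(\id_N)=h\colon F_TU_T(N,h)\to (N,h)$. Whiskering with $F_T$ and then applying $U_T$ at $F_T(M)=(T(M),\mu_M)$ gives the multiplication $\mu_M$, so the induced monad coincides with $(T,\eta,\mu)$ on the nose.

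The comonad case is obtained by formal duality: a comonad on $\mathcal{C}$ is a monad on $\mathcal{C}^{\op}$, the category $\mathcal{C}^W$ agrees with $(\mathcal{C}^{\op})^W$, and the roles of $F$ and $U$ are swapped under $(-)^{\op}$, which exchanges left and right adjoints; so the adjunction $U_W\dashv F_W$ and the recovery of $(W,\varepsilon,\Delta)$ follow immediately from the monad case. None of the steps is genuinely difficult; the only mildly delicate point is (b) in the middle paragraph, where the algebra-morphism condition must be used in exactly the right form to recover $\varphi$ from $\varphi\circ \eta_M$.
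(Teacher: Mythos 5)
Your proof is correct and is the standard argument for the Eilenberg--Moore adjunction; the paper does not give a proof but simply cites the classical reference, Mac Lane's Theorem VI.2.1, which proceeds exactly as you do (transposition $\varphi\mapsto \varphi\circ\eta_M$ with inverse $\psi\mapsto h\circ T(\psi)$, then reading off the unit, counit, and multiplication). All the subpoints you flag are handled correctly, including the delicate inverse check $h\circ T(\varphi\circ\eta_M)=\varphi\circ\mu_M\circ T(\eta_M)=\varphi$, and the dualisation to the comonad case via $\mathcal{C}^{\op}$ is sound.
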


\begin{rmk}\label{eilenbergmooreforfreemonad}
In the case of the free monad $(T(X),\eta,\mu)$ the adjunction 
\[\mathcal{C}(M',U_{T(X)}(M,h))\xrightarrow{\cong} \mathcal{C}^{T(X)}(F_{T(X)}(M'),(M,h))\]
sends a morphism $g\colon M'\to M$ to a morphism $F_{T(X)}(M')\to (M,h)$ whose underlying map in $\mathcal{C}$ is given by $(g_i)_{i\geq 0}\colon \coprod_{i\geq 0} X^i(M')\to M$ where
\[g_i=h_1\circ X(h_1)\circ \dots\circ X^{i-1}(h_1)\circ X^i(g)\colon X^i(M')\to M.\]
A dual description holds for $\mathcal{C}(U_{W(Y)}(M,h), M')\xrightarrow{\cong} \mathcal{C}^{W(Y)}((M,h),F_{W(Y)}(M'))$.
%Dually in the case of the free comonad $(W(Y),\varepsilon,\Delta)$, the adjunction
%\[\mathcal{C}(U_{W(Y)}(M,h), M')\xrightarrow{\cong} \mathcal{C}^{W(Y)}((M,h),F_{W(Y)}(M'))\]
%sends $g\colon M\to M'$ to a morphism $(M,h)\to F_{W(Y)}(M')$ whose underlying map in $\mathcal{C}$ is given by $(g_i)_{i\geq 0}\colon M\to \prod_{i\geq 0} Y^i(M')$ where 
%\[g_i=Y^i(g)\circ Y^{i-1}(h_1)\circ \dots\circ Y(h_1)\circ h_1\colon M\to Y^i(M').\]
\end{rmk}

The Eilenberg--Moore category is particularly well-behaved if $T$ is in addition right exact. In particular, this holds in the examples we have in mind, as the monads we consider come with an ambidextrous adjunction.

\begin{prop}[{\cite[Proposition 5.3]{EM65}}]\label{eilenbergmooreexact}
Let $\mathcal{C}$ be an abelian category and let $T$ be a right exact monad. Then the Eilenberg--Moore category $\mathcal{C}^T$ is abelian and the forgetful functor $U_T\colon \mathcal{C}^T\to \mathcal{C}$ preserves and reflects exact sequences. Dually, if $W$ is a left exact comonad, then the Eilenberg--Moore category $\mathcal{C}^W$ is abelian and the forgetful functor $U_W\colon \mathcal{C}^W\to \mathcal{C}$ preserves and reflects exact sequences. 
\end{prop}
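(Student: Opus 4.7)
The plan is to construct kernels, cokernels and biproducts in $\mathcal{C}^T$ by lifting the analogous constructions from $\mathcal{C}$, and to observe that the forgetful functor $U_T$ is compatible with all of them. Biproducts are straightforward: $(M,h)\oplus(M',h')=(M\oplus M',h\oplus h')$ makes sense because $T$ is additive, hence $T(M\oplus M')\cong T(M)\oplus T(M')$ canonically. For kernels, given $\varphi\colon (M,h)\to (M',h')$, let $k\colon K\to M$ be its kernel in $\mathcal{C}$; applying $T$ to $\varphi k=0$ and using the compatibility $\varphi\circ h=h'\circ T(\varphi)$ forces $\varphi\circ h\circ T(k)=0$, so $h\circ T(k)$ factors uniquely as $k\circ h_K$ for a unique $h_K\colon T(K)\to K$. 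The monad axioms for $h_K$ transfer from those for $h$ since $k$ is monic, and one verifies that $k\colon (K,h_K)\to (M,h)$ is the kernel of $\varphi$ in $\mathcal{C}^T$. This is also consistent with $U_T$ being a right adjoint by Proposition \ref{eilenbergmooreadjunction}, hence preserving limits.

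Cokernels are the only step that genuinely uses right exactness of $T$. Given $\varphi\colon (M,h)\to (M',h')$, let $c\colon M'\to C$ be its cokernel in $\mathcal{C}$. By right exactness, $T(c)$ is the cokernel of $T(\varphi)$; since $c\circ h'\circ T(\varphi)=c\circ\varphi\circ h=0$, the map $c\circ h'$ factors uniquely through $T(c)$ as $h_C\circ T(c)$ for some $h_C\colon T(C)\to C$. The monad action axioms for $h_C$ propagate from those for $h'$ because $T(c)$ and $T^2(c)$ are epimorphisms, and $c\colon (M',h')\to (C,h_C)$ is then the cokernel of $\varphi$ in $\mathcal{C}^T$.

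To conclude, a morphism $\varphi$ in $\mathcal{C}^T$ is monic (respectively epic) if and only if $U_T(\varphi)$ has the same property in $\mathcal{C}$: the `only if' direction follows from $U_T$ preserving the kernels (resp.\ cokernels) built above, and the `if' direction from faithfulness of $U_T$. Given a mono $\varphi$ in $\mathcal{C}^T$, its cokernel has underlying map $c$ in $\mathcal{C}$, and the kernel of this cokernel in $\mathcal{C}$ coincides with $U_T(\varphi)$ because $\mathcal{C}$ is abelian; this canonical isomorphism lifts uniquely to $\mathcal{C}^T$ since morphisms between $T$-algebras with prescribed actions are determined by their underlying data. The dual argument handles epis, so $\mathcal{C}^T$ is abelian. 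Preservation and reflection of exact sequences by $U_T$ are then immediate, as the underlying object of any kernel, cokernel, image and coimage in $\mathcal{C}^T$ agrees with the corresponding construction in $\mathcal{C}$. The only real technicality throughout is checking compatibility of monad actions under these identifications, which is always forced by the relevant universal property, so I do not expect a genuine obstacle beyond the bookkeeping.
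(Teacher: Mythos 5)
The paper does not prove this proposition; it is cited from Eilenberg--Moore (\cite[Proposition~5.3]{EM65}) and used as a black box. So there is no proof in the paper to compare against. Your argument is the standard proof of this classical fact and is essentially correct: the kernel of $\varphi\colon(M,h)\to(M',h')$ lifts because $\varphi\circ h\circ T(k)=h'\circ T(\varphi\circ k)=0$ forces a unique induced action on $K=\ker U_T(\varphi)$; the cokernel lifts precisely because right exactness makes $T(c)$ the cokernel of $T(\varphi)$ and makes $T(c)$ and $T^2(c)$ epimorphisms (so the algebra axioms for $h_C$ can be checked after precomposing with these epis and cancelling); and the comparison of monos/epis between $\mathcal{C}^T$ and $\mathcal{C}$ is handled correctly via faithfulness of $U_T$ together with its preservation of kernels and cokernels. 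The one place where your write-up is slightly elliptical is the verification that the induced actions $h_K$ and $h_C$ actually satisfy the monad-algebra axioms (associativity and unit): for $h_K$ this requires cancelling the monomorphism $k$, and for $h_C$ it requires cancelling $T(c)$ and $T^2(c)$, using naturality of $\mu$ and $\eta$; you acknowledge this as bookkeeping, which is fair, and the checks do go through without obstruction.
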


We define four functors, which are generalisations of the functor associating to a vector space the corresponding simple module and to a module its top or socle in the case of bound quiver algebras. 

\begin{defn}
Let $\mathcal{C}$ be an abelian category and $X, Y\colon \mathcal{C}\to \mathcal{C}$ be endofunctors. 
\begin{enumerate}[(i)]
\item The \emphbf{(left) simple functor} $S\colon \mathcal{C}\to (X\Downarrow \Id)$ is defined as $M\mapsto (X(M)\xrightarrow{0} M)$.
\item The \emphbf{(right) simple functor} $S'\colon \mathcal{C}\to (\Id \Downarrow Y)$ is defined by $M\mapsto (M\xrightarrow{0} Y(M))$.
\item The \emphbf{top functor} $\Kopf_X\colon (X\Downarrow \Id)\to \mathcal{C}$ is defined by $(M,h_1)\mapsto \coker h_1$.
\item The \emphbf{socle functor} $\soc_Y\colon (\Id\Downarrow Y)\to \mathcal{C}$ is defined by $(M,h_1)\mapsto \ker h_1$. 
\end{enumerate}
\end{defn}

\begin{rmk}
In case that $X\dashv Y$ it is easy to see that the categories $(X\Downarrow \Id)$ and $(\Id\Downarrow Y)$ are equivalent via the adjunction isomorphism. Under the equivalence, $S$ is sent to $S'$. 
\end{rmk}

\begin{ex}
We use the notation of Examples \ref{runningexample} and \ref{runningexample6}. For an arrow $\alpha\in Q_1$ denote by $M_{\alpha^*}\colon M_{s(\alpha)}\to \Lambda_{\alpha^*}\otimes_{\Lambda_{t(\alpha)}} M_{t(\alpha)}$ the image of $M_\alpha\colon \Lambda_\alpha\otimes_{\Lambda_{s(\alpha)}} M_{s(\alpha)}$ under the adjunction isomorphism corresponding to the adjuntion $\Lambda_\alpha\otimes_{\Lambda_{s(\alpha)}}\dashv \Lambda_{\alpha^*}\otimes_{t(\alpha)}-$. We give an explicit description of $\Kopf_X$ and $\soc_Y$ for the phyla in Example \ref{runningexample6}.
\begin{enumerate}[(i)]
\item For $(M_\mathtt{1},M_\mathtt{2},M_\alpha)\in \rep(\Lambda_\mathtt{1}\xrightarrow{\Lambda_\alpha} \Lambda_\mathtt{2})$ we obtain $\Kopf_X(M_\mathtt{1},M_\mathtt{2},M_\alpha)=(M_\mathtt{1},\coker M_\alpha)$ and $\soc_Y(M_\mathtt{1},M_\mathtt{2},M_\alpha)=(\ker M_{\alpha^*}, M_\mathtt{2})$.
\item For $(M_\mathtt{1},M_\mathtt{2},M_\mathtt{3},M_\alpha,M_\beta)\in \rep(\Lambda_\mathtt{1}\xrightarrow{\Lambda_\alpha} \Lambda_\mathtt{2}\xrightarrow{\Lambda_\beta} \Lambda_\mathtt{3})$ we obtain 
\begin{equation*}
\begin{split}\Kopf_X (M_\mathtt{1},M_\mathtt{2},M_\mathtt{3}, M_\alpha,M_\beta)&=(M_\mathtt{1},\coker M_\alpha, \coker M_\beta)\\\text{ and }\soc_Y(M_\mathtt{1},M_\mathtt{2},M_\mathtt{3}, M_\alpha,M_\beta)&=(\ker M_{\alpha^*}, \ker M_{\beta^*}, M_\mathtt{3}).
\end{split}
\end{equation*} 
\item For $(M_\mathtt{1},M_\mathtt{2},M_\mathtt{3},M_\alpha,M_\beta)\in \rep(\Lambda_\mathtt{1}\xrightarrow{\Lambda_\alpha} \Lambda_\mathtt{3}\xleftarrow{\Lambda_\beta} \Lambda_\mathtt{2})$ we obtain 
\begin{equation*}
\begin{split}\Kopf_X (M_\mathtt{1},M_\mathtt{2},M_\mathtt{3}, M_\alpha,M_\beta)&=(M_\mathtt{1}, M_\mathtt{2},\coker (M_\alpha, M_\beta))\\
\text{ and }\soc_Y(M_\mathtt{1},M_\mathtt{2},M_\mathtt{3}, M_\alpha,M_\beta)&=(\ker M_{\alpha^*}, \ker M_{\beta^*}, M_\mathtt{3}).
\end{split}
\end{equation*}
\end{enumerate}
\end{ex}

\begin{lem}\label{topisaleftadjoint}
Let $\mathcal{C}$ be an abelian category. Let $X, Y\colon \mathcal{C}\to \mathcal{C}$ be endofunctors. Then, $(\Kopf_X,S)$ and $(S',\soc_Y)$ form adjoint pairs. 
\end{lem}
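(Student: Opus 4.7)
The plan is to verify both adjunctions by unwinding the definitions and invoking the universal properties of cokernels and kernels. I expect no serious obstacle: once the commutative squares defining morphisms in the comma categories are written down, the adjunction bijections will be immediate consequences of the universal properties, and naturality will follow by standard arguments.

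For the first adjunction $\Kopf_X \dashv S$, I would fix objects $(M,h_1) \in (X\Downarrow \Id)$ and $N \in \mathcal{C}$, and observe that a morphism $\varphi\colon (M,h_1) \to S(N) = (X(N)\xrightarrow{0} N)$ in $(X\Downarrow \Id)$ is, by definition, a morphism $\varphi\colon M\to N$ in $\mathcal{C}$ making the square
\[
\begin{tikzcd}
X(M)\arrow{r}{h_1}\arrow{d}{X(\varphi)} &M\arrow{d}{\varphi}\\
X(N)\arrow{r}{0} &N
\end{tikzcd}
\]
commute, which is equivalent to the single condition $\varphi\circ h_1 = 0$. By the universal property of the cokernel, such morphisms $\varphi$ are in natural bijection with morphisms $\overline{\varphi}\colon \coker h_1 = \Kopf_X(M,h_1) \to N$ in $\mathcal{C}$. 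This defines the adjunction isomorphism $\adj^{\Kopf_X \dashv S}_{(M,h_1),N}\colon \mathcal{C}(\Kopf_X(M,h_1),N) \to (X\Downarrow \Id)((M,h_1), S(N))$.

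For the second adjunction $S' \dashv \soc_Y$, I would similarly observe that a morphism $\varphi\colon S'(N) = (N\xrightarrow{0} Y(N)) \to (M,h_1)$ in $(\Id\Downarrow Y)$ is a morphism $\varphi\colon N\to M$ in $\mathcal{C}$ making the square
\[
\begin{tikzcd}
N\arrow{r}{0}\arrow{d}{\varphi} &Y(N)\arrow{d}{Y(\varphi)}\\
M\arrow{r}{h_1} &Y(M)
\end{tikzcd}
\]
commute, which is equivalent to $h_1 \circ \varphi = 0$. By the universal property of the kernel, such morphisms are in natural bijection with morphisms $N \to \ker h_1 = \soc_Y(M,h_1)$.

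Finally I would check naturality in both variables. For $\Kopf_X \dashv S$, naturality in $N$ reduces to the fact that post-composition with a morphism $N\to N'$ commutes with the universal factorisation through the cokernel, while naturality in $(M,h_1)$ follows from the compatibility of cokernel with morphisms in $(X\Downarrow \Id)$. An analogous check handles $S' \dashv \soc_Y$. Since all of this is formal, the proof should be short.
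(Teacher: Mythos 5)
Your proposal is correct and follows essentially the same approach as the paper: identify morphisms $(M,h_1)\to S(N)$ with morphisms $\varphi\colon M\to N$ satisfying $\varphi\circ h_1=0$, and invoke the universal property of the cokernel (dually, the kernel). The paper treats only the first adjunction and declares the second dual, whereas you spell out both, but the argument is the same.
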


\begin{proof}
We only prove the first adjunction, the proof of the second is dual. We need to prove that $\mathcal{C}(\Kopf_X(M,h_1),N)\cong \mathcal{C}^{T(X)}((M,h_1),S(N))$. Denote the canonical projection $M\to \coker h_1=\Kopf_X(M,h_1)$ by $\pi$. Suppose $f\colon \Kopf_X(M,h_1)\to N$ is a morphism. Define $g\colon M\to N$ by $g=f\pi$. We have to check that the following diagram commutes:
\[
\begin{tikzcd}
X(M)\arrow{r}{h_1}\arrow{d}{X(g)} &M\arrow{r}{\pi}\arrow{d}{g} &\Kopf_X(M,h_1)\arrow{ld}{f}\arrow{r}&0\\
X(N)\arrow{r}{0} &N
\end{tikzcd}
\]
i.e. that $gh_1=0$, but this is clear since $g$ factors through the cokernel. Dually let $g\colon (M,h_1)\to S(N)$ be a morphism in $\mathcal{C}^{T(X)}$. This implies that $gh_1=0\circ X(g)$, i.e. $g$ induces a unique map $f\colon \Kopf_X(M,h_1)\to N$. It is straightforward to check that the isomorphism is natural.
\end{proof}

The following lemma is obvious from the definition and the description of the Eilenberg--Moore category in Lemma \ref{comma=EilenbergMoore}.

\begin{lem}\label{topofprojective}
Let $\mathcal{C}$ be an abelian category and let $X\colon \mathcal{C}\to \mathcal{C}$ be an endofunctor preserving (countable) coproducts such that $\coprod_{i\geq 0} X^i(M)$ exists for all $M$. Then, $U_{T(X)}\circ S=1_\mathcal{C}$ and $\Kopf_{X}\circ F_{T(X)}=1_\mathcal{C}$. Dually, for an endofunctor $Y\colon \mathcal{C}\to \mathcal{C}$ preserving (countable) products such that $\prod_{i\geq 0} Y^i(M)$ exists for all $M\in \mathcal{C}$, we get $U_{W(Y)}\circ S'=1_\mathcal{C}$ and $\soc_Y \circ F_{W(Y)}=1_\mathcal{C}$. 
\end{lem}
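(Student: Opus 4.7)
The plan is to verify both identities directly from the definitions, using the explicit description of the Eilenberg--Moore category as the comma category $(X\Downarrow \Id)$ from Lemma \ref{comma=EilenbergMoore}.

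First I would dispense with $U_{T(X)}\circ S = 1_\mathcal{C}$: by definition $S(M)$ is the object $(M, X(M)\xrightarrow{0}M)$ in $(X\Downarrow \Id)$, and the forgetful functor $U_{T(X)}$ simply extracts the underlying object $M$ (this follows from Proposition \ref{eilenbergmooreadjunction} combined with the identification in Lemma \ref{comma=EilenbergMoore}). So this identity is immediate on objects, and a parallel check on morphisms finishes it.

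For $\Kopf_X\circ F_{T(X)} = 1_\mathcal{C}$, the key step is to compute the structure map of $F_{T(X)}(M) = (T(X)(M),\mu_M)$ when viewed in the comma category $(X\Downarrow \Id)$. By the identification in Lemma \ref{comma=EilenbergMoore}, this structure map $h_1\colon X(T(X)(M)) \to T(X)(M)$ is obtained as the composite of the canonical inclusion $X(T(X)(M))\hookrightarrow T(X)(T(X)(M))$ followed by $\mu_M$. Using that $X$ preserves countable coproducts, one has a natural isomorphism
\[
X(T(X)(M)) \;=\; X\!\left(\coprod_{i\geq 0} X^i(M)\right) \;\cong\; \coprod_{i\geq 0} X^{i+1}(M),
\]
and unwinding the definition of $\mu$ from Definition \ref{free monad/comonad} shows that $h_1$ is nothing but the canonical inclusion $\coprod_{i\geq 1} X^i(M)\hookrightarrow \coprod_{i\geq 0} X^i(M) = T(X)(M)$.

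The remaining step is then to identify the cokernel of this inclusion. Since $T(X)(M)\cong M\oplus \coprod_{i\geq 1} X^i(M)$ with $h_1$ being the inclusion of the second summand, the cokernel is the complementary summand $M$, so $\Kopf_X(F_{T(X)}(M)) = M$ naturally in $M$. The dual statements for $W(Y)$, $S'$, and $\soc_Y$ follow by reversing all arrows and replacing coproducts by products. I expect no real obstacle beyond being careful with the identification in Lemma \ref{comma=EilenbergMoore} relating $h$ on $T(X)(M)$ and its restriction $h_1$ on $X(M)$; once that is made explicit, both identities fall out by direct inspection.
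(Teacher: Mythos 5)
Your proof is correct and is essentially the argument the paper has in mind: the paper states the lemma without a written proof, remarking only that it is "obvious from the definition and the description of the Eilenberg--Moore category in Lemma \ref{comma=EilenbergMoore}," and your unwinding of $h_1$ as the inclusion $\coprod_{i\geq 1}X^i(M)\hookrightarrow\coprod_{i\geq 0}X^i(M)$ with cokernel $M$ is exactly that elided computation.
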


We finish this section with another well-known proposition regarding the Eilenberg--Moore categories of adjoint monad-comonad pairs, see e.g. \cite[Proposition 3.3]{EM65}. 

\begin{prop}\label{eilenbergmooreequivalent}
Let $T$ be a monad on $\mathcal{C}$ which is left adjoint to a comonad $W$ on $\mathcal{C}$ in the monadic sense. Then the respective Eilenberg--Moore categories $\mathcal{C}^T$ and $\mathcal{C}^W$ are isomorphic using the adjunction homomorphism.
\end{prop}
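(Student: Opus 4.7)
The plan is to construct a functor $\Phi\colon \mathcal{C}^T\to \mathcal{C}^W$ by sending $(M,h\colon T(M)\to M)$ to $(M,\adj^{T\dashv W}_{M,M}(h)\colon M\to W(M))$ and acting as the identity on underlying morphisms in $\mathcal{C}$, and to show it is an isomorphism of categories with inverse $\Psi$ defined analogously via $(\adj^{T\dashv W})^{-1}$. This is the natural candidate since the only structural data distinguishing an object of $\mathcal{C}^T$ from one of $\mathcal{C}^W$ is a morphism that the adjunction $T\dashv W$ transports between $\mathcal{C}(T(M),M)$ and $\mathcal{C}(M,W(M))$.

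The first step is to verify that, writing $k:=\adj^{T\dashv W}(h)$, the pair $(M,k)$ is a genuine $W$-coalgebra. For coassociativity $\Delta_M\circ k = W(k)\circ k$, I would apply the iterated adjunction $\adj^{T^2\dashv W^2}\colon \mathcal{C}(T^2(M),M)\to \mathcal{C}(M,W^2(M))$ (obtained by composing two copies of $\adj^{T\dashv W}$) to the monad associativity identity $h\circ \mu_M = h\circ T(h)$. The left-hand side transports to $\Delta_M\circ k$ by the conjugacy of $\mu$ with $\Delta$ built into the definition of monadic adjointness. For the right-hand side one has $\adj^{T^2\dashv W^2}(h\circ T(h)) = W(k)\circ k$, a short diagram chase using the explicit formula $\adj^{T\dashv W}(f) = W(f)\circ \unit{T}{W}_A$ for $f\colon T(A)\to B$ together with naturality of the adjunction unit $\unit{T}{W}$ applied to $h\colon T(M)\to M$. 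Equating both sides yields coassociativity. The counit axiom $\varepsilon_M\circ k = \id_M$, where $\varepsilon$ denotes the comonad counit of $W$, follows in the same way from the monad unit axiom $h\circ \eta_M = \id_M$ via the conjugacy of $\eta$ with $\varepsilon$.

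The second step is to check that $\Phi$ is well-defined on morphisms: given $\varphi\colon (M,h)\to (M',h')$ in $\mathcal{C}^T$, applying $\adj^{T\dashv W}$ to both sides of $\varphi\circ h = h'\circ T(\varphi)$ and using naturality of the adjunction isomorphism yields $W(\varphi)\circ k = k'\circ \varphi$, which is precisely the $W$-coalgebra morphism condition. Functoriality of $\Phi$ is then trivial since $\Phi$ is the identity on underlying morphisms. The inverse functor $\Psi\colon \mathcal{C}^W\to \mathcal{C}^T$ is defined by sending $(M,k)$ to $(M,(\adj^{T\dashv W})^{-1}(k))$; the verifications are formally dual and rely on exactly the same conjugacy data. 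Mutual invertibility $\Phi\Psi = \id$ and $\Psi\Phi = \id$ is immediate since $\adj^{T\dashv W}$ and its inverse cancel on the structural maps while the underlying morphisms are untouched.

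The main technical obstacle lies in the first step, specifically in the diagram chase establishing $\adj^{T^2\dashv W^2}(h\circ T(h)) = W(k)\circ k$. The argument itself is routine once $\adj^{T^2\dashv W^2}$ is written as the composite of an inner and an outer copy of $\adj^{T\dashv W}$ and naturality of $\unit{T}{W}$ is invoked, but keeping the monad data $(\eta,\mu)$, the comonad data $(\varepsilon,\Delta)$, and the adjunction unit-counit pair $(\unit{T}{W},\counit{T}{W})$ notationally distinct is where errors are easiest to make.
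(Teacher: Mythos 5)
Your proof is correct and takes the standard approach: transport the structure map through the adjunction isomorphism and use the two conjugacy conditions (for $\mu$/$\Delta$ and $\eta$/$\varepsilon$) together with the naturality of $\adj^{T\dashv W}$ in both variables to transfer the (co)monad algebra axioms. The paper does not actually spell out a proof — it simply cites Eilenberg--Moore \cite[Proposition 3.3]{EM65} — but the argument you give is precisely the one that reference and the surrounding definitions in the paper intend, and the delicate step you flag ($\adj^{T^2\dashv W^2}(h\circ T(h)) = W(k)\circ k$) does indeed go through by factoring $\adj^{T^2\dashv W^2}$ into two copies of $\adj^{T\dashv W}$ and applying naturality in the source variable to $T(h)$ and naturality in the target variable to $k$.
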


\section{Adjunctions with Nakayama functors} \label{sec:nakayamafunctors}

In this section, we recall results from the third author's paper \cite{Kva16} on Nakayama functors for adjunctions and apply them to the Eilenberg--Moore adjunction associated to a free monad. 

\begin{defn}[{\cite[Definition 1.1.1]{Kva16}}]
Let $\mathcal{A}$ and $\mathcal{C}$ be abelian categories and let $f^*\colon \mathcal{A}\to \mathcal{C}$ be a faithful functor with left adjoint $f_!\colon \mathcal{C}\to \mathcal{A}$. A \emphbf{Nakayama functor} with respect to $f_!\dashv f^*$ is a functor $\nu\colon \mathcal{A}\to \mathcal{A}$ with right adjoint $\nu^-\colon \mathcal{A}\to \mathcal{A}$ satisfying:
\begin{enumerate}[(i)]
\item $\nu\circ f_!$ is right adjoint to $f^*$.
\item The unit of the adjunction $\nu\dashv \nu^-$ induces a natural isomorphism $f_!\to \nu^-\circ \nu\circ f_!$. 
\end{enumerate}
\end{defn}

It follows from \cite[Theorem 3.3.4]{Kva16} that a Nakayama functor relative to $f_!\dashv f^*$ is unique up to an equivalence $\Phi\colon \mathcal{A}\to \mathcal{A}$ satisfying $\Phi\circ f_!=f_!$. From now on, we fix the notation $f_*:=\nu\circ f_!\colon \mathcal{C}\to \mathcal{A}$, so that we have adjunctions $f_!\dashv f^*\dashv f_*$. %Furthermore, we let 
% $P:=f_!f^*$ and $I:=f_*f^*$ denote the composites. 
According to \cite[Theorem 3.3.5]{Kva16}, ambidextrous adjunctions (in the monadic sense) are intrically related to the existence of a Nakayama functor. Under our strong assumptions, such a situation arises. 

\begin{lem}\label{XadmitsNakayamafunctor}
Let $\mathcal{C}$, $X$, and $Y$ be as in Assumption \ref{standard1}. 
Suppose that there is an ambidextrous adjunction $Y\dashv X\dashv Y$. Then, there is an ambidextrous adjunction $W(Y)\dashv T(X)\dashv W(Y)$ in the monadic sense. In particular, the adjunction $F_{T(X)}\dashv U_{T(X)}$ has a Nakayama functor.  
\end{lem}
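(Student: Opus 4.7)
The plan is to combine Lemmas \ref{naturalmonadicadjunction} and \ref{unnaturalmonadicadjunction} to produce the ambidextrous adjunction in the monadic sense, and then invoke the existing theory of Nakayama functors from \cite{Kva16} to obtain the final claim.

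First I would observe that Assumption \ref{standard1} together with the hypothesis $Y\dashv X\dashv Y$ provides both ingredients required by Lemmas \ref{naturalmonadicadjunction} and \ref{unnaturalmonadicadjunction}: the biproducts $\bigoplus_{i\geq 0} X^i(M)$ and $\bigoplus_{i\geq 0} Y^i(M)$ exist for every $M\in\mathcal{C}$, and both adjunctions $X\dashv Y$ and $Y\dashv X$ are available. Applying Lemma \ref{naturalmonadicadjunction} to $X\dashv Y$ yields $T(X)\dashv W(Y)$ in the monadic sense, and applying Lemma \ref{unnaturalmonadicadjunction} directly yields $W(Y)\dashv T(X)$ in the monadic sense. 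Together these give the asserted ambidextrous adjunction $W(Y)\dashv T(X)\dashv W(Y)$.

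For the second part, I would exploit Proposition \ref{eilenbergmooreequivalent}: the monadic adjunction $T(X)\dashv W(Y)$ produces an isomorphism $\mathcal{C}^{T(X)}\cong \mathcal{C}^{W(Y)}$. Since this isomorphism sends $(M,h_1)$ to its mate $(M,h_1')$ under the adjunction $X\dashv Y$ and leaves the underlying object in $\mathcal{C}$ unchanged, the forgetful functors $U_{T(X)}$ and $U_{W(Y)}$ are identified across it. Combining Proposition \ref{eilenbergmooreadjunction} applied to both monad and comonad then shows that $U_{T(X)}$ admits a right adjoint (namely $F_{W(Y)}$ transported across the isomorphism) in addition to its left adjoint $F_{T(X)}$, so the Eilenberg--Moore adjunction $F_{T(X)}\dashv U_{T(X)}$ is itself part of an ambidextrous triple. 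The existence of a Nakayama functor now follows directly from \cite[Theorem 3.3.5]{Kva16}, which asserts precisely that a faithful right adjoint with an ambidextrous structure of this form admits a Nakayama functor.

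I do not anticipate any serious obstacle: essentially all of the work has been carried out in Lemmas \ref{naturalmonadicadjunction} and \ref{unnaturalmonadicadjunction} and in \cite{Kva16}. The one point that needs a moment of care is the compatibility of the two forgetful functors under the Eilenberg--Moore isomorphism, but this is immediate from the explicit pointwise description of that isomorphism.
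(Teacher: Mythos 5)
Your proof follows the paper's argument exactly: Lemmas \ref{naturalmonadicadjunction} and \ref{unnaturalmonadicadjunction} produce the ambidextrous monadic adjunction $W(Y)\dashv T(X)\dashv W(Y)$, and \cite[Theorem 3.3.5]{Kva16} then yields the Nakayama functor. The intermediate discussion of the Eilenberg--Moore isomorphism via Propositions \ref{eilenbergmooreadjunction} and \ref{eilenbergmooreequivalent} is harmless but redundant (it is part of what \cite[Theorem 3.3.5]{Kva16} already delivers), and the phrase ``ambidextrous triple'' applied to $F_{T(X)}\dashv U_{T(X)}\dashv F_{W(Y)}$ is a slight abuse since the two outer adjoints are not isomorphic in general.
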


\begin{proof}
The adjointness of the functors follows from Lemmas \ref{naturalmonadicadjunction} and \ref{unnaturalmonadicadjunction}. The statement about the existence of the Nakayama functor then follows from \cite[Theorem 3.3.5]{Kva16}.
\end{proof}

In the situation of the above lemma, we write 
\[f_!:=F_{T(X)}\colon \mathcal{C}\to \mathcal{C}^{T(X)};\quad f^*:=U_{T(X)}\colon \mathcal{C}^{T(X)}\to \mathcal{C};\quad	 f_*:=F_{W(Y)}\colon \mathcal{C}\to \mathcal{C}^{W(Y)}\]
%\begin{align*}
%f_!&:=F_{T(X)}\colon \mathcal{C}\to \mathcal{C}^{T(X)}\\
%f^*&:=U_{T(X)}\colon \mathcal{C}^{T(X)}\to \mathcal{C}\\
%f_*&:=F_{W(Y)}\colon \mathcal{C}\to \mathcal{C}^{W(Y)}
%\end{align*} 
following the notation in Proposition \ref{eilenbergmooreadjunction}. Identifying the categories $\mathcal{C}^{W(Y)}$ and $\mathcal{C}^{T(X)}$ via the isomorphism in Proposition \ref{eilenbergmooreequivalent}, we get adjunctions $f_!\dashv  f^*\dashv f_*$. We recall from \cite{Kva16}:

\begin{defn}[{\cite[Section 4.1]{Kva16}}]
Let $f_!\dashv f^*$ be an adjunction with a Nakayama functor. Then an object is called \emphbf{relative projective} with respect to $f_!\dashv f^*$ if it is a direct summand of an object in the essential image of $f_!$. Dually, an object is called \emphbf{relative injective} with respect to $f^*\dashv f_*$ if it is a direct summand of an object in the essential image of $f_*$. 
\end{defn}

%Note that being relative projective is equivalent to being a direct summand of an object in the essential image of $f_!$ as $P(M,h)=f_!(M)$ and $f_!(M)=P(M,0)$. Dually, being relative injective is equivalent to being a direct summand of an object in the essential image of $f_*$. 

\begin{defn}[{\cite[Definition 4.1.1]{Kva16}}]\label{Gorenstein projectives}
Let $f_!\dashv f^*$ be an adjunction with a Nakayama functor $\nu$. 
\begin{enumerate}[(i)]
\item An object $G\in \mathcal{A}$ is \emphbf{relative Gorenstein projective} with respect to $f_!\dashv f^*$ if there exists an exact sequence
\[
Q_{\bullet}=\cdots \xrightarrow{f_2} Q_{1} \xrightarrow{f_1} Q_0\xrightarrow{f_0} Q_{-1} \xrightarrow{f_{-1}} \cdots 
\]
in $\mathcal{A}$, where $Q_i$ is relative projective for all $i\in \mathbb{Z}$, such that the complex $\nu(Q_{\bullet})$ is exact, and with $Z_0(Q_{\bullet})=\ker f_0=G$. We denote the full subcategory of $\mathcal{A}$ consisting of relative Gorenstein projective objects by $\relGproj{f_!}{}$. 
\item An object $G\in \mathcal{A}$ is \emphbf{relative Gorenstein injective} with respect to $f^*\dashv f_*$ if there exists an exact sequence
\[
J_{\bullet}=\cdots \xrightarrow{g_2} J_{1} \xrightarrow{g_1} J_0\xrightarrow{g_0} J_{-1} \xrightarrow{g_{-1}} \cdots 
\]
in $\mathcal{A}$, where $J_i$ is relative injective for all $i\in \mathbb{Z}$, such that the complex $\nu^-(J_{\bullet})$ is exact, and with $Z_0(J_{\bullet})=\ker g_0=G$. We denote the full subcategory of $\mathcal{A}$ consisting of relative Gorenstein injective objects by $\relGinj{I}{\mathcal{A}}$.
\end{enumerate} 
\end{defn} 

 It is easy to see that $\relGproj{P}{\mathcal{A}}$ and $\relGinj{I}{\mathcal{A}}$ contain the relative projective and the relative injective objects, respectively. Note that by \cite[Proposition 4.1.5]{Kva16} the categories $\relGproj{P}{\mathcal{A}}$ and $\relGinj{I}{\mathcal{A}}$ are closed under extensions, and are therefore exact categories. Furthermore, $\relGproj{P}{\mathcal{A}}$ has the special properties that it is generating (i.e. for every $M\in \mathcal{A}$ there exists $G\in \relGproj{P}{\mathcal{A}}$ and an epimorphism $G\twoheadrightarrow M$) and that it is closed under direct summands and kernels of epimorphisms, i.e. it is a \emphbf{resolving} subcategory of $\mathcal{A}$, see \cite[Proposition 4.1.5]{Kva16}. Dually, $\relGinj{I}{\mathcal{A}}$ is a \emphbf{coresolving} subcategory of $\mathcal{A}$. 

The following lemma shows that $\relGproj{P}{\mathcal{A}}$ has enough injectives and projectives as an exact category if $\mathcal{C}$ has enough injectives and projectives. Furthermore, the injective and projective objects in $\relGproj{P}{\mathcal{A}}$ are precisely the summands of objects of the form $f_!(J)$ and $f_!(Q)$ where $J$ is an injective and $Q$ is a projective object in $\mathcal{C}$, respectively.

\begin{lem}\label{injectives and projectives for Gorenstein P-projectives}
Let $f_!\dashv f^*$ be an adjunction with a Nakayama functor $\nu$. Furthermore, let $A\in \relGproj{P}{\mathcal{A}}$. The following holds:
\begin{enumerate}[(i)]
\item\label{Injectives for Gorenstein P-projectives} If $J\in \inj \mathcal{C}$, then $\Ext^i_{\mathcal{A}}(A,f_!(J))=0$ for all $i>0$;
\item\label{Enough injectives for Gorenstein P-projectives} Assume $\mathcal{C}$ has enough injectives. Then there exists an exact sequence
\[
0\to A\to f_!(J)\to A'\to 0
\]
where $J\in \inj \mathcal{C}$ and $A'\in \relGproj{P}{\mathcal{A}}$;
\item\label{Projectives for Gorenstein P-projectives} If $Q\in \proj \mathcal{C}$, then $\Ext^i_{\mathcal{A}}(f_!(Q),A)=0$ for all $i>0$;
\item\label{Enough projectives for Gorenstein P-projectives} Assume $\mathcal{C}$ has enough projectives. Then there exists an exact sequence
\[
0\to A'\to f_!(Q)\to A\to 0
\]
where $Q\in \proj \mathcal{C}$ and $A'\in \relGproj{P}{\mathcal{A}}$.
\end{enumerate}
\end{lem}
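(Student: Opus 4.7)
The plan is to prove the four statements in the order (iii), (iv), (i), (ii), so that each part can invoke those that precede it. This ordering reflects that (iii) and (iv) follow directly from the adjunction structure, whereas (i) and (ii) require the Nakayama functor formalism of \cite{Kva16}.

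For (iii), since $f^*$ is sandwiched between its two adjoints $f_!$ and $f_*=\nu f_!$, it is exact; hence its left adjoint $f_!$ preserves projectives, so $f_!(Q)$ is projective in $\mathcal{A}$ and the Ext vanishing is immediate. For (iv), the defining complete resolution of $A\in\relGproj{P}{\mathcal{A}}$ provides an epimorphism $Q_0\twoheadrightarrow A$ with $Q_0$ a direct summand of $f_!(C_0)$ for some $C_0\in\mathcal{C}$. Choosing an epimorphism $Q\twoheadrightarrow C_0$ with $Q\in\proj\mathcal{C}$ (available by hypothesis) and applying the right exact functor $f_!$, the composition $f_!(Q)\twoheadrightarrow f_!(C_0)\twoheadrightarrow Q_0\twoheadrightarrow A$ is an epimorphism. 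Its kernel lies in $\relGproj{P}{\mathcal{A}}$ since this class is resolving, i.e., closed under kernels of epimorphisms, as recalled in the paragraph preceding the lemma.

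For (i), I would exploit three consequences of the Nakayama formalism: $\nu f_!=f_*$; the unit of $\nu\dashv \nu^-$ restricts to a natural isomorphism on the essential image of $f_!$, so $f_!(J)\cong \nu^- f_*(J)$; and $f_*$ preserves injectives because $f^*$ is exact. Combining these with the adjunction $\nu\dashv\nu^-$ yields a natural isomorphism
\[
\Hom_\mathcal{A}(M,f_!(J)) \;\cong\; \Hom_\mathcal{A}(\nu M, f_*(J))
\]
for every $M\in\mathcal{A}$. Applied to a complete resolution $Q_\bullet$ of $A$, the complex $\Hom_\mathcal{A}(Q_\bullet,f_!(J))$ is identified with $\Hom_\mathcal{A}(\nu Q_\bullet, f_*(J))$, which is exact, since $\nu Q_\bullet$ is exact by the definition of $\relGproj{P}{\mathcal{A}}$ and $f_*(J)$ is injective in $\mathcal{A}$. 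To extract $\Ext$ vanishing from this $\Hom$-exactness, I would dimension-shift through the syzygies $Z_i$ arising from the short exact sequences $0\to Z_i\to Q_i\to Z_{i-1}\to 0$ extracted from $Q_\bullet$, noting that all $Z_i$ lie in $\relGproj{P}{\mathcal{A}}$ by translating the resolution, and using that relative projectives themselves are Gorenstein projective via a trivial two-term complete resolution. This reduces higher $\Ext^i(A,f_!(J))$ to $\Ext^1(Z_{i-1},f_!(J))$, which is handled by the same complex-exactness argument applied to the complete resolution of each $Z_{i-1}$.

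For (ii), I would construct the embedding $A\hookrightarrow f_!(J)$ as follows. Using enough injectives in $\mathcal{C}$, embed $f^*\nu(A)\hookrightarrow J$ with $J\in\inj\mathcal{C}$; by the adjunction $f^*\dashv f_*$ this corresponds to a morphism $\nu(A)\to f_*(J)$, which I expect to be monic after possibly enlarging $J$. Applying the left exact functor $\nu^-$ and pre-composing with the unit $A\to \nu^-\nu(A)$ — which for $A\in\relGproj{P}{\mathcal{A}}$ is an isomorphism by a dual argument using the complete resolution — yields a monomorphism $A\hookrightarrow \nu^- f_*(J)\cong f_!(J)$. The cokernel lies in $\relGproj{P}{\mathcal{A}}$ thanks to the $\Ext$ vanishing established in (i) together with extension-closure of $\relGproj{P}{\mathcal{A}}$. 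The main obstacle I anticipate is to make the dimension shift in (i) airtight — in particular, organising the induction so that the complex-level exactness of $\Hom(Q_\bullet,f_!(J))$ really propagates to vanishing of each $\Ext^i$ — and to justify the technical claim used in (ii) that the unit $A\xrightarrow{\sim}\nu^-\nu(A)$ is invertible on the class of Gorenstein projective objects, both of which are essentially the content of the Nakayama formalism of \cite{Kva16}.
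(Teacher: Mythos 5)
Parts~(iii) and~(iv) of your proposal are correct and, interestingly, simpler than dualising the paper's argument: the observation that $f_!$ preserves projectives (since $f^*$, being sandwiched between two adjoints, is exact) makes (iii) immediate, and (iv) then follows from the resolving property. This is a genuine shortcut.

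Parts~(i) and~(ii) have real gaps, which you anticipate but underestimate. In~(i), your dimension-shift is circular: from $0\to Z_1\to Q_1\to A\to 0$ the long exact sequence only yields $\Ext^1_{\mathcal{A}}(A,f_!J)\hookrightarrow\Ext^1_{\mathcal{A}}(Q_1,f_!J)$, and you then need $\Ext^j_{\mathcal{A}}(Q_1,f_!J)=0$ for $Q_1$ relative projective. But $Q_1$ is a summand of $f_!(C)$ with $C$ \emph{arbitrary} in $\mathcal{C}$, so $Q_1$ is not projective in $\mathcal{A}$, and the same vanishing is precisely an instance of what is being proved. Taking the trivial complete resolution of $Q_1$ gives nothing new, so the induction has no base case. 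The exactness of $\Hom_{\mathcal{A}}(Q_\bullet,f_!J)$ does not compute $\Ext^*_{\mathcal{A}}(A,f_!J)$, because $Q_\bullet$ is a resolution by relative projectives, not by projectives of $\mathcal{A}$. The paper closes exactly this gap by working with Yoneda $\Ext$-groups in the exact category $\relGproj{P}{\mathcal{A}}$ --- where $Q_\bullet$ genuinely is a projective resolution --- via the exact adjoint pair $f^*\nu\dashv f_!$ and \cite[Lemma~3.2]{LO17}, and then identifying these with $\Ext$-groups in $\mathcal{A}$ using that $\relGproj{P}{\mathcal{A}}$ is resolving so the canonical functor $D^-(\relGproj{P}{\mathcal{A}})\to D^-(\mathcal{A})$ is fully faithful. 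Without that identification the dimension shift cannot get off the ground.

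In~(ii), your concluding step is not valid: you claim the cokernel $A'$ of $A\hookrightarrow f_!(J)$ lies in $\relGproj{P}{\mathcal{A}}$ by ``the $\Ext$-vanishing from~(i) together with extension-closure''. But~(i) gives $\Ext^1(G,f_!J)=0$ for $G$ already known to be Gorenstein projective; it does not let you infer that the quotient $A'$ is Gorenstein projective, and extension-closure is about the middle term of a short exact sequence, not the cokernel of a monomorphism between Gorenstein projectives. The paper avoids the issue by starting from the structural sequence $0\to A\to f_!(B)\to A''\to 0$ furnished by the complete resolution (so $A''\in\relGproj{P}{\mathcal{A}}$), embedding $B\hookrightarrow J$, setting $A'=\coker(f_!(j)\circ i)$, and using the snake lemma to exhibit $A'$ as an extension of $f_!(\coker j)$ by $A''$, both of which are already Gorenstein projective. (Your observation that the unit $A\to\nu^-\nu(A)$ is a monomorphism for $A\in\relGproj{P}{\mathcal{A}}$ is fine --- embed $A$ into some relative projective where the unit is invertible and use naturality --- but it is not enough to make the cokernel argument work.)
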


\begin{proof}
We only prove parts \eqref{Injectives for Gorenstein P-projectives} and \eqref{Enough injectives for Gorenstein P-projectives}, parts \eqref{Projectives for Gorenstein P-projectives} and \eqref{Enough projectives for Gorenstein P-projectives} are proved dually. 
\begin{enumerate}[(i)]
\item[(i)] The restrictions of the functors $f^*\nu\colon \mathcal{G}_P\proj(\mathcal{A})\to \mathcal{C}$ and $f_!\colon \mathcal{C}\to \relGproj{P}{\mathcal{A}}$ are exact and adjoint. It follows from the well-known \cite[Lemma 3.2]{LO17} that for  $A\in \relGproj{P}{\mathcal{A}}$ and  $C\in \mathcal{C}$ there are isomorphisms 
\[\Ext^n_{\mathcal{C}}((f^*\nu)(A),C)\cong \Ext^n_{\relGproj{P}{\mathcal{A}}}(A,f_!(C))\]
for all $n\geq 0$ where $\Ext^n_{\relGproj{P}{\mathcal{A}}}(-,-)$ and $\Ext^n_\mathcal{C}(-,-)$ denote the Yoneda $\Ext$-groups in the exact category $\relGproj{P}{\mathcal{A}}$ and the abelian category $\mathcal{C}$, respectively. Since $\relGproj{P}{\mathcal{A}}$ is resolving, it follows that $\relGproj{P}{\mathcal{A}}$ satisfies the dual of condition (C2) in \cite[Section 12]{Kel96}, and hence by the dual of \cite[Theorem 12.1]{Kel96}, the induced functor 
\[D^-(\relGproj{P}{\mathcal{A}})\to D^-(\mathcal{A})\]
between the derived categories of the exact category $\relGproj{P}{\mathcal{A}}$ and the abelian category $\mathcal{A}$  is fully-faithful. Hence, we obtain that
\begin{align*}\Ext^n_{\relGproj{P}{\mathcal{A}}}(A,f_!(C))&\cong \Hom_{D^-(\relGproj{P}{\mathcal{A}})}(A,f_!(C)[n])\\
&\cong \Hom_{D^-(\mathcal{A})}(A, f_!(C)[n])\cong \Ext^n_{\mathcal{A}}(A,f_!(C))
\end{align*}
where the first isomorphism for exact categories is well-known, see e.g. \cite[Proposition A.13]{Pos11}. In particular, combining these isomorphisms, we obtain that 
\[\Ext^i_\mathcal{A}(A,f_!(J))\cong \Ext^i_\mathcal{C}((f^*\nu)(A),J)=0\]
for all $i>0$ since $J$ is injective in $\mathcal{C}$. This proves part (i).
\item[(ii)] Now assume $\mathcal{C}$ has enough injectives. Since $A\in \relGproj{P}{\mathcal{A}}$, by definition there exists an exact sequence 
\[
0\to A\xrightarrow{i} f_!(B)\xrightarrow{p} A''\to 0
\] 
with $A''\in \relGproj{P}{\mathcal{A}}$. Choose a monomorphism $B\xrightarrow{j}J$ in $\mathcal{C}$ with $J\in \inj \mathcal{C}$, and set $A':= \coker(f_!(j)\circ i)$. Then there is a commutative diagram
\[
\begin{tikzcd}
0\arrow{r} &A\arrow{r}{i}\arrow{d}{1_A}&f_!(B)\arrow{d}{f_!(j)}\arrow{r}&A''\arrow{d}\arrow{r}&0\\
0\arrow{r} &A\arrow{r}{f_!(j)\circ i}&f_!(J)\arrow{r}&A'\arrow{r}&0
\end{tikzcd}
\]
with exact rows. It follows by the snake lemma that $A''\to A'$ is a monomorphism with cokernel isomorphic to $\coker f_!(j)\cong f_!(\coker j)$. Since $A''\in \relGproj{P}{\mathcal{A}}$ and $f_!(\Coker j)\in \relGproj{P}{\mathcal{A}}$ and $\relGproj{P}{\mathcal{A}}$ is closed under extensions, it follows that $A'\in \relGproj{P}{\mathcal{A}}$. This proves part (\ref{Enough injectives for Gorenstein P-projectives}). \qedhere
\end{enumerate}
\end{proof}

A dual result holds for relative Gorenstein injectives.
Let $f_!\dashv f^*$ be an adjunction with a Nakayama functor.  This guarantees that the derived functors $L_j\nu$ and $R^j\nu^-$ exist, independently of whether $\mathcal{A}$ or $\mathcal{B}$ have enough projectives or injectives, see \cite[Definition 4.2.1]{Kva16}.

\begin{defn}
Let $f_!\dashv f^*$ be an adjunction with a Nakayama functor $\nu$. Then, the adjunction $f_!\dashv f^*$ is called \emphbf{Iwanaga--Gorenstein} if there exists an integer $n$ such that $L_j\nu=0$ for all $j>n$ and $R^j\nu^-=0$ for all $j>n$.  
\end{defn}

In this case, the categories of relative Gorenstein projectives and injectives admit another description:

\begin{prop}[{\cite[Theorem 4.2.2]{Kva16}}]
Let $f_!\dashv f^*$ be an Iwanaga--Gorenstein adjunction on $\mathcal{A}$ with a Nakayama functor $\nu$. The following holds:
\begin{enumerate}[(i)]
\item $A\in \mathcal{A}$ is relative Gorenstein projective if and only if $L_j\nu(A)=0$ for all $j>0$;
\item $A\in \mathcal{A}$ is relative Gorenstein injective if and only if $R^j\nu^-(A)=0$ for all $j>0$; 
\end{enumerate}
\end{prop}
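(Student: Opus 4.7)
The plan is to prove part (i) directly and obtain part (ii) by a straightforward dualisation, observing that swapping $f_!\dashv f^*\dashv f_*$ for its right-module analogue swaps the roles of $\nu,\nu^-$ and of relative projectives with relative injectives.

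For the ``only if'' direction of (i), assume $A$ is relative Gorenstein projective, so that there exists an acyclic complex $Q_\bullet$ of relative projectives with $Z_0(Q_\bullet)=A$ and such that $\nu(Q_\bullet)$ is also acyclic. The truncation $\cdots\to Q_2\to Q_1\to Q_0\to A\to 0$ is a relative projective resolution of $A$, so since the derived functors $L_j\nu$ are computed on such resolutions (this being the content of \cite[Section 4.2]{Kva16}), we obtain $L_j\nu(A)\cong H_j(\nu(Q_{\geq 0}))$ for $j>0$. But $H_j(\nu(Q_{\geq 0}))=H_j(\nu(Q_\bullet))=0$ by assumption, proving the vanishing.

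For the ``if'' direction, assume $L_j\nu(A)=0$ for all $j>0$, and let $n$ be the bound from the Iwanaga--Gorenstein hypothesis. I would first build the left half of a complete resolution: since the counit $f_!f^*\to \Id_{\mathcal{A}}$ is a pointwise epimorphism (as $f^*$ is faithful), the category $\mathcal{A}$ has enough relative projectives, and a standard construction yields a resolution $\cdots\to Q_1\to Q_0\to A\to 0$ by relative projectives. A dimension-shift along this resolution gives $L_j\nu(\Omega^k A)=0$ for all $j>0$ and all $k\geq 0$, so applying $\nu$ keeps the whole left half exact. For the right half, the idea is to dualise via the Nakayama functor: the complex $\nu(Q_{\geq 0})$ is an acyclic complex of relative injectives (since $\nu\circ f_!=f_*$), and its zeroth cycle is $\nu(A)$. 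Using the Iwanaga--Gorenstein bound on $R^j\nu^-$, I would show by the dual of the same dimension-shifting argument that $\nu(A)$ admits a coresolution $0\to \nu(A)\to J^0\to J^1\to\cdots$ by relative injectives along which $\nu^-$ stays exact; applying $\nu^-$ and splicing onto $A$ using the natural isomorphism $\nu^-\nu f_!\cong f_!$ produces the right half $0\to A\to Q_{-1}\to Q_{-2}\to\cdots$ by relative projectives whose image under $\nu$ recovers the (exact) coresolution of $\nu(A)$. Gluing the two halves yields the complete resolution certifying that $A\in\relGproj{f_!}{}$.

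The main obstacle will be the gluing step: verifying that applying $\nu$ to the spliced complex remains acyclic \emph{at the splice point} and that the two halves can be arranged compatibly. This rests on the formula $\nu^-\circ\nu\circ f_!\cong f_!$ from the definition of a Nakayama functor, together with the fact that on relative injectives in the essential image of $\nu$ the canonical map $\nu\nu^-\to\Id$ is an isomorphism (which follows formally by the same uniqueness argument used in \cite[Theorem 3.3.4]{Kva16} for right-module-like Nakayama functors). Once this compatibility is in place, the Iwanaga--Gorenstein hypothesis provides exactly the finiteness needed to iterate the construction coherently on both sides, and part (ii) follows by applying (i) to the opposite adjunction data.
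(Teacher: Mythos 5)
The paper offers no proof of this Proposition — it is cited verbatim from \cite[Theorem 4.2.2]{Kva16} — so there is no in-paper argument to compare against; I can only assess your proposal on its own. The ``only if'' direction and the architecture of the ``if'' direction (left half from a $\nu$-exact relative projective resolution, right half obtained by applying $\nu^-$ to a relative injective coresolution of $\nu(A)$) are reasonable, but the gluing step you flag as the main obstacle is a genuine gap, and the two facts you invoke do not close it. Applying $\nu^-$ to $0\to\nu(A)\to J^0\to J^1\to\cdots$ produces a coresolution of $\nu^-\nu(A)$, not of $A$, so to splice onto $A$ you need the unit $\unit{\nu}{\nu^-}_A\colon A\to\nu^-\nu(A)$ to be an isomorphism. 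The isomorphism $\nu^-\nu f_!\cong f_!$ concerns only relative projectives (and $A$ is not assumed to be one), while ``$\nu\nu^-\to\Id$ is an isomorphism on relative injectives in the image of $\nu$'' addresses the opposite composite; neither says anything about $\nu^-\nu(A)$.

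The gap can be filled, but the missing ingredient is Proposition~\ref{kernel and cokernel of unit/counit}: the kernel and cokernel of $\unit{\nu}{\nu^-}_A$ are $R^1\nu^-(\tau(A))$ and $R^2\nu^-(\tau(A))$. Under the hypothesis $L_j\nu(A)=0$ for all $j>0$, the relative projective resolution stays exact after $\nu$, so one computes $\tau(A)\cong\nu(\Omega^2 A)$ with $\Omega^k A$ the syzygies of that resolution. The same dimension shift along the $\nu$-image of the resolution that you use to build the coresolution of $\nu(A)$ also shows $R^j\nu^-(\nu(\Omega^k A))=0$ for all $j>0$ and $k\geq 0$, using the Iwanaga--Gorenstein bound $R^j\nu^-=0$ for $j>n$. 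In particular $R^1\nu^-(\tau(A))=0=R^2\nu^-(\tau(A))$, so $\unit{\nu}{\nu^-}_A$ is an isomorphism and the splice is legitimate. As written, though, the proposal stops short of this, and ``iterating the construction coherently'' does not follow from the stated facts alone.
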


The dimension of $\mathcal{A}$ with respect to the resolving subcategory $\relGproj{P}{\mathcal{A}}$, denoted $\dim_{\relGproj{P}{\mathcal{A}}} (\mathcal{A})$, is the smallest integer $n$ such that for any object $A\in \mathcal{A}$ there exists an exact sequence $0\to G_n\to \dots\to G_1\to G_0\to A\to 0$ with $G_i\in \relGproj{P}{\mathcal{A}}$ for $0\leq i\leq n$. Dually, the dimension of $\mathcal{A}$ with respect to the coresolving subcategory $\relGinj{I}{\mathcal{A}}$, denoted $\dim_{\relGinj{I}{\mathcal{A}}}(\mathcal{A})$, is the smallest integer $n$ such that for any object $A\in \mathcal{A}$ there exists an exact sequence $0\to A\to G_0'\to\dots\to G_n'\to 0$ with $G_i'\in \relGinj{I}{\mathcal{A}}$. for $0\leq i\leq n$.

\begin{thm}[{\cite[Theorem 4.2.6]{Kva16}}]\label{resolution_dimension}
Let $f_!\dashv f^*$ be an adjunction with Nakayma functor $\nu$. Then, the following are equivalent:
\begin{enumerate}[(1)]
	\item $f_!\dashv f^*$ is Iwanaga--Gorenstein;
	\item $\dim_{\relGproj{P}{\mathcal{A}}}(\mathcal{A})< \infty$;
	\item $\dim_{\relGinj{I}{\mathcal{A}}}(\mathcal{A})< \infty$.
\end{enumerate}
Moreover, if this holds, then the following numbers coincide:
\begin{enumerate}[(1)]
\item $\dim_{\relGproj{P}{\mathcal{A}}}(\mathcal{A})$;
\item $\dim_{\relGinj{I}{\mathcal{A}}}(\mathcal{A})$;
\item The smallest integer $s$ such that $L_i\nu(A)=0$ for all $i>s$ and $A\in \mathcal{A}$;
\item The smallest integer $t$ such that $R^i\nu^-(A)=0$ for all $i>t$ and $A\in \mathcal{A}$.
\end{enumerate}
We say that $f_!\dashv f^*$ is $n$\emphbf{-Iwanaga--Gorenstein} or \emphbf{Iwanaga--Gorenstein of dimension $n$} if this common number is $n$.
\end{thm}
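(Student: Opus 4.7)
My approach is to prove that the four quantities
\[
n_1 := \dim_{\relGproj{P}{\mathcal{A}}}(\mathcal{A}), \qquad n_2 := \dim_{\relGinj{I}{\mathcal{A}}}(\mathcal{A}),
\]
\[
s := \sup\{j\geq 0 \mid \exists A\in\mathcal{A},\, L_j\nu(A)\neq 0\}, \qquad t := \sup\{j\geq 0 \mid \exists A\in\mathcal{A},\, R^j\nu^-(A)\neq 0\}
\]
all coincide, with the convention that they may take the value $+\infty$. The equivalences of (1)--(3) and the numerical coincidence in the moreover clause then follow, since $f_!\dashv f^*$ is Iwanaga--Gorenstein precisely when both $s$ and $t$ are finite.

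The starting point is the baseline vanishing: $L_j\nu(G)=0$ for every $G\in\relGproj{P}{\mathcal{A}}$ and every $j>0$, with the dual statement for relative Gorenstein injectives. This is immediate from Definition \ref{Gorenstein projectives}: the left-hand half of the totally acyclic complex $Q_\bullet$ exhibiting $G$ as Gorenstein projective is a relative projective resolution of $G$, and the hypothesis that $\nu(Q_\bullet)$ is exact says precisely that this resolution is $\nu$-acyclic. Dimension shifting along a Gorenstein projective resolution $0\to G_{n_1}\to\cdots\to G_0\to A\to 0$ then yields $L_j\nu(A)=0$ for all $j>n_1$, so $s\leq n_1$; dually $t\leq n_2$.

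The main obstacle is the converse $n_1\leq s$ (and dually $n_2\leq t$). Given $A$ and a relative projective resolution $\cdots\to P_1\to P_0\to A\to 0$, dimension shifting produces $L_j\nu(K)=0$ for all $j>0$, where $K$ is the $s$-th syzygy. The real content is to show $K\in\relGproj{P}{\mathcal{A}}$; the left half of a totally acyclic complex is furnished by the projective resolution, but the right half $0\to K\to Q_0\to Q_{-1}\to\cdots$ by relative projectives on which $\nu$ stays exact must be constructed by hand. I would proceed inductively: given $B$ with $L_j\nu(B)=0$ for $j>0$, construct a short exact sequence $0\to B\to Q\to B'\to 0$ with $Q$ relative projective and $\nu(B)\hookrightarrow\nu(Q)$ a monomorphism. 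The long exact sequence of left-derived functors together with the baseline vanishing applied to $Q$ then forces $L_j\nu(B')=0$ for $j>0$, so the induction continues. The critical ingredient is the Nakayama property $f_!\xrightarrow{\sim}\nu^-\nu f_!$: using it, one builds $Q$ by applying $\nu^-$ to a suitable relative-injective extension of $\nu(B)$ (the relative injectives in the image of $\nu$ being of the form $f_*(J)=\nu f_!(J)$), and the hypothesis $L_1\nu(B)=0$ ensures that the resulting map $B\to Q$ is a monomorphism on which $\nu$ remains exact.

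Finally, to close the loop $n_1=n_2$ and hence $s=t$, I would observe that the Nakayama property upgrades to an equivalence of exact categories $\nu\colon \relGproj{P}{\mathcal{A}}\xrightarrow{\sim}\relGinj{I}{\mathcal{A}}$ with quasi-inverse $\nu^-$: indeed, $\nu$ sends $f_!(M)$ to $f_*(M)$ and $\nu^-$ sends $f_*(M)$ back to $f_!(M)$, so $\nu$ transforms a totally acyclic complex of relative projectives witnessing $G\in\relGproj{P}{\mathcal{A}}$ into a totally acyclic complex of relative injectives witnessing $\nu(G)\in\relGinj{I}{\mathcal{A}}$. Combining this equivalence with the dual of the argument above lets one transfer Gorenstein projective resolutions to Gorenstein injective coresolutions of equal length, yielding $n_1=n_2$ and completing the chain of equalities.
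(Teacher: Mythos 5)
The paper does not prove this statement; it is cited wholesale from \cite[Theorem 4.2.6]{Kva16}, so there is no internal argument to compare yours against. That said, your overall plan---to show that the four quantities $n_1$, $n_2$, $s$, $t$ coincide, allowing the value $+\infty$---is the right shape, and the inequalities $s\le n_1$ and $t\le n_2$ by baseline vanishing on $\relGproj{P}{\mathcal{A}}$ plus dimension shifting are fine.

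The gap is in the converse $n_1\le s$. You reduce correctly to showing that the $s$-th syzygy $K$ of an arbitrary object lies in $\relGproj{P}{\mathcal{A}}$, and you correctly isolate the hard part as constructing the right half of the totally acyclic complex. But the inductive step is not justified as stated. You propose $Q=\nu^-(I)$ for a relative-injective coembedding $i\colon \nu(B)\hookrightarrow I$ and claim that $L_1\nu(B)=0$ forces the composite $B\xrightarrow{\unit{\nu}{\nu^-}_B}\nu^-\nu(B)\xrightarrow{\nu^-(i)}Q$ to be a monomorphism. The obstruction sits in $\unit{\nu}{\nu^-}_B$, and by the paper's own Proposition~\ref{kernel and cokernel of unit/counit} its kernel is $R^1\nu^-(\tau(B))$---a \emph{right}-derived functor of $\nu^-$ applied to $\tau(B)$, which is governed by $t$, not by the vanishing of $L_1\nu(B)$. (Note also that $\tau(B)\ne L_1\nu(B)$ in general; $L_1\nu(B)$ is a subquotient of $\tau(B)$, so its vanishing does not force $\tau(B)=0$ either.) So finiteness of $s$ alone does not let the induction run; you would need control on the dual side as input, which is exactly the two-sidedness built into the Iwanaga--Gorenstein condition. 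A correct proof must therefore first establish $s<\infty\Leftrightarrow t<\infty$ (or interleave the two directions), and this is the content missing from your sketch. Relatedly, your final step---that $\nu$ transports a finite $\relGproj{P}{\mathcal{A}}$-resolution to a $\relGinj{I}{\mathcal{A}}$-coresolution---also skips a verification: $\nu$ is only right exact, so applying it to a Gorenstein projective resolution stays exact only if $L_1\nu$ vanishes on the intermediate syzygies, and an arbitrary resolution by Gorenstein projectives need not have Gorenstein projective syzygies a priori.
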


For any object $A\in \mathcal{A}$ choose an exact sequence $Q_1\xrightarrow{g'} Q_0\to A\to 0$ and an exact sequence $0\to A\to J_0\xrightarrow {h'} J_1$ with $Q_1$ and $Q_0$ being relative projective and $J_0$ and $J_1$ being relative injective. Let \[\tau(A):=\ker \nu(g')\text{ and }\tau^-:=\coker \nu^-(h').\] 
Note that the isomorphism classes of $\tau(A)$ and $\tau^-(A)$ are not determined uniquely by $A$, as they depend on the choice of $g'$ and $h'$. In particular, $\tau$ and $\tau^-$ are not functors on $\mathcal{A}$ in general. We use the notation $\tau(A)$ to indicate the resemblance with the definition of the Auslander--Reiten translation for a finite-dimensional algebra.

\begin{prop}\label{kernel and cokernel of unit/counit}
Let $f_!\dashv f^*$ be an adjunction with a Nakayama functor $\nu$.
%The following hold:
%\begin{enumerate}[(i)]
%\item\label{kernel and cokernel of unit} 
Then, for all objects $A\in \mathcal{A}$ there is an exact sequence
\[
0\to R^1\nu^-(\tau(A))\to A\xrightarrow{\unit{\nu}{\nu^-}_A} \nu^-\nu(A) \to R^2\nu^-(\tau(A))\to 0.
\] 
%\item\label{kernel and cokernel of counit} For all objects $A\in \mathcal{A}$ there is an exact sequence
%\[
%0\to L_2\nu(\tau^-(A))\to \nu\nu^-(A)\xrightarrow{\counit{\nu}{\nu^-}_A} A \to L_1\nu(\tau^-(A))\to 0.
%\] 
%\end{enumerate} 
\end{prop}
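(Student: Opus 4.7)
The plan is to compute both the kernel and cokernel of $\unit{\nu}{\nu^-}_A$ by pushing the projective presentation $Q_1 \xrightarrow{g'} Q_0 \xrightarrow{p} A \to 0$ used to define $\tau(A)$ through $\nu$ and $\nu^-$ and combining the resulting long exact sequences via naturality. Since $\nu$ is right exact (as a left adjoint), applying it gives $\nu(Q_1) \xrightarrow{\nu(g')} \nu(Q_0) \to \nu(A) \to 0$, and by definition $\tau(A) = \ker \nu(g')$. Factoring $\nu(g')$ through its image $I$ produces two short exact sequences
\begin{equation*}
0 \to \tau(A) \to \nu(Q_1) \to I \to 0 \qquad \text{and} \qquad 0 \to I \to \nu(Q_0) \to \nu(A) \to 0.
\end{equation*}

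Two inputs from the theory of Nakayama functors drive the rest. First, property (ii) of a Nakayama functor gives that $Q_i \to \nu^-\nu(Q_i)$ is an isomorphism for $i = 0, 1$. Second, since $\nu f_! \cong f_*$, the objects $\nu(Q_i)$ are relative injective, so $R^j\nu^-\nu(Q_i) = 0$ for all $j \geq 1$; this is where I would have to be careful, unpacking the derived functor construction of \cite{Kva16} to verify that relative injectives are $R^\bullet\nu^-$-acyclic. Applying the left exact functor $\nu^-$ to the two short exact sequences and using these vanishings yields
\begin{equation*}
0 \to \nu^-\tau(A) \to Q_1 \xrightarrow{\phi} \nu^-(I) \to R^1\nu^-\tau(A) \to 0
\end{equation*}
together with an isomorphism $R^1\nu^-(I) \cong R^2\nu^-\tau(A)$, and
\begin{equation*}
0 \to \nu^-(I) \xrightarrow{\psi} Q_0 \to \nu^-\nu(A) \to R^1\nu^-(I) \to 0.
\end{equation*}

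The final step glues the two via naturality of the unit. The square
\begin{equation*}
\begin{tikzcd}
Q_0 \arrow{r}{p} \arrow{d}{\wr} & A \arrow{d}{\unit{\nu}{\nu^-}_A} \\
\nu^-\nu(Q_0) \arrow{r} & \nu^-\nu(A)
\end{tikzcd}
\end{equation*}
commutes and, after identifying the left vertical with the identity, exhibits $Q_0 \to \nu^-\nu(A)$ as $\unit{\nu}{\nu^-}_A \circ p$; analogously, $\psi\phi$ coincides with $g'$ under the identifications $\nu^-\nu(Q_i) \cong Q_i$. Since $p$ is an epimorphism, $\coker \unit{\nu}{\nu^-}_A = \coker(Q_0 \to \nu^-\nu(A)) = R^1\nu^-(I) \cong R^2\nu^-\tau(A)$. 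For the kernel, view $\nu^-(I)$ as a subobject of $Q_0$ via $\psi$; then $\ker \unit{\nu}{\nu^-}_A = p(\nu^-(I)) \cong \nu^-(I)/\image(\phi) \cong R^1\nu^-\tau(A)$, since $\image g' = \image \psi\phi = \psi(\image \phi)$. Splicing these two computations gives the claimed four-term exact sequence. Beyond the acyclicity claim noted above, the only subtlety is tracking that the identifications $Q_i \cong \nu^-\nu(Q_i)$ transport $\nu^-\nu(g')$ to $g'$ and $\nu^-\nu(p)$ to $p$, which is immediate from naturality of $\unit{\nu}{\nu^-}$.
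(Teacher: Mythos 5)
Your proof is correct and follows essentially the same route as the paper's: the same factorisation of $\nu(g')$ through its image, the same use of relative injectivity of $\nu(Q_i)$ to kill higher $R^j\nu^-$, and the same transport of $\nu^-\nu(g')$ and $\nu^-\nu(p)$ back to $g'$ and $p$ via naturality of $\unit{\nu}{\nu^-}$ at relative projectives. The paper finishes the kernel identification with an explicit commutative ladder and the snake lemma, whereas you argue directly with subobjects and images; this is a cosmetic variation, and the acyclicity fact you flag is indeed a standing feature of the relative derived functors of \cite{Kva16}, used equally (and implicitly) in the paper's proof.
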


\begin{proof}
By definition of $\tau$ there are exact sequences
\begin{equation*}
0\to \tau(A)\to \nu(Q_1)\xrightarrow{h} \im \nu(g')\to 0
\text{ and }
0\to \im \nu(g')\to \nu(Q_0)\xrightarrow{\nu(g)} \nu(A)\to 0.
\end{equation*}
for $h$ the range restriction of $\nu(g')$ to its image. Since $\nu\circ f_!=f_*$, it follows that $\nu$ sends relative projective objects to relative injective objects. Hence, the objects $\nu(Q_i)$ are relative injective for $i=0,1$. Thus, applying $\nu^-$ and considering the long exact sequence in cohomology gives $\Coker \nu^-(h) = R^1\nu^-(\tau(A))$ and $R^2\nu^-(\tau(A))\cong R^1\nu^-(\im \nu(g'))\cong \Coker \nu^-\nu(g)$ as $R^i\nu^-$ vanishes on relative injectives for $i\geq 1$. Since $Q_0$ is relative projective, $\unit{\nu}{\nu^-}_{Q_0}$ is an isomorphism. Hence, $\nu^-\nu(g) = \unit{\nu}{\nu^-}_A\circ g\circ (\unit{\nu}{\nu^-}_{Q_0})^{-1}$ by naturality of $\unit{\nu}{\nu^-}$. Since $(\unit{\nu}{\nu^-}_{Q_0})^{-1}$ is an isomorphism and $g$ is an epimorphism, it follows that $\Coker \unit{\nu}{\nu^-}_A\cong \Coker \nu^-\nu(g) \cong R^2\nu^-(\tau(A))$. Also, we have a commutative diagram

\[
\begin{tikzcd}
0 \arrow{r} &\im g'\arrow{r}\arrow{d}{k} &Q_0 \arrow{r}{g}\arrow{d}{\unit{\nu}{\nu^-}_{Q_0}} &A \arrow{d}{\unit{\nu}{\nu^-}_A}\arrow{r} &0\\
0 \arrow{r}&\nu^-(\im \nu(g'))\arrow{r}&(\nu^-\nu)(Q_0)\arrow{r}{(\nu^-\nu)(g)}&(\nu^-\nu)(A)
\end{tikzcd}
\]
with exact rows, where $k$ is induced from the commutativity of the right square. Here, exactness of the lower row follows from left exactness of $\nu^-$. The snake lemma implies that $\ker \unit{\nu}{\nu^-}_A \cong \coker k$. If $p\colon Q_1\to \im g'$ denotes the range restriction of $g'$ to its image, then we have $\nu^-(h)\circ \unit{\nu}{\nu^-}_{Q_1}= k\circ p$. Since $p$ is an epimorphism and $\unit{\nu}{\nu^-}_{Q_1}$ is an isomorphism, we get that $\coker k\cong \coker \nu^-(h)$. Since $\coker \nu^-(h)\cong R^1\nu^-(\tau(A))$, $\ker \unit{\nu}{\nu^-}_A \cong \coker k$, and $\Coker \unit{\nu}{\nu^-}\cong R^2\nu^-(\tau(A))$ the claim follows.
\end{proof}

A dual statement holds involving $\nu\nu^-(A)$ as middle term and the left derived functors of the Nakayama functor. As in the classical case, a `hereditary' situation allows to conclude that $\tau$ is a functor on the whole category: 

\begin{rmk}\label{hereditaryimpliestauisafunctor}
If the dimension of $\mathcal{A}$ with respect to the subcategory of relative projectives is equal to $1$, then we obtain an exact sequence $0\to Q_1\to Q_0\to A\to 0$ for every $A\in \mathcal{A}$ and therefore the exact sequence $0\to \tau(A)\to \nu(Q_1)\to \nu(Q_0)\to \nu(A)\to 0$ shows that $\tau$ can be identified with the functor $L_1\nu$. Dually, if the dimension of $\mathcal{A}$ with respect to the subcategory of relative injectives is equal to $1$, then $\tau^-$ can be identified with the functor $R^1\nu^-$. 
\end{rmk}

\begin{cor} \label{Gorensteinprojectiveimagenu}
Let $f_!\dashv f^*$ be an adjunction with Nakayama functor $\nu$. If $f_!\dashv f^*$ is $n$-Iwanaga--Gorenstein with $n\leq 2$, then $\im \nu^-=\relGproj{P}{\mathcal{A}}$ and $\im \nu = \relGinj{I}{\mathcal{A}}$. 
\end{cor}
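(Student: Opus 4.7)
I will prove $\im \nu^- = \relGproj{P}{\mathcal{A}}$; the identity $\im \nu = \relGinj{I}{\mathcal{A}}$ follows by the dual argument. The main tools are Proposition~\ref{kernel and cokernel of unit/counit}, which for every $A \in \mathcal{A}$ furnishes the four-term exact sequence
\[
0 \to R^1\nu^-(\tau(A)) \to A \xrightarrow{\unit{\nu}{\nu^-}_A} \nu^-\nu(A) \to R^2\nu^-(\tau(A)) \to 0,
\]
together with the hypothesis $n \leq 2$ which, via Theorem~\ref{resolution_dimension}, forces $R^i\nu^- = 0$ and $L_i\nu = 0$ for $i > 2$, and identifies $\relGproj{P}{\mathcal{A}}$ with $\{A \in \mathcal{A} \mid L_j\nu(A) = 0 \text{ for all } j > 0\}$ by \cite[Theorem 4.2.2]{Kva16}.

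For $\relGproj{P}{\mathcal{A}} \subseteq \im\nu^-$: given $A \in \relGproj{P}{\mathcal{A}}$, I aim to show that the unit $\unit{\nu}{\nu^-}_A$ is an isomorphism, so that $A \cong \nu^-(\nu(A)) \in \im\nu^-$. By the displayed exact sequence it suffices to verify $R^1\nu^-(\tau(A)) = R^2\nu^-(\tau(A)) = 0$. Picking a totally acyclic complex $Q_\bullet$ of relative projectives with $A = Z_0(Q_\bullet)$, so that $\nu(Q_\bullet)$ is exact by definition, I set $\tau_k(A) := \ker(\nu(Q_k) \to \nu(Q_{k-1}))$ for $k \geq 1$ (so $\tau_1(A) = \tau(A)$) and extract short exact sequences
\[
0 \to \tau_{k+1}(A) \to \nu(Q_{k+1}) \to \tau_k(A) \to 0
\]
whose middle terms are relative injective (as $\nu$ takes relative projectives to relative injectives) and thus $R^{>0}\nu^-$-acyclic. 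Iterated dimension shifting gives $R^j\nu^-(\tau(A)) \cong R^{j+k}\nu^-(\tau_{k+1}(A))$ for every $k \geq 0$, which vanishes once $j+k > 2$.

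For $\im\nu^- \subseteq \relGproj{P}{\mathcal{A}}$: given $A = \nu^-(B)$, the goal is to establish $L_j\nu(A) = 0$ for every $j > 0$. The triangle identity $\nu^-(\counit{\nu}{\nu^-}_B) \circ \unit{\nu}{\nu^-}_{\nu^-(B)} = \Id_{\nu^-(B)}$ already exhibits $\unit{\nu}{\nu^-}_A$ as a split monomorphism, giving $R^1\nu^-(\tau(A)) = 0$ for free. For the remaining vanishing I plan to dualise the forward argument: apply the dual of Proposition~\ref{kernel and cokernel of unit/counit} at the object $\nu(A)$, use the other triangle identity to see that $\counit{\nu}{\nu^-}_{\nu(A)}$ is a split epimorphism, and then dimension-shift along a relative injective coresolution of $\tau^-(\nu(A))$ using $L_i\nu = 0$ for $i > 2$ to conclude $L_1\nu(A) = L_2\nu(A) = 0$. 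The hard part will be exactly this reverse inclusion: the triangle identities dispose of one half of the obstruction formally, while the bound $n \leq 2$ must be used essentially on both sides of the Iwanaga--Gorenstein condition to convert this input into the required cohomological vanishing. If the direct dimension-shifting does not close cleanly, a fallback is to first establish that $\nu$ and $\nu^-$ restrict to mutually inverse equivalences between $\relGproj{P}{\mathcal{A}}$ and $\relGinj{I}{\mathcal{A}}$ and then promote this to all of $\im\nu^-$ using the finite Gorenstein resolution dimension bound of Theorem~\ref{resolution_dimension}.
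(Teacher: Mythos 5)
Your forward inclusion $\relGproj{P}{\mathcal{A}}\subseteq\im\nu^-$ is fine and, modulo a harmless shift in indexing of the cosyzygies, is a correct explicit argument: dimension shifting along the short exact sequences $0\to Z_{k+1}(\nu Q_\bullet)\to\nu(Q_{k+1})\to Z_k(\nu Q_\bullet)\to 0$ with relative-injective middle terms, combined with $R^i\nu^-=0$ for $i>2$, does kill $R^1\nu^-(\tau(A))$ and $R^2\nu^-(\tau(A))$, so $\unit{\nu}{\nu^-}_A$ becomes an isomorphism by Proposition \ref{kernel and cokernel of unit/counit}. This is more self-contained than the paper, which at this point simply cites \cite[Proposition 4.1.2]{Kva16}.

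The reverse inclusion, however, has a genuine gap as planned. You want $L_j\nu(A)=0$ for $A=\nu^-(B)$, but the dual of Proposition \ref{kernel and cokernel of unit/counit} applied at $\nu(A)$ reads
\[
0\to L_2\nu\bigl(\tau^-(\nu(A))\bigr)\to \nu\nu^-(\nu(A))\xrightarrow{\counit{\nu}{\nu^-}_{\nu(A)}}\nu(A)\to L_1\nu\bigl(\tau^-(\nu(A))\bigr)\to 0,
\]
so the split-epimorphism observation and any subsequent dimension shifting give vanishing of $L_j\nu$ on $\tau^-(\nu(A))$, \emph{not} on $A$; there is no identification of $\tau^-(\nu(A))$ with $A$. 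Likewise the split monomorphism $\unit{\nu}{\nu^-}_A$ controls $R^1\nu^-(\tau(A))$, which is irrelevant to the quantity $L_j\nu(A)$ you actually need. The paper instead invokes \cite[Lemma 4.2.3]{Kva16}: $A\in\im\nu^-$ iff there is an exact sequence $0\to A\to Q_0\to Q_1$ with $Q_0,Q_1$ relative projective, and then uses that $\relGproj{P}{\mathcal{A}}$ is a resolving subcategory with $\dim_{\relGproj{P}{\mathcal{A}}}(\mathcal{A})\leq 2$ (Theorem \ref{resolution_dimension} together with \cite[Proposition 2.3]{Sto14}) to conclude $A\in\relGproj{P}{\mathcal{A}}$. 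If you want to keep a derived-functor flavour, the correct dimension shift is along that same left-exact sequence: setting $K_1=\im(Q_0\to Q_1)$ and $K_2=\coker(Q_0\to Q_1)$, the sequences $0\to A\to Q_0\to K_1\to 0$ and $0\to K_1\to Q_1\to K_2\to 0$ give $L_1\nu(A)\cong L_2\nu(K_1)\cong L_3\nu(K_2)=0$ and $L_2\nu(A)\cong L_3\nu(K_1)=0$ by $L_i\nu=0$ for $i>2$.
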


\begin{proof}
We only prove the first claim. The proof of the second claim is dual. The inclusion $\mathcal{G}_P\proj(\mathcal{A})\subseteq \image\nu^-$ follows from \cite[Proposition 4.1.2]{Kva16}. For the converse, note that by \cite[Lemma 4.2.3]{Kva16}, we have that $A\in \image \nu^-$ if and only if there exists a sequence $0\to A\to Q_0\to Q_1$ with $Q_0$ and $Q_1$ being relative projective. The claim now follows from the fact that  $\mathcal{G}_P\proj(\mathcal{A})$ is a resolving subcategory with $\dim_{\relGproj{P}{\mathcal{A}}}\mathcal{A}\leq 2$ as $f_!\dashv f^*$ is Iwanaga--Gorenstein of dimension $n\leq 2$, see \cite[Proposition 2.3]{Sto14}. 
\end{proof}

\begin{prop}\label{1gorensteinunitepi}
Let $f_!\dashv f^*$ be an adjunction with Nakayama functor $\nu$. Assume that $f_!\dashv f^*$ is Iwanaga--Gorenstein of dimension $n\leq 1$. Then the unit $\unit{\nu}{\nu^-}$ is a pointwise epimorphism while the counit $\counit{\nu}{\nu^-}$ is a pointwise monomorphism. 
%\begin{enumerate}[(i)]
%\item The unit $\unit{\nu}{\nu^-}$ is a pointwise epimorphism;
%\item The counit $\counit{\nu}{\nu^-}$ is a pointwise monomorphism.
%\end{enumerate} 
\end{prop}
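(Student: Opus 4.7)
The plan is to deduce both claims directly from Proposition \ref{kernel and cokernel of unit/counit} together with the characterization of Iwanaga--Gorenstein dimension provided by Theorem \ref{resolution_dimension}.

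First I would handle the unit. Proposition \ref{kernel and cokernel of unit/counit} supplies, for each $A \in \mathcal{A}$, an exact sequence
\[
0\to R^1\nu^-(\tau(A))\to A\xrightarrow{\unit{\nu}{\nu^-}_A} \nu^-\nu(A) \to R^2\nu^-(\tau(A))\to 0.
\]
Since $f_!\dashv f^*$ is Iwanaga--Gorenstein of dimension $n\leq 1$, Theorem \ref{resolution_dimension} tells us that $R^i\nu^-(B)=0$ for all $i>1$ and all $B\in \mathcal{A}$. In particular $R^2\nu^-(\tau(A))=0$, so the exact sequence forces $\unit{\nu}{\nu^-}_A$ to be an epimorphism. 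This is pointwise in $A$, which is exactly the first claim.

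For the counit, I would invoke the dual of Proposition \ref{kernel and cokernel of unit/counit}. The same argument used to produce the four-term exact sequence for the unit, carried out in the opposite category, yields an exact sequence of the shape
\[
0\to L_2\nu(\tau^-(A))\to \nu\nu^-(A)\xrightarrow{\counit{\nu}{\nu^-}_A} A \to L_1\nu(\tau^-(A))\to 0
\]
for every $A\in \mathcal{A}$, where now $\tau^-$ is built from a relative injective copresentation of $A$ as in the paragraph preceding Proposition \ref{kernel and cokernel of unit/counit}. Applying Theorem \ref{resolution_dimension} once more, the assumption $n\leq 1$ implies $L_i\nu=0$ for all $i>1$, and in particular $L_2\nu(\tau^-(A))=0$. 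Hence $\counit{\nu}{\nu^-}_A$ is a monomorphism, giving the second claim.

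Since the two dualities involved (Proposition \ref{kernel and cokernel of unit/counit} and Theorem \ref{resolution_dimension}) are both stated or indicated in the excerpt, there is no real obstacle: the only step requiring any care is the verification that the dual of Proposition \ref{kernel and cokernel of unit/counit} genuinely applies in our setting, which is immediate since the Nakayama setup $f_!\dashv f^*\dashv f_*$ with its formal properties is self-dual in the sense needed (swap $\nu \leftrightarrow \nu^-$, $\tau \leftrightarrow \tau^-$, projectives $\leftrightarrow$ injectives).
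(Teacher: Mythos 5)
Your proof is correct and matches the paper's argument exactly: both use Theorem \ref{resolution_dimension} to kill $R^2\nu^-(\tau(A))$ and $L_2\nu(\tau^-(A))$, then read off the conclusion from Proposition \ref{kernel and cokernel of unit/counit} and its dual (which the paper states explicitly in the remark immediately following that proposition). Your write-up is more detailed in spelling out the dual four-term exact sequence, but the logic is identical.
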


\begin{proof}
By Theorem \ref{resolution_dimension}, $R^2\nu^-(\tau(A))=L_2\nu( \tau^-(A))=0$ for all $A$. Thus, by Proposition \ref{kernel and cokernel of unit/counit}, it follows that $\unit{\nu}{\nu^-}$ is a pointwise epimorphism and $\counit{\nu}{\nu^-}$ is a pointwise monomorphism. It is well-known and straightforward to check that a left approximation which is an epimorphism (resp. a right approximation which is a monomorphism) is minimal.
\end{proof}

\begin{cor}\label{thmCgeneralform}
Let $f_!\dashv f^*$ be an adjunction with Nakayama functor $\nu$. Assume that $f_!\dashv f^*$ is Iwanaga--Gorenstein of dimension $n\leq 1$. Let $g$ be a morphism in $\mathcal{A}$.
\begin{enumerate}[(i)]
\item If $g$ is right almost split in $\mathcal{A}$, then $\nu^-(g)$ is either split or right almost split in $\relGproj{P}{\mathcal{A}}$. 
\item If $g$ is left almost split in $\mathcal{A}$, then $\nu(g)$ is either split or left almost split in $\relGinj{I}{\mathcal{A}}$. 
\end{enumerate}
\end{cor}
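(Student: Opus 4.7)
The plan is to derive this corollary as a direct consequence of the master theorem on adjunctions with monomorphism counits, namely Theorem \ref{theoremAmaintext}, applied to the adjoint pair $(\nu,\nu^-)$. All the required ingredients are already established in the preceding results, so the proof is essentially a matter of assembling them correctly.

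First, I would treat part (i). The setup is an adjunction $\nu\dashv \nu^-$ between the abelian categories $\mathcal{A}$ and $\mathcal{A}$. In order to invoke Theorem \ref{theoremAmaintext} with $L=\nu$ and $R=\nu^-$, I need the counit $\counit{\nu}{\nu^-}\colon \nu\nu^-\to \Id_\mathcal{A}$ to be a pointwise monomorphism. This is exactly the content of Proposition \ref{1gorensteinunitepi}, which uses the hypothesis that the adjunction $f_!\dashv f^*$ is $n$-Iwanaga--Gorenstein with $n\leq 1$ (this is in turn derived from Proposition \ref{kernel and cokernel of unit/counit} combined with Theorem \ref{resolution_dimension}, which forces $R^2\nu^-(\tau(A))=0$ for all $A$). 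Hence Theorem \ref{theoremAmaintext} applies and yields that for any right almost split morphism $g$ in $\mathcal{A}$, the morphism $\nu^-(g)$ is either split or right almost split in $\image(\nu^-)$, considered as a full subcategory of $\mathcal{A}$.

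It then only remains to identify $\image(\nu^-)$ with $\relGproj{f_!}{\mathcal{A}}$. Since the hypothesis says that $f_!\dashv f^*$ is $n$-Iwanaga--Gorenstein with $n\leq 1\leq 2$, Corollary \ref{Gorensteinprojectiveimagenu} gives precisely $\image \nu^- = \relGproj{f_!}{\mathcal{A}}$. Combining this identification with the conclusion of Theorem \ref{theoremAmaintext} finishes part (i).

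Part (ii) is entirely dual: apply the dual part of Theorem \ref{theoremAmaintext} to the same adjunction $\nu\dashv\nu^-$, but now use the dual conclusion of Proposition \ref{1gorensteinunitepi}, which states that the unit $\unit{\nu}{\nu^-}$ is a pointwise epimorphism; dually this ensures preservation of left almost split morphisms by $\nu$, with the conclusion landing in $\image \nu$. A second application of Corollary \ref{Gorensteinprojectiveimagenu} identifies $\image \nu$ with $\relGinj{f_*}{\mathcal{A}}$. I do not anticipate any real obstacle in the argument, since the Iwanaga--Gorenstein hypothesis of dimension at most $1$ is exactly tailored so that Proposition \ref{1gorensteinunitepi} supplies the monomorphism/epimorphism property needed by Theorem \ref{theoremAmaintext}; the only thing that requires a moment of care is verifying that the essential image referred to in Theorem \ref{theoremAmaintext} coincides with the full subcategory appearing in Corollary \ref{Gorensteinprojectiveimagenu}, but this is immediate since essential image is exactly a full subcategory closed under isomorphism.
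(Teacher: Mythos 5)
Your proof is correct and follows exactly the paper's approach: combine Proposition \ref{1gorensteinunitepi} (which gives the monomorphism counit and epimorphism unit under the $n\leq 1$ Iwanaga--Gorenstein hypothesis) with Theorem \ref{theoremAmaintext} applied to the adjunction $\nu\dashv\nu^-$, and then identify the essential images via Corollary \ref{Gorensteinprojectiveimagenu}. The paper condenses this into two sentences, but your expanded reasoning matches it step for step.
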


\begin{proof}
It follows from Proposition \ref{1gorensteinunitepi} that the unit $\unit{\nu}{\nu^-}$ is a pointwise epimorphism and the counit $\counit{\nu}{\nu^-}$ is a pointwise monomorphism. The claim now follows from Theorem \ref{theoremAmaintext} and Corollary \ref{Gorensteinprojectiveimagenu}. 
\end{proof}

Recall that an additive category $\mathcal{B}$ is called \emphbf{Krull--Schmidt} if every object decomposes as a finite direct sum of objects having local endomorphism ring. We refer the reader to \cite{Kra15} for more details on Krull--Schmidt categories. A morphism $f\colon M\to N$ in a Krull--Schmidt category is called \emphbf{left minimal} if each $g\colon N\to N$ such that $gf=f$ is an automorphism. Dually a morphism $f\colon M\to N$ in a Krull--Schmidt category is called \emphbf{right minimal} if each $g\colon M\to M$ such that $fg=f$ is an automorphism. 

Assume $\mathcal{B}$ is a Krull-Schmidt extension closed subcategory of an abelian category. An exact sequence $0\to L\xrightarrow{f} M\xrightarrow{g} N\to 0$ in $\mathcal{B}$ is called an \emphbf{almost split sequence} if $f$ is left almost split and $g$ is right almost split in $\mathcal{B}$.  A standard argument, see e.g. \cite[Proposition 1.14]{ARS95} shows that in this case $L$ and $N$ are indecomposable and $f$ and $g$ are left and right minimal, respectively. 

\begin{lem}\label{right minimal gives almost split seq}
Assume $\mathcal{B}$ is a Krull-Schmidt extension closed subcategory of an abelian category. Let $0\to L\xrightarrow{f} M\xrightarrow{g} N\to 0$ be an exact sequence in $\mathcal{B}$. If $g$ is minimal right almost split, then the sequence is almost split.
\end{lem}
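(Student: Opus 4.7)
The plan is to show that $f$ is left almost split; combined with the hypothesis that $g$ is right almost split, this yields that the sequence is almost split. First I will verify the easy negative clause: if $f$ were a split monomorphism then the sequence would split, which would make $g$ a split epimorphism, contradicting the hypothesis that $g$ is right almost split. So $f$ is not a split monomorphism.

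The substantive part is to show that any $h\colon L\to X$ which is not a split monomorphism factors through $f$. I will form the pushout of $f$ along $h$,
\[
\begin{tikzcd}
0\arrow{r} &L\arrow{r}{f}\arrow{d}{h} &M\arrow{r}{g}\arrow{d}{h''} &N\arrow{r}\arrow[equals]{d} &0\\
0\arrow{r} &X\arrow{r}{f'} &Y\arrow{r}{g'} &N\arrow{r} &0
\end{tikzcd}
\]
and observe by a standard universal property argument that $h$ factors through $f$ if and only if $f'$ is a split monomorphism (equivalently, the bottom row splits). So the goal is to show $f'$ is a split monomorphism, and I will argue this by contradiction.

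Assume $f'$ is not split. Then $g'$ is not a split epimorphism, so since $g$ is right almost split there exists $s\colon Y\to M$ with $gs=g'$. The composite $sh''\colon M\to M$ satisfies $g(sh'')=g'h''=g$, so right minimality of $g$ forces $sh''$ to be an automorphism of $M$. Replacing $s$ by $(sh'')^{-1}s$ (which still satisfies $gs=g'$, since $g\circ(sh'')^{-1}=g$), I may assume $sh''=\id_M$. Then $sf'\colon X\to M$ satisfies $g\circ sf'=g'f'=0$, so since $f$ is the kernel of $g$ there is a unique $t\colon X\to L$ with $ft=sf'$. Computing $f(th)=sf'h=sh''f=f$ and using that $f$ is monic gives $th=\id_L$, so $h$ is a split monomorphism, contradicting the assumption.

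The main obstacle, and the reason the naive pushout argument is delicate, is that even when $g$ is right almost split one only gets a lift $s$ up to an endomorphism of $M$, not a strict retraction of $h''$. The trick is to use right minimality of $g$ to upgrade $sh''$ to an automorphism and then \emph{rescale} $s$ so that it actually splits $h''$; only then does the kernel property of $f$ yield the retraction $t$ of $h$. Once the obstruction is cleared this way, the remainder of the argument is a direct diagram chase in the pushout square.
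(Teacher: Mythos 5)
Your proof is correct and follows essentially the same route as the paper's: form the pushout of $f$ along $h$, lift the bottom epimorphism through $g$ using that $g$ is right almost split, use right minimality of $g$ to upgrade the resulting endomorphism of $M$ to an automorphism, and conclude that $h$ is a split monomorphism. The only cosmetic difference is that you normalize the lift to an honest retraction of $M\to Y$ before passing to kernels, whereas the paper works directly with the induced map $h'\colon L'\to L$ and observes that $h'h$ is an isomorphism; the two steps amount to the same thing.
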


\begin{proof}
We need to show that $f$ is left almost split. Let $h\colon L\to L'$ be a morphism in $\mathcal{B}$ which is not a split monomorphism. Taking the pushout of $f$ along $h$, we get a commutative diagram
\[
\begin{tikzcd}[ampersand replacement=\&]
0\arrow{r} \&L\arrow{r}{f}\arrow{d}{h} \&M\arrow{r}{g}\arrow{d}{k} \&N\arrow{r}\arrow{d}{1}\& 0\\
0\arrow{r} \&L'\arrow{r}{f'} \&M'\arrow{r}{g'} \&N\arrow{r} \&0
\end{tikzcd}
\]
Assume $g'$ is not a split epimorphism. Then $g'$ factors through $g$ via a map $k'\colon M'\to M$, since $g$ is right almost split. Also, the equality $gk'=g'$ implies that there exists induced map $h'\colon L'\to L$. Also, since $g$ is minimal, the composite $k'\circ k$ is an isomorphism, Therefore, $h'\circ h$ is an isomorphism, which implies that $h$ is a split monomorphism. This is a contradiction, and hence $g'$ must be a split epimorphism. Therefore $f'$ is a split monomorphism, so $h$ factors through $f$, which proves the claim.
\end{proof}

\begin{cor}\label{imagesofseveralalmostsplitsequences}
Let $\mathcal{A}$ be a Krull--Schmidt abelian category and $f_!\dashv f^*$ an adjunction with Nakayama functor $\nu$. Assume $f_!\dashv f^*$ is Iwanaga--Gorenstein of dimension $n\leq 1$. Let $0\to A''\xrightarrow{g'} A\xrightarrow{g} A'\to 0$ be an almost split sequence in $\mathcal{A}$.
\begin{enumerate}[(i)]
\item \label{imagesofseveralalmostsplitsequences:ii} If $A'$ is an indecomposable non-projective object in $\relGinj{I}{\mathcal{A}}$, then the sequence
\[
0\to \nu^-(A'')\xrightarrow{\nu^-(g')}\nu^-(A)\xrightarrow{\nu^-(g)}\nu^-(A')\to 0
\]
is exact, and is a sum of an almost split sequence in $\relGproj{P}{\mathcal{A}}$ and a sequence of the form $0\to A'''\xrightarrow{1}A'''\to 0\to 0$ in $\relGproj{P}{\mathcal{A}}$.
\item \label{imagesofseveralalmostsplitsequences:i} If $A'$ is an indecomposable non-projective object in $\relGinj{I}{\mathcal{A}}$, then the sequence
\[
0\to \nu\nu^-(A'')\xrightarrow{\nu\nu^-(g')}\nu\nu^-(A)\xrightarrow{g\circ \counit{\nu}{\nu^-}_A} A'\to 0
\]
is exact, and is a sum of an almost split sequence in $\relGinj{I}{\mathcal{A}}$ and a sequence of the form $0\to A'''\xrightarrow{1}A'''\to 0\to 0$ in $\relGinj{I}{\mathcal{A}}$.  
%\item \label{imagesofseveralalmostsplitsequences:iii} If $A''$ is an indecomposable non-injective object in $\relGproj{P}{\mathcal{A}}$, then the sequence
%\[
%0\to A''\xrightarrow{\unit{\nu}{\nu^-}_{A}\circ g'}\nu^-\nu(A)\xrightarrow{\nu^-\nu(g)} \nu^-\nu(A')\to 0
%\]
%is exact, and is a sum of an almost split sequence in $\relGproj{P}{\mathcal{A}}$ and a sequence of the form $0\to 0\to A'''\xrightarrow{1}A'''\to 0$ in $\relGproj{P}{\mathcal{A}}$.  
%\item \label{imagesofseveralalmostsplitsequences:iv} If $A''$ is an indecomposable non-injective object in $\relGproj{P}{\mathcal{A}}$, then the sequence
%\[
%0\to \nu(A'')\xrightarrow{\nu(g')}\nu(A)\xrightarrow{\nu(g)}\nu(A')\to 0
%\]
%is exact, and is a sum of an almost split sequence in $\relGinj{I}{\mathcal{A}}$ and a sequence of the form $0\to 0\to A'''\xrightarrow{1}A'''\to 0$ in $\relGinj{I}{\mathcal{A}}$.
\end{enumerate}
Dual results hold for the relative Nakayama functor $\nu$. 
\end{cor}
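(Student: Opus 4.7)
I would prove part (i) first and then deduce (ii) by pushing the short exact sequence through $\nu$. Since $\nu^-$ is a right adjoint, it is left exact, so the only content of exactness in (i) is surjectivity of $\nu^-(g)$. The Iwanaga--Gorenstein hypothesis of dimension at most $1$ gives $R^j\nu^-=0$ for $j\geq 2$ by Theorem \ref{resolution_dimension}, and the hypothesis $A'\in\relGinj{I}{\mathcal{A}}$ gives $R^1\nu^-(A')=0$; the long exact sequence thus reduces the question to vanishing of the connecting map, which I would argue using the almost split structure (so that $A''$ plays the role of $\tau(A')$ and inherits enough vanishing from $A'$ being Gorenstein injective and the dimension being at most one).

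Granted exactness, I would apply Corollary \ref{thmCgeneralform}(i) to conclude that $\nu^-(g)$ is either a split epimorphism or right almost split in $\image(\nu^-)=\relGproj{P}{\mathcal{A}}$, the equality of images being Corollary \ref{Gorensteinprojectiveimagenu}. The category $\relGproj{P}{\mathcal{A}}$ is closed under summands, hence inherits the Krull--Schmidt property from $\mathcal{A}$. When $\nu^-(g)$ is right almost split, I would split off its largest summand on which the morphism vanishes, writing $\nu^-(A)\cong B_0\oplus A'''$ with $h:=\nu^-(g)|_{B_0}\colon B_0\to\nu^-(A')$ minimal right almost split and $\nu^-(g)|_{A'''}=0$. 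Lemma \ref{right minimal gives almost split seq} then produces an almost split sequence $0\to\ker h\to B_0\xrightarrow{h}\nu^-(A')\to 0$ in $\relGproj{P}{\mathcal{A}}$, and left exactness of $\nu^-$ forces $\nu^-(A'')\cong \ker h\oplus A'''$, so the original sequence is the direct sum of this almost split sequence with $0\to A'''\xrightarrow{\id}A'''\to 0\to 0$. The case in which $\nu^-(g)$ is a split epimorphism is absorbed by taking $B_0=0$, $A'''=\nu^-(A'')$, with the non-projectivity hypothesis on $A'$ ensuring that $\nu^-(A')$ is not relative projective and that this degenerate case is compatible with the stated decomposition (or rules it out entirely).

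For part (ii), apply $\nu$ to the short exact sequence obtained in (i). Because $\nu^-(A')\in\relGproj{P}{\mathcal{A}}$ and the adjunction is $1$-Iwanaga--Gorenstein, we have $L_1\nu(\nu^-(A'))=0$, so the resulting sequence $0\to\nu\nu^-(A'')\to\nu\nu^-(A)\to\nu\nu^-(A')\to 0$ remains short exact. Since $A'\in\image(\nu)=\relGinj{I}{\mathcal{A}}$ the counit $\counit{\nu}{\nu^-}_{A'}$ is an isomorphism, and naturality of $\counit{\nu}{\nu^-}$ identifies the rightmost map with $g\circ\counit{\nu}{\nu^-}_A$ as in the statement. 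The Nakayama functor $\nu$ restricts to an equivalence $\relGproj{P}{\mathcal{A}}\xrightarrow{\sim}\relGinj{I}{\mathcal{A}}$ with inverse $\nu^-$, and this equivalence preserves almost split sequences; therefore the decomposition from (i) transports to the desired decomposition in $\relGinj{I}{\mathcal{A}}$.

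\textbf{Main obstacle.} The delicate step is the right-exactness in part (i), i.e.\ the vanishing of the connecting map $\nu^-(A')\to R^1\nu^-(A'')$, since $A''$ is not assumed to lie in $\relGinj{I}{\mathcal{A}}$; I expect this to require an Auslander--Reiten-type argument exploiting both that $A'\in\relGinj{I}{\mathcal{A}}$ is a Gorenstein injective and that $A''$ is its translate in the almost split sequence, together with the tight homological bounds coming from dimension at most $1$. A secondary, more bookkeeping-style obstacle is phrasing the split-epimorphism alternative in Corollary \ref{thmCgeneralform} so that it fits cleanly into the claimed ``almost split plus trivial'' decomposition.
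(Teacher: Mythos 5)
Your overall plan is sound and parallels the paper's structure: establish part~(i) (the $\nu^-$ statement), deduce part~(ii) by pushing through $\nu$ using the exact equivalence $\relGproj{P}{\mathcal{A}}\simeq\relGinj{I}{\mathcal{A}}$, and apply Corollary~\ref{thmCgeneralform} together with Lemma~\ref{right minimal gives almost split seq} for the decomposition. However, the exactness step that you flag as the main obstacle is a genuine gap, and the derived-functor route you sketch does not reduce the problem: vanishing of the connecting map $\nu^-(A')\to R^1\nu^-(A'')$ is \emph{equivalent} to surjectivity of $\nu^-(g)$, so arguing it ``using the almost split structure'' still requires a new idea that you have not supplied. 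The paper instead extracts surjectivity from the dichotomy you already have in hand. Since $A'$ is not projective in $\relGinj{I}{\mathcal{A}}$ and $\nu^-$ restricts to an exact equivalence onto $\relGproj{P}{\mathcal{A}}$, the object $\nu^-(A')$ is not projective in $\relGproj{P}{\mathcal{A}}$; hence there is a non-split deflation onto it, and this deflation must factor through $\nu^-(g)$ whenever $\nu^-(g)$ is right almost split (and if $\nu^-(g)$ is split epi the conclusion is immediate). Either way $\nu^-(g)$ is an epimorphism, so exactness is a consequence of Theorem~\ref{theoremAmaintext} and non-projectivity of $\nu^-(A')$, not a separate homological computation.

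The second gap is your handling of the split-epimorphism alternative: it must be ruled out, not absorbed. A split exact sequence with nonzero last term cannot be a direct sum of a (non-split) almost split sequence and a sequence ending in $0$, and your proposed absorption $B_0=0$, $A'''=\nu^-(A'')$ produces a sequence whose last term is $0$ rather than $\nu^-(A')$. The paper rules out the split case via the naturality identity $g\circ\counit{\nu}{\nu^-}_A=\counit{\nu}{\nu^-}_{A'}\circ\nu\nu^-(g)$: the left side is not a split epimorphism because $g$ is not, and $\counit{\nu}{\nu^-}_{A'}$ is an isomorphism since $A'\in\relGinj{I}{\mathcal{A}}$, whence $\nu\nu^-(g)$ and therefore $\nu^-(g)$ are not split epimorphisms. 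Once this is in place, $\nu^-(g)$ is genuinely right almost split and your minimal-version decomposition argument for part~(i), and the transport to part~(ii) via the equivalence $\nu$, go through as you describe.
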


\begin{proof}
%We only prove \eqref{imagesofseveralalmostsplitsequences:i} and \eqref{imagesofseveralalmostsplitsequences:ii}. The proof for \eqref{imagesofseveralalmostsplitsequences:iii} and \eqref{imagesofseveralalmostsplitsequences:iv} is dual. 
Since $f_!\dashv f^*$ is Iwanaga--Gorenstein of dimension $n\leq 1$ it follows from Proposition \ref{kernel and cokernel of unit/counit} that the unit $\unit{\nu}{\nu^-}$ is an epimorphism while the counit $\counit{\nu}{\nu^-}$ is a monomorphism. Therefore Theorem \ref{theoremAmaintext} implies that $\nu^-(g)$ is right almost split or split in $\relGproj{P}{\mathcal{A}}$. Note that since $\nu^-$ and $\nu$ are exact equivalences when restricted to $\relGinj{I}{\mathcal{A}}$ and $\relGproj{P}{\mathcal{A}}$ it follows that if $A'$ is non-projective in $\relGinj{I}{\mathcal{A}}$ then $\nu^-(A')$ is non-projective in $\relGproj{P}{\mathcal{A}}$. Therefore, $\nu^-(g)$ is an epimorphism since $\nu^-(A')$ not being projective in $\relGproj{P}{\mathcal{A}}$ implies that there exists a non-split deflation ending in it which must factor through $\nu^-(g')$. It remains to show that $\nu^-(g')$ is not a split epimorphism since then the claim follows from Lemma \ref{right minimal gives almost split seq} and the fact that in a Krull--Schmidt category there are minimal versions of morphisms, cf. \cite[Corollary 1.4]{KS98}. To this end note that the composite $g\circ \counit{\nu}{\nu^-}_A=\counit{\nu}{\nu^-}_{A'} \circ \nu\nu^-(g)$ is not a split epimorphism since $g$ is not a split epimorphism.  Also since $A'\in \relGinj{I}{\mathcal{A}}$ and $\counit{\nu}{\nu^-}$ is an isomorphism when restricted to $\relGinj{I}{\mathcal{A}}$, it follows that $\nu\nu^-(g)$ is not a split epimorphism, so $\nu^-(g)$ is not a split epimorphism and therefore the sequence is not split exact. This finishes the proof of \eqref{imagesofseveralalmostsplitsequences:ii}.

To prove \eqref{imagesofseveralalmostsplitsequences:i} note that $\nu^-$ and $\nu$ are exact equivalences, when restricted to $\relGinj{I}{\mathcal{A}}$ and $\relGproj{P}{\mathcal{A}}$. It follows that they send almost split sequences to almost split sequences. Thus, \eqref{imagesofseveralalmostsplitsequences:i} immediately follows from \eqref{imagesofseveralalmostsplitsequences:ii}. 
\end{proof}

Recall that a Krull--Schmidt extension closed subcategory of an abelian category is said to \emphbf{have almost split sequences} if for every indecomposable non-projective object there exists an almost split sequence ending in it and for every indecomposable non-injective object there exists an almost split sequence starting in it. 

\begin{cor}\label{Existence of almost split sequences}
Let $\mathcal{A}$ be a Krull--Schmidt abelian category with almost split sequences and $f_!\dashv f^*$ an adjunction with Nakayama functor $\nu$. Assume $f_!\dashv f^*$ is Iwanaga--Gorenstein of dimension $n\leq 1$. Then $\relGproj{P}{\mathcal{A}}$ and $\relGinj{I}{\mathcal{A}}$ have almost split sequences.
\end{cor}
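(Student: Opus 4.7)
The strategy is to feed almost split sequences from $\mathcal{A}$, which are provided by hypothesis, through Corollary~\ref{imagesofseveralalmostsplitsequences} to produce the required almost split sequences in $\relGproj{P}{\mathcal{A}}$, and then to transport the result to $\relGinj{I}{\mathcal{A}}$ along the exact equivalence $\nu \colon \relGproj{P}{\mathcal{A}} \xrightarrow{\sim} \relGinj{I}{\mathcal{A}}$ with inverse $\nu^-$. This equivalence is available in the $1$-Iwanaga--Gorenstein setting by Proposition~\ref{1gorensteinunitepi} (and is already used implicitly in the proof of Corollary~\ref{imagesofseveralalmostsplitsequences}).

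As a preparatory step, I would note that since $\relGproj{P}{\mathcal{A}}$ is extension-closed in $\mathcal{A}$, every admissible short exact sequence in $\relGproj{P}{\mathcal{A}}$ is genuinely short exact in $\mathcal{A}$. Consequently, any object of $\relGproj{P}{\mathcal{A}}$ that happens to be projective in $\mathcal{A}$ is automatically projective in the exact subcategory, and dually for injectives. Taking contrapositives, indecomposable non-projective objects of $\relGproj{P}{\mathcal{A}}$ are non-projective in $\mathcal{A}$, so the hypothesis provides an almost split sequence of $\mathcal{A}$ ending in any such object, and similarly on the left. The exact equivalence $\nu$ furthermore matches projective, injective, and indecomposable objects of $\relGproj{P}{\mathcal{A}}$ with those of $\relGinj{I}{\mathcal{A}}$.

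For the central step, let $G$ be indecomposable non-projective in $\relGproj{P}{\mathcal{A}}$. Set $A' := \nu(G) \in \relGinj{I}{\mathcal{A}}$; by the observations above $A'$ is indecomposable and non-projective in $\relGinj{I}{\mathcal{A}}$, hence also non-projective in $\mathcal{A}$. Choose an almost split sequence $0 \to A'' \to A \to A' \to 0$ in $\mathcal{A}$. Applying Corollary~\ref{imagesofseveralalmostsplitsequences} yields the exact sequence $0 \to \nu^-(A'') \to \nu^-(A) \to \nu^-(A') \to 0$ in $\relGproj{P}{\mathcal{A}}$, expressed in this Krull--Schmidt category as the direct sum of a split short exact sequence and an almost split sequence in $\relGproj{P}{\mathcal{A}}$. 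Since $\nu^-(A') \cong G$ is indecomposable, the trivial summand must contribute nothing at the right-hand end, and so the almost split summand terminates in $G$, as required. The dual of Corollary~\ref{imagesofseveralalmostsplitsequences} (referred to at the end of its statement) supplies, by the same recipe, almost split sequences starting in the indecomposable non-injective objects of $\relGproj{P}{\mathcal{A}}$. Transporting both types of sequences along $\nu$ then gives the analogous existence statements for $\relGinj{I}{\mathcal{A}}$.

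The only real obstacle I expect is administrative: matching \emph{non-projective} (resp.\ \emph{non-injective}) between the exact subcategories and the ambient abelian category so that the input almost split sequence in $\mathcal{A}$ is actually available, and then isolating the almost split summand from the Krull--Schmidt decomposition produced by Corollary~\ref{imagesofseveralalmostsplitsequences}. Once these bookkeeping items are dispatched, the proof assembles directly from the cited results.
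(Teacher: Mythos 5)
Your proof is correct and follows essentially the same route as the paper: both exploit the exact equivalence $\nu^-\colon \relGinj{I}{\mathcal{A}}\xrightarrow{\sim}\relGproj{P}{\mathcal{A}}$ (with quasi-inverse $\nu$) to identify an indecomposable non-projective $G\in\relGproj{P}{\mathcal{A}}$ with $\nu^-(A')$ for a non-projective $A'\in\relGinj{I}{\mathcal{A}}$, take the almost split sequence in $\mathcal{A}$ ending at $A'$, and feed it through Corollary~\ref{imagesofseveralalmostsplitsequences} (and its dual for the left-hand case). Your extra remarks — that non-projectives in the extension-closed subcategory are non-projective in $\mathcal{A}$, and that indecomposability of $G$ forces the trivial summand to the $0$ end — just spell out steps the paper leaves implicit.
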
 

\begin{proof}
Since $\nu^-\colon \relGinj{I}{\mathcal{A}}\to \relGproj{P}{\mathcal{A}}$ is an exact equivalence,  any indecomposable object $A\in \relGproj{P}{\mathcal{A}}$ which is not projective in $\relGproj{P}{\mathcal{A}}$ is isomorphic to an indecomposable object $\nu^-(A')$ where $A'\in \relGinj{I}{\mathcal{A}}$ is not projective in $\relGinj{I}{\mathcal{A}}$. Since $A'$ is not projective, there exists an almost split sequence ending in $A'$ in $\mathcal{A}$. Applying Corollary \ref{imagesofseveralalmostsplitsequences} \eqref{imagesofseveralalmostsplitsequences:ii}, we get an almost split sequence in $\relGproj{P}{\mathcal{A}}$ ending in $A\cong \nu^-(A')$. This shows that any indecomposable non-projective object in $\relGproj{P}{\mathcal{A}}$ is the rightmost term of an almost split sequence. The fact that any  indecomposable non-injective object in $\relGproj{P}{\mathcal{A}}$ is the leftmost term of an almost split sequence follows immediately from the dual of Corollary \ref{imagesofseveralalmostsplitsequences} \eqref{imagesofseveralalmostsplitsequences:i}. This shows that $\relGproj{P}{\mathcal{A}}$ has almost split sequences. The fact that $\relGinj{I}{\mathcal{A}}$ has almost split sequences is proved dually.
\end{proof}

The goal of the remainder of this section is to show that the adjunction with Nakayama functor coming from a phylum is $1$-Iwanaga--Gorenstein and to give explicit descriptions of the functors $\nu$ and $\nu^-$ and the subcategories $\relGproj{P}{\mathcal{A}}$ and $\relGinj{I}{\mathcal{A}}$ in this case. 

Let $\mathcal{C}$, $X$ and $Y$ be as in Assumption \ref{standard1}. Note that the canonical inclusion
\[\bigoplus_{i\geq 1} X^i\to \bigoplus_{i\geq 0} X^i\]
induces a natural transformation $\iota\colon f_!\circ X\to f_!$.  

In our set-up of the Eilenberg--Moore adjunction associated to a free monad, we obtain a `relative global dimension $1$'-situation as the following lemma shows. It generalises the standard projective resolution for hereditary algebras.

\begin{lem}\label{globaldimension1}
Let $\mathcal{C}$, $X$, and $Y$ be as in Assumption \ref{standard1}. 
Then, for every $(M,h)\in \mathcal{C}^{T(X)}$ the sequence 
\[\begin{tikzcd}0\arrow{r} &f_!(X (f^*(M,h)))\arrow{rrr}{\iota_{f^*(M,h)}-f_!(h_1)} &&&(f_!f^*)(M,h)\arrow{r}{\counit{f_!}{f^*}_{(M,h)}} &(M,h)\arrow{r}&0\end{tikzcd}\]
in $\mathcal{C}^{T(X)}$ is exact. In particular, the adjunction $f_!\dashv f^*$ is Iwanaga--Gorenstein of dimension $n\leq 1$. 
\end{lem}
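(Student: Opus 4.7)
The plan is to reduce the statement to an exactness assertion in $\mathcal{C}$, then produce a splitting of the underlying short sequence by a careful block-matrix/Neumann-series argument, and finally invoke Theorem \ref{resolution_dimension} for the Iwanaga--Gorenstein conclusion.

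First, since $X$ is a left adjoint it is right exact, and hence $T(X) = \coprod_{i\geq 0} X^i$ is right exact. By Proposition \ref{eilenbergmooreexact} the forgetful functor $f^* = U_{T(X)}$ preserves and reflects short exact sequences, so it suffices to check exactness of the underlying sequence in $\mathcal{C}$. Write $B := f_! f^*(M,h) = \coprod_{i\geq 0} X^i(M)$ and $A := f_!(X(f^*(M,h))) = \coprod_{i\geq 0} X^{i+1}(M) \cong \coprod_{j\geq 1} X^j(M)$. Using Remark \ref{eilenbergmooreforfreemonad} the counit $\varepsilon := \counit{f_!}{f^*}_{(M,h)}$ is the map whose restriction to the $j$-th summand $X^j(M) \subseteq B$ is $h_1\circ X(h_1)\circ\cdots\circ X^{j-1}(h_1)$, and in particular $\varepsilon\circ\eta_M = \id_M$. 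A direct telescoping computation (essentially already contained in the derivation of the counit) shows that $\varepsilon\circ(\iota_{f^*(M,h)}-f_!(h_1)) = 0$, so the sequence is at least a complex.

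The key step is to verify that $\Phi := (\eta_M, \iota-f_!(h_1))\colon M\oplus A \to B$ is an isomorphism in $\mathcal{C}$. Via the canonical splitting $B = M\oplus A$ (with $M = X^0(M)$), decompose $f_!(h_1) = i_M\alpha + i_A\beta$, where $\alpha\colon A\to M$ equals $h_1$ on $X^1(M)$ and zero elsewhere, and $\beta\colon A\to A$ sends $X^j(M)$ via $X^{j-1}(h_1)$ to $X^{j-1}(M)\subseteq A$ for $j\geq 2$ and is zero on $X^1(M)$. Under the decomposition $M\oplus A \to M\oplus A = B$, the map $\Phi$ becomes the block upper triangular matrix
\[
\Phi = \begin{pmatrix} \id_M & -\alpha \\ 0 & \id_A - \beta \end{pmatrix}.
\]
The main subtlety, and the only real obstacle, lies in showing $\id_A-\beta$ is an isomorphism in the abelian setting: $\beta$ is not globally nilpotent, but on each summand $X^j(M)$ we have $\beta^j = 0$, so the formal Neumann sum $\sum_{k\geq 0}\beta^k$ restricts to the \emph{finite} sum $\sum_{k=0}^{j-1}\beta^k$ on $X^j(M)$. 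By the universal property of the coproduct $A = \coprod_{j\geq 1} X^j(M)$, these finite sums assemble into a well-defined endomorphism $\sigma$ of $A$, and a summand-by-summand check gives $\sigma(\id_A-\beta) = (\id_A-\beta)\sigma = \id_A$. Hence $\Phi$ is an isomorphism, so $(\iota - f_!(h_1))$ is a monomorphism with image $\ker\varepsilon$; combined with $\varepsilon\circ\eta_M = \id_M$, the sequence is (even split) exact in $\mathcal{C}$, and therefore in $\mathcal{C}^{T(X)}$.

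For the second assertion, the exact sequence just constructed is, by definition, a resolution of $(M,h)$ of length at most $1$ by relative projectives (since everything in the essential image of $f_!$ is relative projective). Hence $\dim_{\relGproj{f_!}{}(\mathcal{C}^{T(X)})}(\mathcal{C}^{T(X)}) \leq 1$, and applying Theorem \ref{resolution_dimension} yields that $f_!\dashv f^*$ is Iwanaga--Gorenstein of dimension at most $1$.
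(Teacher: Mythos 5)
Your proof is correct and is essentially the same argument as the paper's: both reduce to the underlying sequence in $\mathcal{C}$ via Proposition \ref{eilenbergmooreexact} and then split it by a Neumann-series automorphism that exists because the relevant "shift" map is locally nilpotent on each summand $X^j(M)$. The only cosmetic difference is that you package the isomorphism as a block upper-triangular map $M\oplus A\to B$ with diagonal blocks $\id_M$ and $\id_A-\beta$, whereas the paper produces a change-of-basis automorphism $\sum_{i\geq0}s^i$ of $B$ with explicit inverse $1-s$ conjugating the complex to the obvious split one.
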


\begin{proof}
By Proposition \ref{eilenbergmooreexact}, it is enough to show that applying $f^*$ to the claimed sequence gives an exact sequence. First note that this yields the sequence 
\[\begin{tikzcd}[column sep=10ex]0\arrow{r} &\bigoplus_{i\geq 1} X^i(M)\arrow{r}{\incl-f^*f_!(h_1)} &\bigoplus_{i\geq 0} X^i(M)\arrow{r}{h}& M\arrow{r}&0.\end{tikzcd}\]
as for $(M,h)\in \mathcal{C}^{T(X)}$ we have $f^*(\counit{f_!}{f^*}_{(M,h)})=h$. Now consider the composite 
\[s:=f^*f_!(h_1)\circ p\colon \bigoplus_{i\geq 0} X^i(M)\to \bigoplus_{i\geq 0}X^i(M)\]
where $p\colon \bigoplus_{i\geq 0} X^i(M)\to \bigoplus_{i\geq 1} X^i(M)$ is the canonical projection. Note that the infinite sum $1+s+s^2+\dots$ makes sense as a morphism since $s^j$ restricted to $X^i(M)$ vanishes for $j\geq i+1$ and therefore the sum is finite when restricting to each component $X^i(M)$. Furthermore  it is an isomorphism with inverse given by $1-s$. Since 
\[\left(\sum_{i=0}^\infty s^i\right)\circ (\incl-f^*f_!(h_1))=\incl\colon \bigoplus_{i\geq 1} X^i(M)\to \bigoplus_{i\geq 0} X^i(M)\]
and 
\[\proj\circ \left(\sum_{i=0}^\infty s^i\right) = h\colon \bigoplus_{i\geq 0}X^i(M)\to M\]
where $\proj$ denotes the canonical projection, it follows that 
\[
\begin{tikzcd}[ampersand replacement=\&, column sep=10ex]
0\arrow{r} \&\bigoplus_{i\geq 1} X^i(M)\arrow{r}{\incl-f^*f_!(h_1)}\arrow[equals]{d} \&\bigoplus_{i\geq 1} X^i(M)\oplus M\arrow{r}{h}\arrow{d}{\sum_{i=0}^\infty s^i} \& M\arrow{r}\arrow[equals]{d} \& 0\\
0\arrow{r} \&\bigoplus_{i\geq 1} X^i(M) \arrow{r}{\begin{pmatrix}1\\0\end{pmatrix}} \&\bigoplus_{i\geq 1} X^i(M)\oplus M\arrow{r}{(0,1)} \& M\arrow{r} \& 0
\end{tikzcd}
\]
commutes. Since the lower sequence is obviously (split) exact, it follows that also the upper sequence is exact. This proves the first claim. To see that $f_!\dashv f^*$ is Iwanaga--Gorenstein of dimension $n\leq 1$ it suffices to use Theorem \ref{resolution_dimension} and note that $\dim_{\relGproj{P}{\mathcal{C}^{T(X)}}} \mathcal{C}^{T(X)}\leq 1$ by the existence of the exact sequence in the lemma and the fact that the image of $f_!$ is contained in $\relGproj{P}{\mathcal{C}^{T(X)}}$. 
\end{proof}

The following lemma generalises that a map between projectives over a finite dimensional algebra is determined by the image of its top.

\begin{lem}\label{morphismsbetweenPprojectives}
Let $\mathcal{C}$, $X$, and $Y$ be as in Assumption \ref{standard1}. 
Let 
\[(\varphi_{i,j})_{i,j}\colon \bigoplus_{i\geq 0} X^i(M)\to \bigoplus_{j\geq 0} X^j(N)\]
be a morphism in $\mathcal{C}$ with components $\varphi_{i,j}\colon X^i(M)\to X^j(N)$. Then there exists a morphism $\varphi\colon f_!(M)\to f_!(N)$ satisfying $f^*(\varphi)=(\varphi_{i,j})_{i,j}$ if and only if $\varphi_{i,j}=0$ for $i>j$ and $\varphi_{i,j}=X^i(\varphi_{0,j-i})$ for $i\leq j$. In particular, $\varphi$ is an isomorphism if and only if $\Kopf_X\varphi=\varphi_{0,0}$ is an isomorphism. 
\end{lem}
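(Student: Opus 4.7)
The plan is to exploit the adjunction $f_! \dashv f^*$ together with the explicit description of the adjunction isomorphism given in Remark \ref{eilenbergmooreforfreemonad}. Since morphisms $\varphi\colon f_!(M) \to f_!(N)$ in $\mathcal{C}^{T(X)}$ correspond bijectively (via $\mathrm{adj}^{f_!\dashv f^*}$) to morphisms $g\colon M \to f^*f_!(N) = \bigoplus_{j\geq 0} X^j(N)$ in $\mathcal{C}$, I would first identify such a $g$ with the row $(\varphi_{0,j})_{j\geq 0}$ of ``first-row'' components of the matrix $(\varphi_{i,j})$. This already shows that $\varphi$ is determined by the components $\varphi_{0,j}$, and hence that any constraint on the full matrix must come from the explicit formula for $f^*(\varphi)$.

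Next I would invoke Remark \ref{eilenbergmooreforfreemonad}: for $(M',h') = f_!(N) = (\bigoplus_{j\geq 0}X^j(N), \mu_N)$ the structure map $h_1\colon X\bigl(\bigoplus_{j\geq 0} X^j(N)\bigr) \cong \bigoplus_{j\geq 0} X^{j+1}(N) \to \bigoplus_{j\geq 0} X^j(N)$ is simply the canonical shift inclusion. Substituting this into the formula $g_i = h_1 \circ X(h_1) \circ \cdots \circ X^{i-1}(h_1) \circ X^i(g)$, and noting that $X^i(g)$ has components $X^i(\varphi_{0,j})\colon X^i(M) \to X^{i+j}(N)$, each application of the shift $h_1$ just reindexes by one, so the $(i,k)$-component becomes $X^i(\varphi_{0,k-i})$ for $k \geq i$ and vanishes for $k < i$. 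This establishes the ``only if'' direction and makes the ``if'' direction immediate by simply declaring the corresponding $g = (\varphi_{0,j})_{j\geq 0}$ and verifying that its image under the adjunction reproduces the given matrix.

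For the ``in particular'' statement, the forward direction is a triviality: functoriality of $\Kopf_X$ combined with Lemma \ref{topofprojective} (which gives $\Kopf_X \circ f_! = 1_\mathcal{C}$) forces $\Kopf_X(\varphi) = \varphi_{0,0}$ to be an isomorphism whenever $\varphi$ is. For the converse, I would construct a candidate inverse by taking the unique $\psi\colon f_!(N) \to f_!(M)$ with $\psi_{0,0} = \varphi_{0,0}^{-1}$ provided by the first part of the lemma. The composites $\psi\varphi$ and $\varphi\psi$ are morphisms between free objects, hence also determined by their $(0,0)$-components; a direct computation using the triangular structure gives $(\varphi\psi)_{0,0} = \varphi_{0,0}\psi_{0,0} = \mathrm{id}_N$ and $(\psi\varphi)_{0,0} = \mathrm{id}_M$, so by the bijection $\mathcal{C}^{T(X)}(f_!(L), f_!(L)) \cong \mathcal{C}(L, f^*f_!(L))$ applied to $L = M, N$ (together with the fact that the identity morphisms obviously have identity $(0,0)$-component), the composites must equal the identity.

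The only mildly subtle point is bookkeeping of the shift in the formula from Remark \ref{eilenbergmooreforfreemonad}, specifically checking that the composite $h_1 \circ X(h_1) \circ \cdots \circ X^{i-1}(h_1)$ restricted to the $X^i(X^{j}(N))$-summand of $X^i(\bigoplus_j X^j(N))$ really is the identification $X^i(X^j(N)) = X^{i+j}(N)$ landing in the correct summand of the target; this is routine but is where one could slip up on indices. Once that identification is in place, both halves of the statement are essentially formal consequences of the adjunction.
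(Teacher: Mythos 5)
The first half of your proof is a valid alternative to the paper's. The paper verifies directly that the matrix $(\varphi_{i,j})$ is a morphism in the comma category $(X\Downarrow\Id)$ by checking the defining commutative square and extracting the recursion $\varphi_{i+1,j}=X(\varphi_{i,j-1})$; you instead pass across the adjunction $f_!\dashv f^*$ to a map $g\colon M\to\bigoplus_j X^j(N)$ and then feed the structure map of $f_!(N)$ (which is the canonical shift) into the explicit formula of Remark~\ref{eilenbergmooreforfreemonad}. Both are correct and of comparable length; yours makes it slightly more transparent that the whole morphism is determined by its first row, while the paper's makes the triangularity constraint visible as a commutativity condition.

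Your argument for the ``in particular'' clause, however, has a genuine gap. The first part of the lemma shows that a morphism $f_!(N)\to f_!(M)$ is determined by its \emph{entire} first row $(\psi_{0,j})_{j\geq 0}$, not by $\psi_{0,0}$ alone, so ``the unique $\psi$ with $\psi_{0,0}=\varphi_{0,0}^{-1}$'' does not exist: there is one such $\psi$ for every choice of the remaining entries $\psi_{0,j}$, $j>0$. If one takes the obvious candidate $\psi=f_!(\varphi_{0,0}^{-1})$ (i.e.\ $\psi_{0,j}=0$ for $j>0$), then $(\psi\varphi)_{0,k}=X^k(\varphi_{0,0}^{-1})\circ\varphi_{0,k}$, which need not vanish for $k>0$, so $\psi\varphi\neq\id$ in general. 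Consequently the step ``the composites are determined by their $(0,0)$-components'' is false --- they are determined by their $(0,k)$-components for \emph{all} $k$. The paper's actual argument (only sketched there) is that $f^*(\varphi)$ is upper triangular with invertible diagonal $(\varphi_{0,0},X(\varphi_{0,0}),\dots)$; writing $f^*(\varphi)=D+U$ with $U$ strictly upper triangular, the Neumann series $\sum_{k\geq 0}(-D^{-1}U)^k$ is a well-defined inverse of $1+D^{-1}U$ because each $(i,j)$-entry receives only finitely many nonzero contributions (the strictly-upper-triangular $U$ shifts indices forward), exactly as in the convergence argument in Lemma~\ref{globaldimension1}; conservativity of $f^*=U_{T(X)}$ then lifts this inverse back to $\mathcal{C}^{T(X)}$. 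You would need to supply something along these lines to close the argument.
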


\begin{proof}
Using Lemma \ref{comma=EilenbergMoore} it follows that $(\varphi_{i,j})_{i,j}$ induces a morphism $\varphi\colon f_!(M)\to f_!(N)$ in $\mathcal{C}^{T(X)}$ if and only if the following diagram commutes 
\[
\begin{tikzcd}
X(\bigoplus_{i\geq 0} X^i(M))\arrow{r}\arrow{d}{X((\varphi_{i,j})_{i,j})} &\bigoplus_{i\geq 0} X^i(M)\arrow{d}{(\varphi_{i,j})_{i,j}}\\
X(\bigoplus_{j\geq 0} X^j(N))\arrow{r} &\bigoplus_{j\geq 0} X^j(N)
\end{tikzcd}
\]
where the horizontal maps are the canonical inclusions. Restricting to the $i$-th component of $X(\bigoplus_{i\geq 0} X^i(M))$ and the $j$-th component of $\bigoplus_{j\geq 0} X^j(N)$ we get that the diagram above commutes if and only if the following diagram commutes for all $i,j\geq 0$
\[\begin{tikzcd}
X^{i+1}(M)\arrow{r}{1}\arrow{d}{X(\varphi_{i,j-1})} &X^{i+1}(M)\arrow{d}{\varphi_{i+1,j}}\\
X^j(N)\arrow{r}{1} &X^j(N)
\end{tikzcd}
\]
where $\varphi_{i,-1}=0$ for all $i$. The commutativity of these diagrams is equivalent to $\varphi_{i,j}=X(\varphi_{i-1,j-1})=\dots=X^{j+1}(\varphi_{i-j-1,-1})=0$ for all $i>j$ and $\varphi_{i,j}=X(\varphi_{i-1,j-1})=\dots=X^i(\varphi_{0,j-i})$ for $i\leq j$. 
The final claim follows from the fact that $\varphi$ is an isomorphism if and only if $f^*(\varphi)=(\varphi_{i,j})_{i,j}$ is an isomorphism, and the fact that $(\varphi_{i,j})_{i,j}$ can be represented as an upper triangular (countably infinite) matrix with diagonal $(\varphi_{0,0},X(\varphi_{0,0}), X^2(\varphi_{0,0}),\dots)$. 
\end{proof}

Dually, there is a generalisation of the fact that a map between injectives is determined by the preimage of the socle. 
%the following lemma generalises the fact that a map between injectives is determined by the preimage of the socle.
%\begin{lem}\label{morphismsbetweenIinjectives}
%Let $\mathcal{C}$, $X$, and $Y$ be as in Assumption \ref{standard1}. 
%Let $(\psi_{i,j})_{i,j}\colon \bigoplus_{i\geq 0} Y^i(M)\to \bigoplus_{j\geq 0} Y^j(N)$ be a morphism in $\mathcal{C}$, where $\psi_{i,j}\colon Y^i(M)\to Y^j(N)$. Then there exists a morphism $\psi:f_*(M)\to f_*(N)$ satisfying $f^*(\psi)=(\psi_{i,j})_{i,j}$ if and only if $\psi_{i,j}=0$ for $i<j$ and $\psi_{i,j}=Y^j(\psi_{i-j,0})$ for $i\geq j$. In particular, $\psi$ is an isomorphism if and only if $\soc_Y \psi=\psi_{0,0}$ is an isomorphism. 
%\end{lem}
Using this, we will give an explicit description of the Nakayama functor in our setting:

\begin{prop}\label{definitionofnu}
Let $\mathcal{C}$, $X$, and $Y$ be as in Assumption \ref{standard1}. 
Then, the functor $\nu$ defined by the sequence 
\begin{equation}\label{definitionofnu:eq}
\begin{tikzcd}
f_*(X(f^*(M,h)))\arrow{r}{\chi} &(f_*f^*)(M,h)\arrow{r} &\nu(M,h)\arrow{r}&0
\end{tikzcd}
\end{equation}
is a Nakayama functor relative to $f_!\dashv f^*$, where the map $\chi$ is defined componentwise by $\chi_{i,j}\colon (Y^iX)(M)\to Y^j(M)$ given by 
\[
\chi_{i,j}=\begin{cases}-Y^i(h_1)&\text{for }j=i,\\Y^{i-1}(\counit{Y}{X}_M)&\text{for }j=i-1,\\0&\text{else.}\end{cases}
\]
\end{prop}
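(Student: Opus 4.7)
My plan is to invoke the abstract Nakayama functor guaranteed by our setup and then identify it with the explicit cokernel presentation in the statement.

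First, by Lemma \ref{XadmitsNakayamafunctor}, the adjunction $f_!\dashv f^*$ admits a Nakayama functor $\nu$. As a left adjoint, $\nu$ is right exact and comes by definition with a natural isomorphism $\nu\circ f_!\cong f_*$. I will feed the natural relative projective resolution of Lemma \ref{globaldimension1},
\[
0\to f_!(Xf^*(M,h))\xrightarrow{\iota_{f^*(M,h)}-f_!(h_1)} f_!f^*(M,h)\to (M,h)\to 0,
\]
through $\nu$. Right exactness together with $\nu f_!\cong f_*$ produces a natural right exact sequence
\[
f_*(Xf^*(M,h))\xrightarrow{\nu(\iota_{f^*(M,h)}-f_!(h_1))} f_*f^*(M,h)\to \nu(M,h)\to 0.
\]
The proposition then reduces to identifying the leftmost morphism with $\chi$.

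To do this I will treat the two summands separately. For $\nu(f_!(h_1))=f_*(h_1)$, the underlying $\mathcal{C}$-morphism is $\prod_i Y^i(h_1)$, so by the dual of Lemma \ref{morphismsbetweenPprojectives} this map has $Y^i(h_1)$ on the diagonal and zeros elsewhere. For $\nu(\iota)$, I will use that under $f_!\dashv f^*$ the natural transformation $\iota\colon f_!X\to f_!$ corresponds to the canonical inclusion $X\hookrightarrow T(X)=f^*f_!$ as the degree-one summand. Exploiting the conjugate pair relations of Proposition \ref{Conjugate units and counits} for the chain of adjunctions $f_!\dashv f^*\dashv f_*$, together with the identification $\nu\circ f_!\cong f_*$, I would show that $\nu(\iota)\colon f_*X\to f_*$ corresponds under $f^*\dashv f_*$ to the natural transformation $W(Y)\circ X=f^*f_*X\to \Id$ obtained by projecting onto the degree-one factor $YX$ and then applying $\counit{Y}{X}$. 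Another appeal to the dual of Lemma \ref{morphismsbetweenPprojectives} then forces $\nu(\iota)$ to have matrix entry $Y^{i-1}(\counit{Y}{X}_M)$ at position $(i,i-1)$ and zero elsewhere. Subtracting the two matrices gives $\nu(\iota-f_!(h_1))=\chi$, as required.

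The most delicate step will be the explicit identification of $\nu(\iota)$, since $\nu$ is known only abstractly via its universal property. Tracking the natural transformation $\iota$ through the conjugate pair relations demands careful bookkeeping, particularly because the canonical degree-one inclusion $X\hookrightarrow T(X)$ on the $f_!$-side must be shown to pair with the degree-one projection $W(Y)\twoheadrightarrow Y$ on the $f_*$-side. Once this identification is in place, naturality of all morphisms involved yields a natural isomorphism between the Nakayama functor provided by Lemma \ref{XadmitsNakayamafunctor} and $\coker\chi$, completing the proof.
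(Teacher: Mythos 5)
Your strategy diverges from the paper's. The paper does not start from the abstract Nakayama functor of Lemma \ref{XadmitsNakayamafunctor}; instead it constructs one from scratch by exhibiting an explicit isomorphism $E\colon \image f_!\to \image f_*$ of the strict image subcategories — on the morphism level it sends the datum $(\varphi_{0,j})_{j\geq 0}$ of Lemma \ref{morphismsbetweenPprojectives} to $(\tilde{\varphi}_{i,0})_{i\geq 0}$ with $\tilde{\varphi}_{i,0}=(\adj^{Y^i\dashv X^i})^{-1}(\varphi_{0,i})$ — checks that $E$ is a functor satisfying $E\circ f_!=f_*$, and invokes \cite[Proposition 3.3.1]{Kva16}. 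The cokernel formula then results from applying $E$ to the relative projective resolution from Lemma \ref{globaldimension1}. You instead invoke Lemma \ref{XadmitsNakayamafunctor} for a black-box $\nu$, feed the resolution through its right exactness, and then try to read off $\nu(\iota_{f^*(M,h)}-f_!(h_1))$.

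The step you flag as delicate is a genuine gap, and I do not see how the conjugate-pair relations of Proposition \ref{Conjugate units and counits} close it. The obstruction is that $\iota_M\colon f_!(X(M))\to f_!(M)$ lies between objects of $\image f_!$ but is \emph{not} of the form $f_!(g)$ for any $g$ in $\mathcal{C}$: in the matrix description of Lemma \ref{morphismsbetweenPprojectives}, its $\varphi_{0,0}$-component vanishes while its $\varphi_{0,1}$-component is the identity. The natural isomorphism $\nu\circ f_!\cong f_*$ that the Nakayama-functor axioms provide is natural only with respect to morphisms $f_!(g)$, so it determines $\nu(f_!(h_1))=f_*(h_1)$ — that part of your argument is fine — but it says nothing about $\nu(\iota_M)$. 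The conjugate relations concern only units and counits of the adjunctions and likewise do not pin down $\nu$ on an arbitrary morphism of $\image f_!$. Indeed, the uniqueness statement recorded after the definition (following \cite[Theorem 3.3.4]{Kva16}) only fixes the Nakayama functor up to an autoequivalence $\Phi$ with $\Phi\circ f_!=f_!$, and such a $\Phi$ may a priori act non-trivially on morphisms $f_!(M)\to f_!(N)$ that are not in the image of $f_!$. To make your computation of $\nu(\iota)$ rigorous you would essentially need to supply the explicit formula for $\nu$ on all of $\image f_!$ together with its functoriality, which is precisely the construction of $E$ that the paper carries out — at which point you have re-derived rather than bypassed the paper's argument.
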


\begin{proof}
According to \cite[Proposition 3.3.1]{Kva16}, it suffices to define $\nu$ as an isomorphism  $\image f_!\to \image f_*$ where $\image f_!:=\{f_!(M)|M\in \mathcal{C}\}$ and $\image f_*:=\{f_*(M)|M\in \mathcal{C}\}$ denote the strict images of $f_!$ and $f_*$, respectively. 
Recall that by Lemma \ref{morphismsbetweenPprojectives} a morphism $\varphi\colon f_!(M)\to f_!(N)$ is uniquely defined by a sequence $(\varphi_{0,0},\varphi_{0,1},\dots)$ of morphisms $\varphi_{0,j}\colon M\to X^j(N)$ in $\mathcal{C}$. Using the dual of Lemma \ref{morphismsbetweenPprojectives} we let $\tilde{\varphi}\colon f_*(M)\to f_*(N)$ be the unique morphism defined by the sequence $(\tilde{\varphi}_{0,0}, \tilde{\varphi}_{1,0},\dots)$ where $\tilde{\varphi}_{i,0}=(\adj^{Y^i\dashv X^i})^{-1}(\varphi_{0,i})\colon Y^i(M)\to N$. We claim that that sending $f_!(M)$ to $f_*(M)$ and $\varphi$ to $\tilde{\varphi}$ gives an isomorphism $E\colon \image f_!\to \image f_*$ satisfying $E\circ f_!=f_*$. We have to prove that $E(\id)=\id$ and that $E(\varphi\circ \psi)=E(\varphi)\circ E(\psi)$. For the former, note that the identity on $f_!(M)$ is represented by the morphism with $\varphi_{0,0}=\id_M$ and $\varphi_{0,i}=0$ for all $i\neq 0$. Therefore $E(\id)$ is represented by the morphism $\tilde{\varphi}$ with $\tilde{\varphi}_{0,0}=\id$ and $\tilde{\varphi}_{i,0}=0$ for $i\neq 0$ by linearity of the adjunction isomorphism. For the latter note that $(\varphi\circ \psi)_{0,i}=\sum_j X^j(\varphi_{0,i-j})\circ \psi_{0,j}$. Therefore 
\begin{align*}
(E(\varphi\circ \psi))_{i,0}&=\sum_j (\adj^{Y^i\dashv X^i})^{-1}(X^j(\varphi_{0,i-j})\circ \psi_{0,j})\\
&=\sum_j(\adj^{Y^{i-j}\dashv X^{i-j}})^{-1}(\adj^{Y^j\dashv X^j})^{-1}(X^j(\varphi_{0,i-j})\circ \psi_{0,j})\\
&=\sum_j(\adj^{Y^{i-j}\dashv X^{i-j}})^{-1}(\varphi_{0,i-j}\circ (\adj^{Y^j\dashv X^j})^{-1}(\psi_{0,j}))
\end{align*}
where the last equality follows from naturality of the adjunction isomorphism in the second variable.
On the other hand, 
\begin{align*}(E(\varphi)\circ E(\psi))_{i,0}&=\sum_k \tilde{\varphi}_{k,0}\circ Y^k(\tilde{\psi}_{i-k,0})\\
&=\sum_k(\adj^{Y^k\dashv X^k})^{-1}(\varphi_{0,k})\circ Y^k((\adj^{Y^{i-k}\dashv X^{i-k}})^{-1}(\psi_{0,i-k}))\\
&=\sum_k(\adj^{Y^k\dashv X^k})^{-1}(\varphi_{0,k}\circ (\adj^{Y^{i-k}\dashv X^{i-k}})^{-1}(\psi_{0,i-k}))
\end{align*}
where the last equality follows from naturality of the adjunction isomorphism in the first variable. The claim follows using the substitution $k=i-j$. 

Hence, by \cite[Proposition 3.3.1]{Kva16} the object $\nu(M,h)$ can be calculated applying $E$ to the leftmost map in the exact sequence in Lemma \ref{globaldimension1} and taking the cokernel. This yields the exact sequence 
%\[
%\begin{tikzcd}
$f_*(X(M))\stackrel{\chi}{\to} f_*(M)\to \nu(M,h)\to0$.
%\end{tikzcd}
%\]
The claim follows.
\end{proof}

\begin{mex}\label{runningexample3}
Let $\mathfrak{A}$ be a phylum on a locally bounded quiver $Q$. Let $X,Y$ be as in Example \ref{runningexample1} and let $Q_{\geq 0}$ denote the set of paths in $Q$: For a path $p=\alpha_n\alpha_{n-1}\cdots \alpha_1$ in $Q$, we let $t(p)=t(\alpha_n)$ and $s(p)=s(\alpha_1)$ denote the target and the source of the path, and we let 
\begin{align*}
F_p&:=F_{\alpha_n}\circ F_{\alpha_{n-1}}\circ \dots\circ F_{\alpha_1}\colon \mathcal{A}_{s(p)}\to \mathcal{A}_{t(p)}\\
G_p&:=G_{\alpha_1}\circ \dots\circ G_{\alpha_{n-1}}\circ G_{\alpha_n}\colon \mathcal{A}_{t(p)}\to \mathcal{A}_{s(p)}
\end{align*}
denote the induced functors. Then the functors $f_!\colon \mathcal{C}\to \mathcal{C}^{T(X)}$ and $f_*\colon \mathcal{C}\to \mathcal{C}^{W(Y)}$ applied to $M=(M_\mathtt{i})_{\mathtt{i}\in Q_0}$ are given by
\begin{align*}
f_!(M)&=\left(\bigoplus_{\substack{p\in Q_{\geq 0}\\t(p)=\mathtt{i}}}F_p(M_{s(p)}), f_!(M)_\alpha\right)_{\substack{\mathtt{i}\in Q_0\\\alpha\in Q_1}} \text{and }
f_*(M)&=\left(\bigoplus_{\substack{p\in Q_{\geq 0}\\s(p)=\mathtt{i}}}G_p(M_{t(p)}), f_*(M)_\alpha\right)_{\substack{\mathtt{i}\in Q_0\\\alpha\in Q_1}}
\end{align*}
where 
\begin{align*}
f_!(M)_\alpha&\colon \bigoplus_{\substack{p\in Q_{\geq 0}\\t(p)=s(\alpha)}} F_\alpha F_p(M_{s(p)})\to \bigoplus_{\substack{q\in Q_{\geq 0}
\\t(q)=t(\alpha)}}F_q(M_{s(q)})\\
f_*(M)_\alpha&\colon \bigoplus_{\substack{q\in Q_{\geq 0}\\s(q)=s(\alpha)}} G_q(M_{t(q)})\to \bigoplus_{\substack{p\in Q_{\geq 0}\\s(p)=t(\alpha)}} G_\alpha G_p(M_{t(p)})
\end{align*}
are the canonical inclusion and projection, respectively. For an object $(M_\mathtt{i}, M_\alpha)\in \rep \mathfrak{A}$, the exact  sequence in Lemma \ref{globaldimension1} at vertex $\mathtt{i}\in Q_0$ then reads as 
\[0\to \bigoplus_{\substack{p\in Q_{\geq 0}\\t(p)=\mathtt{i}}}\bigoplus_{\substack{\alpha\in Q_1\\t(\alpha)=s(p)}} F_pF_\alpha(M_{s(\alpha)})\to \bigoplus_{\substack{q\in Q_{\geq 0}\\t(q)=\mathtt{i}}} F_q(M_\mathtt{i})\to M_\mathtt{i}\to 0\]
where the left-most map is the difference between the maps induced by $F_pF_\alpha(M_{s(\alpha)})\xrightarrow{\id} F_{p\alpha} (M_{s(p\alpha)})$ and $F_pF_\alpha(M_{s(\alpha)})\xrightarrow{F_p(M_\alpha)} F_p(M_{s(p)})$. 
 Applying $\nu$ to the exact sequences in Lemma \ref{globaldimension1} gives the exact sequence
\[\bigoplus_{\substack{p\in Q_{\geq 0}\\s(p)=\mathtt{i}}}\bigoplus_{\substack{\alpha\in Q_1\\t(\alpha)=t(p)}} G_pF_\alpha(M_{s(\alpha)})\to \bigoplus_{\substack{q\in Q_{\geq 0}
\\s(q)=\mathtt{i}}} G_q(M_{t(q)})\to \nu(M_\mathtt{j},M_\alpha)_{\mathtt{i}}\to 0\]
at vertex $\mathtt{i}\in Q_0$, where the leftmost map is the difference between the maps induced by $G_{\alpha p}F_\alpha(M_{s(\alpha)})\xrightarrow{G_p(\counit{G_\alpha}{F_\alpha}_{M_{s(\alpha)}})} G_p(M_{s(\alpha)})$ and $G_pF_\alpha(M_{s(\alpha)})\xrightarrow{G_p(M_\alpha)} G_p(M_{s(\alpha)})$ using that $s(\alpha)=t(p)$. This gives an explicit description of $\nu$ on objects in the case of representations of phyla. 
\end{mex}

In the next example, we spell this out in more detail for the quivers considered before. 

\begin{ex}
For simplicity of notation we omit the subscripts of the natural transformations in the following example. In the notation of Example \ref{runningexample}, we obtain:
\begin{enumerate}[(i)]
\item Let $(M_\mathtt{1},M_\mathtt{2},M_\alpha)\in \rep(\Lambda_\mathtt{1}\xrightarrow{\Lambda_\alpha} \Lambda_\mathtt{2})$. The sequence in Lemma \ref{globaldimension1} reads as:
\[
\begin{tikzcd}[ampersand replacement=\&]
0\arrow{r} \&0\arrow{r}\arrow{d} \&M_\mathtt{1}\arrow{r}{\id}\arrow{d}{\begin{pmatrix}\id\\0\end{pmatrix}} \&M_\mathtt{1}\arrow{r}\arrow{d}{M_\alpha}\&0\\
0\arrow{r} \&\Lambda_\alpha\otimes M_\mathtt{1}\arrow{r}{\begin{pmatrix}\id\\-M_\alpha\end{pmatrix}} \&(\Lambda_\alpha\otimes M_\mathtt{1})\oplus M_\mathtt{2}\arrow{r}{(M_\alpha,1)} \&M_\mathtt{2}\arrow{r} \&0
\end{tikzcd}
\]
Here, the vertical maps have to be read as morphisms from ($\Lambda_\alpha\otimes$ the source) to the target. Applying $\nu$, one obtains an explicit description of the sequence \eqref{definitionofnu:eq} in this example
\[
\begin{tikzcd}[ampersand replacement=\&]
\Lambda_{\alpha^*}\otimes \Lambda_\alpha\otimes M_\mathtt{1}\arrow{d}{\varepsilon_{\alpha}}\arrow{r}{\begin{pmatrix}\varepsilon_{\alpha^*}\\-\Lambda_{\alpha^*}\otimes M_\alpha\end{pmatrix}} \&M_\mathtt{1}\oplus (\Lambda_{\alpha^*}\otimes M_\mathtt{2})\arrow{d}{(0,\varepsilon_{\alpha})}\arrow{r} \& \coker\begin{pmatrix}\varepsilon_{\alpha^*}\\-\Lambda_{\alpha^*}\otimes M_\alpha\end{pmatrix}\arrow{d}\arrow{r} \&0\\
\Lambda_\alpha\otimes M_\mathtt{1}\arrow{r}{-M_\alpha} \& M_\mathtt{2}\arrow{r} \&\coker(M_\alpha)\arrow{r} \&0
\end{tikzcd}
\]
where $\varepsilon_{\alpha^*}$ is the counit of the adjunction $\Lambda_{\alpha^*}\otimes_{\Lambda_\mathtt{2}} -\dashv \Lambda_\alpha\otimes_{\Lambda_\mathtt{1}}-$, i.e. the evaluation map $\Lambda_{\alpha^*}\otimes_{\Lambda_\mathtt{2}}\Lambda_\alpha\to \Lambda_{\mathtt{1}}$ while dually $\varepsilon_{\alpha}$ is the counit of the adjunction $\Lambda_\alpha\otimes_{\Lambda_\mathtt{1}} -\dashv \Lambda_{\alpha^*}\otimes_{\Lambda_\mathtt{2}} -$. 
The presence of the bimodule $\Lambda_\alpha$ complicates the situation. In the case that $\Lambda_\mathtt{1}=\Lambda_\mathtt{2}=\Lambda_\alpha$, one obtains that $\varepsilon_{\alpha^*}$ is the canonical isomorphism and thus, $\coker\begin{pmatrix}\varepsilon_{\alpha^*}\\-\Lambda_{\alpha^*}\otimes M_\alpha\end{pmatrix}\cong M_\mathtt{2}$. Hence, we obtain the ordinary cokernel functor. 
\item Let $(M_\mathtt{1},M_\mathtt{2}, M_\mathtt{3},M_\alpha,M_\beta)\in \rep(\Lambda_\mathtt{1}\xrightarrow{\Lambda_\alpha}\Lambda_\mathtt{2}\xrightarrow{\Lambda_\beta} \Lambda_\mathtt{3})$. Similarly, to Meta-Example \ref{runningexample3} we define $\Lambda_{\beta\alpha}=\Lambda_\beta\otimes_{\Lambda_{\mathtt{2}}} \Lambda_\alpha$, $\Lambda_{\alpha^*\beta^*}=\Lambda_{\alpha^*}\otimes_{\Lambda_{\mathtt{2}}} \Lambda_{\beta^*}$, and $M_{\beta\alpha}\colon \Lambda_{\beta\alpha}\otimes_{\Lambda_{\mathtt{1}}} M_{\mathtt{1}}\to M_{\mathtt{3}}$ as $M_\beta\circ (\Lambda_\beta\otimes M_\alpha)$.  With this notation we obtain for the sequence in Lemma \ref{globaldimension1}:
\[
\begin{tikzcd}[ampersand replacement=\&, row sep=7ex]
0\arrow{r} \&0\arrow{d}\arrow{r}\& M_\mathtt{1}\arrow{d}{\begin{pmatrix}1\\0\end{pmatrix}}\arrow{r}{1}\& M_\mathtt{1}\arrow{d}{M_\alpha}\arrow{r}\&0\\
0\arrow{r} \&\Lambda_\alpha\otimes M_\mathtt{1}\arrow{d}{\begin{pmatrix}1\\0\end{pmatrix}}\arrow{r}{\begin{pmatrix}1\\-M_\alpha\end{pmatrix}}\& (\Lambda_\alpha\otimes M_\mathtt{1})\oplus M_\mathtt{2}\arrow{r}{(M_\alpha,1)}\arrow{d}{\begin{pmatrix}1&0\\0&1\\0&0\end{pmatrix}}\& M_\mathtt{2}\arrow{d}{M_\beta}\arrow{r}\&[-2ex]0\\
0\arrow{r} \&(\Lambda_{\beta\alpha}\otimes M_\mathtt{1})\oplus (\Lambda_\beta\otimes M_\mathtt{2})\arrow{r}[yshift=0.7ex]{\begin{pmatrix}1&0\\-\Lambda_\beta\otimes M_\alpha &1\\0&-M_\beta\end{pmatrix}}\&(\Lambda_{\beta\alpha}\otimes M_1)\oplus (\Lambda_\beta\otimes M_2)\oplus M_\mathtt{3}\arrow{r}{(M_{\beta\alpha},M_\beta,1)}\& M_\mathtt{3}\arrow{r}\&0
\end{tikzcd}
\]
Applying $\nu$, we get an explicit description of the sequence \eqref{definitionofnu:eq}:
\[
\begin{tikzcd}[ampersand replacement=\&, column sep=15ex]
\begin{matrix}(\Lambda_{\alpha^*}\otimes \Lambda_\alpha\otimes M_\mathtt{1})\\
\oplus (\Lambda_{\alpha^*\beta^*}\otimes \Lambda_\beta\otimes M_\mathtt{2})\end{matrix}\arrow{d}{\begin{pmatrix}\varepsilon_\alpha&0\\0&\varepsilon_\alpha\end{pmatrix}} \arrow{r}{\begin{pmatrix}\varepsilon_\alpha&0\\-\Lambda_{\alpha^*}\otimes M_\alpha&\Lambda_{\alpha^*}\otimes \varepsilon_\beta\\0&-\Lambda_{\alpha^*\beta^*}\otimes M_\beta\end{pmatrix}}\&[4ex]\begin{matrix}M_\mathtt{1}\oplus (\Lambda_{\alpha^*}\otimes M_\mathtt{2})\\
\oplus (\Lambda_{\alpha^*\beta^*}\otimes M_\mathtt{3})\end{matrix}\arrow{d}{\begin{pmatrix}0&\varepsilon_\alpha&0\\0&0&\varepsilon_\alpha\end{pmatrix}}\arrow{r}\&[-5ex]\arrow{r}\nu(M)_\mathtt{1}\arrow{d} \&[-8ex]0\\
\begin{matrix}(\Lambda_\alpha\otimes M_\mathtt{1})\\\oplus (\Lambda_{\beta^*}\otimes \Lambda_\beta\otimes M_\mathtt{2})\end{matrix}\arrow{r}{\begin{pmatrix}-M_\alpha&\varepsilon_\beta\\0&-\Lambda_{\beta^*}\otimes M_\beta\end{pmatrix}}\arrow{d}{(0,\varepsilon_\beta)}\&M_\mathtt{2}\oplus (\Lambda_{\beta^*}\otimes M_\mathtt{3})\arrow{d}{(0,\varepsilon_\beta)}\arrow{r}\&\nu(M)_\mathtt{2}\arrow{d}\arrow{r} \&0\\
\Lambda_\beta\otimes M_\mathtt{2}\arrow{r}{-M_\beta}\&M_\mathtt{3}\arrow{r}\&\nu(M)_\mathtt{3} \arrow{r} \&0
\end{tikzcd}
\]
\item In the final example, let $(M_\mathtt{1},M_\mathtt{2}, M_\mathtt{3},M_\alpha,M_\beta)\in \rep(\Lambda_\mathtt{1}\xrightarrow{\Lambda_\alpha}\Lambda_\mathtt{3}\xleftarrow{\Lambda_\beta} \Lambda_\mathtt{3})$. The relative projective resolution in Lemma \ref{globaldimension1} reads as:
\[
\begin{tikzcd}[ampersand replacement=\&, row sep=9ex]
0\arrow{r} \&0\arrow{d}\arrow{r}\&M_\mathtt{1}\arrow{d}{\begin{pmatrix}0\\1\\0\end{pmatrix}}\arrow{r}{1}\&M_\mathtt{1}\arrow{d}{M_\alpha}\\
0\arrow{r} \&(\Lambda_\alpha\otimes M_\mathtt{1})\oplus (\Lambda_\beta\otimes M_\mathtt{2})\arrow{r}{\begin{pmatrix}-M_\alpha&-M_\beta\\1&0\\0&1\end{pmatrix}}\&M_\mathtt{3}\oplus (\Lambda_\alpha\otimes M_\mathtt{1})\oplus (\Lambda_\beta\otimes M_\mathtt{2})\arrow{r}{(1,M_\alpha,M_\beta)}\&M_\mathtt{3}\\
0\arrow{r} \&0\arrow{u}\arrow{r}\&M_\mathtt{2}\arrow{u}[swap]{\begin{pmatrix}0\\0\\1\end{pmatrix}}\arrow{r}{1}\&M_\mathtt{2}\arrow{u}{M_\beta}
\end{tikzcd}
\]
Applying $\nu$ yields an explicit description of the exact sequence \eqref{definitionofnu:eq}
\[
\begin{tikzcd}[ampersand replacement=\&, column sep=15ex]
\begin{matrix}(\Lambda_{\alpha^*}\otimes \Lambda_\alpha\otimes M_\mathtt{1})\\
\oplus (\Lambda_{\alpha^*}\otimes \Lambda_\beta\otimes M_\mathtt{2})\end{matrix}\arrow{d}{\begin{pmatrix}\varepsilon_\alpha&0\\0&\varepsilon_\alpha\end{pmatrix}}\arrow{r}{\begin{pmatrix}\varepsilon_{\alpha^*}&0\\-\Lambda_\alpha^*\otimes M_\alpha&-\Lambda_\alpha^*\otimes M_\beta\end{pmatrix}}\&M_\mathtt{1}\oplus (\Lambda_{\alpha^*}\otimes M_\mathtt{3})\arrow{d}{(0,\varepsilon_{\alpha})}\arrow{r}\&[-7ex]\nu(M)_\mathtt{1}\arrow{d}\arrow{r}\&[-7ex]0\\
(\Lambda_\alpha\otimes M_\mathtt{1})\oplus (\Lambda_\beta\otimes M_\mathtt{2})\arrow{r}{(-M_\alpha,-M_\beta)}\& M_\mathtt{3}\arrow{r}\&\nu(M)_\mathtt{3} \arrow{r}\& 0\\
\begin{matrix}(\Lambda_{\beta^*}\otimes \Lambda_\alpha\otimes M_\mathtt{1})\\
\oplus (\Lambda_{\beta^*}\otimes \Lambda_\beta\otimes M_\mathtt{2})\end{matrix}\arrow{r}{\begin{pmatrix}0&\varepsilon_{\beta^*}\\-\Lambda_{\beta^*}\otimes M_\alpha& -\Lambda_{\beta^*}\otimes M_\beta\end{pmatrix}}\arrow{u}{\begin{pmatrix}\varepsilon_\beta&0\\0&\varepsilon_\beta\end{pmatrix}}\&M_\mathtt{2}\oplus (\Lambda_{\beta^*}\otimes M_\mathtt{3})\arrow{u}{(0,\varepsilon_{\beta})}\arrow{r}\&\nu(M)_\mathtt{2}\arrow{r}\arrow{u} \&0
\end{tikzcd}
\]
\end{enumerate}
\end{ex}

\begin{lem}\label{tautau-adjoint}
Let $\mathcal{C}$, $X$ and $Y$ be as in Assumption \ref{standard1}. Then $\tau^-$ is left adjoint to $\tau$.
\end{lem}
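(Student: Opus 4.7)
The strategy is to exhibit, for any $A, B \in \mathcal{A} = \mathcal{C}^{T(X)}$, an explicit natural isomorphism $\mathcal{A}(\tau^-(A), B) \cong \mathcal{A}(A, \tau(B))$ by computing both sides as kernels of parallel maps and matching them via the ambidextrous adjunction $Y\dashv X\dashv Y$. First, I would apply Lemma \ref{globaldimension1} to obtain the relative projective presentation
\[0 \to f_!(X(f^*(B))) \xrightarrow{g'} f_!(f^*(B)) \to B \to 0,\]
apply $\nu$ (which sends relative projectives to relative injectives via $\nu f_! = f_*$) to obtain $\tau(B) = \ker \nu(g') \subseteq f_*(X(f^*(B)))$, and then apply $\mathcal{A}(A,-)$ combined with $f^*\dashv f_*$ to get
\[\mathcal{A}(A,\tau(B)) = \ker\bigl(\mathcal{C}(f^*(A), X(f^*(B))) \xrightarrow{\alpha_1} \mathcal{C}(f^*(A), f^*(B))\bigr).\]

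Dually, the adjunction $f^*\dashv f_*$ yields a relative injective copresentation $0 \to A \to f_*(f^*(A)) \xrightarrow{h'} f_*(Y(f^*(A))) \to 0$ (analogous to Lemma \ref{globaldimension1}), applying $\nu^-$ gives a cokernel presentation of $\tau^-(A)$, and combining $\mathcal{A}(-, B)$ with $f_!\dashv f^*$ yields
\[\mathcal{A}(\tau^-(A), B) = \ker\bigl(\mathcal{C}(Y(f^*(A)), f^*(B)) \xrightarrow{\alpha_2} \mathcal{C}(f^*(A), f^*(B))\bigr).\]
The adjunction $Y\dashv X$ provides a natural isomorphism
\[\Phi\colon \mathcal{C}(f^*(A), X(f^*(B))) \xrightarrow{\sim} \mathcal{C}(Y(f^*(A)), f^*(B)),\quad \tilde\varphi \mapsto \counit{Y}{X}_{f^*(B)} \circ Y(\tilde\varphi).\]

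The heart of the argument is the identification $\alpha_2 \circ \Phi = -\alpha_1$. Using the explicit formula in Proposition \ref{definitionofnu} for $\nu(g')$, whose only non-zero ``bottom-row'' components are $\chi_{0,0} = -h_1$ and $\chi_{1,0} = \counit{Y}{X}_{f^*(B)}$ (where $h_1\colon X(f^*(B)) \to f^*(B)$ is the EM-structure of $B$), together with the fact that a morphism $A \to f_*(P)$ recovers its components from its socle component via the coaction $k_1\colon f^*(A) \to Y(f^*(A))$ on $A$, one computes
\[\alpha_1(\tilde\varphi) = \counit{Y}{X}_{f^*(B)} \circ Y(\tilde\varphi) \circ k_1 - h_1 \circ \tilde\varphi.\]
The dual computation for $\nu^-(h')$ (whose non-zero ``top-row'' components are $-k_1$ and $\unit{Y}{X}_{f^*(A)}$) gives, for $\tilde\psi \in \mathcal{C}(Y(f^*(A)), f^*(B))$,
\[\alpha_2(\tilde\psi) = h_1 \circ X(\tilde\psi) \circ \unit{Y}{X}_{f^*(A)} - \tilde\psi \circ k_1.\]

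Substituting $\tilde\psi = \Phi(\tilde\varphi)$, the first term of $\alpha_2(\tilde\psi)$ becomes $h_1 \circ X(\counit{Y}{X}_{f^*(B)}) \circ XY(\tilde\varphi) \circ \unit{Y}{X}_{f^*(A)}$, which by naturality of $\unit{Y}{X}$ equals $h_1 \circ X(\counit{Y}{X}_{f^*(B)}) \circ \unit{Y}{X}_{X(f^*(B))} \circ \tilde\varphi$, and then one of the zig-zag identities for $Y \dashv X$ collapses this to $h_1 \circ \tilde\varphi$; the second term is $-\counit{Y}{X}_{f^*(B)} \circ Y(\tilde\varphi) \circ k_1$. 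Hence $\alpha_2(\Phi(\tilde\varphi)) = -\alpha_1(\tilde\varphi)$, so $\Phi$ restricts to a natural isomorphism $\ker\alpha_1 \cong \ker\alpha_2$, yielding $\tau^- \dashv \tau$. The main obstacle is the bookkeeping in the two explicit computations and tracking the various adjunctions; however, naturality in $A$ and $B$ comes for free since $\Phi$ and all the adjunction isomorphisms involved are natural in both arguments.
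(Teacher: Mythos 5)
Your proof is correct, and it takes a genuinely different route from the paper's. The paper argues procedurally: starting from $g\colon \tau^-(A)\to A'$, it chooses arbitrary relative projective/injective (co)resolutions, lifts $g$ to a map of complexes, applies $\nu$ and passes to kernels to produce $\overline{g}\colon A\to \tau(A')$, and then devotes a separate argument to showing independence of the lifting choices. You instead fix the canonical resolutions from Lemma \ref{globaldimension1} and its dual, use $f^*\dashv f_*$ and $f_!\dashv f^*$ to rewrite $\mathcal{A}(A,\tau(B))$ and $\mathcal{A}(\tau^-(A),B)$ as kernels of explicit maps $\alpha_1$ and $\alpha_2$ between $\mathcal{C}$-Hom groups, and then match them via the $Y\dashv X$ adjunction isomorphism $\Phi$ and the componentwise formula for $\chi$ in Proposition \ref{definitionofnu} (together with the dual of Remark \ref{eilenbergmooreforfreemonad} for recovering $\varphi_1$ from the socle component). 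The verification $\alpha_2\circ\Phi=-\alpha_1$, which I checked uses exactly the naturality of $\unit{Y}{X}$ plus the triangle identity $X(\counit{Y}{X})\circ\unit{Y}{X}_X=\id$, is the right key step, and the sign discrepancy is harmless since $\ker(-\alpha_1)=\ker\alpha_1$. Your route trades the paper's lifting-and-well-definedness diagram chase for explicit matrix bookkeeping; it is more self-contained given the machinery of Section \ref{sec:monads}, makes the role of the ambidextrous adjunction $Y\dashv X\dashv Y$ transparent, and avoids the need to verify choice-independence, at the cost of relying on the specific Eilenberg--Moore description rather than the approximation properties of arbitrary resolutions.
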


\begin{proof}
Let $A$ and $A'$ be two arbitrary objects in $\mathcal{C}^{T(X)}$. By Lemma \ref{globaldimension1} and its dual we know that there exist exact sequences
%\begin{align*}
$0\to A\xrightarrow{j} J_1\xrightarrow{t}J_{0}\to 0$ and 
$0\to Q_1\xrightarrow{s}Q_0\xrightarrow{p}A'\to 0$
%\end{align*}
where $Q_0$ and $Q_1$ are relative projective and $p$ is a right approximation with respect to the relative projective objects, and $J_0$ and $J_1$ are relative injective and $j$ is a left approximation with respect to the relative injective objects. Now let $g\colon \tau^-(A)\to A'$ be an arbitrary morphism. Consider the exact sequence
\[
0\to \nu^-(A)\xrightarrow{\nu^-(j)} \nu^-(J_1)\xrightarrow{\nu^-(t)}\nu^-(J_0)\to \tau^-(A)\to 0
\]
The composite $\nu^-(J_0)\to \tau^-(A)\xrightarrow{g}A'$ factors through $p$ via a morphism $u\colon \nu^-(J_0)\to Q_0$ since $p$ is a right approximation and $\nu^-(J_0)$ is relative projective. Hence, there is a commutative diagram
\begin{equation}\label{lifting morphism}
\begin{tikzcd}
&\nu^-(J_1)\arrow{r}{\nu^-(t)}\arrow{d}{v}&\nu^-(J_0)\arrow{r}{}\arrow{d}{u}&\tau^-(A)\arrow{d}{g}\arrow{r}&0 \\
0\arrow{r}&Q_1\arrow{r}{s}&Q_0\arrow{r}{p}&A'\arrow{r}&0
\end{tikzcd}
\end{equation}
with exact rows, where $v$ is induced from the commutativity of the right square. Applying $\nu$ to the leftmost square and composing with the isomorphisms $\nu\nu^-(J_1)\cong J_1$ and $\nu\nu^-(J_0)\cong J_0$, results in the commutative square
\[
\begin{tikzcd}
J_1\arrow{r}{t}\arrow{d}{\tilde{v}}&J_0\arrow{d}{\tilde{u}} \\
\nu(Q_1)\arrow{r}{\nu(s)}&\nu(Q_0)
\end{tikzcd}
\]
Since $\ker t \cong A$ and $\ker \nu(s)\cong \tau(A)$, we get an induced morphism $\overline{g}\colon A\to \tau(A)$. Note that $\overline{g}$ does not depend on the choice of $u$ and $v$ in \eqref{lifting morphism}. In fact, if $u'\colon \nu^-(J_0)\to Q_0$ and $v'\colon \nu^-(J_1)\to Q_1$ are two different morphisms making the diagram \eqref{lifting morphism} commute, then there exists a morphism $w\colon \nu^-(J_0)\to Q_1$ such that $s\circ w = u-u'$ and $w\circ \nu^-(t)=v-v'$. Hence, applying $\nu$ and composing with the isomorphism $\nu\nu^-(J_1)\cong J_1$, we get a morphism $\overline{w}\colon J_0\to \nu(Q_1)$ satisfying $\nu(s)\circ \overline{w} = \overline{u}-\overline{u'}$ and $\overline{w}\circ t=\overline{v}-\overline{v'}$. Therefore, the induced morphisms $A\to \tau(A)$ coming from $(\overline{u},\overline{v})$ and coming from $(\overline{u'},\overline{v'})$ coincide. Hence, there is a well defined map $\mathcal{C}^{T(X)}(\tau^-(A),A')\to \mathcal{C}^{T(X)}(A,\tau (A'))$ sending $g$ to $\overline{g}$. This map is an isomorphism, since the dual construction clearly gives an inverse $\mathcal{C}^{T(X)}(A,\tau (A'))\to \mathcal{C}^{T(X)}(\tau^-(A),A')$. A straightforward computation shows that it is a natural in $A$ and $A'$, which proves the claim.
\end{proof}

We continue by studying the derived functors of $\Kopf_X$ and $\soc_Y$ in our setup. 

\begin{lem}\label{Ltopvanishesonprojectives}
Let $\mathcal{C}$, $X$, and $Y$ be as in Assumption \ref{standard1}. 
Then the left and right derived functors $L_j\Kopf_{X}$ and $R^j\soc_{Y}$ exist for all $j>0$ and vanish on the image of $f_!$ and $f_*$, respectively.
\end{lem}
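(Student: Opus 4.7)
The plan is to exploit the Eilenberg--Moore resolution from Lemma \ref{globaldimension1}, which shows that every object of $\mathcal{C}^{T(X)}$ admits a relative projective resolution of length at most $1$, together with the identity $\Kopf_X\circ f_!=\Id_\mathcal{C}$ from Lemma \ref{topofprojective}. Since $\Kopf_X$ is a left adjoint by Lemma \ref{topisaleftadjoint}, it is right exact, and therefore the relative left derived functors $L_j\Kopf_X$ can be defined by applying $\Kopf_X$ to a relative projective resolution (in the sense of \cite{Kva16}, with respect to the relative projective class given by the image of $f_!$). I would first record that these derived functors are well-defined and independent of the chosen resolution, appealing to the standard comparison theorem for the relative projective class consisting of summands of objects $f_!(N)$; dually for $\soc_Y$ using the relative injective class $\image f_*$ and the dual of Lemma \ref{globaldimension1}.

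Once existence is in place, the vanishing is essentially forced. Given $A=f_!(N)$, I would use the trivial resolution $0\to f_!(N)\xrightarrow{\id} f_!(N)\to 0$ concentrated in degree $0$; applying $\Kopf_X$ yields a complex concentrated in degree $0$, so $L_j\Kopf_X(f_!(N))=0$ for all $j>0$. As a sanity check one can also apply $\Kopf_X$ to the canonical resolution of an arbitrary $(M,h)\in\mathcal{C}^{T(X)}$ from Lemma \ref{globaldimension1}; using $\Kopf_X\circ f_!=\Id_\mathcal{C}$, this reduces to the two-term complex $h_1\colon X(f^*(M,h))\to f^*(M,h)$, giving $L_1\Kopf_X(M,h)=\ker h_1$ and $L_j\Kopf_X=0$ for $j\geq 2$. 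In particular when $(M,h)=f_!(N)$, the map $h_1$ identifies with the canonical split monomorphism $\coprod_{i\geq 1}X^i(N)\hookrightarrow\coprod_{i\geq 0}X^i(N)$, whose kernel is zero, confirming the vanishing.

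The dual argument handles $R^j\soc_Y$ on the image of $f_*$: using that $\soc_Y$ is right adjoint to $S'$ (hence left exact), that $\soc_Y\circ f_*=\Id_\mathcal{C}$ (Lemma \ref{topofprojective}), and that $f_*(N)$ has a length-$1$ relative injective resolution given by (the dual of) Lemma \ref{globaldimension1}, one shows $R^j\soc_Y(f_*(N))=0$ for $j\geq 1$ either via the trivial resolution or by identifying $R^1\soc_Y$ with the cokernel of the structure map of a $W(Y)$-coalgebra and noting that this cokernel vanishes on cofree objects.

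The main obstacle is the first point: making precise the meaning of ``left derived functor'' in a setting where $\mathcal{C}^{T(X)}$ need not have enough ordinary projectives. The correct framework is relative homological algebra with respect to the projective class $\image f_!$, and one must verify that this class is closed under the operations required for the standard comparison theorem to apply; this is encoded in Lemma \ref{globaldimension1} together with the fact that the counit $\counit{f_!}{f^*}$ is an epimorphism on every object. After this setup, the actual vanishing is immediate from the trivial resolution.
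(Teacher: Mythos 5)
Your proof is correct and is essentially the same approach as the paper's, with one organizational difference worth noting. For the existence part, the paper cites a specific criterion from \cite[Proposition 3.1.4]{Kva16}: it suffices to show that $f_!f^*$ and $\Kopf_X\circ f_!f^*$ are exact (the former because $f_!$ is exact since $f^*\nu$ is its left adjoint, the latter because $\Kopf_Xf_!f^*=f^*$). You instead invoke the general machinery of relative homological algebra with respect to the projective class $\image f_!$ and appeal to a comparison theorem, noting that the counit of $f_!\dashv f^*$ being an epimorphism supplies the ``enough relative projectives'' condition. These are two formulations of the same underlying idea, and yours is a reasonable and correct alternative. Your vanishing argument via the trivial resolution $0\to f_!(N)\xrightarrow{\id}f_!(N)\to 0$ is more explicit than the paper's (which leaves the vanishing implicit in the definition of the relative derived functors), and your sanity check that applying $\Kopf_X$ to the canonical resolution from Lemma \ref{globaldimension1} gives $L_1\Kopf_X(M,h)=\ker h_1$ anticipates Lemma \ref{derivedoftop} in the paper. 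The only caveat: the phrase ``one must verify that this class is closed under the operations required for the standard comparison theorem'' is left somewhat vague — the paper's route via \cite[Proposition 3.1.4]{Kva16} sidesteps this by giving a concrete checkable criterion, which is arguably the cleaner path in this setting where $\mathcal{C}^{T(X)}$ may lack ordinary projectives.
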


\begin{proof}
We only prove the claims about $\Kopf_X$, the proofs for the claims about $\soc_Y$ are dual. As $\Kopf_{X}$ is a left adjoint by Lemma \ref{topisaleftadjoint}, it is right exact. Note that $f^*$ is faithful. To prove existence, by \cite[Proposition 3.1.4]{Kva16}, it suffices to show that $f_!f^*$ is exact and $\Kopf_{X}\circ f_!f^*$ is exact. As $f^*\nu$ is left adjoint to $f_!$, see \cite[Lemma 3.2.2]{Kva16}, it follows that $f_!$ is exact and therefore the composite $f_!f^*$ is exact. Also, by Lemma \ref{topofprojective}, we have the identity $\Kopf_X f_!=\Id$, and hence the composite $\Kopf_Xf_!f^*=f^*$ is exact. This proves the claims. 
\end{proof}

Using Lemma \ref{globaldimension1}, the derived functors of $\Kopf_X$ and $\soc_Y$ admit an explicit description.

\begin{lem}\label{derivedoftop}
Let $\mathcal{C}$, $X$, and $Y$ be as in Assumption \ref{standard1}. Let $(M,h)\in \mathcal{C}^{T(X)}$. 
Then,  $L_1\Kopf_{X} (M,h)=\ker h_1$ and $L_i\Kopf_{X} (M,h)=0$ for all $i\geq 2$. Dually, $R^1\soc_Y(M,h)=\coker h_1$ and $R^j\soc_Y(M,h)=0$ for all $j\geq 2$. 
\end{lem}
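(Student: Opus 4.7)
The plan is to prove the result by computing the left derived functors of $\Kopf_X$ using a short $\Kopf_X$-acyclic resolution of $(M,h)$ coming from Lemma \ref{globaldimension1}. Dually for $\soc_Y$.

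First, recall that Lemma \ref{globaldimension1} provides the short exact sequence
\[
0\to f_!(X(f^*(M,h)))\xrightarrow{\iota_{f^*(M,h)}-f_!(h_1)}(f_!f^*)(M,h)\xrightarrow{\counit{f_!}{f^*}_{(M,h)}}(M,h)\to 0
\]
in $\mathcal{C}^{T(X)}$. Both outer terms lie in the image of $f_!$, so by Lemma \ref{Ltopvanishesonprojectives} they are $\Kopf_X$-acyclic. Hence this sequence is a (length-one) $\Kopf_X$-acyclic resolution of $(M,h)$, and we may compute $L_i\Kopf_X(M,h)$ as the homology of $\Kopf_X$ applied to the two-term complex formed by the leftmost map.

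Next I would identify the complex explicitly. By Lemma \ref{topofprojective} we have $\Kopf_X f_!=\Id_\mathcal{C}$, so after applying $\Kopf_X$ the two nontrivial terms become $X(M)$ and $M$. It remains to identify $\Kopf_X(\iota_{f^*(M,h)}-f_!(h_1))$. Using Lemma \ref{morphismsbetweenPprojectives} a morphism $f_!(M')\to f_!(N')$ between relative projectives is determined by its components $\varphi_{0,j}\colon M'\to X^j(N')$, and $\Kopf_X$ extracts the component $\varphi_{0,0}$. The map $\iota_{f^*(M,h)}$ is the inclusion $\bigoplus_{i\geq 0}X^{i+1}(M)\hookrightarrow \bigoplus_{i\geq 0}X^i(M)$, whose $(0,0)$-component is zero, so $\Kopf_X(\iota_{f^*(M,h)})=0$. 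On the other hand, $f_!(h_1)$ has $(0,0)$-component $h_1$, so $\Kopf_X(f_!(h_1))=h_1$. Thus after applying $\Kopf_X$ we obtain the complex
\[
0\to X(M)\xrightarrow{-h_1} M\to 0.
\]

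Taking homology, $L_0\Kopf_X(M,h)=\coker(-h_1)=\coker h_1$, which consistently recovers $\Kopf_X(M,h)$; $L_1\Kopf_X(M,h)=\ker(-h_1)=\ker h_1$; and $L_i\Kopf_X(M,h)=0$ for $i\geq 2$ because the resolution has length one. The dual statement for $\soc_Y$ is proved by dualising each step: one uses the dual of Lemma \ref{globaldimension1} to produce a length-one coresolution of $(M,h)$ by objects in the image of $f_*$, which is $\soc_Y$-acyclic by the dual of Lemma \ref{Ltopvanishesonprojectives}, and then applies the dual of Lemma \ref{morphismsbetweenPprojectives} to identify the resulting cocomplex as $0\to M\xrightarrow{h_1'} Y(M)\to 0$ (up to sign), with $h_1'$ the mate of $h_1$ under the adjunction $X\dashv Y$.

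The only mildly delicate step is the identification of $\Kopf_X(\iota_{f^*(M,h)})$ and $\Kopf_X(f_!(h_1))$; everything else is formal from the general machinery already established. Once one trusts that $\Kopf_X$ on morphisms between relative projectives picks out the $(0,0)$-component (the content of Lemma \ref{morphismsbetweenPprojectives}), the computation is essentially forced.
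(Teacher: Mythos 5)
Your proof is correct and follows essentially the same route as the paper's: apply $\Kopf_X$ to the short exact sequence of Lemma~\ref{globaldimension1}, use Lemma~\ref{Ltopvanishesonprojectives} to kill the higher derived terms on the relative projectives, and identify the resulting map $X(M)\to M$ (via Lemmas~\ref{topofprojective} and~\ref{morphismsbetweenPprojectives}) to read off $\ker h_1$ and $\coker h_1$. The only cosmetic difference is that the paper phrases the computation via the long exact sequence of derived functors whereas you phrase it as homology of the two-term acyclic resolution; you also track the sign and get $-h_1$, while the paper writes $h_1$ — both yield the same kernel and cokernel, so nothing is at stake.
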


\begin{proof}
By Lemma \ref{Ltopvanishesonprojectives}, $L_j\Kopf_{X}$ exists for all $j>0$.
Applying $\Kopf_{X}$ to the short exact sequence in Lemma \ref{globaldimension1} yields the long exact sequence 
\[
\begin{tikzcd}
\dots\arrow{r}&L_1\Kopf_{X}(f_!f^*(M,h))\arrow{r} &L_1\Kopf_{X}(M,h)\arrow{lld}\\
\Kopf_{X} f_!(X(f^*(M,h)))\arrow{r} &\Kopf_{X} f_! f^*(M,h)\arrow{r}&\Kopf_{X}(M,h)\arrow{r} &0.
\end{tikzcd}
\]
Since $L_i\Kopf_X(f_!f^*(M,h))=0$ and $L_i\Kopf_X(f_!Xf^*(M,h))=0$ for $i>0$ by Lemma \ref{Ltopvanishesonprojectives}, it follows that $L_i\Kopf_X(M,h)=0$ for $i>1$. Furthermore, using Lemma \ref{topofprojective} and Lemma \ref{Ltopvanishesonprojectives} we obtain the exact sequence
$0\to L_1\Kopf_{X}(M,h)\to X(M)\xrightarrow{h_1} M\to \Kopf_{X} (M,h)\to 0$.
The claim follows.
\end{proof}

The following lemma shows the equivalence of different formulations of Nakayama's lemma in our context. 

\begin{lem}\label{nakayama}
Let $\mathcal{C}$, $X$, and $Y$ be as in Assumption \ref{standard1}. 
The following are equivalent:
\begin{enumerate}[(1)]
\item For $M\neq 0$ in $\mathcal{C}$, there are no epimorphisms $X(M)\to M$ in $\mathcal{C}$.
\item For all $(M,h)\in \mathcal{C}^{T(X)}$ we have that $\Kopf_{X} (M,h)=0$ if and only if $(M,h)=0$.
\item For all morphisms $\varphi$ in $\mathcal{C}^{T(X)}$ we have that $\Kopf_{X} \varphi$ is an epimorphism if and only if $\varphi$ is an epimorphism.
\end{enumerate}
\end{lem}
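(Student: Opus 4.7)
The plan is to prove the chain $(1) \Leftrightarrow (2) \Leftrightarrow (3)$ by simple manipulations, using the identification $\mathcal{C}^{T(X)} \cong (X \Downarrow \Id)$ from Lemma \ref{comma=EilenbergMoore} together with the fact that $\Kopf_X$ is a left adjoint (Lemma \ref{topisaleftadjoint}) and therefore right exact, combined with exactness properties from Proposition \ref{eilenbergmooreexact}.

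For $(1) \Rightarrow (2)$, I would note that $\Kopf_X(M,h) = \coker h_1 = 0$ is equivalent to $h_1 \colon X(M) \to M$ being an epimorphism. If $(M,h) \neq 0$, i.e.\ $M \neq 0$, this contradicts $(1)$. The converse direction $\Kopf_X(0)=0$ is trivial. For $(2) \Rightarrow (1)$, given any $h_1 \colon X(M) \to M$ with $M \neq 0$, Lemma \ref{comma=EilenbergMoore} provides an object $(M,h) \in \mathcal{C}^{T(X)}$ with underlying morphism $h_1$; if $h_1$ were an epimorphism, then $\Kopf_X(M,h) = \coker h_1 = 0$ would force $(M,h) = 0$ by $(2)$, contradicting $M \neq 0$.

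For $(2) \Rightarrow (3)$, suppose first that $\varphi \colon (M,h) \to (M',h')$ is an epimorphism in $\mathcal{C}^{T(X)}$. Since $\Kopf_X$ is right exact by Lemma \ref{topisaleftadjoint}, it preserves epimorphisms, so $\Kopf_X \varphi$ is an epimorphism. Conversely, suppose $\Kopf_X \varphi$ is an epimorphism. Let $(C,h_C)$ denote the cokernel of $\varphi$ in $\mathcal{C}^{T(X)}$, which exists by Proposition \ref{eilenbergmooreexact}. Right exactness of $\Kopf_X$ yields $\Kopf_X(C,h_C) = \coker(\Kopf_X \varphi) = 0$, so $(2)$ forces $(C,h_C) = 0$, hence $\varphi$ is an epimorphism.

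For $(3) \Rightarrow (2)$, I would apply $(3)$ to the zero endomorphism $\varphi = 0 \colon (M,h) \to (M,h)$: this $\varphi$ is an epimorphism precisely when $(M,h) = 0$, while $\Kopf_X \varphi = 0 \colon \Kopf_X(M,h) \to \Kopf_X(M,h)$ is an epimorphism precisely when $\Kopf_X(M,h) = 0$. There is no serious obstacle in any of these steps; the only point that requires a moment of care is ensuring that an arbitrary morphism $h_1\colon X(M)\to M$ (with no compatibility condition a priori) genuinely defines an object of $\mathcal{C}^{T(X)}$, which is precisely the content of Lemma \ref{comma=EilenbergMoore}.
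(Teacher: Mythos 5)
Your proof is correct and follows essentially the same route as the paper's: the equivalence $(1)\Leftrightarrow(2)$ comes directly from the definition of $\Kopf_X$ together with the identification of $\mathcal{C}^{T(X)}$ with $(X\Downarrow\Id)$, the implication $(2)\Rightarrow(3)$ uses that $\Kopf_X$ commutes with cokernels (you cite right exactness directly where the paper writes out the corresponding diagram and invokes the snake lemma, which is the same observation), and $(3)\Rightarrow(2)$ is obtained by testing $(3)$ on a suitable degenerate morphism (you use the zero endomorphism where the paper uses $0\to M$, an inessential variant).
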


\begin{proof}
(1) and (2) are equivalent by definition of $\Kopf_{X}$. To see that (3) implies (2) note that $M=0$ if and only if the morphism $0\to M$ is an epimorphism. For the remaining direction, (2) implies (3), consider the following diagram:
\[
\begin{tikzcd}
X(M)\arrow{r}{h_1}\arrow{d}{X(\varphi)} &M\arrow{r}{\pi_M}\arrow{d}{\varphi} &\Kopf_{X}(M,h)\arrow{d}{\Kopf_{X}(\varphi)}\arrow{r}&0\\
X(N)\arrow{d}{X(p)}\arrow{r}{g_1} &N\arrow{r}{\pi_N}\arrow{d}{p} &\Kopf_{X}(N,g)\arrow{r}\arrow{d}&0\\
X(\Coker \varphi)\arrow{r}{\overline{g_1}} &\Coker\varphi\arrow{r} &\Coker(\Kopf_{X}(\varphi))=\Kopf_{X}(\coker \varphi, \overline{g_1})\arrow{r}&0
\end{tikzcd}
\]
where equality on the lower right corner follows e.g. from the snake lemma. Thus, if $\Kopf_{X} \varphi$ is an epimorphism, then $\Kopf_{X}(\Coker\varphi, \overline{g_1})=0$. Hence, $\Coker \varphi=0$ implying that $\varphi$ is an epimorphism. 
\end{proof}

We are not able to show that the equivalent conditions of Lemma \ref{nakayama} hold in general. However, if $\mathcal{C}$ has enough projectives, or $X$ comes from a phylum, then the equivalent conditions hold, see Lemma \ref{specialcasenakayama} and Meta-Example \ref{nakayamaforphyla} below. 
We start with the following well-known lemma. We don't expect the equivalent conditions to hold in general since the situation seems very similar to \cite{Nee02} regarding the question whether the inverse limit of epimorphisms is an epimorphism. However, the counterexample in \cite{Nee02} is not a counterexample to our setup. 

\begin{lem}
Let $\mathcal{C}$ be an additive category. Let $M$ be an object such that the natural morphism $\coprod_{i\geq 0} M\to \prod_{i\geq 0} M$ is an isomorphism. Then, $M=0$.
\end{lem}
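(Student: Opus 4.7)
The plan is to build a `diagonal-into-the-coproduct' morphism $\gamma \colon M \to \coprod_{i \geq 0} M$ using the assumed isomorphism, and then use it to force $\id_M = 0$.

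Let $\alpha \colon \coprod_{i \geq 0} M \to \prod_{i \geq 0} M$ denote the natural morphism, and write $\iota_i \colon M \to \coprod_{i \geq 0} M$ and $\pi_i \colon \prod_{i \geq 0} M \to M$ for the canonical inclusions and projections, so that $\pi_j \alpha \iota_i = \delta_{ij} \id_M$. Three further canonical morphisms will be central: the shift $\sigma \colon \coprod_{i \geq 0} M \to \coprod_{i \geq 0} M$ determined by $\sigma \iota_i = \iota_{i+1}$; the codiagonal $\nabla \colon \coprod_{i \geq 0} M \to M$ determined by $\nabla \iota_i = \id_M$; and the diagonal $\Delta \colon M \to \prod_{i \geq 0} M$ determined by $\pi_i \Delta = \id_M$. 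Set $\gamma := \alpha^{-1} \Delta$.

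The cheap step is that $\nabla \sigma$ and $\nabla$ agree on every $\iota_i$, so by the universal property of the coproduct $\nabla \sigma = \nabla$, whence $\nabla(\id - \sigma) = 0$. The heart of the argument is the identity $(\id - \sigma)\gamma = \iota_0$. Granting it, composing with $\nabla$ yields $\id_M = \nabla \iota_0 = \nabla(\id - \sigma)\gamma = 0$, and hence $M = 0$.

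The main obstacle is verifying $(\id - \sigma)\gamma = \iota_0$, because two morphisms with codomain $\coprod_{i \geq 0} M$ are not in general determined by their `components' on the coproduct side. I circumvent this by transporting the equation across $\alpha$ and instead verifying $\alpha(\id - \sigma)\gamma = \alpha\iota_0$ in $\prod_{i \geq 0} M$, where the universal property of the product does furnish uniqueness. A short bookkeeping with $\pi_k$, using $\pi_k \alpha \sigma = \pi_{k-1} \alpha$ for $k \geq 1$ and $\pi_0 \alpha \sigma = 0$, shows that both sides of the transported equation have $k$-th component $\delta_{k,0} \id_M$, which completes the argument.
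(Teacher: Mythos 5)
Your proof is correct and is the same Eilenberg-swindle argument as the paper's, organized more cleanly. The paper also uses the shift-by-one endomorphism (there called $g'$), the diagonal and codiagonal (there called $f_1'$ and $f_3'$), and the inverse of the canonical morphism; it introduces in addition the ``truncated'' diagonal and codiagonal $f_1, f_3$ (zero in degree $0$, identity elsewhere), forms $f := f_3\,\alpha^{-1}\,f_1$, and derives $f+\id_M = f$ from a commutative diagram, whence $\id_M = 0$. Your version dispenses with $f_1,f_3$ by packaging the cancellation into the single identity $(\id - \sigma)\gamma = \iota_0$, verified -- quite rightly -- after transporting across $\alpha$ into the product, where componentwise reasoning is legitimate; combined with $\nabla(\id-\sigma)=0$ this gives $\id_M=0$ directly. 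The only thing to watch, which you handled correctly, is that morphisms \emph{into} the coproduct cannot be checked componentwise, so the transport step across $\alpha$ is essential and not just cosmetic.
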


\begin{proof}
The proof is a version of the Eilenberg swindle. Define $f_1$ as the composition of the morphism $M\to \prod_{i\geq 0} M$ given by $0$ on the zeroth component and $1_M$ on all the other components. Denote the inverse of the natural homomorphism $\coprod_{i\geq 0} M\to \prod_{i\geq 0}M$ by $f_2$ and define $f_3$ as the map $\coprod_{i\geq 0} M\to M$ by zero on the zeroth component and $1_M$ on all the other components. Set $f:=f_3f_2f_1$. Similarly define $f'_i$ to be $1_M$ on every component for $i=1,3$. It is easy to see that the following diagram is commutative:
\[
\begin{tikzcd}[row sep=1ex]
&\prod_{i\geq 0} M\arrow{ddd}{g}\arrow{r}{f_2}&\coprod_{i\geq 0} M\arrow{ddd}{g'}\arrow{rd}{f_3'}\\
M\arrow{ru}{f_1'}\arrow[equals]{d}&&&M\arrow[equals]{d}\\
M\arrow{rd}{f_1} &&&M\\
&\prod_{i\geq 0} M\arrow{r}{f_2}&\coprod_{i\geq 0} M\arrow{ru}{f_3} 
\end{tikzcd}
\]
where $g$ and $g'$ are defined as follows: Denote by $p_i\colon \prod_{i\geq 0} M\to M$ the $i$-th projection and by $\iota_i\colon M\to \coprod_{i\geq 0} M$ the $i$-th inclusion. Then $g\colon \prod_{i\geq 0} M\to \prod_{i\geq 0} M$ is defined to be the map whose $i$-th component $\prod_{i\geq 0} M\to M$ is equal to $p_{i+1}$ while $g'\colon \coprod_{i\geq 0} M\to \coprod_{i\geq 0} M$ is defined to be the map whose $i$-th component $M\to \coprod_{i\geq 0} M$ is equal to $\iota_{i+1}$. 
Going from left to right through the upper part is equal to $f+1_M$ while the path through the lower part is equal to $f$. Thus $f+1_M=f$ and hence $1_M=0$, implying that $M=0$. 
\end{proof}

The following lemma shows that the analogue of Nakayama's lemma holds in case the category $\mathcal{C}$ has enough projectives.

\begin{lem}\label{specialcasenakayama}
Let $\mathcal{C}$, $X$, and $Y$ be as in Assumption \ref{standard1}. 
\begin{enumerate}[(i)]
\item Assume that the inverse limit $M^{\infty}$ (if it exists) of a sequence 
\[\dots\to M^1\to M^0\to 0\]of epimorphisms in $\mathcal{C}$ with $M^0\neq 0$ satisfies $M^{\infty}\neq 0$.
Then $X$ satisfies the equivalent conditions of Lemma \ref{nakayama}.
\item If $\mathcal{C}$ has enough projectives then $\mathcal{C}$ satisfies the assumption in (i).
\end{enumerate}
\end{lem}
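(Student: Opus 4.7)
The plan is to handle (ii) directly from the enough-projectives assumption and then deduce (i) by reducing to the Eilenberg-swindle lemma that immediately precedes the statement.

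For (ii), given a sequence of epimorphisms $\cdots \to M^1 \to M^0 \to 0$ with $M^0 \neq 0$, choose a projective $P$ and a nonzero morphism $\iota \colon P \to M^0$. Using projectivity of $P$, iteratively lift $\iota$ through the epimorphisms $M^{i+1} \to M^i$ to obtain a compatible family $\{P \to M^i\}_{i\geq 0}$; these assemble into a morphism $P \to M^\infty$ whose composite with the projection $M^\infty \to M^0$ recovers $\iota$. Since $\iota \neq 0$, we conclude $M^\infty \neq 0$.

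For (i), suppose for contradiction that $h \colon X(M) \to M$ is an epimorphism with $M \neq 0$. By Assumption \ref{standard1}, $X$ has both a left and a right adjoint (namely $Y$), so $X$ preserves limits and colimits; in particular $X$ preserves epimorphisms. Consequently the tower $\cdots \to X^2(M) \to X(M) \to M$ obtained by iterating $h$ consists of epimorphisms. Since $\prod_{i \geq 0} X^i(M)$ exists by Assumption \ref{standard1}, the inverse limit $M^\infty$ exists in $\mathcal{C}$, and by the hypothesis of (i) we have $M^\infty \neq 0$.

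The crucial step is to show $X^i(M^\infty) \cong M^\infty$ for every $i \geq 0$. Since $X$ preserves limits, $X(M^\infty) \cong \lim_i X^{i+1}(M)$, and the latter coincides with $\lim_i X^i(M) = M^\infty$ because shifting the index gives a cofinal subsystem with the same limit; iteration yields the general case. Applying Assumption \ref{standard1} to $M^\infty$, the natural map $\coprod_{i \geq 0} X^i(M^\infty) \to \prod_{i \geq 0} X^i(M^\infty)$ is an isomorphism. Transporting along the isomorphisms $X^i(M^\infty) \cong M^\infty$ shows that the canonical comparison map $\coprod_{i \geq 0} M^\infty \to \prod_{i \geq 0} M^\infty$ is also an isomorphism, whence the preceding Eilenberg-swindle lemma forces $M^\infty = 0$, a contradiction.

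The main subtlety to verify will be that the transport of the biproduct isomorphism along the chosen isomorphisms $X^i(M^\infty) \cong M^\infty$ indeed produces the canonical map $\coprod M^\infty \to \prod M^\infty$; this reduces to the fact that both natural comparison maps have components $\delta_{ij} \cdot \mathrm{id}$, a property preserved under arbitrary transport by isomorphisms of the summands.
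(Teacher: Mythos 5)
Your proof is correct and follows essentially the same route as the paper: for (ii) you lift a nonzero map from a projective through the tower exactly as the paper does, and for (i) you form the inverse limit $M^\infty$ as a kernel inside $\prod_{i\ge 0}X^i(M)$, observe $X(M^\infty)\cong M^\infty$ from $X$ preserving limits and cofinality, transport the biproduct condition of Assumption \ref{standard1} to $M^\infty$, and invoke the Eilenberg-swindle lemma. Your closing remark about why the transported map is the \emph{canonical} comparison $\coprod M^\infty\to\prod M^\infty$ (a point the paper leaves implicit in its chain of isomorphisms) is the only place the two expositions differ, and it is a welcome bit of extra care rather than a divergence of method.
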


\begin{proof}
Let $(M,h)\in \mathcal{C}^{T(X)}$ with $h_1$ an epimorphism. Consider the sequence
\[\dots\to X^2(M)\xrightarrow{X(h_1)} X(M)\xrightarrow{h_1} M.\] Let $X^\infty(M)$ denote the inverse limit of this sequence. Note that this inverse limit exists because it can be written as the kernel of the morphism 
\[\prod_{i\geq 0}X^i(M)\to \prod_{j\geq 0} X^j(M)\]
which is given as the difference between the projection $p_k\colon \prod_{i\geq 0} X^i(M)\to X^k(M)$ and the composite $X^k(h_1)\circ p_{k+1}\colon \prod_{i\geq 0} X^i(M)\to X^k(M)$ on component $k$. 
Note that there is an isomorphism $X(X^\infty(M))\cong X^\infty(M)$ as the former computes $\varprojlim_{i\geq 1} X^i(M)=\varprojlim X^i(M)$. It follows that
\[\coprod_{i\geq 0} X^\infty(M)\cong\coprod_{i\geq 0} X^i(X^\infty(M))\cong\prod_{i\geq 0} X^i(X^\infty(M))\cong\prod X^\infty(M).\] 
By the preceding lemma, $X^\infty(M)=0$ hence it follows that $M=0$ by assumption.
 
It is well-known that in the case of a category with enough projectives, the inverse limit of an inverse system of non-zero epimorphisms is non-zero, see \cite[Introduction]{Nee02} for the dual property. For the convenience of the reader we spell out the argument: Let $\dots\xrightarrow{h^2} M^1\xrightarrow{h^1} M^0$ be a sequence of non-zero epimorphisms in $\mathcal{C}$ with inverse limit $M^\infty$ and let $h^\infty\colon M^\infty\to M^0$ be the canonical morphism. Let $g\colon P\twoheadrightarrow M^0$ be an epimorphism with $P$ projective. Choose inductively homomorphisms $g^i\colon P\to M^i$   such that $g^0=g$ and $h^i\circ g^i=g^{i-1}$ . It follows from the universal property of $X^\infty(M)$, that there exists a map $g'\colon P\to M^\infty$ such that $g=h^\infty g'$. Hence if $M^0\neq 0$ then $g\neq 0$ and therefore $g'\neq 0$ and in particular $M^\infty\neq 0$.  This proves the claim.
\end{proof}

%\begin{rmk}
%Joint work of  the last named author with Steffen Oppermann and Torkil Stai \cite{OPS18} seems to indicate that the functor $X$ is locally nilpotent if and only if the sequence $X(h_1)$ satisfies the Mittag-Leffler condition for every map $h_1$. One direction is obvious, the other at least implies that $h_{ji}=0$ for $j-i\gg 0$. 
%\end{rmk}

Dual statements hold for $\soc_Y$ and a sequence of monomorphisms. %For future reference, we include the corresponding duals:
%\begin{lem}\label{dualnakayama}
%Let $\mathcal{C}$, $X$, and $Y$ be as in Assumption \ref{standard1}. 
%The following are equivalent:
%\begin{enumerate}[(1)]
%\item For $M\neq 0$, there are no monomorphisms $M\to Y(M)$ in $\mathcal{C}$.
%\item For all $(M,h)\in \mathcal{C}^{T(X)}$ we have $\soc_Y M=0$ if and only if $M=0$.
%\item For all morphisms $\varphi$ in $\mathcal{C}^{T(X)}$ we have that $\soc_Y \varphi$ is a monomorphism if and only if $\varphi$ is a monomorphism. 
%\end{enumerate}
%\end{lem}
%\begin{lem}\label{enoughinjectivesdualNakayama}
%Let $\mathcal{C}$, $X$, and $Y$ be as in Assumption \ref{standard1}. 
%\begin{enumerate}[(i)]
%\item Assume that the limit $M_\infty$ (if it exists) of a sequence of monomorphisms 
%\[0\to M_0\to M_1\to \dots\] in $\mathcal{C}$ with $M_0\neq 0$ satisfies $M_\infty\neq 0$. Then, $Y$ satisfies
%the equivalent conditions of the preceding lemma. 
%\item If $\mathcal{C}$ has enough injectives then $\mathcal{C}$ satisfies the assumption in (i). 
%\end{enumerate}
%\end{lem}
One instance, where Nakayama's lemma holds is the case of phyla on locally bounded quivers:

\begin{mex}\label{nakayamaforphyla}
Let $\mathfrak{A}$ be a phylum on a locally bounded quiver $Q$. Let $\mathcal{C}=\prod_{\mathtt{i}\in Q_0} \mathcal{A}_\mathtt{i}$ and let $X$ be as in Example \ref{runningexample1}. Then the equivalent conditions in Lemma \ref{nakayama} hold for $X$: Indeed for $\mathtt{i}\in Q_0$ let $d(\mathtt{i})$ be the length of the longest path from a source vertex to $\mathtt{i}$. Assume there exists an epimorphism $X(M)\to M$ where $M=(M_\mathtt{i})_{\mathtt{i}\in Q_0}$ is nonzero and choose a vertex $\mathtt{i}\in Q_0$ with $M_\mathtt{i}\neq 0$ and $d(\mathtt{i})$ as small as possible. For any arrow $\alpha$ with target $\mathtt{i}$ we then get that $M_{s(\alpha)}=0$ by assumption. Therefore $X(M)_\mathtt{i}=\bigoplus_{\substack{\alpha\in Q_1\\t(\alpha)=\mathtt{i}}}F_\alpha(M_{s(\alpha)})=0$. If there were an epimorphism $X(M)_\mathtt{i}\to M_\mathtt{i}$ it would follow that $M_\mathtt{i}=0$, a contradiction. Hence, $M=0$ and the claim follows.
\end{mex}

\begin{lem}
Let $\mathcal{C}$, $X$, and $Y$ be as in Assumption \ref{standard1}. 
Then, $\soc_{Y}\circ \nu =\Kopf_{X} $ on the full subcategory $\image f_!$. 
\end{lem}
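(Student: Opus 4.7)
The approach is to reduce the statement to an explicit computation on the strict image $\image f_!$, using that $\nu\circ f_!\cong f_*$ together with the description of the equivalence $\image f_!\xrightarrow{\sim}\image f_*$ induced by $\nu$ in the proof of Proposition~\ref{definitionofnu}.

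On objects the claim is immediate: by the defining property of a Nakayama functor we have $\nu\circ f_!\cong f_*$, so Lemma~\ref{topofprojective} and its dual yield natural isomorphisms
\[
(\soc_Y\circ\nu)(f_!(M)) \;\cong\; \soc_Y(f_*(M)) \;\cong\; M \;\cong\; \Kopf_X(f_!(M)),
\]
which assemble into a candidate natural isomorphism $\alpha\colon \soc_Y\circ\nu\circ f_! \Rightarrow \Kopf_X\circ f_!$ of endofunctors of $\mathcal{C}$.

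Next I would verify that $\alpha$ descends to an isomorphism of the restrictions of $\soc_Y\circ\nu$ and $\Kopf_X$ to the full subcategory $\image f_!$. This amounts to naturality with respect to arbitrary morphisms $\varphi\colon f_!(M)\to f_!(N)$ in $\mathcal{C}^{T(X)}$, not only those of the form $f_!(g)$ for some $g\colon M\to N$. By Lemma~\ref{morphismsbetweenPprojectives}, such a $\varphi$ is uniquely determined by components $\varphi_{0,j}\colon M\to X^j(N)$. Identifying $f_!(M)$ with $(\coprod_{i\geq 0} X^i(M),h_1)$, where $h_1$ is the canonical inclusion, an immediate cokernel computation gives $\Kopf_X(\varphi)=\varphi_{0,0}$. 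On the other side, the isomorphism $E\colon \image f_!\to\image f_*$ constructed in the proof of Proposition~\ref{definitionofnu} shows that $\nu(\varphi)\colon f_*(M)\to f_*(N)$ corresponds to the components $\tilde{\varphi}_{i,0}=(\adj^{Y^i\dashv X^i})^{-1}(\varphi_{0,i})$, and under the identification $f_*(M)=(\prod_{j\geq 0} Y^j(M),h_1')$ with $h_1'$ the canonical projection, the functor $\soc_Y$ picks out precisely the component indexed by $j=0$. Hence $\soc_Y(\nu(\varphi))=\tilde{\varphi}_{0,0}=\varphi_{0,0}$, since the adjunction isomorphism is the identity at $i=0$.

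The main obstacle is the bookkeeping in the second paragraph: one must carefully transport $\nu$ along the isomorphism $\mathcal{C}^{T(X)}\cong \mathcal{C}^{W(Y)}$ of Proposition~\ref{eilenbergmooreequivalent} and unwind the identifications $\mathcal{C}^{T(X)}\cong(X\Downarrow\Id)$ and $\mathcal{C}^{W(Y)}\cong(\Id\Downarrow Y)$ of Lemma~\ref{comma=EilenbergMoore} to confirm that $\soc_Y$ applied to $\nu(\varphi)$ really extracts the $(0,0)$-component $\tilde{\varphi}_{0,0}$. Once this is in place, the equality $\soc_Y(\nu(\varphi))=\Kopf_X(\varphi)$ holds on the nose, so $\alpha$ is natural on all of $\image f_!$ and the lemma follows.
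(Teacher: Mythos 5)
Your proof is correct and follows the same route as the paper: the equality on objects comes from Lemma~\ref{topofprojective}, and the equality on morphisms is verified by using Lemma~\ref{morphismsbetweenPprojectives} (and its dual) to read off $\Kopf_X\varphi=\varphi_{0,0}$ and $\soc_Y(\nu(\varphi))=(\adj^{Y^0\dashv X^0})^{-1}(\varphi_{0,0})=\varphi_{0,0}$ from the explicit description of $\nu$ on $\image f_!$ constructed in Proposition~\ref{definitionofnu}. You are somewhat more explicit about the bookkeeping, but the substance and the key lemmas invoked are identical to the paper's proof.
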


\begin{proof}
It follows from Lemma \ref{topofprojective} that this equality holds on objects. In the description of Lemma \ref{morphismsbetweenPprojectives}, it then follows that $\Kopf_X \varphi=\varphi_{0,0}$. For the left hand side note that by Lemma \ref{morphismsbetweenPprojectives}, $\varphi$ is determined by maps $\varphi_{0,k}\colon M\to X^k(N)$ for $k\geq 0$. This is then sent by $\nu$ to a map $\psi$ in $(\Id\Downarrow Y)$ determined by $(\adj^{Y^k\dashv X^k})^{-1}(\varphi_{0,k})\colon Y^k(M)\to N$ according to the dual of Lemma \ref{morphismsbetweenPprojectives}. But the socle of that map is given by $(\adj^{Y^0\dashv X^0})^{-1}(\varphi_{0,0})=\varphi_{0,0}$ proving the claim. 
\end{proof}

The following theorem shows that the category $\relGproj{P}{\mathcal{C}^{T(X)}}$ can be described as a monomorphism category.

\begin{thm}\label{characterisationsofmono}
Let $\mathcal{C}$, $X$, and $Y$ be as in Assumption \ref{standard1}. 
Assume furthermore for $M\neq 0$ there are no monomorphisms $M\to Y(M)$ in $\mathcal{C}$. Then, the following are equivalent for $(M,h)\in \mathcal{C}^{T(X)}$. 
\begin{enumerate}[(1)]
\item $(M,h)\in \mathcal{G}_P \proj(\mathcal{C}^{T(X)})$,
\item $L_1\nu (M,h) =0$,
\item $(M,h)\in \im \nu^-$,
\item $(M,h)$ is a submodule of $f_!(N)$ for some $N\in \mathcal{C}$,
\item $h_1\colon X(M)\to M$ is a monomorphism,
\item $L_1 \Kopf_X(M,h)=0$.
\end{enumerate}
\end{thm}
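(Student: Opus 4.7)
My plan is to establish (5) $\Leftrightarrow$ (6), (1) $\Leftrightarrow$ (2), and (1) $\Leftrightarrow$ (3) directly from earlier results, and then close the cycle via (3) $\Rightarrow$ (4) $\Rightarrow$ (5) $\Rightarrow$ (2).

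Three of these equivalences come for free. Lemma \ref{derivedoftop} identifies $L_1 \Kopf_X(M,h)$ with $\ker h_1$, giving (5) $\Leftrightarrow$ (6). Lemma \ref{globaldimension1} shows that $f_! \dashv f^*$ is Iwanaga--Gorenstein of dimension at most $1$, so by Theorem \ref{resolution_dimension} and the characterization of Gorenstein projectives loc.\ cit., $(M,h) \in \mathcal{G}_P\proj(\mathcal{C}^{T(X)})$ if and only if $L_j \nu(M,h) = 0$ for all $j > 0$; since higher $L_j \nu$ vanish automatically this collapses to (2). Since the same dimension bound gives $n \leq 2$, Corollary \ref{Gorensteinprojectiveimagenu} yields $\image \nu^- = \mathcal{G}_P\proj(\mathcal{C}^{T(X)})$, which is (1) $\Leftrightarrow$ (3).

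For (3) $\Rightarrow$ (4), write $(M,h) = \nu^-(A)$ for some $A \in \mathcal{C}^{T(X)}$. Applying the dual of the short exact sequence in Lemma \ref{globaldimension1} (valid in $\mathcal{C}^{W(Y)} \cong \mathcal{C}^{T(X)}$ by Proposition \ref{eilenbergmooreequivalent}) to $A$ produces $0 \to A \to f_*f^*(A) \to f_*(Y(f^*(A))) \to 0$. Applying the left-exact functor $\nu^-$ and invoking the Nakayama-functor identity $\nu^- \circ f_* = \nu^- \circ \nu \circ f_! \cong f_!$ (the unit of $\nu \dashv \nu^-$ is an isomorphism on $f_!$ by the defining property of a Nakayama functor), produces an embedding $(M,h) \hookrightarrow f_!(f^*(A))$. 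For (4) $\Rightarrow$ (5), a monomorphism $\iota \colon (M,h) \hookrightarrow f_!(N)$ in $\mathcal{C}^{T(X)}$ makes the structure square commute, yielding $f^*(\iota) \circ h_1 = \mu_N \circ X(f^*(\iota))$ in $\mathcal{C}$. The functor $f^*$ is exact by Proposition \ref{eilenbergmooreexact}, and $X$ is a right adjoint of $Y$ by Assumption \ref{standard1}, hence preserves monomorphisms. Therefore the right-hand side is a monomorphism, forcing $h_1$ to be one as well.

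The main step is (5) $\Rightarrow$ (2), and here Proposition \ref{definitionofnu} does the heavy lifting. It gives $L_1 \nu(M,h) = \ker \chi$ for the explicit morphism $\chi \colon f_*(X(M)) \to f_*(M)$ described there. Transferring to the comonadic picture $\mathcal{C}^{W(Y)}$ via Proposition \ref{eilenbergmooreequivalent} and using that $\soc_Y \circ f_* = \Id_{\mathcal{C}}$ (the dual of Lemma \ref{topofprojective}), one reads off from the component formulas $\chi_{0,0} = -h_1$ and $\chi_{0,j} = 0$ for $j > 0$ that the induced map $\soc_Y(\chi) \colon X(M) \to M$ is exactly $-h_1$. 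Since $\soc_Y$ is a left-exact right adjoint by Lemma \ref{topisaleftadjoint}, it preserves kernels and hence $\soc_Y(L_1 \nu(M,h)) = \ker h_1$. The hypothesis that no non-zero $M$ admits a monomorphism $M \to Y(M)$ is precisely the dual of the condition in Lemma \ref{nakayama}, which dually means that $\soc_Y$ reflects zero objects. Therefore $h_1$ monic forces $\soc_Y L_1 \nu(M,h) = 0$, hence $L_1 \nu(M,h) = 0$. The main obstacle will be the careful bookkeeping in identifying $\soc_Y(\chi)$ from the matrix description in Proposition \ref{definitionofnu}, since it requires tracking the monad--comonad identification and verifying that only the $(0,0)$-component survives under $\soc_Y$.
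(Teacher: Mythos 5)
Your proof is correct, and the overall strategy is essentially the paper's: the same trivial equivalences (1)$\Leftrightarrow$(2), (1)$\Leftrightarrow$(3), (5)$\Leftrightarrow$(6), and the same crucial idea of closing the cycle via the socle functor and the dual of Lemma~\ref{nakayama}. A few local variations are worth recording. For (3)$\Rightarrow$(4) the paper simply cites \cite[Lemma 4.2.3]{Kva16}, whereas your argument via the dual of Lemma~\ref{globaldimension1} and the Nakayama identity $\nu^-f_*\cong f_!$ is self-contained and arguably cleaner in this setting. The paper additionally proves (4)$\Rightarrow$(1) (making its chain slightly redundant); your cycle (3)$\Rightarrow$(4)$\Rightarrow$(5)$\Rightarrow$(2) avoids that step. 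For the final implication, the paper proves (6)$\Rightarrow$(2) by invoking the unnumbered lemma directly preceding the theorem ($\soc_Y\circ\nu=\Kopf_X$ on $\image f_!$), whereas you compute $\soc_Y(\chi)=-h_1$ directly from the component formulas in Proposition~\ref{definitionofnu}; the ``careful bookkeeping'' you anticipated is precisely what that lemma encapsulates, and your computation is a correct special case of it.

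One small gap to flag in (4)$\Rightarrow$(5): after deducing $f^*(\iota)\circ h_1 = \mu_N\circ X(f^*(\iota))$, you justify that $X(f^*(\iota))$ is a monomorphism but not that the structure morphism of $f_!(N)$ (your $\mu_N$) is. This does hold — under Lemma~\ref{comma=EilenbergMoore} the first structure component of $f_!(N)=F_{T(X)}(N)$ is the canonical split inclusion $X(T(X)(N))=\coprod_{i\ge 1}X^i(N)\hookrightarrow\coprod_{i\ge 0}X^i(N)=T(X)(N)$ — but the sentence needs to say so for the composite on the right-hand side to be seen as monic. This is a one-line omission, not a conceptual gap; the paper itself disposes of this step with ``By definition of $f_!(M)$, it is easy to see.''
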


\begin{proof}
The equivalence of (1) and (3) is given by Corollary \ref{Gorensteinprojectiveimagenu}. By Lemma \ref{globaldimension1}, $f_!\dashv f^*$ is Iwanaga-Gorenstein of dimension $n\leq 1$, thus $L_i\nu=0$ for all $i\geq 2$. It follows that (1) and (2) are equivalent. By \cite[Lemma 4.2.3]{Kva16}, $(M,h)\in \image \nu^-$ implies that $(M,h)$ is a submodule of a relative projective, in particular, it is a submodule of some $f_!(N)$ and hence (3) implies (4). 
Conversely, since $f_!\dashv f^*$ is Iwanaga--Gorenstein of dimension $n\leq 1$ we have that $\dim_{\mathcal{G}_P\proj(\mathcal{C}^{T(X)})}(\mathcal{C}^{T(X)})\leq 1$ by Theorem \ref{resolution_dimension} and hence any submodule of an object in $\relGproj{P}{\mathcal{C}^{T(X)}}$ is in $\relGproj{P}{\mathcal{C}^{T(X)}}$ (cf. \cite[Lemma 2.3]{Sto14}). It follows that (4) implies (1). 
By Lemma \ref{derivedoftop}, $L_1\Kopf_{X}(M,h)=\ker h_1$. Thus, (5) and (6) are equivalent. 
By definition of $f_!(M)$, it is easy to see that (4) implies (5).

For the missing direction $(6)\Rightarrow (2)$, note that $L_1\Kopf_X(M,h)=0$ implies that $\Kopf_X \psi$ is a monomorphism  where $\psi\colon (f_!X)(M)\to f_!(M)$ is the leftmost map in the exact sequence in Lemma \ref{globaldimension1}. But $\Kopf_X \psi=\soc_Y\nu(\psi)$ by the foregoing lemma. By the dual of Lemma \ref{nakayama} this is a monomorphism if and only if $\nu(\psi)$ is a monomorphism. Thus, if and only if $L_1\nu(M,h)=0$. 
\end{proof}

There is a dual characterisation of relative Gorenstein injectives.
%For the sake of completeness we record the dual theorem:
%
%\begin{thm}\label{Gorensteinprojectivemonomorphism}
%Let $\mathcal{C}$, $X$, and $Y$ be as in Assumption \ref{standard1}. 
%Assume furthermore that the equivalent conditions in Lemma \ref{nakayama} are satisfied. Then the following are equivalent for $(M,h)\in \mathcal{C}^{T(X)}$:
%\begin{enumerate}[(1)]
%\item $(M,h)\in \relGinj{I}{\mathcal{C}^{T(X)}}$,
%\item $R^1\nu^-(M,h)=0$,
%\item $(M,h)\in \image \nu$,
%\item $(M,h)$ is a quotient of $f_*(N)$ for some $N\in \mathcal{C}$,
%\item $h_1\colon M\to Y(M)$ is an epimorphism, 
%\item $R^1\soc_Y(M,h)=0$. 
%\end{enumerate}
%\end{thm}

Let $\Lambda$ be a generalized species on a locally bounded quiver. Consider the subcategory $\repfd (\Lambda)$ of $\rep (\Lambda)$ consisting of all species of finite total dimension. Analogously we can define the subcategories $\Monofd(\Lambda)$ and $\Epifd(\Lambda)$. All the functors $f_!$, $f^*$, $f_*$, $\nu$ and $\nu^-$ restrict to $\repfd(\Lambda)$ and it is straightforward to adjust the proofs to see that being relative Gorenstein projective in $\repfd (\Lambda)$ is equivalent to being contained in $\Monofd(\Lambda)$, which in turn is equivalent to say that \[M_{\mathtt{i},\arrowin}=(M_\alpha)_\alpha\colon \bigoplus_{\substack{\alpha\in Q_1\\t(\alpha)=\mathtt{i}}}F_\alpha(M_{s(\alpha)})\to M_{\mathtt{i}}\] is a monomorphism for all $\mathtt{i}\in Q_0$. Dually, being relative Gorenstein injective means that $M_{\mathtt{i},\arrowout}$ is an epimorphism. 

\begin{thm}\label{thmCmaintext}
Let $\Lambda$ be a dualisable pro-species on a locally bounded quiver $Q$. 
\begin{enumerate}[(i)]
\item\label{thmCmaintext:i} The categories $\Monofd(\Lambda)$ and $\Epifd(\Lambda)$ have almost split sequences. 
\item\label{thmCmaintext:ii} Let $g$ be a right almost split morphism in $\rep(\Lambda)$. Then $\nu^-(g)$ is a split epimorphism or a right almost split morphism in $\Mono(\Lambda)$. A similar statement holds for $\repfd (\Lambda)$, and a dual statement holds for the epimorphism category. 
\item\label{thmCmaintext:iii} Let $0\to A''\xrightarrow{g'} A\xrightarrow{g} A'\to 0$ be an almost split sequence in $\rep(\Lambda)$ with $A'\in \Epi(\Lambda)$ such that $A'$ is not projective in $\Epi(\Lambda)$. Then, 
\[0\to \nu^-(A'')\xrightarrow{\nu^-(g')} \nu^-(A)\xrightarrow{\nu^-(g)} \nu^-(A')\to 0\]
is a direct sum of an almost split sequence and a split exact sequence in $\Mono(\Lambda)$. A similar statement holds for $\repfd (\Lambda)$, and a dual statement holds for the epimorphism category 
\end{enumerate}
\end{thm}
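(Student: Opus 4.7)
The strategy is to reduce each of the three parts to the abstract almost-split-preservation results already proved, applied to the Eilenberg--Moore adjunction $f_!\dashv f^*\colon \mathcal{C}\to \rep(\Lambda)$ coming from the dualisable pro-species $\Lambda$. First I would assemble the necessary hypotheses. By Meta-Example \ref{runningexample1} and Meta-Example \ref{assumptionssatisfiedforphyla}, the push-along functor $X\colon \mathcal{C}\to \mathcal{C}$ attached to $\Lambda$ satisfies Assumption \ref{standard1}, and by Meta-Example \ref{modulecategoryabelian} the identification $\rep(\Lambda)\cong \mathcal{C}^{T(X)}$ holds. The analogue of Nakayama's lemma (Lemma \ref{nakayama} and its dual for $Y$) is established in Meta-Example \ref{nakayamaforphyla}, and Lemma \ref{globaldimension1} guarantees that $f_!\dashv f^*$ is Iwanaga--Gorenstein of dimension at most $1$ with Nakayama functor $\nu$ described by Proposition \ref{definitionofnu}.

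Next I would spell out the dictionary between the abstract and the concrete setting. Unravelling the definition of $X$ in Meta-Example \ref{runningexample1}, the structure morphism $h_1\colon X(M)\to M$ at vertex $\mathtt{i}$ is precisely the map $M_{\mathtt{i},\arrowin}$. Hence, applying Theorem \ref{characterisationsofmono} and its dual, the identifications $\relGproj{P}{\rep \Lambda}=\Mono(\Lambda)$ and $\relGinj{I}{\rep \Lambda}=\Epi(\Lambda)$ hold, and analogously after restriction to $\repfd(\Lambda)$.

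With these identifications, the proof of each part becomes a direct citation. Part (ii) is the translation of Corollary \ref{thmCgeneralform}(i) via $\relGproj{P}{}=\Mono(\Lambda)$; the dual version for $\Epi(\Lambda)$ likewise follows from Corollary \ref{thmCgeneralform}(ii). Part (iii) is the translation of Corollary \ref{imagesofseveralalmostsplitsequences}\eqref{imagesofseveralalmostsplitsequences:ii}, where the hypothesis that $A'\in\Epi(\Lambda)$ be non-projective in $\Epi(\Lambda)$ corresponds to non-projectivity in $\relGinj{I}{}$; the dual version for the epimorphism category uses part \eqref{imagesofseveralalmostsplitsequences:i} of the same corollary.

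For part (i) I would invoke Corollary \ref{Existence of almost split sequences} applied to $\mathcal{A}=\repfd(\Lambda)$. Here one uses the standard fact that $\repfd(\Lambda)$ is a Hom-finite, Krull--Schmidt abelian category and therefore admits almost split sequences. Since $\Lambda_\alpha$ is projective on either side and $Q$ is locally bounded, all the functors $f_!$, $f^*$, $\nu$, $\nu^-$ restrict to $\repfd(\Lambda)$, and the Iwanaga--Gorenstein property of the restricted adjunction is inherited from the ambient one, so the hypotheses of Corollary \ref{Existence of almost split sequences} are met. The main obstacle I anticipate is this restriction step: since $f_!(M)$ and $f_*(M)$ are built from sums over \emph{all} paths, verifying that they preserve finite total dimension genuinely uses local boundedness, and a careful bookkeeping argument is required. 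A second minor point is that in parts (ii) and (iii), the whole category $\rep(\Lambda)$ need not be Hom-finite, so right/left almost split morphisms may not abound there; the statements are vacuous whenever no such morphism exists, and the \emph{useful} content is realised through the parallel statements in $\repfd(\Lambda)$, whose proofs are identical.
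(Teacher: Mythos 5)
Your proposal follows essentially the same route as the paper: identify $\rep(\Lambda)\cong\mathcal{C}^{T(X)}$ via the push-along functor, verify Assumption~\ref{standard1} and Nakayama's lemma from the Meta-Examples, obtain $1$-Iwanaga--Gorensteinness from Lemma~\ref{globaldimension1}, translate $\relGproj{P}{}\!=\Mono(\Lambda)$ and $\relGinj{I}{}\!=\Epi(\Lambda)$ via Theorem~\ref{characterisationsofmono}, and then cite Corollary~\ref{thmCgeneralform} for (ii), Corollary~\ref{imagesofseveralalmostsplitsequences} for (iii), and Corollary~\ref{Existence of almost split sequences} for (i). The structural correspondence with the paper's proof is exact.

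There is one imprecision to flag in part (i). You justify the existence of almost split sequences in $\repfd(\Lambda)$ by appealing to ``the standard fact that a Hom-finite, Krull--Schmidt abelian category admits almost split sequences.'' This implication is not true in general: Hom-finiteness and the Krull--Schmidt property do not by themselves guarantee almost split sequences in an abelian category. What the paper actually uses is that $\repfd(\Lambda)$ can be identified with finitely presented modules over a \emph{locally bounded} category, so it is a \emph{dualizing variety} in the sense of Auslander--Reiten, and dualizing varieties have almost split sequences (the citation is to page 343 of \cite{AR74}). Your conclusion is correct, and your remark about the restriction of $f_!$, $f^*$, $\nu$, $\nu^-$ to $\repfd(\Lambda)$ (using local boundedness) is exactly the point the paper also makes before Theorem~\ref{thmCmaintext}; it is only the justification for the existence of almost split sequences that needs to be replaced by the dualizing-variety argument.
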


\begin{proof}
 Since $\repfd(\Lambda)$ can be identified with finitely presented modules over a locally bounded category, it follows that $\repfd(\Lambda)$ is a dualizing variety, and therefore has almost split sequences, see page 343 in \cite{AR74}. It follows by Corollary \ref{Existence of almost split sequences} that $\Monofd(\Lambda)$ has almost split sequences. This together with the dual argument proves \eqref{thmCmaintext:i}.
Part \eqref{thmCmaintext:ii} is a special case of Corollary \ref{thmCgeneralform} which satisfies our assumptions by Lemma \ref{globaldimension1}.
Part \eqref{thmCmaintext:iii} is a special case of Corollary \ref{imagesofseveralalmostsplitsequences}, again using Lemma \ref{globaldimension1}. 
\end{proof}

\section{Almost split sequences via the preprojective algebra} \label{sec:preprojective}

In this section, we define the preprojective monad associated to an endofunctor $X$ with an ambidextrous adjunction. Furthermore we show that the Eilenberg--Moore category of the preprojective monad can also be realised as the Eilenberg--Moore category of the free monad associated to the Auslander--Reiten translation, generalising \cite{Ri98}. Finally, we provide a proof of Theorem \ref{thmD}.

We define the category $\mathcal{C}^{\Pi(X)}$ as the full subcategory of $(X,Y\Downarrow \Id)$ consisting of those morphisms $h_1\colon X(M)\to M, h_1'\colon Y(M)\to M$ such that $h_1'Y(h_1)\unit{X}{Y}_M=h_1X(h_1')\unit{Y}{X}_M$. The notation suggests that this category can be realized as an Eilenberg--Moore category over a monad $\Pi(X)$. We will show that this is indeed the case under further assumptions on $\mathcal{C}$. A possible such assumption is that $\mathcal{C}$ is cocomplete. This should be compared to the fact that the preprojective algebra is usually infinite dimensional although the path algebra is finite dimensional. In the remainder of the paper we will however freely use the notation $\mathcal{C}^{\Pi(X)}$ even if the monad $\Pi(X)$ is not defined. 

To construct the preprojective monad we start by adapting the notion of a quotient monad to our setting of abelian categories. It should be compared to the quotient of a ring by the ideal generated by a set of elements. For further reading on quotient monads, see \cite{BHKR15}.

\begin{defn}
Let $\mathcal{C}$ be an abelian category. Let $T$ be a right exact monad on $\mathcal{C}$, let $E$ be a right exact  endofunctor on $\mathcal{C}$ and let $\varphi\colon E\to T$ be a natural transformation. Then the \emphbf{quotient monad} $T'=T/\varphi$ is defined to be the functor given on objects by 
\[T'(M):=\coker(TET(M)\xrightarrow{T(\varphi_{T(M)})} T^3(M)\xrightarrow{T(\mu)} T^2(M)\xrightarrow{\mu} T(M))\] 
and on morphisms by the universal property of the cokernel inducing the following commutative diagram
\[
\begin{tikzcd}
TET(M)\arrow{r}\arrow{d}{TET(f)} &T(M)\arrow{d}{T(f)}\arrow{r}&T'(M)\arrow[dashed]{d}{T'(f)}\\
TET(N)\arrow{r} &T(N)\arrow{r}&T'(N)
\end{tikzcd}
\]
for a morphism $f\colon M\to N$ in $\mathcal{C}$. 
\end{defn}

To state the universal property of the quotient monad we recall the definition of a morphism between monads.

\begin{defn}\label{Morphism of monad}
Let $(T_1,\eta_1,\mu_1)$ and $(T_2,\eta_2,\mu_2)$ be monads on a category $\mathcal{C}$. A morphism $(T_1,\eta_1,\mu_1)\to (T_2,\eta_2,\mu_2)$ of monads is a natural transformation $T_1\xrightarrow{\delta} T_2$ making the following diagrams commutative 
\begin{equation*}
\begin{tikzcd}
T_1T_1 \arrow{d}[swap]{\delta_{T_2}\circ T_1(\delta)} \arrow{r}{\mu_1} & T_1 \arrow{d}{\delta} \\
T_2T_2 \arrow{r}{\mu_2} & T_2 
\end{tikzcd}
\qquad 
\begin{tikzcd}
\operatorname{Id} \arrow{d}{1} \arrow{r}{\eta_1} & T_1 \arrow{d}{\delta} \\
\operatorname{Id} \arrow{r}{\eta_2} & T_2. 
\end{tikzcd}
\end{equation*}
\end{defn}

Note that a morphism $\delta\colon T_1\to T_2$ of monads induces a faithful functor $\delta^*\colon \mathcal{C}^{T_2}\to \mathcal{C}^{T_1}$ between the Eilenberg--Moore categories, given by sending an object $g\colon T_2(M)\to M$ to $g\circ \delta_M\colon T_1(M)\to M$ and acting as identity on morphisms, see \cite[Proposition 4.5.9]{Bor94a}.

The following lemma shows that the quotient monad has indeed the structure of a monad. 

\begin{lem}
Let $\mathcal{C}$ be an abelian category. Let $T$ be a right exact monad on $\mathcal{C}$ and let $E$ be a right exact endofunctor on $\mathcal{C}$. Let $\varphi\colon E\to T$ be a natural transformation and denote by $T'=T/\varphi$ the corresponding quotient monad. Then, there exist unique natural transformations $\mu'\colon T'T'\to T'$ and $\eta'\colon \Id\to T'$ such that the triple $(T',\eta',\mu')$ is a monad, and such that the projection $p\colon T\to T'$ is a morphism of monads.
\end{lem}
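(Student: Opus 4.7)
The strategy is to exploit that the projection $p_M : T(M) \to T'(M)$ is pointwise an epimorphism (it is a cokernel map) and that $T'$ is right exact, being the pointwise cokernel of the natural transformation $\psi : TET \to T$ with $\psi_M := \mu_M \circ T(\mu_M) \circ T(\varphi_{T(M)})$ between right exact functors. Uniqueness is immediate from Definition \ref{Morphism of monad}: the unit axiom forces $\eta' = p \circ \eta$, while the multiplicativity axiom forces $\mu' \circ T'(p) \circ p_T = p \circ \mu$, and since both $p_{T(M)}$ and $T'(p_M)$ are epimorphisms, so is their composite, so $\mu'$ is uniquely determined.

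For existence, set $\eta' := p \circ \eta$ and construct $\mu'$ in two steps. First, I would factor $p \circ \mu : T^2 \to T'$ through $p_T : T^2 \to T'T$. As $p_{T(M)}$ is the cokernel of $\psi_{T(M)}$, this requires $p_M \circ \mu_M \circ \psi_{T(M)} = 0$. Repeated use of associativity ($\mu \circ \mu_T = \mu \circ T\mu$) combined with naturality of $\varphi$ at the morphism $\mu_M : T^2(M) \to T(M)$ produces the identity $\mu \circ \psi_T = \psi \circ TE(\mu)$, which vanishes after postcomposition with $p$. This yields a unique $\tilde\mu : T'T \to T'$ with $\tilde\mu \circ p_T = p \circ \mu$. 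Second, I would factor $\tilde\mu$ through $T'(p) : T'T \to T'T'$. Applying the right exact functor $T'$ to the sequence $TET(M) \xrightarrow{\psi_M} T(M) \xrightarrow{p_M} T'(M) \to 0$ exhibits $T'(p_M)$ as the cokernel of $T'(\psi_M)$, so it suffices to check $\tilde\mu \circ T'(\psi) = 0$. Postcomposing this equation with the epimorphism $p_{TET}$ and using naturality of $p$ reduces it to $p \circ \mu \circ T(\psi) = 0$, which follows from the parallel identity $\mu \circ T\psi = \psi \circ \mu_{ET}$ obtained by a symmetric calculation using this time the naturality of $\mu$ twice.

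The resulting $\mu' : T'T' \to T'$ satisfies $\mu' \circ T'(p) \circ p_T = p \circ \mu$, so $p$ is by construction a morphism of monads. The monad axioms for $(T',\eta',\mu')$ themselves are verified by precomposing the required equalities with pointwise epimorphisms built from $p$, namely $T'(p) \circ p_T$ for the two unit axioms and the threefold horizontal composite $T'T'(p) \circ T'(p_T) \circ p_{T^2} : T^3 \to T'T'T'$ for associativity of $\mu'$, and reducing to the corresponding axioms for $T$. The main obstacle is the careful bookkeeping of naturality squares and applications of associativity required in the two factorisations of $\mu$; once $\tilde\mu$ and $\mu'$ have been extracted, the remaining verifications are routine diagram chases driven by the fact that $p$ is pointwise epi.
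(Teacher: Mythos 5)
Your proof is correct and follows essentially the same strategy as the paper's: establish the two identities $\mu\circ T(\psi)=\psi\circ\mu_{ET}$ and $\mu\circ\psi_T=\psi\circ TE(\mu)$ for $\psi:=\mu\circ T(\mu)\circ T(\varphi_T)$, use them together with cokernel universal properties to factor $p\circ\mu\colon T^2\to T'$ through the horizontal composite $T^2\to T'T'$ of $p$ with itself in two steps, and verify the monad axioms for $T'$ by precomposing with pointwise epimorphisms built from $p$ to reduce to the axioms for $T$. The only (cosmetic) difference is the order of whiskering: you factor through $p_T\colon T^2\to T'T$ and then $T'(p)\colon T'T\to T'T'$, which means your second cokernel identification requires observing that $T'$ is itself right exact (as you correctly deduce from right exactness of $TET$ and $T$), whereas the paper factors through $T(p)\colon T^2\to TT'$ and then $p_{T'}\colon TT'\to T'T'$, so both of its cokernel identifications follow from the hypothesized right exactness of $T$ alone.
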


\begin{proof}
Let $\psi$ denote the composite $TET\xrightarrow{T(\varphi_T)}T^3\xrightarrow{T(\mu)}T^2\xrightarrow{\mu}T$. A straightforward computation shows that  
%\begin{equation}\label{twoequalities}
$\mu\circ T(\psi) = \psi \circ \mu_{ET} \quad \text{and} \quad \mu\circ \psi_T = \psi\circ TE(\mu)$.
%\end{equation}
%Indeed, the first equality of \eqref{twoequalities} follows from the commutativity of the following diagram which follows from naturality of $\varphi$, the naturality of $\mu$, and the associativity of $\mu$. 
%\[
%\begin{tikzcd}
%T^2ET \arrow{r}{T^2(\varphi_{T})}\arrow{d}{\mu_{ET}}&T^4\arrow{r}{T^2(\mu)}\arrow{d}{\mu_{T^2}}&T^3\arrow{d}{\mu_{T}}\arrow{r}{T(\mu)}&T^2\arrow{d}{\mu}\\
%TET\arrow{r}{T(\varphi_T)} &T^3\arrow{r}{T(\mu)}&T^2\arrow{r}{\mu}&T
%\end{tikzcd}
%\]
%while the second equality of \eqref{twoequalities} follows from the commutativity of the following diagram
%\[
%\begin{tikzcd}
%TET^2\arrow{r}{T(\varphi_{T^2})}\arrow{d}{TE(\mu)}&T^4\arrow{d}{T^2(\mu)}\arrow{r}{T(\mu_{T})}&T^3\arrow{d}{T(\mu)}\arrow{r}{\mu_T}&T^2\arrow{d}{\mu}\\
%TET\arrow{r}{T(\varphi_{T})}&T^3\arrow{r}{T(\mu)}&T^2\arrow{r}{\mu}&T
%\end{tikzcd}
%\]
%which follows from the naturality of $\varphi$, and the associativity of $\mu$. 
Hence, the universal property of the cokernel defines a morphism $TT'\xrightarrow{l} T'$ and a commutative diagram
\begin{equation*}
\begin{tikzcd}
T^2ET \arrow{d}{\mu_{ET}} \arrow{r}{T(\psi)} & T^2 \arrow{d}{\mu} \arrow{r}{T(p)} & TT' \arrow{r} \arrow[dashed]{d}{l} & 0 \\
TET \arrow{r}{\psi} & T \arrow{r}{p} & T' \arrow{r} & 0
\end{tikzcd}
\end{equation*}
with exact rows. One can check that the composite 
$TETT'\xrightarrow{\psi_{T'}}TT'\xrightarrow{l}T' $
vanishes. %Indeed, since the functors $T$ and $E$ are right exact, the morphism 
%\[TET(p)\colon TET^2\to TETT'\]
%is an epimorphism. Therefore, the composite $l\circ \psi_{T'}$ vanishes if and only if the composite $l\circ \psi_{T'}\circ TET(p)$ vanishes. Furthermore, by naturality and the second equality in \eqref{twoequalities} we have
%\[
%l\circ \psi_{T'}\circ TET(p) = l\circ T(p)\circ \psi_T = p\circ \mu\circ \psi_T= p\circ \psi\circ TE(\mu)=0
%\]
%where the last equality follows since $p\circ \psi=0$. This shows that $l\circ \psi_{T'}=0$. 
Since there is a right exact sequence
$TETT'\xrightarrow{\psi_{T'}}TT'\xrightarrow{p_{T'}}T'T'\to 0$ 
it follows that the morphism $l$ factors through $p_{T'}$, i.e. that there exists a morphism $\mu'\colon T'T'\to T'$ satisfying $\mu'\circ p_{T'} =l$. Now let $\eta' := p\circ \eta \colon \Id\to T'$. It is now straightforward to check that the triple $(T',\eta',\mu')$ is a monad on $\mathcal{C}$. %First note that the diagrams 
Finally, $\mu'$ and $\eta'$ are the unique natural transformations making the diagrams
\begin{equation*}
\begin{tikzcd}
TT \arrow{d}{} \arrow{r}{\mu} & T \arrow{d}{} \\
T'T' \arrow{r}{\mu'} & T'
\end{tikzcd}
\quad 
\begin{tikzcd}
\operatorname{Id} \arrow{d}{\id} \arrow{r}{\eta} & T \arrow{d}{} \\
\operatorname{Id} \arrow{r}{\eta'} & T'
\end{tikzcd}
\end{equation*}
commutative, since the leftmost vertical morphisms in these diagrams are epimorphisms. In other words, $\mu'$ and $\eta'$ are the unique natural transformations making the projection $p\colon T\to T'$ into a morphism of monads. This proves the lemma.
\end{proof}

\begin{lem}\label{modulecategoryofquotientmonad}
Let $\mathcal{C}$ be an abelian category. Let $T$ be a right exact monad on $\mathcal{C}$ and let $E$ be a right exact endofunctor on $\mathcal{C}$. Let $\varphi\colon E\to T$ be a natural transformation and denote by $T'=T/\varphi$ the corresponding quotient monad with projection morphism $p\colon T\to T'$. The functor $p^*\colon \mathcal{C}^{T'}\to \mathcal{C}^T, (M,h)\mapsto (M,h\circ p_M)$ is full and faithful, and its image consists of all morphisms $h\colon T(M)\to M$ satisfying $h\circ \varphi_M =0$
\end{lem}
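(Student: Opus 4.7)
The plan is to verify the three claims in sequence: faithfulness, fullness, and the characterisation of the essential image.

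Faithfulness of $p^*$ is immediate: since $p^*$ acts as the identity on underlying morphisms, any two morphisms in $\mathcal{C}^{T'}$ with the same image must coincide. For fullness, suppose $f\colon M\to N$ underlies a morphism $p^*(M,h')\to p^*(N,g')$ in $\mathcal{C}^T$, i.e.\ $f\circ h'\circ p_M=g'\circ p_N\circ T(f)$. By the naturality of the monad morphism $p$, we have $p_N\circ T(f)=T'(f)\circ p_M$, so $(f\circ h')\circ p_M=(g'\circ T'(f))\circ p_M$. Since $p_M$ is a cokernel projection, it is an epimorphism in $\mathcal{C}$, hence $f\circ h'=g'\circ T'(f)$ and $f$ is a morphism in $\mathcal{C}^{T'}$.

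For the description of the image, note that $(M,h)\in\image p^*$ iff $h$ factors as $h=h'\circ p_M$ for some $h'\colon T'(M)\to M$ such that $(M,h')$ is a $T'$-algebra. By the universal property of the cokernel, $h$ factors through $p_M$ iff $h\circ\psi_M=0$, where $\psi_M=\mu_M\circ T(\mu_M)\circ T(\varphi_{T(M)})$. The key step is to show that, for a $T$-algebra $(M,h)$, the identity $h\circ\psi_M=0$ is equivalent to $h\circ\varphi_M=0$. In one direction, define $\iota_M=\eta_{ET(M)}\circ E(\eta_M)\colon E(M)\to TET(M)$; a direct computation using the naturality of $\eta$ at $\varphi_{T(M)}$ and at $\mu_M$, together with the unit axioms of $T$, yields $\psi_M\circ\iota_M=\varphi_M$, so $h\circ\psi_M=0$ implies $h\circ\varphi_M=0$. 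Conversely, repeatedly applying the algebra axiom $h\circ\mu_M=h\circ T(h)$ and the naturality of $\varphi$ at $h$ rewrites $h\circ\psi_M$ as $h\circ T(h\circ\varphi_M\circ E(h))$, which vanishes when $h\circ\varphi_M=0$.

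Once $h=h'\circ p_M$ with $h\circ\varphi_M=0$, one must still verify that the induced $h'$ defines a $T'$-algebra. The unit axiom follows from $p_M\circ\eta_M=\eta'_M$ together with $h\circ\eta_M=\id_M$. For the multiplication axiom $h'\circ\mu'_M=h'\circ T'(h')$ we precompose with the epimorphism $p_{T'(M)}\circ T(p_M)$ (an epimorphism because $T$ is right exact) and use the monad-morphism property $p_M\circ\mu_M=\mu'_M\circ p_{T'(M)}\circ T(p_M)$ together with the naturality of $p$, reducing the identity to $h\circ\mu_M=h\circ T(h)$, which holds by hypothesis.

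The main bookkeeping obstacle is the equivalence $h\circ\psi_M=0\Leftrightarrow h\circ\varphi_M=0$: producing the section-like map $\iota_M$ and chasing the naturality squares for $\eta$, $\varphi$ and the unit axioms in a consistent way. Everything else is formal once $p_M$ (and hence the relevant composites of the form $p_{T'(M)}\circ T(p_M)$) is available as an epimorphism, which holds because of the right exactness of $T$.
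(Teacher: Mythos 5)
Your proof is correct and follows the same three-step structure as the paper's (faithfulness, fullness, image characterisation), but it is notably more complete than what the paper records. The paper only establishes sufficiency of the condition $h\circ\varphi_M=0$: it shows $h\circ\varphi_M=0\Rightarrow h\circ\psi_M=0$ and concludes $h$ factors through $p_M$, without ever addressing why every object in the image of $p^*$ satisfies $h\circ\varphi_M=0$. Your section-like map $\iota_M=\eta_{ET(M)}\circ E(\eta_M)$ with $\psi_M\circ\iota_M=\varphi_M$ (a computation which, as you note, uses naturality of $\eta$ at $\varphi_{T(M)}$ and $\mu_M$, naturality of $\varphi$ at $\eta_M$, and both unit axioms) is exactly what is needed for that converse: it gives $p_M\circ\varphi_M=p_M\circ\psi_M\circ\iota_M=0$, so $h=h'\circ p_M$ automatically kills $\varphi_M$. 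Likewise, you spell out the "straightforward computation" that the induced $h'$ is a $T'$-algebra, correctly invoking right-exactness of $T$ to see that $p_{T'(M)}\circ T(p_M)$ is an epimorphism and then using the monad-morphism identity $\mu'_M\circ p_{T'(M)}\circ T(p_M)=p_M\circ\mu_M$ together with naturality of $p$ at $h'$. All of your intermediate identities check out; in particular the rewriting $h\circ\psi_M=h\circ T(h\circ\varphi_M\circ E(h))$ using the algebra axiom twice and naturality of $\varphi$ at $h$ is exactly the argument the paper alludes to with "it follows that $h\circ\psi_M=0$".
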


\begin{proof}
It is immediate that the functor is faithful. We show that it is full. Let $h\colon T'(M)\to M$ and $\tilde{h}\colon T'(\tilde{M})\to \tilde{M}$ be two objects in $\mathcal{C}^{T'}$, and let $\theta\colon p^*(h)\to p^*(\tilde{h})$ be a morphism in $\mathcal{C}^T$, i.e. a morphism $\theta\colon M\to \tilde{M}$ in $\mathcal{C}$ satisfying $\theta\circ h\circ p_{M} = \tilde{h}\circ p_{\tilde{M}} \circ T(\theta)$. By naturality of $p$ we have the equality $p_{\tilde{M}}\circ T(\theta) = T'(\theta)\circ p_{M}$ and hence $\theta\circ h\circ p_{M}=\tilde{h}\circ T'(\theta)\circ p_{M}$. Since $p_M$ is an epimorphism, it follows that $\theta\circ h=\tilde{h}\circ T'(\theta)$, and hence $\theta$ is also a morphism between $h$ and $\tilde{h}$ in $\mathcal{C}^{T'}$. This shows that $p^*$ is full. 

We prove the statement describing the image of $p^*$: Let $h\colon T(M)\to M$ be an object in $\mathcal{C}^{T}$, and assume $h\circ \varphi_M =0$. Let $\psi= \mu\circ T(\mu)\circ T(\varphi_T)$ as in the preceding lemma. Then it follows that $h\circ \psi_M=0$. 
%\begin{align*}
%h\circ \psi_M &= h\circ \mu_M\circ T(\mu_M)\circ T(\varphi_{T(M)})  = h\circ T(h)\circ T(\mu_M)\circ T(\varphi_{T(M)}) \\
%& = h\circ T(h)\circ T^2(h)\circ T(\varphi_{T(M)})  = h\circ T(h)\circ T(\varphi_M)\circ TE(h) =0.
%\end{align*}
Hence, there exists a morphism $h'\colon T'(M)\to M$ such that $h=h'\circ p_M$. A straightforward computation shows that $h'$ is an object in $\mathcal{C}^{T'}$. 
%It only remains to show that $h'$ is an object in $\mathcal{C}^{T'}$. Recall that the unit $\eta'$ for the monad structure on $T'$ is given by $\eta' = p\circ \eta$. Hence, we have that 
%\[
%h'\circ \eta'_M= h'\circ p_M\circ \eta_M = h\circ \eta_M = \id_M.
%\]
%Secondly, since $p\colon T\to T'$ is a morphism of monads, it follows that $\mu'\circ p_{T'}\circ T(p) = p\circ \mu$. Hence, we have that
%\begin{align*}
%h'\circ \mu'_M\circ p_{T'(M)}\circ T(p_M) & = h'\circ p_M\circ \mu_M  = h\circ \mu_M  = h\circ T(h) = h'\circ p_M\circ T(h) \\
%& = h'\circ T'(h)\circ p_{T(M)}  = h'\circ T'(h')\circ T'(p_M) \circ p_{T(M)}  \\
%&= h'\circ T'(h')\circ p_{T'(M)}\circ T(p_M)
%\end{align*}
%Now since $p_{T'(M)}\circ T(p_M)$ is an epimorphism, it follows that $h'\circ \mu'_M=h'\circ T'(h')$. This shows that $h'$ is an object in $\mathcal{C}^{T'}$, which proves the claim.
\end{proof}

\begin{defn}
Let $X\dashv Y\dashv X$ be an ambidextrous adjunction. Assume that $T(X,Y)$ exists (e.g. by assuming that $\mathcal{C}$ is cocomplete). Let $\rho\colon \Id\to T(X,Y)$ be the composition of the unit of the adjunction $\Id\to XY$ and the embedding $XY\to T(X,Y)$. Dually, let $\rho'\colon \Id\to T(X,Y)$ be the composition of the unit of the adjunction $\Id\to YX$ and the embedding $YX\to T(X,Y)$. Then the \emphbf{preprojective monad} $\Pi(X,Y)$ is defined to be the quotient monad $T(X,Y)/(\rho-\rho')$. 
\end{defn}

Note that by Lemma \ref{modulecategoryofquotientmonad} the Eilenberg--Moore category of the preprojective monad is in fact equivalent to the subcategory $\mathcal{C}^{\Pi(X)}$ of $(X,Y\Downarrow \Id)$ as defined at the start of this section. 

The following result is a generalisation of a result of Ringel in the classical setting of the module category of a finite quiver, see \cite{Ri98}. Let us recall the definition of the Auslander--Reiten translation $\tau$ in our setup. Let $\mathcal{C}$, $X$, and $Y$ be as in Assumption \ref{standard1}. By Lemma \ref{globaldimension1}, there exists a short exact sequence
\[0\to (f_!Xf^*)(M,h)\to (f_!f^*)(M,h)\to (M,h)\to 0.\]
Thus, by Remark \ref{hereditaryimpliestauisafunctor}, $\tau=L_1\nu\colon (X\Downarrow \Id)\to (X\Downarrow \Id)$ can be computed by applying $\nu$ to the above sequence to obtain the exact sequence
\[0\to \tau(M,h)\xrightarrow{\kappa} (f_*Xf^*)(M,h) \xrightarrow{\chi}f_*f^*(M,h) \to \nu(M,h)\to 0.\]

\begin{thm}\label{ringelsdescriptionofpreprojective}
Let $\mathcal{C}$, $X$, and $Y$ be as in Assumption \ref{standard1}. 
Then there are isomorphisms  $(\tau^-\Downarrow \Id)\cong \mathcal{C}^{\Pi(X)}\cong (\Id\Downarrow \tau)$ making the following diagram commute:
\[
\begin{tikzcd}[row sep=1ex]
(\tau^{-}\Downarrow \Id)\arrow{r}{\cong}\arrow{rd} &\mathcal{C}^{\Pi(X)}\arrow{d}&(\Id\Downarrow \tau)\arrow{l}[swap]{\cong}\arrow{dl}\\
&\mathcal{C}^{T(X)}
\end{tikzcd}
\]
where the vertical functors are the forgetful functors. 
\end{thm}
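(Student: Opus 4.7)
The plan is to combine the adjunction $\tau^{-} \dashv \tau$ from Lemma~\ref{tautau-adjoint} with a concrete unravelling of the definition of $\tau$ provided by Proposition~\ref{definitionofnu}. Since $\tau^{-} \dashv \tau$, the adjunction bijection
\[ \mathcal{C}^{T(X)}(\tau^{-}(M, h_1), (M, h_1)) \cong \mathcal{C}^{T(X)}((M, h_1), \tau(M, h_1)) \]
is natural in $(M, h_1)$ and so induces an isomorphism of categories $(\tau^{-} \Downarrow \Id) \cong (\Id \Downarrow \tau)$ lying over the forgetful functor to $\mathcal{C}^{T(X)}$. It therefore suffices to construct an isomorphism $\Phi : (\Id \Downarrow \tau) \xrightarrow{\sim} \mathcal{C}^{\Pi(X)}$ compatible with the forgetful functors.

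By Lemma~\ref{globaldimension1} together with Proposition~\ref{definitionofnu}, the object $\tau(M, h_1)$ is the kernel in $\mathcal{C}^{T(X)}$ of the explicitly given morphism $\chi: f_* X f^*(M, h_1) \to f_* f^*(M, h_1)$. Hence a morphism $g: (M, h_1) \to \tau(M, h_1)$ in $\mathcal{C}^{T(X)}$ is the same data as a morphism $\tilde g: (M, h_1) \to f_*(X(M))$ satisfying $\chi \circ \tilde g = 0$. The adjunction $f^* \dashv f_*$ identifies $\tilde g$ bijectively with a morphism $\hat g: M \to X(M)$ in $\mathcal{C}$, and the adjunction $Y \dashv X$ then identifies $\hat g$ bijectively with a morphism $h_1': Y(M) \to M$ via $\hat g = X(h_1') \circ \unit{Y}{X}_M$. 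I define $\Phi(M, h_1, g) := (M, h_1, h_1')$.

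The core computation is to verify that $\chi \circ \tilde g = 0$ is equivalent to the preprojective relation. Writing $h_1^{\flat} := Y(h_1) \circ \unit{X}{Y}_M: M \to Y(M)$ for the coaction corresponding to $h_1$ under $X \dashv Y$, the explicit form of the adjunction isomorphism $\mathcal{C}^{T(X)}((M, h_1), f_*(N)) \cong \mathcal{C}(M, N)$ dual to Remark~\ref{eilenbergmooreforfreemonad} yields
\[ \tilde g_i = Y^i(\hat g) \circ Y^{i-1}(h_1^{\flat}) \circ \cdots \circ Y(h_1^{\flat}) \circ h_1^{\flat} \]
for the $i$-th component of $f^*(\tilde g): M \to \prod_{i \geq 0} Y^i X(M)$. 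Combined with the componentwise formula for $\chi$ in Proposition~\ref{definitionofnu}, a direct calculation then gives
\[ (\chi \circ \tilde g)_j = Y^j\bigl(\counit{Y}{X}_M \circ Y(\hat g) \circ h_1^{\flat} - h_1 \circ \hat g\bigr) \circ Y^{j-1}(h_1^{\flat}) \circ \cdots \circ h_1^{\flat}, \]
so $\chi \circ \tilde g = 0$ if and only if its $j = 0$ component vanishes, i.e.\ $h_1 \circ \hat g = \counit{Y}{X}_M \circ Y(\hat g) \circ h_1^{\flat}$. Substituting $\hat g = X(h_1') \circ \unit{Y}{X}_M$ and applying naturality of $\counit{Y}{X}$ followed by the triangle identity for $Y \dashv X$, this reduces precisely to the preprojective relation $h_1 \circ X(h_1') \circ \unit{Y}{X}_M = h_1' \circ Y(h_1) \circ \unit{X}{Y}_M$.

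On morphisms, a morphism $\varphi: (M, h_1, g) \to (N, k_1, l)$ in $(\Id \Downarrow \tau)$ is a morphism $\varphi: (M, h_1) \to (N, k_1)$ in $\mathcal{C}^{T(X)}$ satisfying $\tau(\varphi) \circ g = l \circ \varphi$. Composing with the inclusion $\tau(N, k_1) \hookrightarrow f_*(X(N))$ and passing successively through the adjunctions $f^* \dashv f_*$ and $Y \dashv X$, this translates into the intertwining condition $\varphi \circ h_1' = k_1' \circ Y(\varphi)$, which combined with $\varphi \circ h_1 = k_1 \circ X(\varphi)$ is exactly the defining property of a morphism in $\mathcal{C}^{\Pi(X)}$. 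Thus $\Phi$ extends to a functor that is bijective on both objects and morphisms and lies over the forgetful functors to $\mathcal{C}^{T(X)}$, giving the desired commutative diagram of isomorphisms. The main obstacle is the componentwise computation of $\chi \circ \tilde g$ in the third paragraph; the rest consists of formal manipulations with the adjunctions $f^* \dashv f_*$, $X \dashv Y$, and $Y \dashv X$.
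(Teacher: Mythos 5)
Your proposal is correct and uses the same chain of adjunction isomorphisms ($f^* \dashv f_*$ and $Y \dashv X$) and the same central algebraic identity ($\chi \circ \tilde g = 0$ is equivalent to the preprojective relation $h_1 \circ X(h_1') \circ \unit{Y}{X}_M = h_1' \circ Y(h_1) \circ \unit{X}{Y}_M$) as the paper, but you package the argument differently. The paper constructs explicit functors $F: \mathcal{C}^{\Pi(X)} \to (\Id \Downarrow \tau)$ and $G: (\Id \Downarrow \tau) \to \mathcal{C}^{\Pi(X)}$ and then checks $FG = \Id$ and $GF = \Id$ on objects separately, which requires carrying around the components $\kappa_i$ of the inclusion $\tau(M,h_1) \hookrightarrow f_*(X(M))$ and several large commutative diagrams. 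You instead observe directly that each step of the translation $g \leftrightarrow \tilde g \leftrightarrow \hat g \leftrightarrow h_1'$ is a bijection, so it is enough to verify that the constraint on one side ($\chi \circ \tilde g = 0$) transports to the constraint on the other (the preprojective relation). Your single component formula $(\chi \circ \tilde g)_j = Y^j\bigl(\counit{Y}{X}_M \circ Y(\hat g) \circ h_1^{\flat} - h_1 \circ \hat g\bigr) \circ Y^{j-1}(h_1^{\flat}) \circ \cdots \circ h_1^{\flat}$ (which I have checked against the definition of $\chi$ in Proposition~\ref{definitionofnu} and the dual of Remark~\ref{eilenbergmooreforfreemonad}) makes the reduction to the $j = 0$ component transparent and replaces the paper's larger diagram chase. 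The morphism-level argument and the derivation of the left-hand isomorphism from $\tau^- \dashv \tau$ also check out. This is a legitimate streamlining that saves the separate verification of $FG=\Id$ and $GF=\Id$; the one small bookkeeping point is that the identification $\tau = \ker\chi$ also invokes Remark~\ref{hereditaryimpliestauisafunctor} and not merely Lemma~\ref{globaldimension1} and Proposition~\ref{definitionofnu}, but the paper states this explicitly immediately before the theorem.
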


\begin{proof}
Firstly we define the functor $F\colon \mathcal{C}^{\Pi(X)}\to (\Id\Downarrow \tau)$: Let $(M,h_1,h_1')\in \mathcal{C}^{\Pi(X)}$. Using the adjunction $Y\dashv X$, the map $h_1'\colon Y(M)\to M$ corresponds to a map $\adj^{Y\dashv X}(h_1')\colon M\to X(M)$. Using the adjunction $f^*\dashv f_*$, this map gives rise to a morphism $\varphi\colon (M,h_1)\to f_*(X(M))$ in $(X\Downarrow \Id)$. The definition of $\tau$, using Proposition \ref{globaldimension1} and Remark \ref{hereditaryimpliestauisafunctor}, gives rise to the following  diagram
\begin{equation}
\label{diagramdefiningpsi}
\begin{tikzcd}
{}&{}&(M,h_1)\arrow{d}{\varphi}\arrow{rd}{0}\arrow[dashed]{ld}[swap]{\psi}\\
0\arrow{r}&\tau(M,h_1)\arrow{r}{\kappa} &f_*(X(M))\arrow{r}{\chi} &f_*(M).
\end{tikzcd}
\end{equation}
As a next step we show commutativity of the right hand triangle. By the universal property of the kernel, this means that there is a unique map $\psi$ making the diagram commutative. 

Since every map $(M,h_1)\to f_*(M)$ is uniquely determined by a map $M\to M$ and by Proposition \ref{definitionofnu} the only non-zero components of $f_*(X(M))\to f_*(M)$ whose image lies in $M$ are $-h_1\colon X(M)\to M$ and $\varepsilon^{Y\dashv X}_M\colon (YX)(M)\to M$, it suffices to check that the diagram 
\[
\begin{tikzcd}[ampersand replacement=\&, column sep=12ex]
M\arrow{d}[swap]{\begin{pmatrix}\varphi_0\\\varphi_1\end{pmatrix}}\arrow{rd}{0}\\
X(M)\oplus (YX)(M)\arrow{r}[swap]{(-h_1,\varepsilon^{Y\dashv X}_M)}\&M
\end{tikzcd}
\]
commutes. By definition, $\varphi=\adj^{f^*\dashv f_*}(\adj^{Y\dashv X}(h_1'))$. By Remark \ref{eilenbergmooreforfreemonad} we obtain that $\varphi_0=\adj^{Y\dashv X}(h_1')=X(h_1')\eta^{Y\dashv X}_M$ and $\varphi_1=Y(X(h_1')\eta^{Y\dashv X}_M)Y(h_1)\eta^{X\dashv Y}_M$.  Thus, the commutativity follows from 
\begin{align*}
&(-h_1,\varepsilon^{Y\dashv X}_M)\begin{pmatrix}X(h_1') \eta^{Y\dashv X}\\YX(h_1')Y(\eta^{Y\dashv X}_M)Y(h_1)\eta^{X\dashv Y}_M\end{pmatrix}\\
=&-h_1X(h_1')\eta^{Y\dashv X}_M+\varepsilon^{Y\dashv X}_M YX(h_1')Y(\eta^{Y\dashv X}_M)Y(h_1)\eta^{X\dashv Y}_M\\
=&-h_1X(h_1')\eta^{Y\dashv X}_M+h_1'\varepsilon^{Y\dashv X}_{Y(M)} Y(\eta^{Y\dashv X}_M) Y(h_1)\eta^{X\dashv Y}_M\\
=&-h_1 X(h_1')\eta^{Y\dashv X}_M+h_1' Y(h_1) \eta^{X\dashv Y}_M=0
 \end{align*}
by definition of $\mathcal{C}^{\Pi(X)}$. We define $F$ on objects via $F(M,h_1,h_1')=(M,h_1,\psi)$.

To define $F$ on morphisms let $a\colon (M,h_1,h_1')\to (\tilde{M},\tilde{h}_1,\tilde{h}_2)$ be a morphism in $\mathcal{C}^{\Pi(X)}$. In particular, $a$ defines a morphism $(M,h_1)\to (\tilde{M},\tilde{h}_1)$ in $(X\Downarrow \Id)$. We define $F(a)=a$. To check that $F$ is well-defined it suffices to show that $\tau(a)\psi=\tilde{\psi}a$. Consider the following diagram
\[
\begin{tikzcd}[column sep=3ex, row sep=2ex]
{}&&&(M,h_1)\arrow[dashed]{rdd}[near start]{a}\arrow{lld}[swap]{\psi}\arrow{d}{\varphi}\arrow{rrd}{0}\\
0\arrow{r}&\tau(M,h_1)\arrow[dashed]{rdd}{\tau(a)}\arrow{rr}{\kappa} &&f_*(X(M))\arrow[dashed]{rdd}[near start, swap]{(f_*Xf^*)(a)}\arrow{rr}{\chi} &&f_*(M)\arrow[dashed]{rdd}{(f_*f^*)(a)}\\
&&&&(\tilde{M},\tilde{h}_1)\arrow{lld}[swap]{\tilde{\psi}}\arrow{d}{\tilde{\varphi}}\arrow{rrd}{0}\\
&0\arrow{r}&\tau(\tilde{M},\tilde{h}_1)\arrow{rr}{\tilde{\kappa}}&&f_*(X(\tilde{M}))\arrow{rr}{\tilde{\chi}}&&f_*(M)
\end{tikzcd}
\]

 This equation holds since
\[\tilde{\kappa}\tau(a)\psi=f_*(Xf^*(a))\kappa\psi
= f_*(Xf^*(a))\varphi
=\tilde{\varphi}a
=\tilde{\kappa}\tilde{\psi}a\]
and the fact that $\tilde{\kappa}$ is a monomorphism. This finishes the definition of $F$. It is clear from the definition that $F$ defines a functor. 

To define its inverse $G\colon (\Id\Downarrow \tau)\to \mathcal{C}^{\Pi(X)}$ let $(M,h_1,\psi)\in (\Id\Downarrow \tau)$ where $\psi\colon (M,h_1)\to \tau(M,h_1)$. Consider the sequence
\[0\to \tau(M,h_1)\xrightarrow{\kappa} f_*(X(M))\xrightarrow{\chi} f_*(M).\]

Write $\tau(M,h_1)=(N,g_1)$, and let $\kappa_i\colon N\to (Y^iX)(M)$ denote the $i$th component of the map $f^*(\kappa)\colon N\to \prod_{i\geq 0}(Y^iX)(M)$. By Remark \ref{eilenbergmooreforfreemonad} we have that 
\begin{equation}\label{some name}
\kappa_i= Y^{i}(\kappa_0)\circ Y^{i-1}(\operatorname{adj}^{X\dashv Y}(g_1))\circ \cdots \circ Y(\operatorname{adj}^{X\dashv Y}(g_1))\circ \operatorname{adj}^{X\dashv Y}(g_1).
\end{equation}
Since $\chi\kappa=0$, it follows in particular that 
\begin{equation}\label{some name 2}
h_1\circ \kappa_0 = \counit{Y}{X}_M\circ \kappa_1.
\end{equation}

Define $h_1':Y(M)\to M$ as $h_1'=(\adj^{Y\dashv X})^{-1}(\kappa_0) Y(f^*(\psi))$. Consider the following diagram

\[
\begin{tikzcd}[column sep=10.5ex]
{}&(XY)(M)\arrow[bend left, dashed]{rr}{X(h_1')}\arrow{r}{(XYf^*)(\psi)}&(XY)(N)\arrow{rd}[swap]{(XY)(\kappa_0)}\arrow{r}{X((\adj^{Y\dashv X})^{-1}(\kappa_0))}&X(M)\arrow{rd}{h_1}\\
M\arrow{ru}{\unit{Y}{X}_M}\arrow{rd}[swap]{\unit{X}{Y}_M}\arrow{r}{f^*(\psi)}&N\arrow{rd}[swap, near start]{\unit{X}{Y}_M}\arrow{rrd}{\kappa_1}\arrow{r}{\kappa_0}&X(M)\arrow{r}[swap]{\unit{Y}{X}_M}&(XYX)(M)\arrow{u}{X(\counit{Y}{X}_M)}&M\\
&(YX)(M)\arrow{r}{(YXf^*)(\psi)}\arrow{rd}[swap]{Y(h_1)}&(YX)(N)\arrow{rd}[near start]{Y(g_1)}&(YX)(M)\arrow{ru}{\counit{Y}{X}_M}\\
&&Y(M)\arrow[bend right=75, dashed]{rruu}[swap]{h_1'}\arrow{r}[swap]{(Yf^*)(\psi)}&Y(N)\arrow{u}{Y(\kappa_0)}\arrow{ruu}[description, yshift=-3ex]{(\adj^{Y\dashv X})^{-1}(\kappa_0)}
\end{tikzcd}
\]

We claim that it is commutative. To check that the small internal polygons commute follows (from top left to bottom right) from 
%\begin{itemize}
the definition of $h_1'$, 
$\unit{X}{Y}$ being a natural transformation,
the definition of $(\adj^{Y\dashv X})^{-1}(\kappa_0)$,
the equation \eqref{some name 2} and the triangle identity,
$\unit{X}{Y}$ being a natural transformation,
$\psi$ being a homomorphism in $(X\Downarrow \Id)$,
the equation \eqref{some name} for $i=1$,
the definition of $(\adj^{Y\dashv X})^{-1}(\kappa_0)$,
the definition of $h_1'$.
%\end{itemize}
Reading the boundary we obtain that $h_1X(h_1')\unit{Y}{X}_M=h_1'Y(h_1)\unit{X}{Y}_M$, i.e. $(M,h_1,h_1')\in \mathcal{C}^{\Pi(X)}$. 

To define $G$ on morphisms, consider a morphism $a\colon (M,h_1,\psi)\to (\tilde{M},\tilde{h}_1,\tilde{\psi})$ in $(\Id\Downarrow \tau)$, i.e. a morphism $a\colon (M,h_1)\to (\tilde{M},\tilde{h}_1)$ in $\mathcal{C}^{T(X)}$ satisfying $\tilde{\psi} a=\tau(a) \psi$. Define $G(a)=a$. Well-definedness follows from commutativity of the following diagram:
\[
\begin{tikzcd}[column sep=10ex]
Y(M)\arrow{r}{Yf^*(\psi)}\arrow{d}{(Yf^*)(a)}&(Yf^*)(\tau(M,h_1))\arrow{r}{(\adj^{Y\dashv X})^{-1}(\kappa_0)}\arrow{d}{(Yf^*)(\tau(a))}&M\arrow{d}{f^*(a)}\\
Y(\tilde{M})\arrow{r}{Yf^*(\tilde{\psi})}&(Yf^*)(\tau(\tilde{M},\tilde{h}_1))\arrow{r}{(\adj^{Y\dashv X})^{-1}(\tilde{\kappa}_0)} &\tilde{M}
\end{tikzcd}
\]
Commutativity of the left square follows from $a$ being a morphism in $(\Id\Downarrow \tau)$ while commutativity of the right square follows from applying the adjunction isomorphisms corresponding to $Y\dashv X$ and $f^*\dashv f_*$ to the   square 
\[
\begin{tikzcd}
\tau(M,h_1)\arrow{r}{\kappa}\arrow{d}{\tau(a)} &(f_*X)(M)\arrow{d}{(f_*X)(f^*(a))}\\
\tau(\tilde{M}, \tilde{h}_1)\arrow{r}{\tilde{\kappa}} &(f_*X)(\tilde{M})
\end{tikzcd}
\] 
which commutes by definition of $\tau(a)$. 

That $GF(a)=a$ and $FG(a)=a$ on morphisms is clear. Thus $F$ and $G$ are fully-faithful. 

We are left showing that $GF=\Id$ and $FG=\Id$ on objects. Let $(M,h_1,h_1')\in \mathcal{C}^{\Pi(X)}$. Then $F((M,h_1,h_1'))=(M,h_1,\psi)$ where $\psi$ is defined by the diagram \eqref{diagramdefiningpsi} where $\varphi$ is determined by the $0$-th component $\varphi_0=\adj^{Y\dashv X}(h_1')$ of the underlying map in $\mathcal{C}$. Define $\tilde{h}_1'$ via $(M,h_1,\tilde{h}_1')=(GF)(M,h_1,h_1')$, i.e. $\tilde{h}_1'=(\adj^{Y\dashv X})^{-1}(\kappa_0)\circ Y(f^*(\psi))$. Applying the adjunction isomorphism to this equation we obtain that $\adj^{Y\dashv X}(\tilde{h}_1')=\kappa_0f^*(\psi)$, which by the commutativity of \eqref{diagramdefiningpsi} is equal to $\varphi_0=\adj^{Y\dashv X}(h_1')$. Applying $(\adj^{Y\dashv X})^{-1}$ we obtain $h_1'=\tilde{h}_1'$. This proves $GF=\Id$ on objects. 

To prove $FG=\Id$ on objects, let $(M,h_1,\psi)\in (\Id\Downarrow \tau)$. Then $G(M,h_1,\psi)=(M,h_1,h_1')$ where $h_1'=(\adj^{Y\dashv X})^{-1}(\kappa_0)Y(f^*(\psi))$. Define $\tilde{\psi}$ via $(FG)(M,h_1,\psi)=(M,h_1,\tilde{\psi})$. We want to show that $\psi=\tilde{\psi}$. For this, it suffices to show that $\psi$ makes \eqref{diagramdefiningpsi} commutative. Since $\varphi$ and $\kappa$ are determined in degree $0$, this follows from $\varphi_0=\adj^{Y\dashv X}(h_1')=\kappa_0f^*(\psi)$. This finishes the proof that $\mathcal{C}^{\Pi(X)}\cong (\Id\Downarrow \tau)$.

The statement that $(\tau^-\Downarrow \Id)$ is equivalent to $(\Id\Downarrow \tau)$ follows from adjointness of $\tau^-$ and $\tau$, see Lemma \ref{tautau-adjoint}. The statement about the forgetful functor follows immediately from the definition of $F$ and $G$. 
\end{proof}

As in the classical case we obtain functors between the Eilenberg--Moore category of the preprojective monad and the Eilenberg--Moore category of the free monad: Denote by $g^*\colon (\Id\Downarrow \tau)\to (X\Downarrow \Id)$ the forgetful functor $(X,h_1,\psi)\mapsto (X,h_1)$ and by $j\colon (X\Downarrow \Id)\to (\Id\Downarrow \tau)$ the functor $(M,h_1)\mapsto (M,h_1,0)$.

\begin{lem}\label{isomorphismgamma}
Let $\mathcal{C}$, $X$, and $Y$ be as in Assumption \ref{standard1}. 
Then there exists an adjunction $j\circ \nu\dashv \nu^-\circ g^*$. 
Furthermore $g^*((\adj^{j\circ \nu\dashv \nu^-\circ g^*})^{-1})=(\adj^{\nu\dashv \nu^-})^{-1}$.
\end{lem}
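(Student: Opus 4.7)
The plan is to reduce the adjunction $j\circ \nu \dashv \nu^-\circ g^*$ to the existing adjunction $\nu\dashv \nu^-$ by showing that the extra condition cutting out $\Hom$-sets in $(\Id\Downarrow \tau)$ is automatically satisfied when the source is in the essential image of $j\circ \nu$. Unwinding the definitions, a morphism in $(\Id\Downarrow \tau)$ from $j\nu(M,h_1) = (\nu(M,h_1), 0)$ to $(N, g_1, \psi)$ is precisely a morphism $a\colon \nu(M, h_1) \to (N, g_1)$ in $(X\Downarrow \Id)$ such that $\psi\circ a = \tau(a)\circ 0 = 0$, while the right-hand $\Hom$ in $(X\Downarrow \Id)$ from $(M, h_1)$ to $\nu^- g^*(N, g_1, \psi) = \nu^-(N, g_1)$ is, via $\adj^{\nu\dashv \nu^-}$, in bijection with arbitrary morphisms $\nu(M, h_1) \to (N, g_1)$. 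Hence it suffices to show that the constraint $\psi\circ a = 0$ is automatic for every $a$, or equivalently that
\[\Hom_{(X\Downarrow \Id)}(\nu(M, h_1), \tau(N, g_1)) = 0.\]

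The main step is this vanishing. Using $\tau^-\dashv \tau$ from Lemma~\ref{tautau-adjoint}, the claim reduces to $\tau^-\nu(M, h_1) = 0$. By Lemma~\ref{globaldimension1} the adjunction $f_!\dashv f^*$ is $1$-Iwanaga--Gorenstein, so Remark~\ref{hereditaryimpliestauisafunctor} identifies $\tau^-$ with $R^1\nu^-$, and Corollary~\ref{Gorensteinprojectiveimagenu} gives $\image \nu \subseteq \relGinj{I}{(X\Downarrow \Id)}$. Thus $\nu(M, h_1)$ is relative Gorenstein injective, and the characterization of $\relGinj{I}{(X\Downarrow \Id)}$ via the vanishing $R^j\nu^-(-) = 0$ for $j > 0$ (cited in the paragraph after Definition~\ref{Gorenstein projectives} from \cite[Theorem 4.2.2]{Kva16}) yields $\tau^-\nu(M, h_1) = R^1\nu^-(\nu(M, h_1)) = 0$, as required.

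Once this vanishing is established, the natural isomorphism $\adj^{\nu\dashv \nu^-}$ restricts pointwise to a bijection between the two $\Hom$-sets above, and naturality in both $(M, h_1)$ and $(N, g_1, \psi)$ is inherited from that of $\adj^{\nu\dashv \nu^-}$. This yields the adjunction $j\circ \nu \dashv \nu^-\circ g^*$. The second assertion is then immediate from the construction: by the way the isomorphism was just defined, $(\adj^{j\circ \nu \dashv \nu^-\circ g^*})^{-1}$ sends $b\colon (M,h_1) \to \nu^-(N, g_1)$ to the morphism in $(\Id\Downarrow \tau)$ whose underlying morphism in $(X\Downarrow \Id)$ is $(\adj^{\nu\dashv \nu^-})^{-1}(b)$, and $g^*$ simply forgets the $\tau$-structure. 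The only substantive obstacle is the Gorenstein vanishing step, where the $1$-Iwanaga--Gorenstein property coming from Lemma~\ref{globaldimension1} is essential; the remaining verifications are formal consequences of the existing adjunction $\nu\dashv \nu^-$ and the definition of $j$ and $g^*$.
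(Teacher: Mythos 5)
Your proof is correct and follows essentially the same strategy as the paper: both reduce the verification that $\psi\circ a = 0$ to the Gorenstein vanishing $\tau^-\nu(M,h_1)=0$, which holds because $\nu(M,h_1)\in\relGinj{I}{\mathcal{C}^{T(X)}}$ and the adjunction is $1$-Iwanaga--Gorenstein. The only cosmetic difference is that you invoke the adjunction $\tau^-\dashv\tau$ from Lemma~\ref{tautau-adjoint} directly to conclude $\mathcal{C}^{T(X)}(\nu(M,h_1),\tau(N,g_1))=0$, whereas the paper transports the commuting-square condition through the isomorphism $(\Id\Downarrow\tau)\cong(\tau^-\Downarrow\Id)$ from Theorem~\ref{ringelsdescriptionofpreprojective} and observes that the resulting square has zero top-left corner; your formulation is a touch more direct but uses the same underlying input.
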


\begin{proof}
Let $(M,h_1)\in (X\Downarrow \Id)$ and $(N,k_1,\psi)\in (\Id\Downarrow \tau)$. It is sufficient to show that any morphism $a\colon \nu(M,h_1)\to (N,k_1)$ in $\mathcal{C}^{T(X)}$ is also a morphism
\[
a\colon j\nu(M,h_1)\to (N,k_1,\psi)
\]
in $(\operatorname{Id}\Downarrow \tau)$. To prove this we need to show that the following diagram commutes 
\[
\begin{tikzcd}
\nu(M,h_1)\arrow{r}{0}\arrow{d}{a} &\tau \nu(M,h_1)\arrow{d}{\tau(a)}\\
(N,k_1)\arrow{r}{\psi} &\tau (N,k_1)
\end{tikzcd}
\]
Let $\tau^-\nu(M,h_1)\xrightarrow{s} \nu(M,h_1)$ and $\tau^-(N,k_1)\xrightarrow{t} (N,k_1)$ denote the corresponding objects in $(\tau^-\Downarrow \operatorname{Id})$ obtained via the isomorphism $(\operatorname{Id}\Downarrow \tau)\cong (\tau^-\Downarrow \operatorname{Id})$ in Theorem \ref{ringelsdescriptionofpreprojective}. Then the diagram above commutes if and only if the diagram  
\[
\begin{tikzcd}
\tau^-\nu(M,h_1)\arrow{r}{s}\arrow{d}{\tau^-(a)} & \nu(M,h_1)\arrow{d}{a}\\
\tau^-(N,k_1)\arrow{r}{t} & (N,k_1)
\end{tikzcd}
\]
commutes. But this diagram commutes trivially since $\nu(M,h_1)\in \relGinj{I}{\mathcal{C}^{T(X)}}$ by Corollary \ref{Gorensteinprojectiveimagenu} and Lemma \ref{globaldimension1}, and hence $\tau^-\nu(M,h_1)=R^1\nu^-\nu(M,h_1)=0$ by the dual of Theorem \ref{characterisationsofmono}. The second claim follows from the definition of the adjunction isomorphism.
\end{proof}

\begin{rmk}
Note that even in the classical case, $j$ and $g^*$ are not adjoint, so this adjunction doesn't follow from a composition of two adjunctions. 
\end{rmk}

Finally, we are able to prove Theorem \ref{thmD}:

\begin{thm}\label{thmDmaintext}
Let $\mathcal{C}$, $X$, and $Y$ be as in Assumption \ref{standard1}. Assume furthermore that $\mathcal{C}^{T(X)}$ is Krull--Schmidt. Let
\begin{equation}\label{ARinpreproj}
0\to (L,h_1,\psi_1)\to (M,\tilde{h}_1,\tilde{\psi}_1)\to (N,\hat{h}_1,\hat{\psi}_1)\to 0
\end{equation}

be an almost split sequence in $\mathcal{C}^{\Pi(X)}$.
%\begin{enumerate}[(i)]
%\item 
If $\nu^- (N,\hat{h}_1)$ is not projective then the sequence 
\[0\to \nu^-(L,h_1)\to \nu^-(M,\tilde{h}_1)\to \nu^-(N,\hat{h}_1)\to 0\]
is either split exact or a sum of an almost split sequence in $\relGproj{P}{\mathcal{C}^{T(X)}}$ and a sequence of the form $0\to (L',h')\xrightarrow{\id} (L',h')\to 0\to 0$ in $\relGproj{P}{\mathcal{C}^{T(X)}}$. 

A dual statement holds for relative Gorenstein injectives. 
%\item  If $\nu (L,h_1)$ is not injective, then the sequence
%\[0\to \nu(L,h_1)\to \nu(M,\tilde{h}_1)\to \nu(N,\hat{h}_1)\to 0\]
%is either split exact or a sum of an almost split sequence in $\relGinj{I}{\mathcal{C}^{T(X)}}$ and a sequence of the form $0\to 0\to (L',h')\xrightarrow{\id} (L',h')\to 0$ in $\relGinj{I}{\mathcal{C}^{T(X)}}$. 
%\end{enumerate}
\end{thm}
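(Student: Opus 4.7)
The strategy is to apply Theorem \ref{theoremAmaintext} to the adjunction $j\circ\nu\dashv\nu^-\circ g^*$ from Lemma \ref{isomorphismgamma}, using the identification $\mathcal{C}^{\Pi(X)}\cong(\Id\Downarrow\tau)$ of Theorem \ref{ringelsdescriptionofpreprojective} to view the almost split sequence in the latter form. The first task is to verify that the counit $\counit{j\nu}{\nu^-g^*}$ of this adjunction is a pointwise monomorphism. From the second part of Lemma \ref{isomorphismgamma}, applying $g^*$ to this counit produces $\counit{\nu}{\nu^-}$ in $\mathcal{C}^{T(X)}$, which is a pointwise monomorphism by Proposition \ref{1gorensteinunitepi} (applicable since $f_!\dashv f^*$ is Iwanaga--Gorenstein of dimension at most one by Lemma \ref{globaldimension1}). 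Because $g^*$ is faithful and a kernel in $\mathcal{C}^{\Pi(X)}$ reduces to the underlying kernel in $\mathcal{C}^{T(X)}$, this implies that $\counit{j\nu}{\nu^-g^*}$ is itself a pointwise monomorphism.

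With this verified, Theorem \ref{theoremAmaintext} applied to the right almost split morphism $g\colon(M,\tilde h_1,\tilde\psi_1)\to(N,\hat h_1,\hat\psi_1)$ shows that $(\nu^-g^*)(g)$ is either a split epimorphism or right almost split in the essential image of $\nu^-g^*$. This essential image coincides with $\relGproj{P}{\mathcal{C}^{T(X)}}$: the inclusion $\subseteq$ is Corollary \ref{Gorensteinprojectiveimagenu}, while conversely any $(A,k_1)\in\relGproj{P}{\mathcal{C}^{T(X)}}$ satisfies $(A,k_1)\cong\nu^-\nu(A,k_1)=\nu^-g^*(j\nu(A,k_1))$, so lies in the essential image.

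I then distinguish two cases. If $(\nu^-g^*)(g)$ is a split epimorphism, then combined with left exactness of $\nu^-\circ g^*$ (coming from $g^*$ being exact and $\nu^-$ being a right adjoint) the image of the sequence is a split short exact sequence. If $(\nu^-g^*)(g)$ is right almost split, the hypothesis that $\nu^-(N,\hat h_1)$ is non-projective in $\relGproj{P}{\mathcal{C}^{T(X)}}$ supplies a non-split deflation onto $\nu^-(N,\hat h_1)$ witnessing failure of Ext-projectivity; this deflation must factor through the right almost split $(\nu^-g^*)(g)$, forcing the latter to be an epimorphism, and hence the sequence is short exact. Using Krull--Schmidtness of $\mathcal{C}^{T(X)}$ one extracts a minimal right almost split direct summand of $(\nu^-g^*)(g)$, and Lemma \ref{right minimal gives almost split seq} then produces an almost split sequence in $\relGproj{P}{\mathcal{C}^{T(X)}}$; the complementary summand necessarily has the shape $0\to(L',h')\xrightarrow{\id}(L',h')\to0\to0$, giving the claimed decomposition. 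The main technical obstacle is the careful passage between the possibly non-abelian $\mathcal{C}^{\Pi(X)}$ and the abelian $\mathcal{C}^{T(X)}$, which is circumvented throughout by exploiting the faithfulness and exactness of $g^*$.
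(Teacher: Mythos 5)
Your approach — applying Theorem \ref{theoremAmaintext} directly to the adjunction $j\circ\nu\dashv\nu^-\circ g^*$ after verifying the counit is a pointwise monomorphism — is a genuinely different and more conceptual route than the paper's. The paper does not invoke Theorem \ref{theoremAmaintext} here; instead it re-derives the factorization property by hand, using Lemma \ref{isomorphismgamma} to move a test morphism $b\colon(K,\ell_1)\to\nu^-(N,\hat h_1)$ up into $\mathcal{C}^{\Pi(X)}$, showing via $g^*$ and $\nu^-$ that it cannot be split epi, factoring through the almost split sequence there, and then moving back down via the adjunction isomorphism. In effect the paper inlines a bespoke version of the argument behind Theorem \ref{theoremAmaintext}, working one morphism at a time, precisely so as to avoid invoking any categorical structure on $\mathcal{C}^{\Pi(X)}$ beyond its being a plain category with a faithful functor $g^*$ to the abelian $\mathcal{C}^{T(X)}$.

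This is where your proposal has a genuine gap. Theorem \ref{theoremAmaintext} requires \emph{both} source and target to be abelian; its proof uses left exactness of the right adjoint $R$ (to deduce that $R\counit{L}{R}$ is a monomorphism) and the mono-plus-epi-implies-iso axiom in $\mathcal{A}$. Under the hypotheses of Theorem \ref{thmDmaintext} (Assumption \ref{standard1} plus Krull--Schmidt of $\mathcal{C}^{T(X)}$), the paper never establishes that $\mathcal{C}^{\Pi(X)}$ is abelian; indeed the discussion at the start of Section \ref{sec:preprojective} explicitly flags that even the existence of the preprojective monad needs extra assumptions such as cocompleteness. You acknowledge the issue (``possibly non-abelian $\mathcal{C}^{\Pi(X)}$'') but then assert it is ``circumvented throughout by exploiting the faithfulness and exactness of $g^*$''. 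That circumvention works for the paper's bare-hands argument, which only ever pushes individual morphisms through $g^*$, but it does not repair the direct invocation of Theorem \ref{theoremAmaintext}, whose hypotheses and proof genuinely require abelianness of the source. The gap is in principle fillable: under the $1$-Iwanaga--Gorenstein assumption (Lemma \ref{globaldimension1}), $L_2\nu=0$, so $\tau=L_1\nu$ is left exact, and one can check that $(\Id\Downarrow\tau)\cong\mathcal{C}^{\Pi(X)}$ is then abelian with $g^*$ exact. But that is an additional lemma your proposal needs and does not supply. Your description of the remaining steps (identifying $\image(\nu^-g^*)$ with $\relGproj{P}{\mathcal{C}^{T(X)}}$, forcing epimorphy from non-projectivity of $\nu^-(N,\hat h_1)$ by composing with the relative-projective cover, extracting the minimal summand via Lemma \ref{right minimal gives almost split seq}) matches the paper in substance.
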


\begin{proof}
Application of $\nu^-g^*$ to the almost split sequence \eqref{ARinpreproj}  yields the left exact sequence
\[0\to \nu^-(L,h_1)\to \nu^-(M,\tilde{h}_1)\to \nu^-(N,\hat{h}_1).\]
We will show that this sequence satisfies that for every $(K,\ell_1)\in \relGproj{P}{\mathcal{C}^{T(X)}}$ and every $b\colon (K,\ell_1)\to \nu^-(N,\hat{h}_1)$ which is not a split epimorphism, $b$ factors through $\nu^-(M,\tilde{h}_1)$. Using the adjunction isomorphism $(\adj^{j\circ \nu\dashv \nu^-\circ g^*})^{-1}$ in Lemma \ref{isomorphismgamma} we obtain a map 
\[j\nu(K,\ell_1)\xrightarrow{(\adj^{j\circ \nu\dashv \nu^-\circ g^*})^{-1}(b)} (N,\hat{h}_1,\hat{\psi}_1).\]
We claim that $(\adj^{j\circ \nu\dashv \nu^-\circ g^*})^{-1}(b)$ is not a split epimorphism. Assume to the contrary that there exists $\delta\colon (N,\hat{h}_1,\hat{\psi}_1)\to j\nu(K,\ell_1)$ such that $(\adj^{j\circ \nu\dashv \nu^-\circ g^*})^{-1}(b)\delta=\id_{(N,\hat{h}_1,\hat{\psi}_1)}$. Applying $g^*$ we obtain that $(\adj^{\nu\dashv \nu^-})^{-1}(b)g^*(\delta)=\id_{(N,h_1)}$ using Lemma \ref{isomorphismgamma}. Now applying $\nu^-$ to this and using that $\nu^-\nu(K,\ell_1)\cong (K,\ell_1)$ via the unit of the adjunction since $(K,\ell_1)\in \relGproj{P}{\mathcal{C}^T(X)}$, we get a morphism $\nu^-(N,\hat{h}_1)\to (K,\ell_1)$ such that the composite
\[\nu^-(N,\hat{h}_1)\to (K,\ell_1)\xrightarrow{b} \nu^-(N,\hat{h}_1)\]
is the identity. This contradicts the assumption that $b$ is not a split epimorphism. 

Since \eqref{ARinpreproj} is an almost split sequence, it follows that $\adj^{j\circ \nu\dashv \nu^-\circ g^*}(b)$ factors through $(M,\tilde{h}_1,\tilde{\psi}_1)$ via a map $\xi\colon j\nu(K,\ell_1)\to (M,\tilde{h}_1,\tilde{\psi}_1)$. Again applying $g^*$ and the adjunction isomorphism we see that $b$ factors through $\nu^-(M,\tilde{h}_1)$ via the map $\adj^{\nu\dashv \nu^-}(g^*(\xi))$. 

We are left with proving that the sequence $0\to \nu^-(L,h_1)\to \nu^-(M,\tilde{h}_1)\to \nu^-(N,\hat{h}_1)\to 0$ is right exact. Since $\nu^-(N,\hat{h}_1)$ is not projective, there exists a non-split epimorphism $(K',\ell_1')\to \nu^-(N,\hat{h}_1)$. Composing with the epimorphism $\counit{f_!}{f^*}_{(K',\ell_1')}\colon f_!f^*(K',\ell_1')\to (K',\ell_1')$, we get a non-split epimorphism $f_!f^*(K',\ell_1')\to \nu^-(N,\hat{h}_1)$. Since $f_!f^*(K',\ell_1')$ is relative projective, it is contained in  $\relGproj{P}{\mathcal{C}^{T(X)}}$. By the argument above the obtained non-split epimorphism will therefore factor through $\nu^-(M,\tilde{h}_1)$. This proves that the sequence is right exact. 
Hence, if the sequence is not split exact then it must be a sum of an almost split sequence and a sequence $0\to (L',h')\xrightarrow{\id} (L',h')\to 0\to 0$ by Lemma \ref{right minimal gives almost split seq} and the fact that in a Krull--Schmidt category there are minimal versions of morphisms. This proves the claim. 
\end{proof}

We conclude the paper with the following question, which to the best of our knowledge is even open in the classical case of the preprojective algebra of a quiver:

\begin{q}
Does the functor $g^*\colon \mathcal{C}^{\Pi(X)}\to \mathcal{C}^{T(X)}$ send almost split sequences to direct sums of almost split sequences and split sequences? In the classical case, since the embedding of $\Bbbk Q$ into $\Pi(Q)$ splits, it is easy to see that an almost split sequence in $\Pi(Q)$ gets sent to a sum of almost split sequences and split sequences as an element of (the socle of) $\Ext^1_{\Bbbk Q}(\tau(M),M)$. In small examples, it seems to us that this is even true on the level of short exact sequences.
\end{q}

\bibliographystyle{alpha}
\bibliography{publication}

\end{document}